\providecommand{\tabularnewline}{\\}
\theoremstyle{plain}
\newtheorem{thm}{\protect\theoremname}
\theoremstyle{remark}
\newtheorem{rem}[thm]{\protect\remarkname}
\theoremstyle{plain}
\newtheorem{conjecture}[thm]{\protect\conjecturename}
\theoremstyle{remark}
\newtheorem*{acknowledgement*}{\protect\acknowledgementname}
\theoremstyle{plain}
\newtheorem{lem}[thm]{\protect\lemmaname}
\theoremstyle{plain}
\newtheorem{fact}[thm]{\protect\factname}
\theoremstyle{definition}
\newtheorem{defn}[thm]{\protect\definitionname}
\theoremstyle{definition}
\newtheorem{example}[thm]{\protect\examplename}
\theoremstyle{plain}
\newtheorem{cor}[thm]{\protect\corollaryname}
\theoremstyle{remark}
\newtheorem*{rem*}{\protect\remarkname}
\definecolor{BLACK}{RGB}{0, 0, 0}
\definecolor{green}{RGB}{0, 0, 0}
\definecolor{cyan}{RGB}{0,100, 0}
\definecolor{yellow}{RGB}{0,100,0}
\definecolor{red}{RGB}{0,100,0}
\definecolor{BLUE}{RGB}{0,100,0}
\definecolor{blue}{RGB}{0,100,0}
\colorlet{linkequation}{BLACK}
\newcommand\widecheck[1]{%
\savestack{\tmpbox}{\stretchto{%
  \scaleto{%
    \scalerel*[\widthof{\ensuremath{#1}}]{\kern-.6pt\bigwedge\kern-.6pt}%
    {\rule[-\textheight/2]{1ex}{\textheight}}%WIDTH-LIMITED BIG WEDGE
  }{\textheight}% 
}{0.5ex}}%
\stackon[1pt]{#1}{\scalebox{-1}{\tmpbox}}%
}
\providecommand{\acknowledgementname}{Acknowledgement}
\providecommand{\conjecturename}{Conjecture}
\providecommand{\corollaryname}{Corollary}
\providecommand{\definitionname}{Definition}
\providecommand{\examplename}{Example}
\providecommand{\factname}{Fact}
\providecommand{\lemmaname}{Lemma}
\providecommand{\remarkname}{Remark}
\providecommand{\theoremname}{Theorem}
\begin{document}
\title{The SO(3) Vortex Equations over Orbifold Riemann Surfaces}
\author{\textcolor{black}{Mariano Echeverria}}
\begin{abstract}
We study the general properties of the moduli spaces of $SO(3)$ vortices
over orbifold Riemann surfaces and use these to characterize the solutions
of the $SO(3)$ monopole equations on Seifert manifolds following
in the footsteps of Mrowka, Ozsváth and Yu.

We also study the solutions to the $SO(3)$ monopole equations on
$S^{1}\times\varSigma$ in order to motivate the construction of a
version of monopole Floer homology, which we call framed monopole
Floer homology, in analogy with the construction given by Kronheimer
and Mrowka for the case of instanton Floer homology. 

Finally, the $SO(3)$ vortex moduli spaces provide a nice toy model
for recent work due to Feehan and Leness regarding the study of a
natural Morse-Bott function on the moduli space of $SO(3)$ monopoles
over Kahler manifolds. In particular, we compute the Morse-Bott indices
of this function.

\end{abstract}

\maketitle

\section{Introduction}

\subsection{$SO(3)$ vortices and Morse-Bott Theory}

\ 

The study of vortices on Riemann surfaces has been so fruitful since
the work of Jaffe and Taubes \cite{MR614447} that it would be entirely
reasonable to assume that on this topic there is nothing new under
the Sun. Indeed, the vortex equations have been studied and generalized
in multiple directions (see for example \cite{MR1396775,MR1124279,MR1085139,MR1246476,MR1273268,MR1354000,MR1423193,MR3454234,MR3513572,MR1086749}),
and they have proved incredibly useful in 3- and 4-manifold topology
due to its interaction with the Seiberg-Witten equations \cite{MR1306021,MR1306023,MR1324704,MR2350473}.
However, by scraping the bottom of the barrel one can still find some
new things as we now explain. 

First of all,\textbf{ }an \textbf{$SO(3)$ }vortex\textbf{ }consists
of a pair $(C,\varUpsilon)$, where $C$ is a unitary connection on
a $U(2)$ bundle $E$ over a Riemann surface $\varSigma$, and $\varUpsilon$
is a section of $E$. The connection $C$ must induce a predetermined
connection $C^{\det}$ on $\det E$, which is why we are really studying
the $SO(3)$ vortex equations instead of the $U(2)$ vortex equations.
The equations $(C,\varUpsilon)$ must satisfy are the equations (\ref{eq:SO(3) vortex equations}),
that is, 
\begin{align*}
*F_{C}^{0}-i\left[\varUpsilon\varUpsilon^{*}-\frac{1}{2}|\varUpsilon|^{2}I_{E}\right]=0\\
\bar{\partial}_{C}\varUpsilon=0
\end{align*}
 The meaning of these equations is explained in Section (\ref{sec:the--Vortex SO(3)}),
but they probably already look familiar to most readers. By studying
the solutions to the $SO(3)$ vortex equations modulo gauge transformations
one obtains the $SO(3)$ vortex moduli space $\mathcal{M}(\varSigma,E)$. 

The study of these equations has a long history. For example, in his
PhD thesis (see also \cite[Section 5]{MR1265143}), García-Prada studied
the moduli space of non-abelian vortices from a gauge theoretic and
algebraic geometric point of view. From the latter perspective, these
moduli spaces can be identified with the moduli spaces of stable pairs,
and García-Prada interpreted stable pairs of arbitrary rank on any
compact Kahler manifold $X$ as stable bundles on $X\times\mathbb{CP}^{1}$,
which allowed him to encode the stability parameter of the moduli
space of stable pairs in terms of the stability condition in the sense
of Mumford of the corresponding stable bundle.

Around the same time, Bradlow and Daskalopoulous \cite{MR1124279,MR1250254}
studied the moduli spaces of stable pairs for the case of a Riemann
surface from a more analytic point of view, and a bit later the moduli
spaces for the rank two case were studied by Thaddeus \cite{MR1273268}
and Bertram \cite{MR1297851} from the algebraic geometric point of
view. 

The case of the $U(2)$ vortex equations was also studied extensively
from a gauge-theoretic perspective by Bradlow and Daskalopoulous \cite{MR1124279,MR1250254},
as well as Bradlow, Daskalopoulous and Wentworth \cite{MR1124279}.
More importantly, the moduli space of $SO(3)$ vortices for Riemann
surfaces without marked points is a particular case of the general
framework developed in the paper by García-Prada, Gothen and Mundet
i Riera \cite{Prada-Higgs}.

For our purposes it was important to work with the more general setup
of orbifold Riemann surfaces, using the ideas of Furuta and Steer
\cite{MR1185787} for the case of the Yang-Mills equations (which
is the orbifold version of Atiyah and Bott's seminal work \cite{MR702806}).
We use gauge-theoretic techniques instead of geometric invariant theory
(GIT), which is also used to study these moduli spaces from the algebraic
geometric point of view. In fact, Bertram describes briefly the construction
of the moduli space of stable parabolic pairs \cite[Section 3]{MR1297851}
(using GIT techniques).

One reason for studying the moduli space of $SO(3)$ vortices on Riemann
surfaces is that they provide useful toy models complementing recent
work of Feehan and Leness, who studied the $SO(3)$ monopole moduli
spaces over Kahler surfaces from a Morse-theoretic point of view.
Following the work of Hitchin \cite{MR887284}, Feehan and Leness
\cite{Feehan-Leness[Virtual]} studied the $L^{2}$ norm of the spinor
of an $SO(3)$ monopole as a Morse-Bott function on the moduli space
of $SO(3)$ monopoles over Kahler manifolds, which in particular means
they had to compute the Morse-Bott indices of the critical sets of
this function. We do the analogous computations for $SO(3)$ vortices
over a Riemann surfaces. Since we are also finding these formulas
for the case of orbifolds, the work \cite{MR1375314} by Nasatyr and
Steer also served as an important model. We now explain how the computation
of these indices is done.

As mentioned earlier, $\mathcal{M}(\varSigma,E)$ is obtained by studying
the solutions to the $SO(3)$ vortex equations modulo gauge. In this
case, the gauge group used is the determinant-one gauge group $\mathcal{G}^{\det}(E)$,
which is not the gauge group of all unitary automorphisms of $E$.
The upshot of doing this is that there is a residual symmetry on the
moduli space of $SO(3)$ vortices modulo $\mathcal{G}^{\det}(E)$. 

More precisely, there is a circle action on the moduli space obtained
by rescaling the section $\varUpsilon$, in other words, $\varUpsilon\rightarrow e^{i\theta}\varUpsilon$.
Associated to this circle action, there is a moment map $\mu$ which
is essentially the $L^{2}$ norm of the section, that is, $\mu(C,\varUpsilon)=\frac{1}{2}\|\varUpsilon\|_{L^{2}(\varSigma)}^{2}$.
Following Bradlow, Daskalopoulous and Wentworth \cite{MR1124279},
the idea is to study $\mu$ as a Morse-Bott function on the $SO(3)$
vortex moduli space (they studied this for the $U(2)$ vortex moduli
space for the case of a smooth Riemann surface, i.e, without marked
points). 

The critical sets of $\mu$ can be identified with the fixed points
of the circle action, which consists of the moduli space of projectively
flat connections on $\varSigma$, and certain moduli spaces of abelian
$U(1)$ vortices on $\varSigma$. The abelian vortices satisfy equations
(\ref{abelian vortex equation}), which read
\begin{align*}
*F_{C_{L}}-\frac{i}{2}|\alpha|^{2}=*\frac{1}{2}F_{C^{\det}}\\
\bar{\partial}_{C_{L}}\alpha=0
\end{align*}
Here $E$ has reduced as $E=L\oplus(L^{*}\otimes\det E)$, and $C_{L}$
is a connection on $L$, while $\alpha$ is a section of $L$. Via
an easy Chern-Weil argument (still valid in the orbifold case, see
lemma (\ref{abelian vortex equation})) we must have $c_{1}(L)\leq\frac{1}{2}c_{1}(E)$,
where in the orbifold case these Chern classes are in general rational
cohomology classes, and Poincaré duality is used to regard these as
rational numbers. Any $SO(3)$ vortex which is not a projectively
flat connection nor an abelian vortex will be called an \emph{irreducible}
\emph{$SO(3)$ vortex}, following the standard terminology in gauge
theory.

In any case, a theorem due to Frankel \cite{MR131883} shows that
the indices of these critical sets equals the dimension of the subspace
on which the circle action acts with negative weight. Feehan and Leness
recently \cite{Feehan-Leness[Virtual]} computed these indices for
the analogue of $\mu$ in the case of $SO(3)$ monopoles over a Kahler
surface. In section Theorem (\ref{thm:computation indices}) we compute
these indices for the case of an orbifold Riemann surface. 

Before giving the general formula for the index, we remark that if
our Riemann surface is regarded as an orbifold Riemann surface we
then write $\check{\varSigma}=(\varSigma,(p_{1},a_{1}),\cdots,(p_{n},a_{n}))$,
where the marked points $p_{1},\cdots,p_{n}$ have been assigned positive
integers $a_{1},\cdots,a_{n}$. Moreover, an orbifold $U(2)$ bundle
$\check{E}$ over $\check{\varSigma}$ carries some \emph{isotropy
data, }which consists of integers $b_{i}^{\pm}$ at each marked points
satisfying certain conditions which we recall in the next section. 
\begin{thm}
\textbf{General Properties of the $SO(3)$ vortex moduli spaces and
indices of the Morse-Bott function $\mu$: }

a) Suppose that $\check{E}$ is an orbifold $U(2)$ bundle over $\check{\varSigma}=(\varSigma,(p_{1},a_{1}),\cdots,(p_{n},a_{n}))$
with isotropy data $(b_{1}^{\pm},\cdots,b_{n}^{\pm})$. Then if $[C,\varUpsilon]\in\mathcal{M}(\check{\varSigma},\check{E})$
is an irreducible $SO(3)$ vortex, the moduli space is smooth at this
point, and of dimension
\[
\dim_{\mathbb{R}}\mathcal{M}(\check{\varSigma},\check{E})=2\left(g-1+c_{1}(\det\check{E})+n-n_{0}-\sum_{i=1}^{n}\frac{b_{i}^{-}+b_{i}^{+}}{a_{i}}\right)
\]
where $n_{0}=\#\{i\mid b_{i}^{-}=b_{i}^{+}\}$.

b) There is an $S^{1}$ action on $\mathcal{M}(\check{\varSigma},\check{E})$
whose fixed point set consists of the moduli space of projectively
flat connections on $\text{ad}\check{E}$ and the moduli spaces of
abelian orbifold vortices associated to the reductions $\check{E}=\check{L}\oplus(\check{L}^{*}\otimes\det\check{E})$,
with $c_{1}(\check{L})\leq\frac{1}{2}c_{1}(\check{E})$.

If one assumes that $E$ is of odd degree in the smooth case or $\check{E}$
is an odd power of the fundamental orbifold line bundle $\check{L}_{0}$
in the case where the $a_{i}$ are mutually coprime, and also that
\[
c_{1}(\check{E})>2c_{1}(K_{\check{\varSigma}})=2\left(2g-2+n-\sum_{i=1}^{n}\frac{1}{a_{i}}\right)
\]
 where $K_{\check{\varSigma}}$ represents the canonical orbifold
line bundle, then $\mathcal{M}(\check{\varSigma},\check{E})$ is also
smooth at the reducible $SO(3)$ vortices. In fact, $\mathcal{M}(\check{\varSigma},\check{E})$
is a smooth closed Kahler manifold.

In particular, the function $\mu$ is a Morse-Bott function on $\mathcal{M}(\check{\varSigma},\check{E})$
and if the orbifold line bundle $\check{L}$ has isotropy $b_{i}$,
the index of abelian vortex associated to the reduction $\check{E}=\check{L}\oplus(\check{L}^{*}\otimes\det\check{E})$
is 
\[
\text{ind}(\check{E},\check{L})=2\left[g-1+c_{1}(\det\check{E})-2c_{1}(\check{L})+\sum_{i\mid\epsilon_{i}=1}\frac{b_{i}^{+}-b_{i}^{-}}{a_{i}}+n_{-}+\sum_{i\mid\epsilon_{i}=-1}\frac{b_{i}^{-}-b_{i}^{+}}{a_{i}}\right]
\]
where $\epsilon_{i}=1$ if $b_{i}=b_{i}^{+}$, $\epsilon_{i}=-1$
if $b_{i}=b_{i}^{-}$. Here $n_{-}=\#\{i\mid b_{i}=b_{i}^{-}<b_{i}^{+}\}$.
\end{thm}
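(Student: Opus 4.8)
The plan is to recognize $\mathcal{M}(\check\varSigma,\check E)$ as a Kahler quotient on which $\mu$ is the moment map for the residual circle action $\varUpsilon\mapsto e^{i\theta}\varUpsilon$, and then to invoke Frankel's theorem \cite{MR131883}, which identifies the critical sets of a moment map with the fixed-point set of the action and computes the Morse-Bott index as twice the complex dimension of the negative-weight part of the holomorphic tangent space. All the analytic input (smoothness, the Kahler structure, Hodge theory for the deformation complex) is already available over orbifolds through the Furuta-Steer formalism \cite{MR1185787}, so the real content reduces to three index computations carried out by orbifold Riemann-Roch.

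First I would fix the deformation complex of the $SO(3)$ vortex equations at $[C,\varUpsilon]$. Because we divide by the determinant-one gauge group $\mathcal{G}^{\det}(\check E)$, it is built from the trace-free bundle $\text{ad}\,\check E$ together with $\check E$,
\[ 0 \to \Omega^0(\text{ad}\,\check E) \xrightarrow{D_0} \Omega^{0,1}(\text{ad}\,\check E)\oplus\Omega^0(\check E) \xrightarrow{D_1} \Omega^{0,1}(\check E) \to 0, \]
with $D_0$ the infinitesimal gauge action and $D_1$ the linearization of $\bar\partial_C\varUpsilon$. Irreducibility forces $\mathbb{H}^0=0$, and a Weitzenbock/Kahler vanishing argument forces $\mathbb{H}^2=0$, so the Zariski tangent space is $\mathbb{H}^1$ and $\dim_{\mathbb{C}}\mathbb{H}^1=-\chi$ of the complex. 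Evaluating $\chi=\chi(\text{ad}\,\check E)-\chi(\check E)$ by orbifold Riemann-Roch \cite{MR1185787} yields part (a), the boundary contributions at each $p_i$ producing exactly the correction $n-n_0-\sum_i (b_i^-+b_i^+)/a_i$.

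For part (b) I would first identify the fixed points. If $[C,\varUpsilon]$ is fixed by the rescaling, then differentiating the family $g_\theta\in\mathcal{G}^{\det}(\check E)$ of gauge transformations that undo the rotation produces a covariantly constant trace-free endomorphism of $\check E$, and its eigenbundle decomposition gives the holomorphic splitting $\check E=\check L\oplus(\check L^*\otimes\det\check E)$ with $\varUpsilon$ a section of $\check L$. The case $\varUpsilon=0$ is the projectively flat locus, and otherwise one reads off an abelian orbifold vortex, the bound $c_1(\check L)\le\frac{1}{2}c_1(\check E)$ following by integrating the first abelian equation. Smoothness at these reducibles and the closed Kahler structure reduce again to the vanishing of the obstruction space, which under the hypothesis $c_1(\check E)>2c_1(K_{\check\varSigma})$ I would obtain from a Serre-duality positivity estimate on the off-diagonal line bundles.

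The crux is the index formula. With respect to the splitting, $\text{ad}\,\check E$ decomposes holomorphically as $\mathcal{O}\oplus(\check L^{-2}\otimes\det\check E)\oplus(\check L^{2}\otimes(\det\check E)^{-1})$, and the induced circle action splits the deformation complex into weight pieces. A direct check of the weights shows that the negative-weight part of $\mathbb{H}^1$ is governed by the first cohomology of the off-diagonal bundle $\check L^{2}\otimes(\det\check E)^{-1}=\text{Hom}(\check L^*\otimes\det\check E,\check L)$, whose $H^0$ vanishes by stability, so the index is $2h^1=-2\chi(\check L^{2}\otimes(\det\check E)^{-1})$; a final application of orbifold Riemann-Roch produces the leading term $g-1+c_1(\det\check E)-2c_1(\check L)$. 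The main obstacle is the bookkeeping of isotropy at the marked points: the local weight of $\check L$ at $p_i$ equals either $b_i^+$ or $b_i^-$, and this choice simultaneously fixes the sign $\epsilon_i$ and determines whether the local model of a section of $\check L^{2}\otimes(\det\check E)^{-1}$ lands in the negative-weight piece. Matching the floor function in the orbifold correction to the combinatorics of these weights is what yields the split sums over $\epsilon_i=1$ and $\epsilon_i=-1$ together with the integer shift $n_-$, and getting this rounding exactly right is the delicate step.
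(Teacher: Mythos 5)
Your overall architecture coincides with the paper's: the moduli space is exhibited as Kahler with $\mu$ the moment map for the residual circle action, Frankel's theorem reduces the Morse--Bott index to the negative-weight subspace, the dimension in (a) is the Euler characteristic of the deformation complex evaluated by orbifold Riemann--Roch, the fixed points are identified through the eigenbundle splitting plus a Chern--Weil bound giving $c_{1}(\check{L})\leq\frac{1}{2}c_{1}(\check{E})$, and the index is $-2\chi_{\mathbb{C}}$ of the negative-weight bundle $\check{L}^{2}\otimes(\det\check{E})^{-1}$, with the isotropy of that bundle normalized into $[0,a_{i})$ so that the shift by $a_{i}$ at the points with $\epsilon_{i}=-1$ produces the term $n_{-}$. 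This is exactly the content of Sections 3--5 of the paper, including the bookkeeping you single out as the delicate step.

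The one genuine gap is your mechanism for killing the obstruction space in part (a). You assert that at an irreducible vortex ``a Weitzenbock/Kahler vanishing argument forces $\mathbb{H}^{2}=0$.'' No such positivity argument is available: part (a) carries no hypothesis on $c_{1}(\check{E})$, and indeed at a solution with $\varUpsilon=0$ the obstruction is $\mathrm{coker}\,\bar{\partial}_{C}\simeq H^{1}(\check{E})$, which does not vanish in general --- this is precisely why the degree hypothesis must enter in part (b). What actually forces $\mathbb{H}^{2}=0$ at an irreducible vortex is the coupling term $\dot{c}''\otimes\varUpsilon$ in the linearization: pairing an element $\tilde{\varUpsilon}$ of the cokernel against all variations $\dot{c}''$ yields the pointwise identity $\tilde{\varUpsilon}\varUpsilon^{*}=\frac{1}{2}\mathrm{tr}(\tilde{\varUpsilon}\varUpsilon^{*})I_{E}$, and then the holomorphicity of $\varUpsilon$, the fact that $\tilde{\varUpsilon}\in\ker\bar{\partial}_{C}^{*}$ obeys unique continuation, and the hypothesis that $\varUpsilon$ is not a section of a holomorphic sub-line bundle force $\tilde{\varUpsilon}\equiv0$; this is Hitchin's argument, which the paper follows. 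Relatedly, your smoothness claim at the reducibles invokes only ``Serre-duality positivity on the off-diagonal line bundles'': that disposes of the abelian vortices, but at the projectively flat connections the obstruction is $H^{1}(\check{E})$ itself, and its vanishing under $c_{1}(\check{E})>2c_{1}(K_{\check{\varSigma}})$ requires the stability of $\check{E}$ furnished by the orbifold Narasimhan--Seshadri theorem (the paper's argument, following Thaddeus, derives a contradiction with the slope inequality from a nonzero section of $K_{\check{\varSigma}}\otimes\check{E}^{*}$), not merely a degree count on a line bundle.
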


\begin{rem}
a) The proof of the previous theorem is the main content of Sections
3, 4 and 5. 

b) We point out that in general some assumption on the degree of $\check{E}$
seems necessary. For example, the cokernel of the linearized $SO(3)$
vortex map at a projectively flat connection can basically be identified
with $\text{coker}\bar{\partial}_{C}\simeq H^{1}(\check{E})$ and
the only way to guarantee the vanishing of $H^{1}(\check{E})$ for
a semistable bundle $\check{E}$ is by taking $c_{1}(\check{E})$
sufficiently large.

c) Also, $c_{1}(\check{L})\leq\frac{1}{2}c_{1}(\check{E})$ is a necessary
but not sufficient condition for the appearance of a moduli space
of abelian vortices. One also needs for $\check{L}$ to have the correct
isotropy data and for the background degree of $\check{L}$ to be
non-negative. We explain this terminology in Section (\ref{sec:A-CrashCourse-on})
and illustrate this with the examples we do at the end of the paper. 

d) An easy way to guarantee that the moduli space $\mathcal{M}^{*}(\check{\varSigma},\check{E})$
of irreducible $SO(3)$ vortices is empty is by knowing that there
are no orbifold line bundles $\check{L}$ compatible with the isotropy
data of $\check{E}$ (this is more common that one might expect as
the examples in Section (\ref{sec:-Seifert}) illustrate). The reason
for this is that the function $\mu$ must achieve an absolute maximum
on $\mathcal{M}(\check{\varSigma},\check{E})$, but if $\mathcal{M}^{*}(\check{\varSigma},\check{E})\subset\mathcal{M}(\check{\varSigma},\check{E})$
is non-empty then the maximum must occur at an abelian vortex.

e) Finally, $c_{1}(\check{E})>2c_{1}(K_{\check{\varSigma}})$ does
guarantee in the smooth case that the dimension of the moduli space
of $SO(3)$ vortices is positive {[}except when $g=0${]}. Likewise,
we will see in Section (\ref{sec:-Seifert}) examples where this condition
is not enough to guarantee positive dimensional moduli spaces.
\end{rem}

In the smooth case (no marked points on the Riemann surface), the
formula is quite clean and simply reads 
\[
\text{ind}(E,L)=2(g-1+\deg E-2\deg L)
\]

As an example of what one can do with the formula for the index, choosing
$\deg L=0$ we see that the index is $2(g-1+\deg E)$, which is the
dimension formula for the moduli space of $SO(3)$ vortices in the
smooth case. This means that this moduli space of abelian vortices
(which in fact ends up being a single point), corresponds to the maximum
for the function $\mu$.

Strictly speaking, these indices can be computed using the orbifold
version of Riemann-Roch even in the case where we are not imposing
conditions on $\check{E}$ which guarantee that the moduli spaces
are smooth at the reducible points, so they are best interpreted (in
the terminology of Feehan and Leness) as \emph{virtual }Morse-Bott
indices. 

\subsection{Framed Monopole Floer Homology and $SO(3)$ monopoles on $S^{1}\times\varSigma$}

\ 

A second motivation for studying the moduli space of $SO(3)$ vortices
over Riemann surfaces which could be of more interest to low-dimensional
topologists is the conjecture due to Kronheimer and Mrowka relating
the framed instanton homology $HI^{\#}(Y)$ of a 3 manifold and the
tilde version $\widetilde{HM}(Y)$ of monopole Floer homology (equivalently,
the hat version of Heegaard Floer homology $\widehat{HF}(Y)$). More
precisely, a special case of \cite[Conjecture 7.24]{MR2652464} states
the following:
\begin{conjecture}
\cite[Conjecture 7.24]{MR2652464} Let $Y$ denote a closed oriented
3-manifold. Then 
\[
HI^{\#}(Y)\simeq\widehat{HF}(Y)\otimes\mathbb{C}\simeq\widetilde{HM}(Y)\otimes\mathbb{C}
\]
\end{conjecture}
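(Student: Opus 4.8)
The plan is to treat the two isomorphisms in the conjecture separately, since they are of very different character. The second isomorphism, $\widehat{HF}(Y)\otimes\mathbb{C}\simeq\widetilde{HM}(Y)\otimes\mathbb{C}$, is essentially a consequence of the now-established equivalence between Heegaard Floer homology and monopole Floer homology: the work of Kutluhan--Lee--Taubes together with that of Colin--Ghiggini--Honda identifies the two theories, and under this identification the hat flavor $\widehat{HF}$ corresponds to the tilde flavor $\widetilde{HM}$. I would therefore regard this half as known and concentrate all effort on the first isomorphism $HI^{\#}(Y)\simeq\widehat{HF}(Y)\otimes\mathbb{C}$, which compares an $SO(3)$ gauge theory with a Seiberg--Witten theory and is the genuinely open part.

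For that first isomorphism, the strategy I would pursue is exactly the one this paper is built to support: use the $SO(3)$ monopole equations as a cobordism bridging the anti-self-dual and the Seiberg--Witten worlds. On a four-manifold the $SO(3)$ (equivalently $PU(2)$) monopole moduli space carries two distinguished families of reducibles, one consisting of anti-self-dual connections (the \emph{instanton stratum}) and one consisting of abelian Seiberg--Witten solutions (the \emph{monopole stratum}), and the smooth part of the moduli space provides a bordism linking the two. This is precisely the mechanism exploited by Feehan and Leness to deduce Witten's conjecture relating Donaldson and Seiberg--Witten invariants. My proposal is to implement the Floer-theoretic analogue: build an $SO(3)$ monopole Floer homology whose two reducible loci recover, respectively, framed instanton homology and the tilde flavor of monopole Floer homology, and then run a neck-stretching and cobordism argument to produce the desired isomorphism on homology after tensoring with $\mathbb{C}$.

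The computational backbone is the product case $S^{1}\times\varSigma$, which is why the $SO(3)$ vortex moduli spaces $\mathcal{M}(\check{\varSigma},\check{E})$ studied above are relevant. Under dimensional reduction along the $S^{1}$ factor the three-dimensional $SO(3)$ monopole equations collapse to the $SO(3)$ vortex equations on $\varSigma$, so the generators of the putative framed monopole Floer complex become points of $\mathcal{M}(\check{\varSigma},\check{E})$. Moreover, the Morse--Bott decomposition of $\mu$ computed in Theorem (\ref{thm:computation indices})---projectively flat connections on one side, abelian vortices on the other---is exactly the decomposition of the fixed locus into the two types of reducibles. I would use this to match generators at the two ends of the cobordism with the instanton and monopole theories, with the index formula for $\mu$ supplying the relative gradings.

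The main obstacle will be the analysis of reducibles in the infinite-dimensional Floer setting. Already in the closed four-manifold case Feehan and Leness require delicate work to resolve the singularities of the $SO(3)$ monopole moduli space along its two reducible strata, via blow-up of the equations, links of the strata, and the attendant gluing and wall-crossing. In the Floer-theoretic setting one must control all of this while simultaneously handling non-compactness, instanton bubbling, and the failure of transversality at reducible flow lines, and then prove that the resulting cobordism maps assemble into a genuine chain homotopy equivalence rather than merely an equality of Euler characteristics. Establishing that the framed constructions on both sides are compatible with this cobordism---so that what one extracts is the isomorphism $HI^{\#}(Y)\simeq\widehat{HF}(Y)\otimes\mathbb{C}$ and not just a numerical coincidence of ranks---is where I expect essentially all the difficulty to lie, and it is the reason the full conjecture remains open.
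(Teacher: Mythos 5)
This statement is not a theorem of the paper: it is Kronheimer and Mrowka's Conjecture 7.24, recorded here as a conjecture precisely because nobody --- including the author --- proves it. There is therefore no ``paper proof'' for your proposal to be measured against, and your proposal is not a proof either; it is a research program, as you yourself concede in the final paragraph. What the paper actually establishes is strictly weaker than the conjecture and you should be careful not to conflate the two: (i) the construction of the framed monopole Floer group $HM^{\#}(Y)=HM(Y\#T^{3},\omega,\varGamma)$, (ii) the isomorphism $HM^{\#}(Y)\simeq\widetilde{HM}(Y,\varGamma)$ of Theorem (\ref{theo iso framed group}), obtained from the Kutluhan--Lee--Taubes K\"unneth formula \cite{MR4194309}, and (iii) the equality of Euler characteristics $\chi(\widetilde{HM}(Y))=\chi(HM^{\#}(Y))=\chi(HI^{\#}(Y))$, which rests on Scaduto \cite{MR3394316} and the work in progress of Doan--Gerig \cite{Doan-Gerig[KM]}. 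An equality of Euler characteristics is very far from an isomorphism of graded groups, and the paper never claims otherwise.

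That said, your program does track the paper's own motivation closely: splitting off $\widehat{HF}\otimes\mathbb{C}\simeq\widetilde{HM}\otimes\mathbb{C}$ as known (via Kutluhan--Lee--Taubes), and attacking $HI^{\#}(Y)\simeq\widetilde{HM}(Y)\otimes\mathbb{C}$ by using $SO(3)$ monopoles on $Y\#T^{3}$ as a bridge, with the $SO(3)$ vortex moduli space on $T^{2}$ supplying the canonical cobordism between the unique abelian vortex (the generator of $HM^{\#}(S^{3})$) and the unique projectively flat connection (the generator of $HI^{\#}(S^{3})$). Two concrete issues your outline glosses over. First, the coefficient mismatch: because the perturbation $\omega$ is non-balanced, the paper's framed group is only defined with a local coefficient system, so what one gets is the \emph{twisted} group $\widetilde{HM}(Y,\varGamma)$ over a Novikov field, not $\widetilde{HM}(Y)\otimes\mathbb{C}$; identifying the two (or removing the twisting) is itself a nontrivial step your program needs. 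Second, no $SO(3)$ monopole Floer homology exists --- the paper explicitly calls its construction ``a daunting challenge'' because of reducibles --- so the cobordism maps you want to ``assemble into a chain homotopy equivalence'' have no ambient theory in which to live; this is not a technical refinement of an existing argument but the entire missing content of the conjecture.
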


A definition of $\widetilde{HM}(Y)=\bigoplus_{\mathfrak{s}\in\text{Spin}^{c}(Y)}\widetilde{HM}(Y,\mathfrak{s})$
can be found in \cite{MR2764887,MR2746727} and by the work of Kutluhan,
Lee and Taubes \cite{MR4194309}, it is isomorphic to $\widehat{HF}(Y)=\bigoplus_{\mathfrak{s}\in\text{Spin}^{c}(Y)}\widehat{HF}(Y,\mathfrak{s})$.
The Euler characteristic of these groups are 
\[
\chi(\widetilde{HM}(Y))=\begin{cases}
|H_{1}(Y,\mathbb{Z})| & b_{1}(Y)=0\\
0 & b_{1}(Y)\neq0
\end{cases}
\]
That $HI^{\#}(Y)$ has the same Euler characteristic as $\widetilde{HM}(Y)$
was shown by Scaduto in \cite[Corollary 1.4]{MR3394316}, and the
isomorphism between the groups has been verified for certain cases
in \cite{MR3394316,BaldwinSivek[2019],MR3890778}.

The equality up to a sign between the Euler characteristics in the
more general case of the sutured versions of monopole, Heegaard and
instanton Floer homologies has been shown recently by Zhenkun Li and
Fan Ye in their paper \cite[Theorem 1.2]{Li-Ye[SuturedInstanton]}.

At this point it is useful to recall how $HI^{\#}(Y)$ is defined.
One first considers the ordinary instanton Floer homology $HI(Y\#T^{3},\gamma)$,
where $\gamma$ represents the Poincaré dual for an admissible $U(2)$
bundle $E$ over $Y\#T^{3}$ (more precisely, $\gamma$ is a loop
which represents one of the circle factors of $T^{3}$). 

The group $HI(Y\#T^{3},\gamma)$ is $\mathbb{Z}/8\mathbb{Z}$ graded,
and $HI^{\#}(Y)$ is isomorphic to four consecutive summands of $HI(Y\#T^{3},\gamma)$.
In the special case of $Y=S^{3}$, it then follows that $HI^{\#}(S^{3})$
corresponds to four summands of $HI(T^{3},\gamma)$, which is two
dimensional since it has two generators and no differential, so $HI^{\#}(S^{3})$
ends up being one-dimensional \cite[Section 4.1]{MR2860345}.

Notice that the Euler characteristic of these groups satisfies
\begin{equation}
\chi(HI^{\#}(Y))=\frac{1}{2}\chi(HI(Y\#T^{3},\gamma))=\frac{1}{2}\chi(CI(Y\#T^{3},\gamma))\label{Euler characteristics instantons}
\end{equation}
where $CI(Y\#T^{3},\gamma)$ denotes the chain complex used to define
the Floer groups. 

On the other hand, $\widetilde{HM}(Y)$ can be defined in terms of
a mapping cone construction which uses the monopole Floer chain complexes
on $Y$ \cite{MR2764887,MR4194309}. For us, the important fact is
that $\widetilde{HM}(Y)$ is defined in terms of data on $Y$, without
using additional 3-manifolds like $T^{3}$ as in the case of $HI^{\#}(Y)$.

Thus, in order to try to compare $HI^{\#}(Y)$ and $\widetilde{HM}(Y)$
it may be convenient to use an alternative construction of $\widetilde{HM}(Y)$
so that it is also defined on $Y\#T^{3}$. As communicated to the
author by Tom Mrowka, this alternative construction of $\widetilde{HM}(Y)$
was known to him and Peter Kronheimer, and probably to the experts
at large as well.

We motivate the construction in Section (\ref{sec:-Framed Monopole Homology}),
but basically we consider $HM(Y\#T^{3},\omega,\varGamma)$, that is,
the monopole Floer homology on $Y\#T^{3}$ associated to a suitable
non-balanced perturbation and a local coefficient system $\varGamma$,
which is needed in order to obtain a well defined differential. We
call this version of monopole Floer homology \emph{framed monopole
Floer homology}, and denote it as $HM^{\#}(Y)$ in analogy to the
notation for the case of instantons. 

It may not be clear why study $HM(Y\#T^{3},\omega,\varGamma)$ in
a paper that is about $SO(3)$ vortices over Riemann surfaces, but
the main idea is that if we think of $T^{3}$ as $S^{1}\times T^{2}$,
then the perturbation term $\omega$ appears naturally from how the
abelian vortices embed in the moduli space of $SO(3)$ vortices on
$T^{2}$. 

To be more specific, we first need to know how the solutions to the
$SO(3)$ monopole equations on $S^{1}\times\varSigma$ compare to
the $SO(3)$ vortices on $\varSigma$. In Section (\ref{sec:-Framed Monopole Homology})
we prove (for the Seiberg-Witten case see \cite{MR1438191,MR2141962}):
\begin{thm}
Suppose that $(B,\varPsi)$ is an irreducible $SO(3)$ monopole for
the spin-u structure 
\[
V=(\mathbb{C}\oplus K_{\varSigma}^{-1})\otimes E
\]
where $E$ is the pullback of a $U(2)$ bundle on $\varSigma$ under
the obvious projection map $S^{1}\times\varSigma\rightarrow\varSigma$.
Write 
\[
\varPsi=\alpha\oplus\beta\in\varGamma(E)\oplus\varGamma(K_{\varSigma}^{-1}\otimes E)
\]

Then either $\beta$ vanishes identically or $\alpha$ vanishes identically. 

a) The solutions with $\beta\equiv0$ can be identified with the irreducible
$SO(3)$ vortices for $(\varSigma,E)$.

b) The solutions with $\alpha\equiv0$ can be identified (via Serre
duality) with the irreducible $SO(3)$ vortices for $(\varSigma,K_{\varSigma}^{-1}\otimes E)$.

Moreover, if we assume that $c_{1}(E)>2c_{1}(K_{\varSigma})$, then
only solutions of type a) can occur.
\end{thm}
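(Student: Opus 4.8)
The plan is to run a Kähler-type vanishing argument in the spirit of Witten's analysis of Seiberg--Witten theory on Kähler surfaces, adapted to the dimensional reduction from $S^{1}\times\varSigma$ down to $\varSigma$. Write $t$ for the coordinate on $S^{1}$, so that the product metric together with the complex structure on $\varSigma$ supplies the Clifford multiplication. Since all the data are pulled back from $\varSigma$, I would first write the connection as $B=C+\varPhi\,dt$, with $C$ a connection on $E\to\varSigma$ and $\varPhi\in\varGamma(\text{ad}E)$ the temporal component. As in the Seiberg--Witten case \cite{MR1438191,MR2141962}, irreducible solutions are translation invariant and $\varPhi$ can be gauged away, so that $F_{B}=F_{C}$ is pulled back from $\varSigma$ and carries no $dt$-directed part. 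The Dirac equation $D_{B}\varPsi=0$ then separates by type on $\varSigma$ into $\bar{\partial}_{C}\alpha=0$ and $\bar{\partial}_{C}^{*}\beta=0$; in particular $\alpha$ is holomorphic and, after conjugation, so is $\beta$ as a section of the conjugate bundle.

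The heart of the argument is the pointwise vanishing leading to the dichotomy. Because $F_{B}$ is pulled back, $*_{3}F_{B}$ is purely $dt$-directed, whereas the quadratic spinor term $\sigma(\varPsi)$ has both a $\varSigma$-component proportional to $\alpha^{*}\beta$ and a $dt$-component proportional to $\alpha\alpha^{*}-\beta\beta^{*}$ (minus traces). Matching the two sides of the curvature equation therefore forces, pointwise, the off-diagonal relation $\alpha^{*}\beta=0$ together with the vortex moment-map equation along $dt$. Combined with the holomorphicity from the reduction step, the relation $\alpha^{*}\beta\equiv0$ shows that $\alpha$ and $\beta$ each have isolated zeros unless identically zero, while $\alpha^{*}\beta\equiv0$ forces their zero sets to cover $\varSigma$. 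Two sections with isolated zero sets cannot cover $\varSigma$, so one of $\alpha,\beta$ vanishes identically, and irreducibility rules out the simultaneous vanishing $\alpha\equiv\beta\equiv0$.

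It remains to identify the two families and to establish the degree obstruction. When $\beta\equiv0$ the surviving equations on $(C,\alpha)$ are exactly the $SO(3)$ vortex equations (\ref{eq:SO(3) vortex equations}) for $(\varSigma,E)$, giving (a); when $\alpha\equiv0$, the condition $\bar{\partial}_{C}^{*}\beta=0$ on $\beta\in\varGamma(K_{\varSigma}^{-1}\otimes E)$ becomes, after Serre duality, the holomorphicity condition for an $SO(3)$ vortex on $(\varSigma,K_{\varSigma}^{-1}\otimes E)$, giving (b). For the final claim I would integrate the $dt$-component (the moment-map equation) over $\varSigma$ and apply Chern--Weil: the $\beta$-term enters with a definite sign, so the resulting identity equates a fixed combination of degrees to $c\int_{\varSigma}|\beta|^{2}$ with $c>0$. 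Under $c_{1}(E)>2c_{1}(K_{\varSigma})$ that degree combination is negative, which is incompatible with $\int_{\varSigma}|\beta|^{2}>0$; hence $\beta\equiv0$ and only type (a) can occur.

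The step I expect to be the main obstacle is the precise Hodge-type decomposition of $\sigma(\varPsi)$ that isolates the pointwise identity $\alpha^{*}\beta=0$ with the correct coefficients, together with the sign bookkeeping in the Chern--Weil computation that makes the hypothesis on $c_{1}(E)$ genuinely obstruct case (b). The translation-invariance reduction, although analytically the heaviest ingredient, can be imported essentially verbatim from the Seiberg--Witten literature cited in the statement.
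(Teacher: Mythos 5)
There are two genuine gaps, and both sit exactly at the places you flagged as "importable" or "bookkeeping."

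The first and most serious is your opening step. You assert that irreducible solutions are translation invariant, that the temporal component $\varPhi$ can be gauged away, and that this "can be imported essentially verbatim from the Seiberg--Witten literature." Translation invariance is not an input here; it is the analytic heart of the theorem, and the cited results concern the \emph{abelian} Seiberg--Witten equations, whose quadratic term is scalar, not the non-abelian term $(\varPsi\varPsi^{*})_{00}$ of the $SO(3)$ equations. The paper's proof of Theorem (\ref{thm-solutions on S1xsigma}) establishes invariance by working in temporal gauge $B=C(\theta)$, differentiating the Dirac equation in $\theta$, using the Kahler identity $i\bar{\partial}_{C(\theta)}^{*}=\varLambda\partial_{C(\theta)}$ together with the \emph{off-diagonal} curvature equation, and integrating by parts over $S^{1}\times\varSigma$ to obtain an identity of the schematic form
\[
\left\Vert \frac{\partial\alpha}{\partial\theta}\right\Vert ^{2}+\sqrt{2}\int_{S^{1}\times\varSigma}\left(|\alpha|^{2}|\beta|^{2}-\frac{1}{2}\left|\left\langle \beta,\alpha\right\rangle \right|^{2}\right)+2\left\Vert \bar{\partial}_{C(\theta)}\alpha\right\Vert ^{2}=0,
\]
in which every term is non-negative, since Cauchy--Schwarz gives $|\alpha|^{2}|\beta|^{2}-\frac{1}{2}|\langle\beta,\alpha\rangle|^{2}\geq\frac{1}{2}|\alpha|^{2}|\beta|^{2}$. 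This single identity delivers $\theta$-independence of all the data, $\bar{\partial}_{C}\alpha=0$, and the pointwise vanishing $|\alpha|\,|\beta|\equiv0$ (hence the dichotomy) simultaneously. So the invariance you want to cite can only be obtained by carrying out this computation, and once it is done your separate pointwise step is redundant; conversely, without it your proof has not started. (That pointwise step is itself fine as conditional reasoning: if $F_{B}$ has no $d\theta$-component, the off-diagonal curvature equation kills the rank-one endomorphism $(\beta\otimes\alpha^{*})_{0}$, hence $\beta\alpha^{*}$ itself, and unique continuation for holomorphic sections gives the dichotomy.)

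The second gap is the degree obstruction. In case (b) the curvature equation reads $*F_{C}^{0}+i(\varLambda\beta\otimes\beta^{*})_{0}=0$, an equality of \emph{trace-free} endomorphisms of $E$; the only scalar you can extract by integrating over $\varSigma$ is its trace, which vanishes identically on both sides. There is therefore no identity equating a combination of degrees to $c\int_{\varSigma}|\beta|^{2}$. Such a scalar equation exists only in the abelian, reducible case --- this is Lemma (\ref{Lem degree constraint}), and it is the Kahler-surface argument of Witten you are modelling --- precisely because there the bundle splits and the curvature equation becomes an equation of $2$-forms rather than of trace-free endomorphisms. To rule out type (b) one must instead pass through stability, as the paper does: via Serre duality $\beta$ corresponds to a non-zero holomorphic section of a bundle with $c_{1}=2c_{1}(K_{\varSigma})-c_{1}(E)<0$; the sub-line bundle $L$ generated by that section satisfies $c_{1}(L)\geq0$, while stability of the pair (Section \ref{sec:-Vortices-and Stable Pairs}) forces $c_{1}(L)<\frac{1}{2}\left(2c_{1}(K_{\varSigma})-c_{1}(E)\right)<0$, a contradiction. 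Your identifications in (a) and (b) are correct, but both analytic pillars of the theorem --- the invariance/dichotomy and the vanishing under $c_{1}(E)>2c_{1}(K_{\varSigma})$ --- require the arguments above, not the ones you sketched.
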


Returning to our discussion of framed monopole homology, choose the
$U(2)$ bundle $E$ over $T^{2}$ with $c_{1}(E)=1$. The expected
dimension of the moduli space of $SO(3)$ vortices is two, so after
dividing by the circle action we get a manifold which is topologically
an interval, the maximum of this interval corresponding to the unique
abelian vortex inside this moduli space, and the minimum of this interval
corresponding to the unique projectively flat connection on $\text{ad}E$
\footnote{Recall that irreducible projectively flat connections on Riemann surfaces
correspond to stable bundles, and in the case of an elliptic curve
these were classified by Atiyah \cite{MR0131423}. For the point of
view of algebraic geometry, we are looking at the moduli space of
stable bundles with a fixed determinant, which is why we get a single
element.}.

This forces $HM(T^{3},\omega)$ to be one dimensional (since Seiberg-Witten
monopoles on $S^{1}\times\varSigma$ are in bijection with abelian
vortices on $\varSigma$). Therefore, we can think of the moduli space
of $SO(3)$ vortices on $T^{2}$ as providing a \emph{canonical }cobordism
between the unique abelian vortex (which generates $HM^{\#}(S^{3})=HM(T^{3},\omega)$),
and the unique projectively flat connection on $T^{2}$ (which can
be taken as a generator for $HI^{\#}(S^{3})$). 

Thus, from this point of view, the isomorphism between $HM^{\#}(S^{3})$
and $HI^{\#}(S^{3})$ can be understood as a consequence of the fact
that there is a \emph{canonical }cobordism between the unique generator
for $HM^{\#}(S^{3})$ and the unique generator for $HI^{\#}(S^{3})$.
We should also point out that here we haven't perturbed the $SO(3)$
monopole equations on $S^{1}\times\varSigma$, since otherwise the
moduli space would be zero dimensional, instead of an interval as
we have in our situation. 

We should also note that the chain complex $CM^{\#}(Y)=CM(Y\#T^{3},\omega)$
which yields $HM(Y\#T^{3},\omega,\varGamma)$ is finitely generated,
thus the Euler characteristic of $HM^{\#}(Y)$ is well defined with
respect to the Novikov field implicit in our choice of $\varGamma$.
In particular,
\begin{equation}
\chi(HM^{\#}(Y))=\chi(CM^{\#}(Y))\label{eq: Euler characteristic monopoles}
\end{equation}
This immediately begs the questions:

1) How is $\chi(HM^{\#}(Y))$ related to $\chi(\widetilde{HM}(Y))$? 

2) How is $\chi(HM^{\#}(Y))$ related to $\chi(HI^{\#}(Y))$?

Regarding the first question, a Künneth formula proven by Kutluhan,
Lee and Taubes in \cite{MR4194309} relates $CM(Y\#T^{3},\omega,\varGamma)$
to a mapping cone $S_{U}(CM(Y\sqcup T^{3}),\omega,\varGamma))$. Since
the right hand side still involves a local coefficient system, this
is still not quite isomorphic to $\widetilde{HM}(Y)$ (which is defined
over integer coefficients for example), but rather a twisted version
$\widetilde{HM}(Y,\varGamma)$ (see the discussion preceding our theorem
(\ref{theo iso framed group}) for more details). But the Künneth
formula implies that 
\begin{equation}
\chi(HM^{\#}(Y))=\chi(\widetilde{HM}(Y))\label{eq: euler characteristic}
\end{equation}

Regarding the second question, thanks to work in progress by Aleksander
Doan and Chris Gerig \cite{Doan-Gerig[KM]}, which is based on ideas
of Kronheimer and Mrowka, after appropriate choices of orientations
\[
\chi(CM(Y\#T^{3},\omega))=\frac{1}{2}\chi(CI(Y\#T^{3},\gamma))
\]
so combining (\ref{Euler characteristics instantons}), (\ref{eq: Euler characteristic monopoles})
and (\ref{eq: euler characteristic}) we obtain 
\[
\chi(\widetilde{HM}(Y))=\chi(HM^{\#}(Y))=\chi(HI^{\#}(Y))
\]
 The interesting feature of the idea of Kronheimer and Mrowka used
by Doan and Gerig is that the equality between both Euler characteristics
is obtained by studying the $SO(3)$ monopole equations on the $3$-manifold,
so in a sense is a low dimensional version of Witten's conjecture.
One could then try to see if this can also be used for relating the
Floer homologies $HM^{\#}(Y)$ and $HI^{\#}(Y)$, but that is outside
the scope of this work. 

Alternatively, in order to relate the Euler characteristics one can
use the fact that for 3-manifolds with first Betti number the Euler
characteristic of the instanton Floer homology groups is (up to a
sign) twice the Casson-Walker-Lescop invariant $\lambda_{CWL}(Y)$
\cite[Theorem 1.1]{MR3403212}, and again up to a sign $\lambda_{CWL}(Y)$
equals the sum of the Turaev torsion functions on the 3-manifold \cite{MR2002617},
which are known to agree with the Seiberg-Witten invariant \cite{MR1418579}.

\subsection{$SO(3)$ monopoles on Seifert manifolds}

\ 

Finally, the fact that we studied the $SO(3)$ vortices over orbifold
Riemann surfaces allows us to describe the solutions to the $SO(3)$
monopole equations on Seifert manifolds in terms of the $SO(3)$ vortex
moduli spaces, which is the analogue of the computations done by Mrowka,
Ozsváth and Yu \cite{MR1611061} for the case of the Seiberg-Witten
equations. 

Just as in their paper, one must work with connections which are spinorial
with respect to a metric connection compatible with the Seifert structure
(which is not the Levi-Civita connection). This connection can be
thought of as an adiabatic connection \cite{MR1651420}. In Section
(\ref{sec:-Seifert}) we find: 
\begin{thm}
Suppose that $\pi:Y=S(\check{L})\rightarrow\check{\varSigma}$ is
a Seifert manifold arising as the unit circle bundle of an orbifold
line bundle $\check{L}\rightarrow\check{\varSigma}$. Let $(B,\varPsi)$
denote an irreducible $SO(3)$ monopole for the spin-u structure 
\[
V=(\mathbb{C}\oplus\pi^{*}(K_{\check{\varSigma}}^{-1}))\otimes\pi^{*}(\check{E})
\]
where $\check{E}$ is an $U(2)$ bundle over $\varSigma$. Write 
\[
\varPsi=\alpha\oplus\beta\in\varGamma(\pi^{*}(\check{E}))\oplus\varGamma(\pi^{*}(K_{\check{\varSigma}}^{-1})\otimes\pi^{*}(\check{E}))
\]

Then either $\beta$ vanishes identically or $\alpha$ vanishes identically. 

a) The solutions with $\beta\equiv0$ can be identified with the irreducible
$SO(3)$ vortices for $(\check{\varSigma},\check{E}')$. Here $\check{E}'$
is any $U(2)$ bundle over $\check{\varSigma}$ satisfying $\det\check{E}'\simeq\det\check{E}$. 

b) The solutions with $\alpha\equiv0$ can be identified (via Serre
duality) with the irreducible $SO(3)$ vortices for $(\varSigma,K_{\varSigma}^{-1}\otimes\check{E}')$,
where $\check{E}'$ has the same meaning as in part a).

Moreover, if we assume that $c_{1}(\check{E})>2c_{1}(K_{\check{\varSigma}})$,
then only solutions of type a) can occur.
\end{thm}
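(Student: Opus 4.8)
The plan is to reduce the three‑dimensional $SO(3)$ monopole equations on the Seifert manifold $Y=S(\check L)$ to the two‑dimensional $SO(3)$ vortex equations on the base orbifold $\check\varSigma$, exactly as in the $S^1\times\varSigma$ case treated in the previous theorem, but now accounting for the nontrivial Seifert fibration. First I would fix the adiabatic metric connection compatible with the Seifert structure (not Levi--Civita) and write out the Dirac operator and the curvature equation for a spin‑u connection $B$ on $V=(\mathbb C\oplus\pi^*(K_{\check\varSigma}^{-1}))\otimes\pi^*(\check E)$ in terms of the fibre direction and the horizontal (base) directions. The key structural input is that the connection $1$‑form of the Seifert fibration splits the tangent bundle into a vertical line and the horizontal pullback $\pi^*T\check\varSigma$, so that the Dirac operator decomposes as a fibrewise $S^1$‑operator plus a horizontal Dolbeault‑type operator $\bar\partial_B$ twisted by $\pi^*\check E$.

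The central step is the vanishing dichotomy: either $\beta\equiv0$ or $\alpha\equiv0$. I would obtain this by a Weitzenb\"ock/integration‑by‑parts argument applied to the curvature component of the $SO(3)$ monopole equation, mirroring the computation in the $S^1\times\varSigma$ theorem. The quadratic moment‑map term $[\varPsi\varPsi^*]_0$ pairs $\alpha$ and $\beta$ in such a way that, after integrating the appropriate component of $F_B^0$ over $Y$ and using that $\bar\partial_B\alpha=0$ and $\bar\partial_B^*\beta=0$ (the Dirac equation $D_B\varPsi=0$ in holomorphic form), the cross term $\alpha\otimes\beta^*$ must vanish pointwise; combined with a maximum‑principle or unique‑continuation argument this forces one of the two spinor components to vanish identically. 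The adiabatic limit is what makes this clean: in that limit the fibrewise derivative of an irreducible solution is forced to vanish, so $\alpha$ and $\beta$ descend to sections of bundles on $\check\varSigma$, and the $3$‑dimensional equations become the orbifold $SO(3)$ vortex equations on the base.

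For parts (a) and (b) I would then match the reduced equations to the vortex equations stated in the introduction. When $\beta\equiv0$, the surviving equations for $(B|_{\text{base}},\alpha)$ are precisely the $SO(3)$ vortex equations for some $U(2)$ bundle $\check E'$ over $\check\varSigma$; the freedom in $\check E'$ (any bundle with $\det\check E'\simeq\det\check E$) arises because the fibrewise $S^1$‑weight of the spinor can shift the isotropy data of the summands of $\check E'$ at the marked points while preserving the determinant, which is fixed by the spin‑u structure. When $\alpha\equiv0$, the surviving equations for $\beta\in\varGamma(\pi^*(K_{\check\varSigma}^{-1})\otimes\pi^*\check E)$ are, after applying Serre duality on $\check\varSigma$ (using $K_{\check\varSigma}$), identified with the $SO(3)$ vortices for $K_{\check\varSigma}^{-1}\otimes\check E'$. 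The final clause follows from the degree inequality $c_1(\check E)>2c_1(K_{\check\varSigma})$: a type (b) solution would correspond to an abelian vortex sub‑line‑bundle inside $K_{\check\varSigma}^{-1}\otimes\check E'$ whose Chern‑Weil bound is violated once $c_1(\check E)$ is large, exactly as in the remark following the index theorem, so no such solution exists.

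The main obstacle I expect is making the adiabatic reduction and the isotropy bookkeeping rigorous in the orbifold setting. Away from the singular fibres the argument is a routine translation of the Mrowka--Ozsv\'ath--Yu and $S^1\times\varSigma$ computations, but at the exceptional fibres one must track how the $S^1$‑weights of $\alpha$ and $\beta$ interact with the local group actions $\mathbb Z/a_i$, and verify that the resulting local data are exactly the admissible isotropy data $(b_i^\pm)$ for an orbifold $U(2)$ bundle. Controlling the unique‑continuation/maximum‑principle step uniformly across the singular set, and confirming that the choice of $\check E'$ is parametrized precisely by the determinant constraint, is where the care is needed; the rest follows by assembling the fibrewise‑constant solutions into genuine orbifold $SO(3)$ vortices on $\check\varSigma$.
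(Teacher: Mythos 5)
Your proposal follows essentially the same route as the paper's proof: fix the adiabatic Seifert-compatible connection, split the Dirac operator into a vertical derivative plus a horizontal Dolbeault-type operator, run a Weitzenb\"ock/integration-by-parts argument in which the mixed curvature components $F_{\eta,e^i}$ are converted via the monopole curvature equation into the quadratic cross term $(\beta\otimes\alpha^*)_0$, so that Cauchy--Schwarz forces fibrewise covariant constancy, $D_2\varPsi=0$, and pointwise $|\alpha|\,|\beta|=0$, whence unique continuation gives the dichotomy, descent gives the orbifold $SO(3)$ vortex equations for some $\check{E}'$ with $\det\check{E}'\simeq\det\check{E}$, Serre duality handles case (b), and the Chern--Weil degree bound excludes case (b) when $c_{1}(\check{E})>2c_{1}(K_{\check{\varSigma}})$. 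This is the same approach as Theorem \ref{thm solutions on Seifert manifolds} in the paper, which merely fleshes out the anticommutator identity and the bundle bookkeeping you flag as the remaining technical work.
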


\begin{rem}
a) We should point out that if one were interested in relating the
flat $SU(2)$ or $SO(3)$ connections on $Y=S(\check{L})$ with solutions
to the Seiberg-Witten equations on $Y$, then the right $U(2)$ bundles
to look at are those which satisfy $\det\check{E}\simeq\check{L}^{k}$
for some integer $k$, since these are the ones Furuta and Steer used
to relate the projectively flat connections on $Y$ with solutions
to the Yang-Mills equations on $\check{\varSigma}$ \cite[Theorem 3.7]{MR1185787}

b) At the end of Section (\ref{sec:-Seifert}) we work out the example
of the Poincaré homology sphere. In this case one can choose $\det\check{E}\simeq\check{L}_{0}$
where $\check{L}_{0}$ satisfies $c_{1}(\check{L}_{0})=\frac{1}{2\cdot3\cdot5}=\frac{1}{30}$
and generates the topological Picard group for orbifold lines bundles
over $S^{2}(2,3,5)$. 

There are six $U(2)$ bundles $\check{E}'$ such that $\det\check{E}'\simeq\check{L}_{0}$,
but only two end up being interesting: call these $\check{E}_{1}$
and $\check{E}_{2}$. Each of these contain one projectively flat
connection.

The expected dimension of the moduli space of $SO(3)$ vortices for
$\check{E}_{1}$ and $\check{E}_{2}$ is $2$ and $0$ respectively,
so after taking the quotient by the $S^{1}$ action, it is clear that
the second one must be empty (in the sense that there are no irreducible
$SO(3)$ vortices), while the first one becomes one dimensional, thus
providing a cobordism between the unique abelian vortex and the unique
projectively flat connection on the bundle $\check{E}_{1}$. 

So on the Poincaré homology sphere, the moduli space of $SO(3)$ vortices
consists of:

i) The trivial $SU(2)$ connection, 

ii) Two irreducible $SU(2)$ flat connections

iii) One Seiberg-Witten monopole, 

iv) A one-dimensional moduli space of irreducible $SO(3)$ monopoles
which serves as a cobordism between the Seiberg-Witten monopole and
one of the irreducible flat connections.

It is interesting to compare the case of the Poincaré homology sphere
with that of $T^{3}$, since in both situations we have a one dimensional
moduli of $SO(3)$ vortices connecting one of the flat connections
which the unique Seiberg-Witten monopole. 

In fact, thanks to the work of Taubes \cite{MR1037415}, it is known
that the Casson invariant $\lambda_{C}(Y)$ \cite{MR1030042} is morally
$(1/2)$ the count of irreducible flat $SU(2)$ connections on $Y$. 

On the other hand, as Lim showed \cite{MR1739221} (verifying a conjecture
due to Kronheimer), on an integer homology sphere $Y$, the count
of irreducible Seiberg-Witten monopoles, while not a topological invariant,
can be made to agree with $\lambda_{C}(Y)$ after adding a suitable
correction term.

So morally there are twice as many flat connections as there are Seiberg-Witten
monopoles, and we find it is amusing to give examples where this ``principle''
holds on the nose.

This also suggests studying the monopole and instanton Floer homologies
of the connected sum $Y\#\varSigma(2,3,5)$, in analogy to studying
the monopole and instanton Floer homologies of $Y\#T^{3}$.

We also describe the case of the Brieskorn homology sphere $\varSigma(2,3,7)$
near the end of the paper. The computations are slightly more complicated,
but one finds a similar picture to that of $\varSigma(2,3,5)$.
\end{rem}

We finish this introduction by mentioning that in follow-up work we
plan to study the flowlines for the $SO(3)$ Chern-Simons-Dirac functional
on Seifert manifolds and $S^{1}\times\varSigma$ in terms of the geometry
of the moduli space of stable pairs on natural (orbifold) ruled surfaces
associated to these 3-manifolds \cite{MR1400761,MR1611061,MR1871404}.

\ 

\ 

\textbf{\emph{Outline of the paper: }}

In Section 2 we give a basic review of gauge theory on orbifolds.
In many ways this material is already common knowledge, so this section
is basically intended to fix some notation. If one is willing to invest
some time in the algebraic geometry literature, it is not difficult
to find very general formulas for the Hirzebruch-Grothendieck-Riemann-Roch
index theorem on orbifolds (or even stacks!). However, usually these
formulas are not fleshed out explicitly for the situations we have
in mind, so we also work out some of them in this section. 

Section 3 introduces the $SO(3)$ vortex equations over an orbifold
Riemann surface. The discussion is pretty standard, and include things
like the deformation theory of the moduli spaces, the conditions under
which abelian vortices appear in the moduli space, and the Kahler
structure on the moduli spaces. The fact that we are working over
an orbifold only means we need to keep track of more topological data,
but besides that it is almost indistinguishable from the smooth case.
We are also quite explicit when describing the different structures
on the moduli spaces, since many simplifications occur from the fact
that we are in dimension two. 

Section 4 discusses the well-known correspondence between $SO(3)$
vortices and stable pairs. Our use of the stable pairs point of view
is very limited in this work, and we mostly wrote it to clarify certain
things in case the reader is more acquainted with the $U(2)$ vortex
equations. 

In Section 5 we study the Morse-Bott function $\mu$ on the moduli
space of $SO(3)$ vortices. This is essentially the version for Riemann
surfaces of the work of Feehan and Leness \cite{Feehan-Leness[Virtual]},
or the classical paper by Hitchin but now for the case of $SO(3)$
vortices \cite{MR887284}. 

Section 6 describes some basic facts about the $SO(3)$ monopole equations
on 3-manifolds. This material is an adaptation of \cite{MR1855754},
but we found it useful to work it out explicitly because the reader
may not be familiarized with certain features of the $SO(3)$ monopole
equations. We also describe briefly the $U(2)$ monopole equations
on 3-manifolds, which could be of interest for technical reasons which
we discuss in this section.

In Section 7 we describe the $SO(3)$ monopole equations on 3-manifolds
of the form $S^{1}\times\varSigma$, where $\varSigma$ is a Riemann
surface. Our main interest is the case when $\varSigma$ is the 2-torus
$T^{2}$, since this example is what motivated our definition of the
\emph{framed monopole Floer homology }of $Y$. We give its definition
in this section, and discuss some of the properties mentioned in the
introduction.

Finally, in Section 8 we describe the behavior of the $SO(3)$ monopole
equations for Seifert manifolds and discuss the examples of $\varSigma(2,3,5)$
and $\varSigma(2,3,7)$.
\begin{acknowledgement*}
The idea to study the $SO(3)$ vortex equations on Riemann surfaces
arose from many helpful conversations with Paul Feehan and Tom Leness
surrounding their project \cite{Feehan-Leness[Virtual]} so the author
would like to thank them for their encouragement and many valuable
suggestions. The author would also like to thank Yi-Jen Lee for pointing
out the Künneth formula in her paper \cite{MR4194309} with Kutluhan
and Taubes, as well as Chris Woodward, Andrei Teleman, Oscar García-Prada,
Michael Thaddeus, Dinesh Valluri, Aleksander Doan, Tom Mrowka and
Matthew Stoffregen for useful conversations and correspondence.
\end{acknowledgement*}

\section{\label{sec:A-CrashCourse-on}A CrashCourse on Orbifolds }

We now give a brief summary of gauge theory on orbifolds, which in
the modern literature (e.g \cite{MR2493583,MR2359514}) falls into
the theory of analytic Deligne-Mumford stacks.

For our purposes, the main arena will be that of a Riemann surface
$\varSigma$ with finitely many marked points $p_{1},\cdots,p_{n}$.
Since this material is standard, we refer to \cite{MR1151325,MR1611061,MR1185787,MR1362026,MR1375314,MR1703606,MR1718086,MR1863850,MR1241873,MR1308489,MR1651420,MR1846125,Nicolaescu[1991],MR1388002,MR1359140,MR2450211,MR2493583,MR3376575,MR527023}
for more details and proofs of the main results we need.

We will follow the conventions of Kronheimer and Mrowka \cite{MR2860345},
and use a $\check{}$ whenever we want to emphasize the orbifold structure
of a geometric object. For example, $\check{\varSigma}$ will denote
an orbifold Riemann surface, while $\varSigma$ will denote the underlying
topological space (i.e, we forget the marked points). So we can think
of $\check{\varSigma}$ as a shorthand notation for the data $(\varSigma,p_{1},\cdots,p_{n})$,
with the isotropy at each of the marked points (which will be recalled
soon), being implicit.\textbf{}

In general, a \textbf{complex orbifold} $\check{X}$ consists of a
connected paracompact complex space such that every point $p$ has
an open neighborhood $U_{p}$ which is of the form $U_{p}\simeq V_{p}/G_{p}$,
where $V_{p}$ is a suitable complex manifold and $G_{p}$ a finite
group acting biholomorphically on $V_{p}$. 

The germ $(\check{X},p)$ of an orbifold at $p$ is called the \textbf{quotient
germ }\cite[Definition 1.1]{MR1388002}. If $\dim_{\mathbb{C}}\check{X}=m$,
we can assume that $G_{p}$ is a  finite subgroup of $GL_{m}(\mathbb{C})$,
unique up to conjugation, and $V_{p}$ is an open neighborhood of
$0\in\mathbb{C}^{m}$ such that $g(V_{p})=V_{p}$ for all $g\in G_{p}$.
We can think of the quotient germ $(\check{X},p)$ as $(\check{X},p)=(\mathbb{C}^{m},0)/G_{p}$
and the map $\pi:(\mathbb{\mathbb{C}}^{m},0)\rightarrow(\check{X},p)$
is called the \textbf{local smoothing covering }of $\check{X}$ at
$p$.\textbf{}

We will assume that there are finitely many points $p_{1},\cdots,p_{n}$
of $\check{X}$ where $G_{p}$ is non-trivial. In fact, we can assume
that $G_{p_{i}}\simeq\mathbb{Z}/a_{i}\mathbb{Z}$, where the action
is $z\ni\mathbb{C}^{m}\rightarrow e^{2\pi i/a_{i}}z$. In this case
the natural orbifold metric $\check{g}$ on $\check{X}$ is one with
a conical singularity at each $p_{i}$ of cone angle $2\pi/a_{i}$.
Integration on orbifolds can also be defined using a partition of
unity, and the orbifold version of the de Rham complex yields cohomology
groups isomorphic to those of the underlying topological space \cite[Section 2.1]{MR3376575}.
In other words, 
\[
H^{k}(\check{X};\mathbb{R})\simeq H^{k}(X;\mathbb{R})\simeq H_{DR}^{k}(X;\mathbb{R}),\;\;\;k=0,1,\cdots,\dim_{\mathbb{R}}X
\]
with a similar statement for $\mathbb{C}$ instead of $\mathbb{R}$.
Moreover, the cup product of orbifold de Rham classes is induced by
the wedge product. By the same token, any element of $H^{k}(\check{X};\mathbb{R})$
can be represented by a unique orbifold harmonic $k$ form.

\textbf{Orbi-bundles} can be defined in an analogous way to how we
defined orbifolds. Namely, we have the data $\pi:\check{E}\rightarrow\check{X}$
, where both $\check{E},\check{X}$ are orbifolds and $\pi$ is a
holomorphic surjective map. If $F$ is a complex manifold, $\pi$
has fibre $F$ if for every $p\in\check{X}$ there is a local smoothing
covering $U_{p}=V_{p}/G_{p}$ such that there is a $G_{p}$ action
on $V_{p}\times G_{p}$ with $\check{E}\mid_{U_{p}}\simeq(V_{p}\times F)/G_{p}$
over $U_{p}$.

In general the fibres of $\pi$ are orbifolds, namely, $\pi^{-1}(p)=F/G_{p}$
for some action of $G_{p}$ on $F$. In particular, notice that the
projection $\pi$ is not required to be locally trivial, as opposed
to the usual vector bundle situation. Regardless, it still makes sense
to talk about connections on vector orbi-bundles \cite[Section 2.2]{MR3376575}. 

For the particular case of an orbifold line bundle $\check{L}\rightarrow\check{X}$,
its tensor powers $\check{L}^{\otimes q}$ will continue to be orbifold
line bundles. In fact, one can take $q$ sufficiently large so that
$\check{L}^{\otimes q}$ becomes locally trivial, in which case one
can define the \textbf{orbifold Chern class }of $\check{L}$ as 
\[
c_{1}(\check{L})\equiv\frac{1}{q}c_{1}(\check{L}^{\otimes q})\in H^{2}(\check{X};\mathbb{Q})
\]
Notice that in general this will be a class in the second cohomology
of the orbifold with \emph{rational coefficients. }Alternatively,
one can also use Chern-Weil theory in order to define $c_{1}(\check{L})$
as 
\[
c_{1}(\check{L})=\left[\frac{i}{2\pi}F_{\check{\nabla}}\right]
\]
where $[\cdot]$ denotes the orbifold de Rham class defined by the
curvature of an orbifold connection $\check{\nabla}$ on $\check{L}$. 

For higher rank orbi-bundles $\check{E}$, the Chern classes $c_{i}(\check{E})$
can be defined in a similar way \cite[Section 2]{MR1388002}. Moreover,
there is a natural notion of an \textbf{orbifold Euler characteristic
}of $\check{X}$, given by the formula 
\[
e(\check{X})=e(X)-\sum_{i=1}^{n}\left(1-\frac{1}{a_{i}}\right)
\]
 In particular, this definition guarantees that the relation 
\[
e(\check{X})=(-1)^{m}c_{m}(\varOmega_{\check{X}}^{1})
\]
continues to hold, where $c_{m}(\varOmega_{\check{X}}^{1})$ is the
orbifold Chern number of the orbifold holomorphic cotangent bundle. 

When $\check{\mathscr{F}}$ is a locally free orbifold sheaf of rank
$r$ on $\check{X}$, the version of \textbf{Hirzebruch-Grothendieck-Riemann-Roch}
reads in this context \cite[Section 3]{MR1388002}: 
\begin{equation}
\chi(\check{X},\check{E})=\int_{\check{X}}\text{ch(\ensuremath{\check{E}}})\text{td}(\check{X})+\sum_{i=1}^{n}\frac{1}{\#G_{p_{i}}}\left(\sum_{g\in G_{p_{i}}-\{1_{m}\}}\frac{\text{tr}(\rho(g))}{\det(1_{m}-g)}\right)\label{eq:Riemann Roch}
\end{equation}
where on the left hand side we mean the alternating sum of the dimensions
of the cohomology groups $H^{i}(\check{X},\check{E})$.

 Here we are regarding $g\in G_{p}\simeq\mathbb{Z}/a_{i}\mathbb{Z}$
as an $m\times m$ matrix, and $\rho(g)$ denotes the action of $g$
on the orbifold bundle $\check{E}$. 

In order to compare this formula with the one which appears in the
paper by Furuta and Steer \cite[Theorem 1.5]{MR1185787}, we take
$\check{X}=\check{\varSigma}$ and assume $\check{E}=\check{L}$ is
a line bundle. In this case, 
\[
\begin{cases}
\text{ch}(\check{L})=\text{rk}(\check{L})+c_{1}(\check{L})=1+c_{1}(\check{L})\\
\text{td}(\check{L})=1+\frac{c_{1}(\check{L})}{2}
\end{cases}
\]
Therefore we have 
\begin{align*}
 & \int_{\check{\varSigma}}\text{ch(\ensuremath{\check{L}}})\text{td}(\check{\varSigma})\\
= & \int_{\check{\varSigma}}\left(1+c_{1}(\check{L})\right)\left(1+\frac{c_{1}(\check{\varSigma})}{2}\right)\\
= & \int_{\check{\varSigma}}\left(\frac{1}{2}\right)c_{1}(\check{\varSigma})+c_{1}(\check{L})\\
= & \frac{1}{2}\left(2-2g-\sum_{i=1}^{n}\left(1-\frac{1}{a_{i}}\right)\right)+\int_{\check{\varSigma}}c_{1}(\check{L})\\
= & (1-g)-\sum_{i=1}^{n}\left(\frac{a_{i}-1}{2a_{i}}\right)+\int_{\check{\varSigma}}c_{1}(\check{L})
\end{align*}
Now we want to analyze the term $\sum_{g\in G_{p_{i}}-\{1\}}\frac{\text{tr}(\rho(g))}{\det(1-g)}$.To
understand how to compute $\rho(g)$, it is useful to work with orbifold
connections in a slightly more concrete way \cite[section 1.(ii)]{MR1241873}. 

For the case of an orbifold line bundle $\check{L}$, near each marked
point $p$ we can choose polar coordinates $(r,\theta)$ on a disk
neighborhood $D_{p}$ of $p$ so that we have the connection form
\[
A_{\lambda}=i\lambda d\theta
\]
 where $\lambda$ is a constant known the \emph{holonomy parameter}.
The reason for this name is that holonomy of a connection $A$ on
$D_{p}\backslash\{p\}$ whose matrix connection coincides with $A_{\lambda}$
near $p$ on the positively-oriented small circles of constant $r$
will have a limiting holonomy approximately equal to 
\[
e^{-2\pi i\lambda}
\]
Strictly speaking, we want to study the previous situation \emph{up
to conjugacy }\cite{MR1152376}, but since $U(1)$ is abelian this
is not important, so it suffices to take $0\leq\lambda<1$ in order
to guarantee that $e^{-2\pi i\lambda}$ exhausts all possible elements
in $U(1)$. We are interested in connections that differ from $A_{\lambda}$
by a smooth one form on $\varSigma$, and for the orbifold interpretation
to hold, we need to take $\lambda\in\mathbb{Q}\cap[0,1)$. In fact,
we will write $\lambda$ as $\lambda=\frac{b}{a}$ for $0<b<a$. The
parameter $b$ can then be identified with what Furuta and Steer call
the \emph{isotropy data }of the line bundle \cite{MR1185787}. It
is now clear that we can write 
\[
\sum_{g\in G_{p_{i}}-\{1\}}\frac{\text{tr}(\rho(g))}{\det(1-g)}=\sum_{k=1}^{a_{i}-1}\frac{e^{-2\pi ikb_{i}/a_{i}}}{1-e^{2\pi ik/a_{i}}}
\]

Now we need to find a more concrete version of the previous sum. A
similar expression appears in \cite[p.7]{DhillonValluri[2020]}, and
the author would like to thank Dinesh Valluri for suggesting how to
prove the following lemma.
\begin{lem}
Let $\zeta$ denote an a-th root of unity, which for simplicity we
take as $\zeta=e^{2\pi i/a}$. Then for any $0<b<a$,
\[
\sum_{k=1}^{a-1}\frac{\zeta^{kb}}{1-\zeta^{k}}=\sum_{k=1}^{a-1}\frac{e^{2\pi ikb/a}}{1-e^{2\pi ik/a}}=b-\frac{a+1}{2}
\]
\end{lem}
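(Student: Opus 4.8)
The plan is to treat the left-hand side as a function of the integer parameter $b$ and establish the formula by a telescoping recursion, pinning down the additive constant with a single explicit base case. Write $S(b)=\sum_{k=1}^{a-1}\frac{\zeta^{kb}}{1-\zeta^{k}}$. First I would compute the forward difference: since $\frac{\zeta^{kb}(\zeta^{k}-1)}{1-\zeta^{k}}=-\zeta^{kb}$, one gets immediately
\[
S(b+1)-S(b)=-\sum_{k=1}^{a-1}\zeta^{kb}.
\]
The right-hand side is a truncated geometric sum: because $\sum_{k=0}^{a-1}\zeta^{kb}=0$ whenever $a\nmid b$, we have $\sum_{k=1}^{a-1}\zeta^{kb}=-1$ for every $b$ with $1\le b\le a-1$, and hence $S(b+1)-S(b)=1$ on this range. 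Thus $S$ is an arithmetic progression of common difference $1$ in $b$, which already matches the slope of the claimed answer $b-\frac{a+1}{2}$.

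Next I would fix the constant by evaluating one value. The cleanest choice is $b=0$: here $S(0)=\sum_{k=1}^{a-1}\frac{1}{1-\zeta^{k}}$, and pairing the index $k$ with $a-k$ gives $\frac{1}{1-\zeta^{k}}+\frac{1}{1-\zeta^{-k}}=1$ (using $\frac{1}{1-\zeta^{-k}}=-\frac{\zeta^{k}}{1-\zeta^{k}}$). Summing over the $\tfrac{a-1}{2}$ pairs when $a$ is odd, or over $\tfrac{a-2}{2}$ pairs plus the self-paired term $k=a/2$ contributing $\tfrac12$ when $a$ is even, yields $S(0)=\frac{a-1}{2}$ in both cases. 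Applying the difference formula once at $b=0$, where instead $\sum_{k=1}^{a-1}\zeta^{0}=a-1$, gives $S(1)=S(0)-(a-1)=\frac{1-a}{2}$. Solving the recursion on $1\le b\le a-1$ then produces $S(b)=S(1)+(b-1)=b-\frac{a+1}{2}$, as desired.

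I expect the only real subtlety, rather than a genuine obstacle, to be the bookkeeping around the value $b=0$: the recursion has slope $+1$ throughout $1\le b\le a-1$, but the single step out of $b=0$ has slope $-(a-1)$, because $\sum_{k=1}^{a-1}\zeta^{kb}$ jumps from $a-1$ (at $b\equiv 0$) to $-1$ (otherwise). This is exactly why the closed form $b-\frac{a+1}{2}$ is asserted only for $0<b<a$ and would be false at $b=0$, and keeping this discontinuity straight is what makes the constant come out correctly. The base-case pairing argument and the even-$a$ middle term are routine and can be dispatched quickly.
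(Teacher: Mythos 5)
Your proof is correct, and it is essentially the paper's argument reorganized: telescoping your difference relation $S(b+1)-S(b)=-\sum_{k=1}^{a-1}\zeta^{kb}$ from $0$ to $b-1$ is precisely the paper's Step 3 identity $\frac{\zeta^{kb}-1}{\zeta^{k}-1}=\sum_{j=0}^{b-1}\zeta^{kj}$ summed over $k$ with the order of summation swapped, and both arguments rest on the same two facts, namely $\sum_{k=1}^{a-1}\zeta^{kj}=-1$ for $a\nmid j$ and $\sum_{k=1}^{a-1}\frac{1}{1-\zeta^{k}}=\frac{a-1}{2}$. The only cosmetic difference is the base case, where you pair $k$ with $a-k$ termwise (splitting into even and odd $a$), while the paper re-indexes the whole sum $k\mapsto a-k$ and solves for it, obtaining the same value without a case split.
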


\begin{proof}
\textbf{Step $1$: }

Since $1=\zeta^{aj}=(\zeta^{j})^{a}$ we have that 
\[
0=(\zeta^{j}-1)(1+\zeta^{j}+\cdots+(\zeta^{j})^{a-1})
\]
 So for $0<j<a$, we have 
\begin{equation}
\sum_{k=1}^{a-1}\zeta^{kj}=-1\label{IDENTITY 1}
\end{equation}

\textbf{Step 2: 
\begin{equation}
\sum_{k=1}^{a-1}\frac{1}{\zeta^{k}-1}=-\left(\frac{a-1}{2}\right)\label{identity 2}
\end{equation}
}Observe that the left hand side of (\ref{identity 2}) equals 
\begin{align*}
 & \sum_{k=1}^{a-1}\frac{1}{\zeta^{k}-1}\\
= & \sum_{k=1}^{a-1}\frac{1-\zeta^{k}+\zeta^{k}}{\zeta^{k}-1}\\
= & \sum_{k=1}^{a-1}\left[-1+\left(\frac{\zeta^{k}}{\zeta^{k}-1}\right)\right]\\
= & -(a-1)+\sum_{k=1}^{a-1}\frac{\zeta^{k}}{\zeta^{k}-1}\\
= & -(a-1)+\sum_{k=1}^{a-1}\frac{1}{1-\zeta^{-k}}
\end{align*}
Now, notice that
\begin{align*}
 & \sum_{k=1}^{a-1}\frac{1}{1-\zeta^{-k}}\\
= & \sum_{k=1}^{a-1}\frac{1}{1-\zeta^{a-k}}\\
= & \frac{1}{1-\zeta^{a-1}}+\frac{1}{1-\zeta^{a-2}}+\cdots+\frac{1}{1-\zeta}\\
= & \sum_{k=1}^{a-1}\frac{1}{1-\zeta^{k}}
\end{align*}
 Therefore, the proof of (\ref{identity 2}) becomes 
\begin{align*}
 & \sum_{k=1}^{a-1}\frac{1}{\zeta^{k}-1}=-(a-1)+\sum_{k=1}^{a-1}\frac{1}{1-\zeta^{k}}\\
\\
\implies & \sum_{k=1}^{a-1}\frac{1}{\zeta^{k}-1}=-\frac{(a-1)}{2}
\end{align*}

\textbf{Step 3: 
\begin{equation}
\frac{\zeta^{kb}-1}{\zeta^{k}-1}=1+\zeta^{k}+\cdots+(\zeta^{k})^{b-1}\label{IDENTITY 3}
\end{equation}
}This is obtained by factoring the numerator $\zeta^{kb}-1=(\zeta^{k})^{b}-1$.

\textbf{Proof of Lemma: 
\begin{align*}
 & \sum_{k=1}^{a-1}\frac{\zeta^{kb}-1}{\zeta^{k}-1}\\
= & \sum_{k=1}^{a-1}\left(\sum_{j=0}^{b-1}\zeta^{kj}\right)\\
= & \sum_{j=0}^{b-1}\sum_{k=1}^{a-1}\zeta^{kj}\\
=^{\text{Step}(1)} & (a-1)+\sum_{j=1}^{b-1}\underbrace{\sum_{k=1}^{a-1}\zeta^{kj}}_{-1}\\
= & a-1-(b-1)\\
= & a-b
\end{align*}
}Therefore, 
\begin{equation}
\sum_{k=1}^{a-1}\frac{\zeta^{kb}}{\zeta^{k}-1}-\sum_{k=1}^{a-1}\frac{1}{\zeta^{k}-1}=a-b\label{identity 4}
\end{equation}
Using step (2) we obtain 
\[
\sum_{k=1}^{a-1}\frac{\zeta^{kb}}{\zeta^{k}-1}=-\frac{(a-1)}{2}+a-b=\frac{a+1}{2}-b
\]
Therefore 
\[
\sum_{k=1}^{a-1}\frac{\zeta^{kb}}{1-\zeta^{k}}=b-\frac{a+1}{2}
\]
\end{proof}

Therefore 
\begin{align*}
 & \sum_{i=1}^{n}\frac{1}{\#G_{p_{i}}}\left(\sum_{g\in G_{p_{i}}-\{1\}}\frac{\text{tr}(\rho(g))}{\det(1-g)}\right)\\
= & \sum_{i=1}^{n}\frac{1}{a_{i}}\left(\sum_{k=1}^{a_{i}-1}\frac{e^{-2\pi ikb_{i}/a_{i}}}{1-e^{2\pi ik/a_{i}}}\right)\\
= & \sum_{i=1}^{n}\frac{1}{a_{i}}\left(\sum_{k=1}^{a_{i}-1}\frac{e^{2\pi ik(a_{i}-b_{i})/a_{i}}}{1-e^{2\pi ik/a_{i}}}\right)\\
= & \sum_{i=1}^{n}\frac{1}{a_{i}}\left(a_{i}-b_{i}-\frac{a+1}{2}\right)\\
= & -\sum_{i=1}^{n}\frac{b_{i}}{a_{i}}+\sum_{i=1}^{n}\frac{a_{i}-1}{2a_{i}}
\end{align*}

So the orbifold version of Riemann Roch for line bundles is 
\begin{align*}
 & \dim_{\mathbb{C}}H^{0}(\check{\varSigma},\check{L})-\dim_{\mathbb{C}}H^{1}(\check{\varSigma},\check{L})\\
= & (1-g)-\sum_{i=1}^{n}\left(\frac{a_{i}-1}{2a_{i}}\right)+\int_{\check{\varSigma}}c_{1}(\check{L})-\sum_{i=1}^{n}\frac{b_{i}}{a_{i}}+\sum_{i=1}^{n}\frac{a_{i}-1}{2a_{i}}\\
= & (1-g)+\int_{\check{\varSigma}}c_{1}(\check{L})-\sum_{i=1}^{n}\frac{b_{i}}{a_{i}}\\
= & (1-g)+c_{1}(\check{L})-\sum_{i=1}^{n}\frac{b_{i}}{a_{i}}
\end{align*}
which agrees with \cite[Theorem 1.5]{MR1185787}, under the usual
abuse of notation that identifies $c_{1}(\check{L})$ with a number.
\begin{rem}
The quantity $c_{1}(\check{L})-\sum_{i=1}^{n}\frac{b_{i}}{a_{i}}$
is denoted the \textbf{background degree }in \cite[Definition 2.7]{MR1611061}.
As explained in \cite[p.44]{MR1185787}, if one thinks of $H^{*}(\check{\varSigma},\check{L})$
as $H^{*}(\check{\varSigma},\mathcal{O}(\check{L}))$, where $\mathcal{O}(\check{L})$
is the sheaf of holomorphic sections, then $\mathcal{O}(\check{L})$
is a locally free sheaf with degree $c_{1}(\check{L})-\sum_{i=1}^{n}\frac{b_{i}}{a_{i}}$.
We will write 
\begin{equation}
\deg_{B}(\check{L})=c_{1}(\check{L})-\sum_{i=1}^{n}\frac{b_{i}}{a_{i}}\label{eq:background degree}
\end{equation}
 for this formula of the background degree.

One can also relate orbifold line bundles with divisors \cite[Section 1B]{MR1375314}.
Namely, a divisor $\check{D}$ on $\check{\varSigma}$ is a finite
linear combination 
\[
\check{D}=\sum_{p\in\varSigma}\frac{n_{p}}{a_{p}}\cdot p
\]
where $n_{p}\in\mathbb{Z}$, and $a_{p}=1$ if $p$ is not one of
the marked points on $\check{\varSigma}$. The degree of a divisor
is then 
\[
\deg\check{D}=\sum_{p}\frac{n_{p}}{a_{p}}
\]
Proposition 1.3 in \cite{MR1375314} then describes a bijection between
equivalence classes of divisors and holomorphic prbifold line bundles.
If $\check{L}_{\check{D}}$ is the line bundle corresponding to $\check{D}$,
then $c_{1}(\check{L}_{\check{D}})=\deg\check{D}$.
\end{rem}

For our purposes it is also necessary to have a formula of Riemann-Roch
for $U(2)$ orbifold bundles $\check{E}$. One option would be to
apply formula (\ref{eq:Riemann Roch}) one more time, but we find
it more elegant to bootstrap from Riemann-Roch for orbifold line bundles
$\check{L}$. This requires a small detour into the classification
of $U(2)$ orbifold bundles, which can be found in \cite[Proposition 1.8]{MR1185787},
as well as \cite[Section 1A]{MR1375314}. 

The isotropy data for a rank two $U(2)$ orbifold bundle $\check{E}$
will be specified by the pairs of integers 
\[
((b_{1}^{-},b_{1}^{+}),\cdots,(b_{n}^{-},b_{n}^{+}))
\]
where $0\leq b_{i}^{-},b_{i}^{+}<a_{i}$, for $i=1,\cdots,n$. In
particular, we can find orbifold line bundles $\check{L}^{-},\check{L}^{+}$
with isotropy data $(b_{i}^{-})$ and $(b_{i}^{+})$ respectively.
Then 
\[
\check{E}=\check{L}^{-}\oplus\check{L}^{+}
\]
is an $U(2)$ orbifold bundle with isotropy data $((b_{1}^{-},b_{1}^{+}),\cdots,(b_{n}^{-},b_{n}^{+}))$.
Notice that the notation seems to suggest that $b_{i}^{-}\leq b_{i}^{+}$,
in fact, this is assumed in \cite[p.599]{MR1375314}. This convention
is related to which group of automorphisms we are allowed to use. 

Up to conjugacy, any matrix in $U(2)$ takes the form 
\[
\left(\begin{array}{cc}
e^{-2\pi i\lambda_{1}} & 0\\
0 & e^{-2\pi i\lambda_{2}}
\end{array}\right)
\]
 where we can assume  $0\leq\lambda_{1}\leq\lambda_{2}<1$. However,
this classification requires conjugating by arbitrary matrices in
$U(2)$. In our case we are interested in using the \emph{determinant
one gauge group}, which means that the bundle $\check{L}^{-}\oplus\check{L}^{+}$
needs to be distinguished from the bundle $\check{L}^{+}\oplus\check{L}^{-}$,
since an automorphism which permutes both factors would locally take
the form $\left(\begin{array}{cc}
0 & 1\\
1 & 0
\end{array}\right)$, and this matrix, while an element of $U(2)$, is not an element
of $SU(2)$, because it has determinant $-1$. 

Therefore, with respect to the determinant one gauge group, we need
to consider the isotropy data as ordered pairs, without any assumption
regarding whether $b_{i}^{-}\leq b_{i}^{+}$ or not. This point is
briefly made on \cite[p. 602]{MR1338483} . However, ultimately we
are interested in studying the moduli spaces after taking the quotient
by the residual $S^{1}$ action mentioned in the introduction. Since
$S^{1}\times_{\{\pm I_{2}\}}SU(2)$ can be identified with $U(2)$
via the map $(e^{i\theta},A)\rightarrow e^{i\theta}A$, this  point
doesn't end up making a big difference, so it is fine to assume $b_{i}^{-}\leq b_{i}^{+}$
while reading this paper.

\textbf{}

In any case, the important point is that the orbifold version of Hirzebruch-Grothendieck-Riemann-Roch
states that the Euler characteristic $\chi(\check{\varSigma},\check{E})$
is topological, so it suffices to find the formula in the case of
$\chi(\check{\varSigma},\check{L}^{-}\oplus\check{L}^{+})$. By this
we mean the following: the spaces $H^{0}(\check{\varSigma},\check{E})$
and $H^{1}(\check{\varSigma},\check{E})$ can be identified with the
dimensions of the kernel and cokernel of an orbifold Dolbeault operator
$\bar{\partial}_{\check{C}}:\varOmega^{0}(\check{\varSigma},\check{E})\rightarrow\varOmega^{0,1}(\check{\varSigma},\check{E})$,
where $\check{C}$ is an arbitrary $U(2)$ orbifold connection on
$\check{E}$. 

In particular, we can assume that $\check{C}$ is a $U(2)$ connection
compatible with the topological splitting $\check{E}=\check{L}^{-}\oplus\check{L}^{+}$,
which we write as $\check{C}=\check{C}^{-}\oplus\check{C}^{+}$. In
this situation it is clear that $\chi(\check{\varSigma},\check{E})=\chi(\check{\varSigma},\check{L}^{-})+\chi(\check{\varSigma},\check{L}^{+})$
and thus we find that 
\begin{align*}
 & \dim_{\mathbb{C}}H^{0}(\check{\varSigma},\check{E})-\dim_{\mathbb{C}}H^{1}(\check{\varSigma},\check{E})\\
= & \chi(\check{\varSigma},\check{L}^{-})+\chi(\check{\varSigma},\check{L}^{+})\\
= & (1-g)+c_{1}(\check{L}^{-})-\sum_{i=1}^{n}\frac{b_{i}^{-}}{a_{i}}+(1-g)+c_{1}(\check{L}^{+})-\sum_{i=1}^{n}\frac{b_{i}^{+}}{a_{i}}\\
= & 2(1-g)+c_{1}(\det\check{E})-\sum_{i=1}^{n}\frac{b_{i}^{-}+b_{i}^{+}}{a_{i}}
\end{align*}
In the special case of a $SU(2)$ orbifold bundle $\check{E}$, we
can assume that $0\leq b_{i}^{-}\leq[a_{i}/2]$, and also take $b_{i}^{+}=a_{i}-b_{i}^{-}$.
In particular, the previous formula reduces to 
\[
2(1-g)-n
\]
Summarizing our findings, we have found 

\ 

\ 

\ 

\ 

\begin{thm}
\textbf{Riemann-Roch for orbifold Riemann surfaces:}

a) If $\check{L}$ is an orbifold line bundle with isotropy data $(b_{i})_{i=1,\cdots,n}$,
then 
\begin{equation}
\dim_{\mathbb{C}}H^{0}(\check{\varSigma},\check{L})-\dim_{\mathbb{C}}H^{1}(\check{\varSigma},\check{L})=(1-g)+c_{1}(\check{L})-\sum_{i=1}^{n}\frac{b_{i}}{a_{i}}\label{Riemann Roch Line Bundles}
\end{equation}

b) If $\check{E}$ is an orbifold $SU(2)$ bundle with isotropy data
$((b_{i}^{-},b_{i}^{+}=a_{i}-b_{i}^{-}))_{i=1}^{n}$, then 
\[
\dim_{\mathbb{C}}H^{0}(\check{\varSigma},\check{E})-\dim_{\mathbb{C}}H^{1}(\check{\varSigma},\check{E})=2(1-g)-n
\]

c) If $\check{E}$ is an orbifold $U(2)$ bundle with isotropy data
$((b_{i}^{-},b_{i}^{+}))_{i=1}^{n}$, and determinant line bundle
$\det\check{E}$, then 
\begin{equation}
\dim_{\mathbb{C}}H^{0}(\check{\varSigma},\check{E})-\dim_{\mathbb{C}}H^{1}(\check{\varSigma},\check{E})=2(1-g)+c_{1}(\det\check{E})-\sum_{i=1}^{n}\frac{b_{i}^{-}+b_{i}^{+}}{a_{i}}\label{Riemann Roch U(2)}
\end{equation}
\end{thm}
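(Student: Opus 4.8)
The plan is to assemble the three parts from the orbifold Hirzebruch-Grothendieck-Riemann-Roch formula (\ref{eq:Riemann Roch}), treating part (a) as the computational core and deducing (b) and (c) from it. The structural fact I would lean on throughout is that the Euler characteristic $\chi(\check{\varSigma},\check{E})=\dim_{\mathbb{C}}H^{0}(\check{\varSigma},\check{E})-\dim_{\mathbb{C}}H^{1}(\check{\varSigma},\check{E})$ is the index of the orbifold Dolbeault operator $\bar{\partial}_{\check{C}}$, which is invariant under continuous deformation of $\check{C}$. This is what makes the quantity topological and lets me compute it with whatever connection is most convenient.

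For part (a) I would specialize (\ref{eq:Riemann Roch}) to a rank-one bundle $\check{L}$. First I would evaluate the smooth contribution $\int_{\check{\varSigma}}\text{ch}(\check{L})\,\text{td}(\check{\varSigma})$, which expands to $(1-g)-\sum_{i=1}^{n}\frac{a_{i}-1}{2a_{i}}+\int_{\check{\varSigma}}c_{1}(\check{L})$. The main work is then the orbifold correction $\sum_{i=1}^{n}\frac{1}{a_{i}}\sum_{k=1}^{a_{i}-1}\frac{e^{-2\pi ikb_{i}/a_{i}}}{1-e^{2\pi ik/a_{i}}}$, which I would evaluate by rewriting $e^{-2\pi ikb_{i}/a_{i}}=e^{2\pi ik(a_{i}-b_{i})/a_{i}}$ and applying the preceding Lemma with $b\mapsto a_{i}-b_{i}$. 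The Lemma returns $a_{i}-b_{i}-\frac{a_{i}+1}{2}$ for each inner sum; dividing by $a_{i}$ and summing produces $\sum_{i}\frac{a_{i}-1}{2a_{i}}-\sum_{i}\frac{b_{i}}{a_{i}}$, and the $\sum_{i}\frac{a_{i}-1}{2a_{i}}$ pieces cancel against those in the smooth contribution, leaving exactly $(1-g)+c_{1}(\check{L})-\sum_{i}\frac{b_{i}}{a_{i}}$.

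For part (c) I would bootstrap rather than re-run (\ref{eq:Riemann Roch}) in rank two. Given a $U(2)$ orbifold bundle $\check{E}$ with isotropy data $((b_{i}^{-},b_{i}^{+}))$, the classification of orbifold $U(2)$ bundles lets me realize this isotropy by choosing orbifold line bundles $\check{L}^{-},\check{L}^{+}$ with isotropy $(b_{i}^{-})$ and $(b_{i}^{+})$ and forming $\check{E}=\check{L}^{-}\oplus\check{L}^{+}$. Since $\chi$ is the index of $\bar{\partial}_{\check{C}}$ and independent of $\check{C}$, I may compute it with a connection $\check{C}=\check{C}^{-}\oplus\check{C}^{+}$ compatible with the splitting, so that $\bar{\partial}_{\check{C}}$ becomes block diagonal and $\chi(\check{\varSigma},\check{E})=\chi(\check{\varSigma},\check{L}^{-})+\chi(\check{\varSigma},\check{L}^{+})$. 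Adding the two instances of part (a) and using $c_{1}(\det\check{E})=c_{1}(\check{L}^{-})+c_{1}(\check{L}^{+})$ yields the stated formula. Part (b) is then the specialization $b_{i}^{+}=a_{i}-b_{i}^{-}$, where $c_{1}(\det\check{E})=0$ and $\sum_{i}\frac{b_{i}^{-}+b_{i}^{+}}{a_{i}}=\sum_{i}\frac{a_{i}}{a_{i}}=n$, giving $2(1-g)-n$.

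The main obstacle is the evaluation of the root-of-unity correction sum in part (a), which is precisely the content isolated in the preceding Lemma; once that identity is available the remainder is bookkeeping. A secondary point needing care is the justification that $\chi$ depends only on the isotropy data and $c_{1}(\det\check{E})$ rather than on the chosen splitting, which rests on deformation-invariance of the index together with the guarantee, from the classification of orbifold $U(2)$ bundles, that some splitting realizes any prescribed isotropy data.
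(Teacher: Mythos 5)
Your proposal is correct and follows essentially the same route as the paper: part (a) via the orbifold HGRR formula with the root-of-unity lemma applied to $b\mapsto a_{i}-b_{i}$, part (c) by bootstrapping from (a) through a split bundle $\check{L}^{-}\oplus\check{L}^{+}$ with a compatible connection and topological invariance of the Dolbeault index, and part (b) as the specialization $b_{i}^{+}=a_{i}-b_{i}^{-}$. The cancellation of the $\sum_{i}\frac{a_{i}-1}{2a_{i}}$ terms and the justification that $\chi$ depends only on isotropy data and $c_{1}(\det\check{E})$ are exactly the points the paper makes.
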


Riemann-Roch is specially powerful when combined with Serre duality.
Recall that \textbf{Serre duality }states that 
\[
H^{i}(\check{\varSigma},\check{E})\simeq H^{1-i}(\check{\varSigma},K_{\check{\varSigma}}\otimes\check{E}^{*})
\]
Here $K_{\check{\varSigma}}$ is the canonical bundle, which as an
orbifold line bundle has Seifert invariants 
\begin{equation}
(2g-2,a_{1}-1,\cdots,a_{n}-1)\label{eq:Seifert Canonical}
\end{equation}
in the sense that $\deg_{B}K_{\check{\varSigma}}=2g-2$ and 
\[
c_{1}(K_{\check{\varSigma}})=2g-2+\sum_{i=1}^{n}\frac{a_{i}-1}{a_{i}}=2g-2+n-\sum_{i=1}^{n}\frac{1}{a_{i}}
\]
Thus, suppose we want to check which conditions on $\check{L}$ guarantee
that $H^{1}(\check{\varSigma},\check{L})$ vanishes. First of all,
by Serre duality, 
\[
\dim H^{1}(\check{\varSigma},\check{L})=\dim H^{0}(\check{\varSigma},K_{\check{\varSigma}}\otimes\check{L}^{*})
\]
If $\check{L}$ has isotropy $b_{i}$ at the point $p_{i}$ we define
\[
\epsilon_{i}=\begin{cases}
1 & 0<b_{i}\\
0 & b_{i}=0
\end{cases}
\]
then the isotropy of $\check{L}^{*}$ at the point $p_{i}$ is $\epsilon_{i}(a_{i}-b_{i})$,
and the isotropy of $K_{\check{\varSigma}}\otimes\check{L}^{*}$ is
\[
\begin{cases}
a_{i}-1 & \text{if }b_{i}=0\\
a_{i}-1-b_{i} & \text{if }a_{i}-1>b_{i}\\
0 & \text{if }b_{i}=a_{i}-1
\end{cases}
\]
Notice that the first case ends up being contained in the second one.

Clearly if $\deg_{B}(K_{\check{\varSigma}}\otimes\check{L}^{*})<0$
, then $H^{1}(\check{\varSigma},\check{L})$ vanishes. Looking at
the formula for the background degree, this requires 
\[
c_{1}(K_{\check{\varSigma}}\otimes\check{L}^{*})-\sum_{0\leq b_{i}<a_{i}-1}\frac{a_{i}-1-b_{i}}{a_{i}}<0
\]
This is the same as $2g-2+n-\sum_{i=1}^{n}\frac{1}{a_{i}}-\sum_{0\leq b_{i}<a_{i}-1}\frac{a_{i}-1-b_{i}}{a_{i}}<c_{1}(\check{L})$,
or more succinctly 
\[
2g-2+\sum_{b_{i}=a_{1}-1}\frac{a_{i}-1}{a_{i}}+\sum_{0\leq b_{i}<a_{i}-1}\frac{b_{i}}{a_{i}}<c_{1}(\check{L})
\]
But this ends up being equal to 
\[
2g-2<\deg_{B}(\check{L})
\]
Therefore, we conclude that 
\begin{lem}
Suppose that $\check{L}$ is an orbifold line bundle such that $2g-2<\deg_{B}(\check{L})$.
Then $H^{1}(\check{\varSigma},\check{L})$ vanishes.
\end{lem}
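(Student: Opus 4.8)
The plan is to trade the vanishing of $H^{1}$ for the vanishing of an $H^{0}$ via Serre duality, and then observe that the hypothesis forces the dual object to have negative background degree, so that it can carry no nonzero holomorphic sections. Concretely, Serre duality gives
\[
\dim_{\mathbb{C}}H^{1}(\check{\varSigma},\check{L})=\dim_{\mathbb{C}}H^{0}(\check{\varSigma},K_{\check{\varSigma}}\otimes\check{L}^{*}),
\]
so it suffices to show that $K_{\check{\varSigma}}\otimes\check{L}^{*}$ has no nonzero global holomorphic sections under the assumption $2g-2<\deg_{B}(\check{L})$.

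First I would record the isotropy bookkeeping. Writing $b_{i}$ for the isotropy of $\check{L}$ at $p_{i}$, dualization sends $b_{i}$ to $a_{i}-b_{i}$ when $b_{i}>0$ and to $0$ when $b_{i}=0$; tensoring with $K_{\check{\varSigma}}$ (whose isotropy at $p_{i}$ is $a_{i}-1$) and reducing modulo $a_{i}$ then yields the isotropy of $K_{\check{\varSigma}}\otimes\check{L}^{*}$, namely $a_{i}-1-b_{i}$ in the range $0\leq b_{i}<a_{i}-1$ and $0$ in the boundary case $b_{i}=a_{i}-1$. The only care needed here is to keep the reduction modulo $a_{i}$ consistent and to merge the $b_{i}=0$ case into the generic one, exactly as indicated in the computation preceding the statement.

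Next I would compute the background degree of $K_{\check{\varSigma}}\otimes\check{L}^{*}$ from the formula for $\deg_{B}$, using $c_{1}(K_{\check{\varSigma}})=2g-2+\sum_{i=1}^{n}\frac{a_{i}-1}{a_{i}}$ together with $c_{1}(K_{\check{\varSigma}}\otimes\check{L}^{*})=c_{1}(K_{\check{\varSigma}})-c_{1}(\check{L})$. After substituting the isotropy data found above and simplifying, the inequality $\deg_{B}(K_{\check{\varSigma}}\otimes\check{L}^{*})<0$ collapses exactly to $2g-2<\deg_{B}(\check{L})$, which is the hypothesis. This algebraic reduction is the computational heart of the argument, and it is precisely the chain of equivalences displayed just before the lemma.

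Finally I would invoke the standard fact that a locally free rank-one orbifold sheaf of negative background degree has vanishing $H^{0}$: identifying $H^{0}(\check{\varSigma},K_{\check{\varSigma}}\otimes\check{L}^{*})$ with the holomorphic sections of the associated locally free sheaf, any nonzero section would cut out an effective orbifold divisor whose degree equals the background degree and is therefore non-negative, contradicting negativity. Hence $H^{0}(\check{\varSigma},K_{\check{\varSigma}}\otimes\check{L}^{*})=0$, and with it $H^{1}(\check{\varSigma},\check{L})=0$. The main obstacle is not conceptual but a matter of bookkeeping: one must track the isotropy through dualization and the tensor product with $K_{\check{\varSigma}}$ with care, since the background degree, and hence the whole argument, is sensitive to the reduction of the isotropy integers modulo $a_{i}$ and to the boundary cases $b_{i}=0$ and $b_{i}=a_{i}-1$.
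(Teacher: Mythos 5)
Your proposal is correct and follows essentially the same route as the paper's own argument: Serre duality converts $H^{1}(\check{\varSigma},\check{L})$ into $H^{0}(\check{\varSigma},K_{\check{\varSigma}}\otimes\check{L}^{*})$, the same isotropy bookkeeping (including the merging of the $b_{i}=0$ case into the generic one and the boundary case $b_{i}=a_{i}-1$) identifies the isotropy of $K_{\check{\varSigma}}\otimes\check{L}^{*}$, and the hypothesis $2g-2<\deg_{B}(\check{L})$ is shown to be exactly equivalent to $\deg_{B}(K_{\check{\varSigma}}\otimes\check{L}^{*})<0$, which kills all holomorphic sections. The only difference is that you spell out the final vanishing step (a locally free rank-one sheaf of negative degree has no nonzero sections), which the paper simply asserts as clear.
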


\begin{rem}
Observe that $c_{1}(K_{\check{\varSigma}}\otimes\check{L}^{*})=c_{1}(K_{\check{\varSigma}})-c_{1}(\check{L})$,
so by \cite[Corollary 1.4]{MR1375314} $H^{1}(\check{\varSigma},\check{L})$
will also vanish whenever $c_{1}(K_{\check{\varSigma}})<c_{1}(\check{L})$. 
\end{rem}

It is also important to mention the conditions an orbifold line bundle
$\check{L}$ must satisfy in order to appear in a reduction of an
orbifold $U(2)$ bundle $\check{E}$ \cite{MR1375314,MR1185787}.
First of all, we must have the topological splitting 
\[
\check{E}=\check{L}\oplus\left(\check{L}^{*}\otimes\det\check{E}\right)
\]

The isotropy data of $\check{L}$ is determined by that of $\check{E}$
in the sense that $b_{i}\in\{b_{i}^{-},b_{i}^{+}\}$ for all $i$.
\textbf{} Define the vector $\epsilon=(\epsilon_{1},\cdots,\epsilon_{n})$
by the conditions
\[
\epsilon_{i}=\begin{cases}
0 & b_{i}^{-}=b_{i}^{+}\\
-1 & b_{i}=b_{i}^{-}\\
+1 & b_{i}=b_{i}^{+}
\end{cases}
\]
and the integers
\[
\begin{cases}
n_{0}=\#\{i\mid\epsilon_{i}=0\}\\
n_{\pm}=\#\{i\mid\epsilon_{i}=\pm1\}
\end{cases}
\]
Then $c_{1}(\check{L})$ satisfies $c_{1}(\check{L})-\sum_{i=1}^{n}\frac{\epsilon_{i}(b_{i}^{+}-b_{i}^{-})+(b_{i}^{+}+b_{i}^{-})}{2a_{i}}\in\mathbb{Z}$.

The topological isomorphism classes of orbifold line bundles form
a group under tensor product denoted $\text{Pic}_{V}^{t}(\check{\varSigma})$.
According to \cite[Corollary 1.7]{MR1185787}, when $a_{1},\cdots,a_{n}$
are mutually coprime, there is a unique orbifold line bundle $\check{L}_{0}$
such that $c_{1}(\check{L}_{0})=\frac{1}{\prod_{i=1}^{n}a_{i}}$ ,
and any other orbifold line bundle $\check{L}$ has the form $\check{L}_{0}^{k}$,
for some integer $k$. 

Moreover, as explained in \cite[p. 47]{MR1185787}, for $SO(3)$ orbifold
bundles there is a commutative diagram 
\[
\begin{array}{ccc}
\{U(2)-\text{orbifold bundles}\} & \rightarrow^{\det} & \text{Pic}_{V}^{t}(\check{\varSigma})\\
\downarrow_{Ad} &  & \downarrow\\
\{SO(3)-\text{orbifold bundles}\} & \rightarrow^{w} & \text{Pic}_{V}^{t}/(\text{Pic}_{V}^{t})^{2}
\end{array}
\]
where the map $w$ is essentially the second Stiefel-Whitney class. 

For the examples we will analyze at the end of this paper, it is important
to know when an orbifold $U(2)$ bundle $\check{E}$ admits an irreducible
projectively flat connection. 

The proofs can be found in \cite{MR1185787}, but we found the summary
from \cite{MR1375314} especially useful. In particular, we need the
following facts:
\begin{fact}
\label{FACTS}(See \cite{MR1185787}, \cite[Lemmas 2.2 and 4.2]{MR1375314})

a) Suppose $\check{E}$ is an orbifold $U(2)$ bundle with isotropy
data $(b_{1}^{\pm},b_{2}^{\pm},\cdots,b_{n}^{\pm})$ and let $n_{0}=\#\{i\mid b_{i}^{-}=b_{i}^{+}\}$.
If $g=0$ and $n-n_{0}\leq2$, then $\check{E}$ admits no irreducible
projectively flat connections.

b) The space of irreducible projectively flat connections is connected
and simply connected regardless of the genus of $\varSigma$.

c) The space of irreducible projectively flat connections is empty
if and only if there exists a vector $(\epsilon_{i})$, with $\epsilon_{i}=\pm1$,
such that $n_{+}+\deg_{B}(\det\check{E})\equiv1\mod2$ and $n_{+}-\sum_{i=1}^{n}\frac{\epsilon_{i}(b_{i}^{+}-b_{i}^{-})}{a_{i}}<1-g$.
Here $n_{\pm}=\#\{i\mid\epsilon_{i}=\pm1\}$. 
\end{fact}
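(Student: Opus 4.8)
The plan is to derive all three statements from the orbifold Narasimhan--Seshadri correspondence of Furuta and Steer \cite{MR1185787}, which identifies the irreducible projectively flat connections on $\text{ad}\check{E}$ with the stable holomorphic structures on $\check{E}$ of the prescribed topological type, equivalently with the irreducible representations of the orbifold fundamental group $\pi_{1}^{\text{orb}}(\check{\varSigma})$ into $PU(2)$ whose holonomy around each $p_{i}$ is prescribed by the isotropy data. Under this dictionary the holonomy around a cone point $p_{i}$ is conjugate to $\text{diag}(e^{-2\pi ib_{i}^{-}/a_{i}},e^{-2\pi ib_{i}^{+}/a_{i}})$, which is scalar --- hence trivial in $PU(2)$ --- precisely when $b_{i}^{-}=b_{i}^{+}$.

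For part (a) I would argue on the representation side. When $g=0$ the group $\pi_{1}^{\text{orb}}(\check{\varSigma})$ is generated by loops $x_{1},\dots,x_{n}$ around the cone points subject to the single relation $\prod_{i}x_{i}=1$. By the observation above, each generator $x_{i}$ with $b_{i}^{-}=b_{i}^{+}$ maps to the centre and is invisible in $PU(2)$, so the induced $PU(2)$-representation factors through the group generated by the $n-n_{0}$ remaining loops, still subject to one product relation. If $n-n_{0}\le2$ this leaves at most one free $PU(2)$-generator (when $n-n_{0}\le 1$), or a pair $x_{i},x_{j}$ for which the relation forces $x_{j}$ to equal $x_{i}^{-1}$ times a central element (when $n-n_{0}=2$); in either case the image is abelian, a common eigenline is preserved, and the representation is reducible. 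Hence no irreducible projectively flat connection exists.

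For part (b) I would invoke the orbifold Yang--Mills stratification, the orbifold analogue due to Furuta--Steer of the analysis of Atiyah and Bott \cite{MR702806}. The affine space of all orbifold connections is contractible, and the determinant-one gauge group acts with the semistable (projectively flat) locus as the open minimum of the Yang--Mills functional; the unstable strata all have real codimension at least two, with codimension increasing with the degree of instability. Equivariant perfection then lets one transport connectivity and simple-connectivity of the homotopy quotient of the contractible ambient space down to the minimal stratum, and since the gauge group modulo its centre acts freely on irreducible connections, the moduli space of irreducible projectively flat connections inherits these properties, independently of $g$. For part (c) I would characterise emptiness holomorphically: the moduli space is empty exactly when no holomorphic structure of the given topological type is stable, i.e. when every such structure admits a destabilising sub-line-bundle $\check{L}\hookrightarrow\check{E}$ with $\deg_{B}\check{L}\ge\frac{1}{2}\deg_{B}(\det\check{E})$. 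Each candidate $\check{L}$ selects $b_{i}^{+}$ or $b_{i}^{-}$ at every point, encoded by a sign vector $\epsilon=(\epsilon_{i})$, and the integrality constraint on $c_{1}(\check{L})$ recorded earlier confines $\deg_{B}\check{L}$ to a fixed coset of $\mathbb{Z}$; the hypothesis $n_{+}+\deg_{B}(\det\check{E})\equiv1\pmod{2}$ is precisely the condition that this coset meets the critical slope. Realising $\check{E}$ as a generic extension and running a Riemann--Roch count via formula (\ref{Riemann Roch U(2)}) to bound the degree of a maximal sub-line-bundle of isotropy type $\epsilon$, the destabilising inequality rearranges into $n_{+}-\sum_{i}\epsilon_{i}(b_{i}^{+}-b_{i}^{-})/a_{i}<1-g$; conversely, when no $\epsilon$ satisfies both conditions a sufficiently generic extension is stable, so the moduli space is nonempty.

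The hard part will be part (c): pinning down the exact inequality requires the extension-theoretic fact that the generic bundle of a fixed topological type minimises the degree of its maximal sub-line-bundle, together with the careful bookkeeping that converts the half-integral background degrees and the integrality coset into the stated parity condition. The simple-connectivity half of part (b) is the other delicate point, since it genuinely uses the equivariant perfection --- not merely the connectivity --- of the Yang--Mills functional in the orbifold setting.
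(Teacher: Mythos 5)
The paper offers no proof of this Fact at all: it is quoted from Furuta--Steer \cite{MR1185787} and Nasatyr--Steer \cite[Lemmas 2.2 and 4.2]{MR1375314} (``The proofs can be found in...''), so your attempt stands or falls on its own rather than against an in-paper argument. Part (a) of your proposal is correct and is the standard holonomy argument, with one precision worth making: what your case analysis actually produces in the cases $n-n_{0}\leq2$ is a \emph{cyclic} image in $PU(2)$, and it is cyclicity rather than abelianness that yields a common eigenline. The Klein four-group in $PU(2)$ is abelian yet acts irreducibly (its preimage in $U(2)$ is the quaternion group), and it is exactly the configuration that becomes available once $n-n_{0}=3$; so the distinction between ``cyclic'' and ``abelian'' is precisely what makes the bound $n-n_{0}\leq2$ sharp.

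Parts (b) and (c) have genuine gaps. In (b), the claim that ``the unstable strata all have real codimension at least two'' is not merely unjustified but inconsistent with part (c): whenever the criterion in (c) holds, the semistable locus is empty, so the open dense stratum of the orbifold Atiyah--Bott stratification is an \emph{unstable} one of codimension zero; any correct codimension estimate must involve the isotropy data, and genus zero is exactly where this is delicate. Moreover, even where a codimension bound holds, real codimension two preserves only connectivity, never simple connectivity (remove a point from $\mathbb{C}$); one needs real codimension at least three, and ``equivariant perfection'' is a statement about equivariant cohomology, which can control $H^{1}$ but not $\pi_{1}$. Finally, descending from the stratum to the moduli space is not automatic from freeness of the action: the homotopy exact sequence of the quotient requires $\pi_{0}$ of the determinant-one complex gauge group to vanish, which you never address. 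In (c), neither implication is actually proved: for ``criterion $\Rightarrow$ empty'' you must show that \emph{every} holomorphic structure on $\check{E}$ admits a destabilising sub-line bundle, which no genericity argument can give; for the converse, the Riemann--Roch bound on the maximal sub-line bundle of a generic extension, and the bookkeeping that converts the integrality coset of $\deg_{B}\check{L}$ into the parity condition $n_{+}+\deg_{B}(\det\check{E})\equiv1\bmod2$, are asserted rather than carried out --- and those two computations are precisely the content of the cited lemmas.
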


\textbf{}

\section{\label{sec:the--Vortex SO(3)}the $SO(3)$ Vortex Moduli Space}

Let $\check{\varSigma}=(\varSigma,p_{1},\cdots,p_{n})$ denote an
orbifold Riemann surface and choose an orbifold $U(2)$ bundle $\check{E}\rightarrow\check{\varSigma}$.
Our policy will be to not make any particular assumptions on $\check{E}$
at this point. For example, in the smooth case it is typical to assume
$\deg E>4g-4$ \cite[Assumption 2]{MR1124279}, but we will explain
why this choice of degree is made in this section and the next.

Even if the bundle is an orbifold $SU(2)$ bundle, we will regard
it as an orbifold $U(2)$ bundle, in the sense that we will make a
choice of reference connection in $\det\check{E}$, though in the
$SU(2)$ case we can just take it to be the trivial connection. 

 For notational ease, we will use $\check{}$ to remind the reader
that our bundles and other geometric data should be interpreted in
the orbifold sense, but for an orbi-section of $\check{E}$, we will
write $\varUpsilon$ instead of $\check{\varUpsilon}$. Similar remarks
apply to our notation for orbifold connections. 

It is a good moment to justify our unusual choice of notation at this
point. Since we are eventually interested in using these moduli spaces
for 3- and 4-manifolds, we want a notation that does not interfere
with the one used in these contexts. In particular, we are trying
to follow the notation of Kronheimer and Mrowka \cite{MR2388043},
so typically for us
\begin{itemize}
\item $X$ denotes a 4-manifold, $A$ a connection of some bundle defined
on $X$, and $\varPhi$ a section of some spinor-type bundle.
\item $Y$ denotes a 3-manifold, $B$ a connection of some bundle defined
on $Y$, and $\varPsi$ a section of some spinor-type bundle.
\item $\varSigma$ denotes a 2-manifold, $C$ a connection of some bundle
defined on $\varSigma$, and $\varUpsilon$ a section of some spinor-type
bundle.
\end{itemize}
Further remarks regarding our notation will be explained as it becomes
necessary.
\begin{defn}
Let $\mathcal{A}^{\det}(\check{E})$ denote the space of $U(2)$ connections
on $\check{E}$ which induce a fixed reference $U(1)$ connection
$C^{\det}$ on $\det\check{E}$. The\textbf{ $SO(3)$ vortex equations}
are equations for a pair $(C,\varUpsilon)\in\mathcal{A}^{\det}(\check{E})\times\varGamma(\check{E})$
which satisfy 
\begin{equation}
SO(3)\text{ vortex equations }\begin{cases}
*F_{C}^{0}-i\left[\varUpsilon\varUpsilon^{*}-\frac{1}{2}|\varUpsilon|^{2}I_{E}\right]=0\\
\bar{\partial}_{C}\varUpsilon=0
\end{cases}\label{eq:SO(3) vortex equations}
\end{equation}
where $F_{C}^{0}$ denotes the traceless part of the curvature of
$C$.
\end{defn}

\begin{rem}
Observe that the first equation makes sense since $\varUpsilon\varUpsilon^{*}-\frac{1}{2}|\varUpsilon|^{2}I_{E}$
is a traceless hermitian endomorphism of $E$, so $i\left[\varUpsilon\varUpsilon^{*}-\frac{1}{2}|\varUpsilon|^{2}I_{E}\right]$
is skew-hermitian, and thus can be regarded as an element of $\mathfrak{su}(2)$.
\end{rem}

If we define $\mathcal{C}(\check{\varSigma},\check{E})=\mathcal{A}^{\det}(\check{E})\times\varGamma(\check{E})$
as the configuration space and $\mathcal{G}^{\det}(\check{E})$ the
gauge group (those automorphisms of $\check{E}$ with determinant
one), we want to study the solutions to the $SO(3)$ vortex equations
modulo gauge, i.e, the moduli space $\mathcal{M}(\check{\varSigma},\check{E})\subset\mathcal{C}(\check{\varSigma},\check{E})/\mathcal{G}^{\det}(\check{E})$.
Recall that the gauge group $u\in\mathcal{G}^{\det}(E)$ acts on $(C,\varUpsilon)$
as 
\[
u\cdot(C,\varUpsilon)=(C-(d_{C}u)u^{-1},u\varUpsilon)
\]

The $SO(3)$ \textbf{vortex map }is the map\textbf{
\begin{align}
\mathfrak{F}_{SO(3)}: & \mathcal{A}^{\det}(\check{E})\times\varGamma(\check{E})\rightarrow\varGamma(\check{\varSigma};\mathfrak{su}(E))\oplus\varGamma(K_{\check{\varSigma}}^{-1}\otimes E)\label{SO(3) VORTEX MAP}\\
(C,\varUpsilon) & \rightarrow\left(*F_{C}^{0}-i\left[\varUpsilon\varUpsilon^{*}-\frac{1}{2}|\varUpsilon|^{2}I_{E}\right],\bar{\partial}_{C}\varUpsilon\right)\nonumber 
\end{align}
}An easy computation shows that:\textbf{ }
\begin{lem}
\label{Lem Linearization vortex map}The linearization of the $SO(3)$
vortex map is
\[
\mathcal{D}\mathfrak{F}_{SO(3),(C,\varUpsilon)}(\dot{c},\dot{\varUpsilon})=\left(*d_{C}\dot{c}-i\left[\varUpsilon\dot{\varUpsilon}^{*}+\dot{\varUpsilon}\varUpsilon^{*}-\text{Re}\left\langle \varUpsilon,\dot{\varUpsilon}\right\rangle I_{E}\right],\bar{\partial}_{C}\dot{\varUpsilon}+\dot{c}''\otimes\varUpsilon\right)
\]
where we have decomposed $\dot{c}\in\varOmega^{1}(\check{\varSigma},\mathfrak{su}(E))\subset\varOmega^{1}(\check{\varSigma},\mathfrak{su}(E)\otimes\mathbb{C})$
as $\dot{c}=\dot{c}'+\dot{c}''$.
\end{lem}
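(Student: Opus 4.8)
The plan is to differentiate $\mathfrak{F}_{SO(3)}$ along the straight path $(C_{t},\varUpsilon_{t})=(C+t\dot{c},\varUpsilon+t\dot{\varUpsilon})$ and read off the coefficient of $t$ at $t=0$, treating the two components of $\mathfrak{F}_{SO(3)}$ separately. Since every term is either polynomial in $\varUpsilon$ or affine in $C$, the whole computation reduces to the Leibniz rule once the dependence of the relevant operators on the connection is understood.

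For the first component I would first handle the curvature term. Because $\dot{c}\in\varOmega^{1}(\check{\varSigma},\mathfrak{su}(E))$, the curvature expands as $F_{C+t\dot{c}}=F_{C}+t\,d_{C}\dot{c}+\tfrac{t^{2}}{2}[\dot{c}\wedge\dot{c}]$, and since $\dot{c}$ is already traceless the traceless projection commutes with the $t$-derivative; applying the Hodge star then gives $*d_{C}\dot{c}$. For the algebraic term $-i[\varUpsilon_{t}\varUpsilon_{t}^{*}-\tfrac{1}{2}|\varUpsilon_{t}|^{2}I_{E}]$ the Leibniz rule yields $\tfrac{d}{dt}\big|_{0}\varUpsilon_{t}\varUpsilon_{t}^{*}=\varUpsilon\dot{\varUpsilon}^{*}+\dot{\varUpsilon}\varUpsilon^{*}$, while $\tfrac{d}{dt}\big|_{0}|\varUpsilon_{t}|^{2}=\tfrac{d}{dt}\big|_{0}\left\langle \varUpsilon_{t},\varUpsilon_{t}\right\rangle =2\,\text{Re}\left\langle \varUpsilon,\dot{\varUpsilon}\right\rangle $, where the factor of two cancels the $\tfrac12$ and leaves $\text{Re}\left\langle \varUpsilon,\dot{\varUpsilon}\right\rangle I_{E}$. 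Collecting these gives precisely the first slot of the stated formula.

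For the second component the only point that is not immediate is that the Dolbeault operator itself depends on the connection. The key identity is that the operator attached to the unitary connection $C+t\dot{c}$ is the $(0,1)$ part of the covariant derivative, so that $\bar{\partial}_{C+t\dot{c}}=\bar{\partial}_{C}+t\,\dot{c}''$ with $\dot{c}''$ the $(0,1)$ component of $\dot{c}$. Hence $\bar{\partial}_{C_{t}}\varUpsilon_{t}=\bar{\partial}_{C}\varUpsilon+t(\bar{\partial}_{C}\dot{\varUpsilon}+\dot{c}''\otimes\varUpsilon)+t^{2}\,\dot{c}''\otimes\dot{\varUpsilon}$, and differentiating at $t=0$ produces $\bar{\partial}_{C}\dot{\varUpsilon}+\dot{c}''\otimes\varUpsilon$, as claimed.

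The computation is entirely routine; the only genuine points requiring care are the $C$-dependence of $\bar{\partial}_{C}$, the bookkeeping of the Hermitian inner-product convention (so that $\text{Re}$ appears in the moment-map term), and the observation that the traceless projection is transparent to differentiation because the variations $\dot{c}$ already lie in $\mathfrak{su}(E)$. Once these conventions are fixed there is no analytic obstacle, and the result follows by matching the coefficients of $t$ componentwise.
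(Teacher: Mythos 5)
Your proposal is correct and is essentially the same argument as the paper's: the paper's proof simply records the expansions of $|\varUpsilon+t\dot{\varUpsilon}|^{2}$ and $(\varUpsilon+t\dot{\varUpsilon})(\varUpsilon+t\dot{\varUpsilon})^{*}$ and declares the rest immediate, which is exactly your Leibniz-rule computation for the algebraic terms. You additionally spell out the two steps the paper leaves implicit, namely $F_{C+t\dot{c}}=F_{C}+t\,d_{C}\dot{c}+\tfrac{t^{2}}{2}[\dot{c}\wedge\dot{c}]$ and $\bar{\partial}_{C+t\dot{c}}=\bar{\partial}_{C}+t\,\dot{c}''$, both of which are handled correctly.
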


\begin{rem}
Recall that on a Riemann surface the Hodge star operator acting on
one-forms satisfies $*^{2}=-\text{Id}$, so it can be used to define
an almost complex structure on $\varOmega^{1}(\mathfrak{g}_{E})$.
In particular, whenever we write $c=c'+c''$, then $*c=-ic'+ic''$.
\end{rem}

\begin{proof}
It follows easily after observing that for $t\in\mathbb{R}$,
\[
\begin{cases}
|\varUpsilon+t\dot{\varUpsilon}|^{2}=|\varUpsilon|^{2}+t^{2}|\dot{\varUpsilon}|^{2}+2\text{Re}\left\langle \varUpsilon,\dot{\varUpsilon}\right\rangle \\
(\varUpsilon+t\dot{\varUpsilon})(\varUpsilon+t\dot{\varUpsilon})^{*}=\varUpsilon\varUpsilon^{*}+t^{2}\dot{\varUpsilon}\dot{\varUpsilon}^{*}+t\varUpsilon\dot{\varUpsilon}^{*}+t\dot{\varUpsilon}\varUpsilon^{*}
\end{cases}
\]
\end{proof}
\begin{lem}
For $\xi\in\varOmega^{0}(\check{\varSigma};\mathfrak{su}(\check{E}))$,
define 
\[
d_{(C,\varUpsilon)}^{0}(\xi)=(-d_{C}\xi,\xi\varUpsilon)
\]

Then at each solution $(C,\varUpsilon)$ to the $SO(3)$ vortex equations
(\ref{SO(3) VORTEX MAP}), there is a complex 
\[
\varOmega^{0}(\check{\varSigma};\mathfrak{su}(\check{E}))\rightarrow^{d_{(C,\varUpsilon)}^{0}}\varOmega^{1}(\check{\varSigma};\mathfrak{su}(\check{E}))\oplus\varGamma(\check{E})\rightarrow^{D\mathfrak{F}_{SO(3),(C,\varUpsilon)}}\varOmega^{0}(\check{\varSigma};\mathfrak{su}(\check{E}))\oplus\varGamma(K_{\check{\varSigma}}^{-1}\otimes\check{E})
\]
\end{lem}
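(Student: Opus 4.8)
The plan is to show directly that the composite $D\mathfrak{F}_{SO(3),(C,\varUpsilon)}\circ d^0_{(C,\varUpsilon)}$ vanishes whenever $(C,\varUpsilon)$ solves the vortex equations (\ref{eq:SO(3) vortex equations}); this is exactly the content of the claim that the displayed sequence is a complex. Conceptually the vanishing is forced by gauge invariance: $d^0_{(C,\varUpsilon)}$ is the infinitesimal action of $\mathcal{G}^{\det}(\check E)$ at $(C,\varUpsilon)$, the map $\mathfrak{F}_{SO(3)}$ is equivariant, and differentiating the equivariance relation along a one-parameter family of gauge transformations with generator $\xi$ shows that $D\mathfrak{F}_{SO(3),(C,\varUpsilon)}\bigl(d^0_{(C,\varUpsilon)}\xi\bigr)$ equals the infinitesimal action of $\xi$ on the \emph{value} $\mathfrak{F}_{SO(3)}(C,\varUpsilon)$, which is zero at a solution. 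I would nonetheless record the explicit cancellation, since it makes transparent where each of the two vortex equations is used and fixes the signs for later use.

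Concretely, I substitute $\dot c = -d_C\xi$ and $\dot\varUpsilon = \xi\varUpsilon$ into Lemma~\ref{Lem Linearization vortex map}. For the first ($\mathfrak{su}(\check E)$-valued) component I would use three facts: that $d_C^2\xi = [\,F_C^0,\xi\,]$ (the trace part of the curvature being central), whence $*d_C(-d_C\xi) = -[\,*F_C^0,\xi\,]$; that $\xi^* = -\xi$ forces $\mathrm{Re}\langle\varUpsilon,\xi\varUpsilon\rangle = 0$; and that $\varUpsilon(\xi\varUpsilon)^* + (\xi\varUpsilon)\varUpsilon^* = [\,\xi,\varUpsilon\varUpsilon^*\,]$, again by $\xi^* = -\xi$. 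The quadratic term of the linearization therefore collapses to $-i[\,\xi,\varUpsilon\varUpsilon^* - \tfrac12|\varUpsilon|^2 I_E\,]$ (the central $I_E$ drops out of the commutator), while the first vortex equation reads $*F_C^0 = i\bigl(\varUpsilon\varUpsilon^* - \tfrac12|\varUpsilon|^2 I_E\bigr)$ and hence $-[\,*F_C^0,\xi\,] = i[\,\xi,\varUpsilon\varUpsilon^* - \tfrac12|\varUpsilon|^2 I_E\,]$. The two contributions cancel exactly.

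For the second component I would use the remaining vortex equation $\bar\partial_C\varUpsilon = 0$ together with the Leibniz rule $\bar\partial_C(\xi\varUpsilon) = (d_C\xi)''\varUpsilon + \xi\,\bar\partial_C\varUpsilon$, whose last term vanishes, leaving $(d_C\xi)''\varUpsilon$; here $(\cdot)''$ denotes the $(0,1)$-part. Since $\dot c'' = (-d_C\xi)'' = -(d_C\xi)''$, the term $\dot c''\otimes\varUpsilon$ equals $-(d_C\xi)''\varUpsilon$, so the second component also vanishes and the composite is zero as claimed. The only real care needed---and the single place the computation can go wrong---is bookkeeping: identifying $d_C^2\xi$ with the curvature commutator, observing that every term produced is itself a commutator and so automatically trace-free (landing in $\mathfrak{su}(\check E)$ as required), and keeping the signs in the $(0,1)$-decomposition straight. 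No analysis enters; the identity is pointwise and purely algebraic.
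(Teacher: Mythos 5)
Your proposal is correct and is essentially the paper's own argument: the paper proves the lemma by quoting the equivariance identity $\mathcal{D}\mathfrak{F}_{SO(3),(C,\varUpsilon)}\circ d_{(C,\varUpsilon)}^{0}(\xi)=\left([\xi,\mathfrak{F}_{SO(3)}^{1}(C,\varUpsilon)],\xi\mathfrak{F}_{SO(3)}^{2}(C,\varUpsilon)\right)$ (citing Feehan--Leness) and concluding from $\mathfrak{F}_{SO(3)}(C,\varUpsilon)=(0,0)$, which is exactly your first paragraph. Your explicit cancellation, using $d_{C}^{2}\xi=[F_{C}^{0},\xi]$, $\xi^{*}=-\xi$, and the Leibniz rule, is a correct verification of that identity and merely fills in the details the paper delegates to the citation.
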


\begin{proof}
Write $\mathfrak{F}_{SO(3)}=\left(\mathfrak{F}_{SO(3)}^{1},\mathfrak{F}_{SO(3)}^{2}\right)$.
Then just as in \cite[p.312]{MR1664908}, we have that 
\[
\mathcal{D}\mathfrak{F}_{SO(3),(C,\varUpsilon)}\circ d_{(C,\varUpsilon)}^{0}(\xi)=\left([\xi,\mathfrak{F}_{SO(3)}^{1}(C,\varUpsilon)],\xi\mathfrak{F}_{SO(3)}^{2}(C,\varUpsilon)\right)
\]
so the result follows since at a solution to the vortex equations
we have $\mathfrak{F}_{SO(3)}(C,\varUpsilon)=(0,0)$.
\end{proof}
\begin{defn}
Define the $L^{2}$ \textbf{\emph{real}} inner product \emph{\cite[Lemma 9.3.3]{MR2388043}
\begin{equation}
\left\langle (\dot{c}_{1},\dot{\varUpsilon}_{1}),(\dot{c}_{2},\dot{\varUpsilon}_{2})\right\rangle _{L^{2}}\equiv\int_{\check{\varSigma}}\left\langle \dot{c}_{1},\dot{c}_{2}\right\rangle +\text{Re}\left\langle \dot{\varUpsilon}_{1},\dot{\varUpsilon}_{2}\right\rangle \label{Inner product}
\end{equation}
where the point-wise inner product on $\mathfrak{su}(\check{E})$
is given by 
\[
\left\langle \dot{c}_{1},\dot{c}_{2}\right\rangle =\frac{1}{2}\text{Tr}(\dot{c}_{1}^{\dagger}\dot{c}_{2})
\]
Moreover, define 
\begin{equation}
d_{(C,\varUpsilon)}^{0,*}(\dot{c},\dot{\varUpsilon})=-d_{C}^{*}\dot{c}+[\dot{\varUpsilon}\varUpsilon^{*}-\varUpsilon\dot{\varUpsilon}^{*}-i\text{Re}<i\varUpsilon,\dot{\varUpsilon}>I_{E}]\label{Coulomb condition}
\end{equation}
}
\end{defn}

\begin{lem}
If $d_{(C,\varUpsilon)}^{0,*}(c,\dot{\varUpsilon})=0$, then $(\dot{c},\dot{\varUpsilon})$
is $L^{2}$ orthogonal to $\text{Im}d_{(C,\varUpsilon)}^{0}$ .
\end{lem}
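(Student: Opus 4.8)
The plan is to recognize the assertion as an immediate consequence of the fact that $d_{(C,\varUpsilon)}^{0,*}$ is the formal $L^{2}$-adjoint of $d_{(C,\varUpsilon)}^{0}$. Concretely, once I establish the identity
\[
\left\langle d_{(C,\varUpsilon)}^{0}\xi,(\dot{c},\dot{\varUpsilon})\right\rangle_{L^{2}}=\left\langle \xi,d_{(C,\varUpsilon)}^{0,*}(\dot{c},\dot{\varUpsilon})\right\rangle_{L^{2}}\qquad\text{for all }\xi\in\varOmega^{0}(\check{\varSigma};\mathfrak{su}(\check{E})),
\]
the lemma follows instantly: if $d_{(C,\varUpsilon)}^{0,*}(\dot{c},\dot{\varUpsilon})=0$ then the right-hand side vanishes for every $\xi$, so $(\dot{c},\dot{\varUpsilon})$ is $L^{2}$-orthogonal to each element $d_{(C,\varUpsilon)}^{0}\xi$ of $\text{Im}\,d_{(C,\varUpsilon)}^{0}$. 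Here I give $\varOmega^{0}(\check{\varSigma};\mathfrak{su}(\check{E}))$ the integrated pointwise pairing $\frac{1}{2}\text{Tr}(\xi_{1}^{\dagger}\xi_{2})$, which one checks is real-valued on $\mathfrak{su}(\check{E})$, so the adjoint relation above is a statement about \emph{real} inner products compatible with (\ref{Inner product}).

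To verify the adjoint identity I would expand the left-hand side using $d_{(C,\varUpsilon)}^{0}\xi=(-d_{C}\xi,\xi\varUpsilon)$ and split the $L^{2}$ pairing into its connection and spinor summands. For the connection summand, since $\check{\varSigma}$ is a closed Riemannian orbifold, integration by parts against the formal adjoint $d_{C}^{*}$ is valid: the marked points form a set of measure zero and the orbifold de Rham theory recalled in Section \ref{sec:A-CrashCourse-on} yields the usual Green's formula, so that $\int_{\check{\varSigma}}\langle -d_{C}\xi,\dot{c}\rangle=\int_{\check{\varSigma}}\langle \xi,-d_{C}^{*}\dot{c}\rangle$. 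This reproduces precisely the $-d_{C}^{*}\dot{c}$ term of $d_{(C,\varUpsilon)}^{0,*}$ in (\ref{Coulomb condition}).

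The spinor summand is where the real work lies and is the step I expect to be the main obstacle, since it is purely algebraic and sign-sensitive. The goal is the pointwise identity
\[
\text{Re}\left\langle \xi\varUpsilon,\dot{\varUpsilon}\right\rangle =\left\langle \xi,\;\dot{\varUpsilon}\varUpsilon^{*}-\varUpsilon\dot{\varUpsilon}^{*}-i\,\text{Re}\langle i\varUpsilon,\dot{\varUpsilon}\rangle I_{E}\right\rangle.
\]
I would begin by writing $\langle \xi\varUpsilon,\dot{\varUpsilon}\rangle=\dot{\varUpsilon}^{*}\xi\varUpsilon=\text{Tr}(\xi\,\varUpsilon\dot{\varUpsilon}^{*})$ via cyclicity of the trace, so the left side becomes $\text{Re}\,\text{Tr}(\xi\,\varUpsilon\dot{\varUpsilon}^{*})$. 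Decomposing $\varUpsilon\dot{\varUpsilon}^{*}$ into its hermitian and skew-hermitian parts and using that $\xi$ is traceless and skew-hermitian, one checks that $\text{Tr}(\xi H)$ is purely imaginary for $H$ hermitian while $\text{Tr}(\xi S)$ is real for $S$ skew-hermitian; hence after taking real parts only the traceless skew-hermitian part of $\varUpsilon\dot{\varUpsilon}^{*}$, namely $\frac{1}{2}(\varUpsilon\dot{\varUpsilon}^{*}-\dot{\varUpsilon}\varUpsilon^{*})-\frac{i}{2}\text{Im}\langle\varUpsilon,\dot{\varUpsilon}\rangle I_{E}$, survives. Tracking the scalar correction that makes the skew-hermitian part traceless produces exactly the term $-i\,\text{Re}\langle i\varUpsilon,\dot{\varUpsilon}\rangle I_{E}$ after the elementary identity $\text{Re}\langle i\varUpsilon,\dot{\varUpsilon}\rangle=-\text{Im}\langle\varUpsilon,\dot{\varUpsilon}\rangle$, and matching against $\langle \xi,X\rangle=-\frac{1}{2}\text{Tr}(\xi X)$ fixes every constant. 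Integrating this identity over $\check{\varSigma}$ and adding the connection contribution yields the adjoint relation, and hence the lemma.
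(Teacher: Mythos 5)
Your proposal is correct, and its skeleton matches the paper's: expand the $L^{2}$ pairing, integrate by parts in the connection factor, and reduce everything to a pointwise algebraic identity in the spinor factor. The differences are in organization and in how that identity is verified. The paper does not isolate the adjointness statement; instead it substitutes the hypothesis $d_{(C,\varUpsilon)}^{0,*}(\dot{c},\dot{\varUpsilon})=0$ into the integrand and shows the result vanishes, with the crucial algebraic step being the identity $\frac{1}{2}\left\langle \xi\varUpsilon,\dot{\varUpsilon}\right\rangle =\left\langle \xi,\dot{\varUpsilon}\varUpsilon^{*}\right\rangle$, which it checks by writing $\xi$, $\varUpsilon$, $\dot{\varUpsilon}$ out as explicit $2\times2$ matrices and column vectors and comparing traces entry by entry. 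You instead prove the unconditional formal-adjoint identity and verify the pointwise step in a coordinate-free way: cyclicity of the trace, the decomposition of $\varUpsilon\dot{\varUpsilon}^{*}$ into hermitian and skew-hermitian parts, the parity facts that $\text{Tr}(\xi H)$ is imaginary and $\text{Tr}(\xi S)$ is real for skew-hermitian traceless $\xi$, and the observation that multiples of $iI_{E}$ pair to zero against traceless $\xi$ (which is also why the $-i\,\text{Re}\langle i\varUpsilon,\dot{\varUpsilon}\rangle I_{E}$ term is harmless). Your route is basis-free, makes the structural reason for the cancellation transparent, and yields the slightly stronger statement that $d^{0,*}_{(C,\varUpsilon)}$ is the formal adjoint of $d^{0}_{(C,\varUpsilon)}$, not merely orthogonality under the kernel hypothesis; the paper's route is more pedestrian but leaves nothing implicit about constants. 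One small caution: your identity $\text{Re}\langle i\varUpsilon,\dot{\varUpsilon}\rangle=-\text{Im}\langle\varUpsilon,\dot{\varUpsilon}\rangle$ presumes the hermitian inner product is complex linear in its first slot, whereas the paper's local computation uses the opposite convention (there $\langle a,b\rangle=a^{*}b$), which flips that sign; since this term only multiplies $I_{E}$ and dies against traceless $\xi$, the discrepancy does not affect the lemma, but it is worth fixing a convention explicitly when you write the argument up.
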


\begin{proof}
Observe that 
\begin{align*}
 & \left\langle (-d_{C}\xi,\xi\varUpsilon),(\dot{c},\dot{\varUpsilon})\right\rangle _{L^{2}}\\
= & \left\langle -d_{C}\xi,\dot{c}\right\rangle _{L^{2}}+\left\langle \xi\varUpsilon,\dot{\varUpsilon}\right\rangle _{L^{2}}\\
= & \left\langle \xi,-d_{C}^{*}\dot{c}\right\rangle _{L^{2}}+\left\langle \xi\varUpsilon,\dot{\varUpsilon}\right\rangle _{L^{2}}\\
= & \int_{\varSigma}\left\langle \xi,-d_{C}^{*}\dot{c}\right\rangle +\text{Re}\left\langle \xi\varUpsilon,\dot{\varUpsilon}\right\rangle 
\end{align*}
 So now we need to verify a pointwise identity. First of all, notice
that for two matrices $M_{1},M_{2}$ of $\mathfrak{su}(E)$, their
hermitian inner product is real in the sense that 
\[
\left\langle M_{1},M_{2}\right\rangle =\frac{1}{2}\text{tr}(M_{1}^{\dagger}M_{2})=-\frac{1}{2}\text{tr}(M_{1}M_{2})=-\frac{1}{2}\text{tr}(M_{2}M_{1})=\frac{1}{2}\text{tr}(M_{2}^{\dagger}M_{1})=\left\langle M_{2},M_{1}\right\rangle 
\]
which justifies why there is no need to put $\text{Re}$ in front
of $\left\langle \xi,-d_{C}^{*}\dot{c}\right\rangle $. On the other
hand, we can re-write $\text{Re}\left\langle \xi\varUpsilon,\dot{\varUpsilon}\right\rangle $
as 
\[
\frac{1}{2}\left\langle \xi\varUpsilon,\dot{\varUpsilon}\right\rangle +\frac{1}{2}\left\langle \dot{\varUpsilon},\xi\varUpsilon\right\rangle 
\]
Therefore, the integrand we are considering becomes {[}recall $d_{(C,\varUpsilon)}^{0,*}(c,\dot{\varUpsilon})=0$
so we can substitute $d_{C}^{*}\dot{c}${]}
\begin{align*}
 & \left\langle \xi,-[\dot{\varUpsilon}\varUpsilon^{*}-\varUpsilon\dot{\varUpsilon}^{*}-i\text{Re}<i\varUpsilon,\dot{\varUpsilon}>I_{E}]\right\rangle +\frac{1}{2}\left\langle \xi\varUpsilon,\dot{\varUpsilon}\right\rangle +\frac{1}{2}\left\langle \dot{\varUpsilon},\xi\varUpsilon\right\rangle \\
= & \left\langle \xi,\varUpsilon\dot{\varUpsilon}^{*}\right\rangle -\left\langle \xi,\dot{\varUpsilon}\varUpsilon^{*}\right\rangle +\frac{1}{2}\left\langle \xi\varUpsilon,\dot{\varUpsilon}\right\rangle +\frac{1}{2}\left\langle \dot{\varUpsilon},\xi\varUpsilon\right\rangle \\
= & \frac{1}{2}\left\langle \xi\varUpsilon,\dot{\varUpsilon}\right\rangle -\left\langle \xi,\dot{\varUpsilon}\varUpsilon^{*}\right\rangle +\left\langle \xi,\varUpsilon\dot{\varUpsilon}^{*}\right\rangle -\frac{1}{2}\left\langle \xi\dot{\varUpsilon},\varUpsilon\right\rangle 
\end{align*}
 So really we need to compare the previous quantities. For example,
we want to know whether\textbf{
\begin{equation}
\frac{1}{2}\left\langle \xi\varUpsilon,\dot{\varUpsilon}\right\rangle =^{?}\left\langle \xi,\dot{\varUpsilon}\varUpsilon^{*}\right\rangle \label{eq:QUESTION}
\end{equation}
}This would guarantee the vanishing of the first two terms, and the
other two are essentially the same. We compute this locally. Observe
that we can write 
\begin{align*}
\xi= & \left(\begin{array}{cc}
it & -\bar{z}\\
z & -it
\end{array}\right) & \varUpsilon= & \left(\begin{array}{c}
\varPhi_{1}\\
\varPhi_{2}
\end{array}\right) & \dot{\varUpsilon}= & \left(\begin{array}{c}
\phi_{1}\\
\phi_{2}
\end{array}\right)
\end{align*}
Start with the right hand side 
\begin{alignat*}{1}
 & \left\langle \xi,\dot{\varUpsilon}\varUpsilon^{*}\right\rangle \\
= & -\frac{1}{2}\text{tr}\left(\xi\dot{\varUpsilon}\varUpsilon^{*}\right)\\
= & -\frac{1}{2}\text{tr}\left(\left(\begin{array}{cc}
it & -\bar{z}\\
z & -it
\end{array}\right)\left(\begin{array}{c}
\phi_{1}\\
\phi_{2}
\end{array}\right)\left(\begin{array}{cc}
\bar{\varPhi}_{1} & \bar{\varPhi}_{2}\end{array}\right)\right)\\
= & -\frac{1}{2}\text{tr}\left(\left(\begin{array}{c}
it\phi_{1}-\bar{z}\phi_{2}\\
z\phi_{1}-it\phi_{2}
\end{array}\right)\left(\begin{array}{cc}
\bar{\varPhi}_{1} & \bar{\varPhi}_{2}\end{array}\right)\right)\\
= & -\frac{1}{2}\text{tr}\left(\begin{array}{cc}
it\phi_{1}\bar{\varPhi}_{1} & \text{*}\\
* & -it\phi_{2}\bar{\varPhi}_{2}
\end{array}\right)
\end{alignat*}
where $*$ are things we don't care about since we are just computing
a trace! On the other hand, the left hand side is\textbf{ }
\begin{align*}
 & \frac{1}{2}\left\langle \xi\varUpsilon,\dot{\varUpsilon}\right\rangle \\
= & \frac{1}{2}\varUpsilon^{*}\xi^{*}\dot{\varUpsilon}\\
= & -\frac{1}{2}\left(\begin{array}{cc}
\bar{\varPhi}_{1} & \bar{\varPhi}_{2}\end{array}\right)\left(\begin{array}{cc}
it & -\bar{z}\\
z & -it
\end{array}\right)\left(\begin{array}{c}
\phi_{1}\\
\phi_{2}
\end{array}\right)\\
= & -\frac{1}{2}\left(\begin{array}{cc}
\bar{\varPhi}_{1} & \bar{\varPhi}_{2}\end{array}\right)\left(\begin{array}{c}
it\phi_{1}-\bar{z}\phi_{2}\\
z\phi_{1}-it\phi_{2}
\end{array}\right)\\
= & -\frac{1}{2}\text{tr}\left(\begin{array}{cc}
\bar{\varPhi}_{1}it\phi_{1} & **\\
** & -it\bar{\varPhi}_{2}\phi_{2}
\end{array}\right)
\end{align*}
So the traces of both matrices will be the same.
\end{proof}
\begin{defn}
Suppose that $(C,\varUpsilon)$ is a solution to the $SO(3)$ vortex
equations. Define the cohomology groups 
\[
\begin{cases}
\mathbf{H}_{(C,\varUpsilon)}^{0}\equiv\ker d_{(C,\varUpsilon)}^{0}\\
\mathbf{H}_{(C,\varUpsilon)}^{1}\equiv\ker(d_{(C,\varUpsilon)}^{0,*}\oplus\mathcal{D}\mathfrak{F}_{SO(3),(C,\varUpsilon)})\\
\mathbf{H}_{(C,\varUpsilon)}^{2}\equiv\text{coker}\mathcal{D}\mathfrak{F}_{SO(3),(C,\varUpsilon)}
\end{cases}
\]
\end{defn}

\begin{lem}
\label{lem:Suppose-that-smoothness criterion}Suppose that $(C,\varUpsilon)$
is a solution to the $SO(3)$ vortex equations satisfying:

i) $\mathbf{H}_{(C,\varUpsilon)}^{0}\equiv0$.

ii) $\bar{\partial}_{C}:\varGamma(\check{E})\rightarrow\varGamma(K_{\check{\varSigma}}^{-1}\otimes\check{E})$
is surjective or alternatively $(C,\varUpsilon)$ is irreducible,
in the sense that $\varUpsilon$ is not identically zero and $C$
is an irreducible connection.

Then $\mathbf{H}_{(C,\varUpsilon)}^{2}\equiv0$ and near $[C,\varUpsilon]\in\mathcal{M}(\check{\varSigma},\check{E})$
the moduli space has the structure of a smooth manifold of real dimension
\begin{equation}
\dim_{\mathbb{R}}\mathcal{M}(\check{\varSigma},\check{E})=\dim\mathbf{H}_{[C,\varUpsilon]}^{1}=2(g-1)+2c_{1}(\det\check{E})+2(n-n_{0})-2\sum_{i=1}^{n}\frac{b_{i}^{-}+b_{i}^{+}}{a_{i}}\label{dimension moduli space}
\end{equation}
\end{lem}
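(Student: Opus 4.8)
The plan is to run the standard Kuranishi/slice argument for the $SO(3)$ vortex equations and reduce the statement to two things: the vanishing of $\mathbf{H}^2_{(C,\varUpsilon)}$ and the computation of $\dim\mathbf{H}^1_{(C,\varUpsilon)}$ by an index calculation. Hypothesis i) says the infinitesimal stabilizer $\mathbf{H}^0=\ker d^0_{(C,\varUpsilon)}$ vanishes, so the Coulomb slice $\ker d^{0,*}_{(C,\varUpsilon)}$ is a genuine local slice for the $\mathcal{G}^{\det}$-action near $[C,\varUpsilon]$; once I also know $\mathbf{H}^2=\operatorname{coker}D\mathfrak{F}_{SO(3),(C,\varUpsilon)}=0$, the restriction of $D\mathfrak{F}$ to the slice is surjective, so by elliptic regularity and the implicit function theorem (valid on closed orbifold Riemann surfaces, cf. Section \ref{sec:A-CrashCourse-on}) the zero set is a smooth manifold of dimension $\dim\mathbf{H}^1$. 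The organizing device for both remaining steps is the identification of the real elliptic deformation complex $\{d^0,D\mathfrak{F}\}$, in Coulomb gauge, with the Dolbeault resolution of the holomorphic two-term complex $\mathcal{C}^\bullet:=[\mathfrak{sl}(\check E)\xrightarrow{\cdot\varUpsilon}\check E]$ in degrees $0,1$, where $\varUpsilon$ is holomorphic since $\bar{\partial}_C\varUpsilon=0$; this is the orbifold version of the K\"ahler/moment-map correspondence with stable pairs developed in Section 4. Under it each $\mathbf{H}^i$ acquires a complex structure with $\mathbf{H}^i\cong\mathbb{H}^i(\mathcal{C}^\bullet)$, so in particular $\dim_{\mathbb{R}}\mathbf{H}^1=2\dim_{\mathbb{C}}\mathbb{H}^1(\mathcal{C}^\bullet)$.

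For the vanishing $\mathbf{H}^2=0$ I would use the hypercohomology long exact sequence of $\mathcal{C}^\bullet$. Since $\check\varSigma$ is a curve, $H^2(\check\varSigma,\mathfrak{sl}(\check E))=0$, and the sequence collapses to $\mathbb{H}^2(\mathcal{C}^\bullet)\cong\operatorname{coker}\big(H^1(\check\varSigma,\mathfrak{sl}(\check E))\xrightarrow{\cdot\varUpsilon}H^1(\check\varSigma,\check E)\big)$. In the first case of hypothesis ii), surjectivity of $\bar{\partial}_C$ means $H^1(\check\varSigma,\check E)=0$ and the cokernel is trivially zero. In the second case $\varUpsilon\not\equiv0$, and I would show that $\cdot\varUpsilon$ is already surjective on $H^1$: dualizing by Serre duality, this is the injectivity of $H^0(K_{\check\varSigma}\otimes\check E^*)\to H^0(K_{\check\varSigma}\otimes\mathfrak{sl}(\check E))$, $\psi\mapsto(\varUpsilon\otimes\psi)_0$ (traceless part, using $\mathfrak{sl}(\check E)\cong\mathfrak{sl}(\check E)^*$). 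If $(\varUpsilon\otimes\psi)_0=0$ then $\varUpsilon\otimes\psi=\tfrac12\psi(\varUpsilon)I_{\check E}$; the left side has pointwise rank $\le1$ while a nonzero multiple of $I_{\check E}$ has rank $2$, forcing $\psi(\varUpsilon)=0$ and hence $\psi(\cdot)\varUpsilon\equiv0$, so $\psi$ vanishes wherever $\varUpsilon\ne0$ and therefore identically. This is the analytic heart of the lemma, and I expect the main obstacle to lie here (together with making the real-to-holomorphic identification precise in the orbifold setting).

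For the dimension, since $\mathbb{H}^0=\mathbf{H}^0=0$ and $\mathbb{H}^2=\mathbf{H}^2=0$, the Euler characteristic of the two-term complex gives $\dim_{\mathbb{C}}\mathbb{H}^1=\chi(\check\varSigma,\check E)-\chi(\check\varSigma,\mathfrak{sl}(\check E))$. The first term is supplied by orbifold Riemann-Roch for $U(2)$ bundles, equation \eqref{Riemann Roch U(2)}. For the second I would split $\mathfrak{sl}(\check E)$ along a reduction $\check E=\check L^-\oplus\check L^+$ into the Cartan summand (a trivial $\mathcal{O}$ with zero isotropy) and the two root summands $\check L^+(\check L^-)^*$ and $\check L^-(\check L^+)^*$, whose first Chern classes are opposite and whose isotropies at $p_i$ are $(b_i^+-b_i^-)\bmod a_i$ and its complement. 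Applying the line-bundle formula \eqref{Riemann Roch Line Bundles} to each summand, the $c_1$-contributions cancel and the isotropy contributions of the two root spaces sum to $1$ at each $i$ with $b_i^-\ne b_i^+$ and to $0$ otherwise, giving $\chi(\check\varSigma,\mathfrak{sl}(\check E))=3(1-g)-(n-n_0)$. Substituting both expressions into $\dim_{\mathbb{C}}\mathbb{H}^1$ and multiplying by $2$ reproduces \eqref{dimension moduli space}. The index bookkeeping is routine once the Riemann-Roch formulas of Section \ref{sec:A-CrashCourse-on} are in hand; the only delicate point is the vanishing argument above.
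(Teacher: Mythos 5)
Your proposal is correct in its two computational cores, but it reaches them by a genuinely different route than the paper, and it leans on one bridging step that the paper's proof deliberately avoids. The paper never leaves the real gauge-theoretic setting: it computes the formal adjoint of $\mathcal{D}\mathfrak{F}_{SO(3),(C,\varUpsilon)}$ using the relation $\mathcal{D}\mathfrak{F}^{1}_{SO(3)}=-d_{(C,\varUpsilon)}^{0,*}\circ\mathbf{J}$, kills the $\varOmega^{0}(\mathfrak{su}(\check{E}))$-component of its kernel with hypothesis i), and kills the spinor component via the pointwise identity $\tilde{\varUpsilon}\varUpsilon^{*}=\tfrac{1}{2}\text{tr}(\tilde{\varUpsilon}\varUpsilon^{*})$, which in a local frame becomes $\phi_{1}\bar{\varPhi}_{2}=\phi_{2}\bar{\varPhi}_{1}=0$, followed by unique continuation of holomorphic sections; the dimension is then obtained by homotoping the full operator to $(d_{C}^{0,*}\oplus*d_{C})\oplus\bar{\partial}_{C}$ and adding the two indices. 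Your argument instead works in the holomorphic category: $\mathbb{H}^{2}$ of the two-term complex is the cokernel of $H^{1}(\mathfrak{sl}(\check{E}))\rightarrow H^{1}(\check{E})$, and you prove surjectivity by Serre-dualizing to the injectivity of $\psi\mapsto(\varUpsilon\otimes\psi)_{0}$ and running the rank-$\leq1$ versus rank-$2$ argument. Note that your condition $(\varUpsilon\otimes\psi)_{0}=0$ is precisely the Serre-dual shadow of the paper's $(\tilde{\varUpsilon}\varUpsilon^{*})_{0}=0$, so the analytic heart is the same; what your packaging buys is transparency (only $\varUpsilon\not\equiv0$ and holomorphy are used in the second case of ii), and the dimension drops out of a single Euler-characteristic computation, with the same decomposition of $\mathfrak{sl}(\check{E})$ into Cartan and root line bundles and the same isotropy bookkeeping as the paper's $\chi(\check{\varSigma};\mathfrak{su}(\check{E})\otimes\mathbb{C})=3(1-g)-(n-n_{0})$).

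The step you should not gloss over is the identification $\mathbf{H}^{i}_{(C,\varUpsilon)}\simeq\mathbb{H}^{i}(\mathcal{C}^{\bullet})$, in particular $\mathbf{H}^{2}\simeq\mathbb{H}^{2}$, since the implicit function theorem is applied to the \emph{real} equations and therefore needs $\text{coker}\,\mathcal{D}\mathfrak{F}_{SO(3),(C,\varUpsilon)}=0$, not merely the vanishing of a hypercohomology group. This identification is not supplied by Section (\ref{sec:-Vortices-and Stable Pairs}), which establishes a set-level bijection between irreducible $SO(3)$ vortices and stable pairs, not an isomorphism of deformation complexes; and the tangent-space identification sketched in Section (\ref{sec:A-Morse-Function}) uses equality of dimensions, i.e.\ the very formula (\ref{dimension moduli space}) you are proving, so invoking it here would be circular. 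The genuine content of the bridge is the "folding" of the real elliptic complex via the K\"ahler identities: harmonic representatives of $\mathbf{H}^{2}$ solve a \emph{coupled} first-order system in $(\tilde{\xi},\tilde{\varUpsilon})$ (the two components of the adjoint do not decouple a priori), and one must show by a Hitchin-style integration-by-parts argument that this system has no solutions exactly when $\mathbb{H}^{2}=0$ and $\mathbf{H}^{0}=0$. This is standard and fillable, but it is real work, and it is exactly what the paper's direct computation of $\ker\mathcal{D}\mathfrak{F}^{*}_{SO(3),(C,\varUpsilon)}$ replaces; with that lemma written out (valid on orbifolds since the K\"ahler identities and integration by parts hold there), your proof is complete.
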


\begin{proof}
In order to show that 
\[
\mathcal{D}\mathfrak{F}_{SO(3),(C,\varUpsilon)}:\varOmega^{1}(\check{\varSigma};\mathfrak{su}(\check{E}))\oplus\varGamma(\check{E})\rightarrow\varOmega^{0}(\check{\varSigma};\mathfrak{su}(\check{E}))\oplus\varGamma(K_{\check{\varSigma}}^{-1}\otimes\check{E})
\]
is surjective we follow a similar argument to the one given by Hitchin
in \cite[Section 5]{MR887284}. First of all, we write 
\[
\mathcal{D}\mathfrak{F}_{SO(3),(C,\varUpsilon)}=\left(\mathcal{D}\mathfrak{F}_{SO(3),(C,\varUpsilon)}^{1},\mathcal{D}\mathfrak{F}_{SO(3),(C,\varUpsilon)}^{2}\right)
\]
A simple calculation shows that if we define $\mathbf{J}_{(C,\varUpsilon)}(\dot{c},\dot{\varUpsilon})=(*\dot{c},i\dot{\varUpsilon})$,
then 
\[
\mathcal{D}\mathfrak{F}_{SO(3),(C,\varUpsilon)}^{1}(\dot{c},\dot{\varUpsilon})=-d_{(C,\varUpsilon)}^{0,*}\mathbf{J}(\dot{c},\dot{\varUpsilon})
\]
 This computation is implicitly done in Lemma (\ref{lem:almost complex structure}),
where we show that $\mathbf{J}$ defines an almost complex structure
on the Zariski tangent space. Therefore, the formal adjoint of $\mathcal{D}\mathfrak{F}_{SO(3),(C,\varUpsilon)}$
is (recall $\mathbf{J}^{*}=-\mathbf{J}$)
\[
\mathcal{D}\mathfrak{F}_{SO(3),(C,\varUpsilon)}^{*}=\left(\mathbf{J}\circ d_{(C,\varUpsilon)}^{0}\right)\oplus,\mathcal{D}\mathfrak{F}_{SO(3),(C,\varUpsilon)}^{2,*}:\varOmega^{0}(\check{\varSigma};\mathfrak{su}(\check{E}))\oplus\varGamma(K_{\check{\varSigma}}^{-1}\otimes\check{E})\rightarrow\varOmega^{1}(\check{\varSigma};\mathfrak{su}(\check{E}))\oplus\varGamma(\check{E})
\]
Now suppose that $(\tilde{\xi},\tilde{\varUpsilon})$ is such that
\[
\begin{cases}
\mathbf{J}\circ d_{(C,\varUpsilon)}^{0}\tilde{\xi}=0\\
\mathcal{D}\mathfrak{F}_{SO(3),(C,\varUpsilon)}^{2,*}\tilde{\varUpsilon}=0
\end{cases}
\]
Since $\mathbf{J}^{2}=-\text{Id}$, the first equation says that $\tilde{\xi}\in\ker d_{(C,\varUpsilon)}^{0}$,
which vanishes by assumption i). Therefore $\tilde{\xi}=0$. 

Likewise, the second equation says that for all $(\dot{c},\dot{\varUpsilon})\in\varOmega^{1}(\check{\varSigma};\mathfrak{su}(\check{E}))\oplus\varGamma(\check{E})$
, 
\[
<\bar{\partial}_{C}\dot{\varUpsilon}+\dot{c}''\otimes\varUpsilon,\tilde{\varUpsilon}>_{L^{2}}=0
\]
Taking $\dot{\varUpsilon}=\varUpsilon$ this means that $<\dot{c}''\otimes\varUpsilon,\tilde{\varUpsilon}>_{L^{2}}=0$
since $\bar{\partial}_{C}\varUpsilon=0$. Hence, we obtain the condition
\[
<\bar{\partial}_{C}\dot{\varUpsilon},\tilde{\varUpsilon}>_{L^{2}}=0\iff<\dot{\varUpsilon},\bar{\partial}_{C}^{*}\tilde{\varUpsilon}>_{L^{2}}=0\iff\tilde{\varUpsilon}\in\ker\bar{\partial}_{C}^{*}
\]
Therefore $\tilde{\varUpsilon}$ is zero if we assume that $\bar{\partial}_{C}$
is surjective. If we no longer assume that $\bar{\partial}_{C}$ is
surjective, notice that from equation (\ref{eq:QUESTION}) $<\dot{c}''\otimes\varUpsilon,\tilde{\varUpsilon}>_{L^{2}}=0$
is equivalent to $<\dot{c}'',(\tilde{\varUpsilon}\varUpsilon^{*})_{0}>_{L^{2}}=0$,
thus we find that
\[
\tilde{\varUpsilon}\varUpsilon^{*}=\frac{1}{2}\text{tr}(\tilde{\varUpsilon}\varUpsilon^{*})
\]
Locally, we can write $\varUpsilon=\left(\begin{array}{c}
\varPhi_{1}\\
\varPhi_{2}
\end{array}\right)$ and $\tilde{\varUpsilon}=\left(\begin{array}{c}
\phi_{1}\\
\phi_{2}
\end{array}\right)$ , the previous equation is equivalent to 
\[
\left(\begin{array}{cc}
\phi_{1}\bar{\varPhi}_{1} & \phi_{1}\bar{\varPhi}_{2}\\
\phi_{2}\bar{\Phi}_{1} & \phi_{2}\bar{\varPhi}_{2}
\end{array}\right)=\left(\begin{array}{cc}
\frac{1}{2}\left(\phi_{1}\bar{\varPhi}_{1}+\phi_{2}\bar{\varPhi}_{2}\right) & 0\\
0 & \frac{1}{2}\left(\phi_{1}\bar{\varPhi}_{1}+\phi_{2}\bar{\varPhi}_{2}\right)
\end{array}\right)
\]
 Therefore, we conclude that 
\[
\begin{cases}
\phi_{1}\bar{\varPhi}_{2}=0\\
\phi_{2}\bar{\Phi}_{1}=0
\end{cases}
\]
Now, if $\varUpsilon$ is not a section of some splitting holomorphic
sub-bundle of $\check{E}$, then we must conclude that $\phi_{1}$
and $\phi_{2}$ must vanish on the open sets where $\varPhi_{1},\varPhi_{2}$
are non-vanishing. But since $*\tilde{\varUpsilon}$ is a holomorphic
section, this can only happen if $\tilde{\varUpsilon}$ vanishes.

To find the expected dimension of the moduli space at the solution
$[C,\varUpsilon]$, we need to compute 
\[
\text{ind}(d_{(C,\varUpsilon)}^{0,*}\oplus\mathcal{D}\mathfrak{F}_{SO(3),(C,\varUpsilon)})
\]
This operator is homotopic to the operator 
\[
(d_{C}^{0,*}\oplus*d_{C})\oplus\bar{\partial}_{C}
\]
so the index is equal to 
\[
\text{ind}(d_{C}^{0,*}\oplus*d_{C})+\text{ind}\bar{\partial}_{C}
\]

From the Riemann-Roch formula (\ref{Riemann Roch U(2)}), we know
that 
\[
\text{ind}_{\mathbb{C}}\bar{\partial}_{C}=2(1-g)+c_{1}(\det\check{E})-\sum_{i=1}^{n}\frac{b_{i}^{-}+b_{i}^{+}}{a_{i}}
\]
To compute $\text{ind}(d_{C}^{0,*}\oplus*d_{C})$, we use the Dolbeault
model, and compute this index as 
\[
\chi(\check{\varSigma};\mathfrak{su}(\check{E})\otimes\mathbb{C})
\]
Just as we found Riemann-Roch for $U(2)$ orbifold bundles, suppose
that 
\[
\check{E}=\check{L}^{-}\oplus\check{L}^{+}
\]
Then 
\[
\mathfrak{su}(\check{E})\otimes\mathbb{C}\simeq\mathbb{C}\oplus(\check{L}^{-}\otimes\check{L}^{+*})\oplus(\check{L}^{-*}\otimes\check{L}^{+})
\]
Therefore, 
\[
\chi(\check{\varSigma};\mathfrak{su}(\check{E})\otimes\mathbb{C})=\chi(\check{\varSigma};\mathbb{C})+\chi(\check{\varSigma};\check{L}^{-}\otimes\check{L}^{+*})+\chi(\check{\varSigma};\check{L}^{-*}\otimes\check{L}^{+})
\]
Recall that the orbifold cohomology groups with complex coefficients
are isomorphic to the ordinary de Rham cohomology groups, in particular,
this means that 
\[
\chi(\check{\varSigma};\mathbb{C})=1-g
\]
To compute $\chi(\check{\varSigma};\check{L}^{-}\otimes\check{L}^{+*})+\chi(\check{\varSigma};\check{L}^{-*}\otimes\check{L}^{+})$
, we need to understand the isotropies of the line bundles $L=\check{L}^{-}\otimes\check{L}^{+*}$
and $L^{*}=\check{L}^{-*}\otimes\check{L}^{+}$.

Suppose that at a particular marked point, $b_{i}^{+}\neq b_{i}^{-}$.
Then $L$ and $L^{*}$ have non-trivial isotropies. If $L$ has isotropy
$b_{i,L}$ at the marked point $p_{i}$, then we can take the isotropy
$b_{i,L^{*}}$ of $L^{*}$ to be $b_{i,L^{*}}=a_{i}-b_{i,L}$. Therefore,
in the formula (\ref{Riemann Roch Line Bundles}) for Riemann-Roch
for orbifold line bundles, the terms $\frac{b_{i,L}}{a_{i}}$ and
$\frac{b_{i,L^{*}}}{a_{i}}$ will add up to $1$. 

On the other hand, if one looks at a marked point where $b_{i}^{+}=b_{i}^{-}$,
then, the isotropy of $L$ and $L^{*}$ become zero at this point.

In this way we obtain 
\[
\chi(\check{\varSigma};L)+\chi(\check{\varSigma};L^{*})=2(1-g)-(n-n_{0})
\]
adding the Euler characteristics we find 
\[
\chi(\check{\varSigma};\mathfrak{su}(\check{E})\otimes\mathbb{C})=3(1-g)-(n-n_{0})
\]
So the \emph{real }expected dimension of the moduli space is 
\begin{align*}
 & \dim_{\mathbb{R}}\mathcal{M}(\check{\varSigma},\check{E})\\
= & 6(g-1)+2(n-n_{0})+4(1-g)+2c_{1}(\det\check{E})-2\sum_{i=1}^{n}\frac{b_{i}^{-}+b_{i}^{+}}{a_{i}}\\
= & 2(g-1)+2c_{1}(\det\check{E})+2(n-n_{0})-2\sum_{i=1}^{n}\frac{b_{i}^{-}+b_{i}^{+}}{a_{i}}
\end{align*}
\end{proof}
\begin{rem}
a) In \cite{MR1124279}, Bradlow and Daskalopoulos restrict themselves
to the stable pairs $(\mathcal{E},\varUpsilon)$ where the bundle
$\mathcal{E}$ is semi-stable (here $\mathcal{E}$ denotes $E$ with
a particular holomorphic structure on it). Then they assume $\deg E$
is sufficiently large so that $\bar{\partial}_{C}$ becomes surjective
\cite[Proposition 1.8]{MR1250254}. In fact the proof of our previous
lemma should be compared with their proof of Proposion 2.1, which
is quite similar. Thaddeus also verifies a similar condition in \cite[(1.10)]{MR1273268}.

b) As a sanity check, notice that the real dimension for the moduli
space of stable pairs in Thaddeus paper is \cite[Equation (2.2)]{MR1273268}
$2g-4+2\deg E$, while in our case the expected dimension formula
seems to be $2g-2+2\deg E$. In other words, our dimension formula
is larger than the one Thaddeus gives, so why is that? Basically,
we still haven't taken the quotient by a residual $S^{1}$ action
on the moduli space of stable pairs, which will bring our dimension
down to $2g-3+2\deg E$. Furthermore, Thaddeus is working with a \emph{fixed
}value for the stability parameter $\tau$, which as we explain in
the next section roughly corresponds to fixing a level set for the
moment map function $\mu$ which is defined at the end of this section. 

c) In the next section, we will explain why further assumptions on
$c_{1}(\check{E})$ are needed in order to make the moduli space smooth
at the reducibles (fixed points of the circle action). \textbf{}
\end{rem}

As mentioned in the introduction, we can define a circle action on
the moduli space given by 
\begin{equation}
e^{i\theta}\cdot(C,\varUpsilon)=(C,e^{i\theta}\varUpsilon)\label{Circle action}
\end{equation}
Just as in the case of \cite[Proposition 7.1]{MR887284}, or \cite[Proposition 3.1]{MR1855754}
the fixed points \emph{modulo }gauge correspond to projectively flat
connections or solutions to the abelian vortex equations.
\begin{lem}
A fixed point $[C,\varUpsilon]$ to the circle action takes the form:

i) A projectively flat connection $[C,0]$.

ii) A reducible solution $[C_{L}\oplus(C_{L}^{*}\otimes C^{\det}),\alpha\oplus0]$
compatible with a splitting $\check{E}=\check{L}\oplus(\check{L}^{*}\otimes\det\check{E})$,
where $(C_{L},\alpha)$ satisfies the abelian vortex equation 
\[
\text{abelian vortex equation}\begin{cases}
*F_{C_{L}}-\frac{i}{2}|\alpha|^{2}=*\frac{1}{2}F_{C^{\det}}\\
\bar{\partial}_{C_{L}}\alpha=0
\end{cases}
\]
Moreover, $\alpha$ cannot vanish identically if either of the following
conditions is satisfied:

I) In the smooth case, $c_{1}(\det E)$ is of odd degree.

II) In the orbifold case $\check{\varSigma}=(\varSigma,p_{1},\cdots,p_{n})$
with the multiplicities $a_{i}$ of $p_{i}$ mutually coprime, we
have $\det\check{E}$ is an odd integer power of the fundamental orbifold
line bundle $\check{L}_{0}$ satisfying $c_{1}(\check{L}_{0})=\frac{1}{a_{1}\cdots a_{n}}$
.

\end{lem}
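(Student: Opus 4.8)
The plan is to characterize the fixed points by passing to the infinitesimal gauge action and then upgrading a single parallel endomorphism into a holomorphic splitting. First I would note that, exactly as in the cited results of Hitchin and Feehan--Leness, the point $[C,\varUpsilon]$ is fixed by the circle action \eqref{Circle action} precisely when the generating vector field $\tfrac{d}{d\theta}\big|_{\theta=0}(C,e^{i\theta}\varUpsilon)=(0,i\varUpsilon)$ is tangent to the gauge orbit through $(C,\varUpsilon)$, i.e.\ lies in $\mathrm{Im}\,d_{(C,\varUpsilon)}^{0}$. Using the formula $d_{(C,\varUpsilon)}^{0}(\xi)=(-d_{C}\xi,\xi\varUpsilon)$ from the earlier lemma, this produces a $\xi\in\varOmega^{0}(\check{\varSigma};\mathfrak{su}(\check{E}))$ with $d_{C}\xi=0$ and $\xi\varUpsilon=i\varUpsilon$. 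If $\varUpsilon\equiv0$, the first equation in \eqref{eq:SO(3) vortex equations} forces $*F_{C}^{0}=0$, so $C$ is projectively flat and we are in case i).

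Assuming now $\varUpsilon\not\equiv0$, I would exploit that $\xi$ is a parallel, skew-Hermitian, trace-free endomorphism of $\check{E}$. Its eigenvalues $\{i\mu,-i\mu\}$ are locally constant, hence constant on the connected $\check{\varSigma}$, and the relation $\xi\varUpsilon=i\varUpsilon$ at a point where $\varUpsilon\neq0$ forces $\mu=1$. Thus $\xi$ has eigenvalues $\pm i$ everywhere and its $\pm i$-eigenbundles determine a parallel orthogonal splitting $\check{E}=\check{L}\oplus\check{L}'$. Since $C$ preserves this splitting and induces the fixed connection $C^{\det}$ on $\det\check{E}$, one obtains $\check{L}'\cong\check{L}^{*}\otimes\det\check{E}$ and $C=C_{L}\oplus(C_{L}^{*}\otimes C^{\det})$. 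Because $\varUpsilon$ lies in the $+i$-eigenbundle $\check{L}$ wherever it is nonzero, continuity gives $\varUpsilon=\alpha\oplus0$ with $\alpha\in\varGamma(\check{L})$. Substituting this reduced pair into \eqref{eq:SO(3) vortex equations} and reading off the diagonal $(1,1)$-entry of the first equation together with the $\check{L}$-component of $\bar{\partial}_{C}\varUpsilon=0$ recovers precisely the abelian vortex equations for $(C_{L},\alpha)$, placing us in case ii).

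For the final assertion I would integrate the first abelian vortex equation over $\check{\varSigma}$ against the volume form. Using $\int_{\check{\varSigma}}*F=\int_{\check{\varSigma}}F$ on a surface together with $\tfrac{i}{2\pi}\int_{\check{\varSigma}}F_{C_{L}}=c_{1}(\check{L})$ and the analogous identity for $C^{\det}$, this Chern--Weil computation yields a relation of the form $\|\alpha\|_{L^{2}}^{2}=2\pi\bigl(c_{1}(\det\check{E})-2c_{1}(\check{L})\bigr)$. Hence $\alpha\equiv0$ if and only if $c_{1}(\check{L})=\tfrac{1}{2}c_{1}(\det\check{E})$. Under hypothesis I) we have $c_{1}(\check{L})=\deg\check{L}\in\mathbb{Z}$ while $\tfrac{1}{2}c_{1}(\det E)$ is a half-integer, a contradiction; under hypothesis II), writing $\det\check{E}=\check{L}_{0}^{m}$ with $m$ odd and $\check{L}=\check{L}_{0}^{k}$ forces $2k=m$, again impossible. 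In either case $\alpha$ cannot vanish identically.

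The step I expect to be the main obstacle is the passage from the single parallel endomorphism $\xi$ to a genuine holomorphic orbifold reduction $\check{E}=\check{L}\oplus(\check{L}^{*}\otimes\det\check{E})$: one must verify that the eigenbundles are honest orbifold line bundles whose isotropy data at each $p_{i}$ is compatible with that of $\check{E}$ (so that $\check{L}$ is one of the reductions classified in Section \ref{sec:A-CrashCourse-on}), and that the $\bar{\partial}_{C_{L}}$-holomorphic structure is the one making $\alpha$ holomorphic. Everything else reduces to matching components of the vortex equations and a Chern--Weil integration, both of which are routine.
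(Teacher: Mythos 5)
Your proposal is correct, but it takes a genuinely different route from the paper's on the key structural step. The paper never derives reducibility from the fixed-point condition: it appeals to \cite[Proposition 7.1]{MR887284} and \cite[Proposition 3.1]{MR1855754} for the statement that fixed points are projectively flat connections or reducibles, and then works at a reducible connection with a \emph{general} section $\varUpsilon=\alpha\oplus\beta$. There the off-diagonal entry of the curvature equation gives $\alpha\bar{\beta}=0$; holomorphicity of $\alpha$ and $\beta$ together with unique continuation forces one of them to vanish identically; and a determinant-one gauge transformation swapping the two line sub-bundles (the subject of the remark following the lemma) justifies taking $\beta\equiv0$. You instead extract from the fixed-point condition a $d_{C}$-parallel $\xi\in\varOmega^{0}(\check{\varSigma};\mathfrak{su}(\check{E}))$ with $\xi\varUpsilon=i\varUpsilon$, split $\check{E}$ into its $\pm i$-eigenbundles, and observe that $\varUpsilon$ lands in one summand automatically, so the unique-continuation and swap steps are never needed. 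What each approach buys: yours is self-contained exactly where the paper cites the literature, and the obstacle you flag at the end is easily dispatched, since $\xi$ is equivariant in local uniformizing charts, so its eigenbundles are honest orbifold sub-bundles whose isotropy is induced from that of $\check{E}$; the paper's version, by treating arbitrary $\alpha\oplus\beta$, proves the slightly stronger fact that \emph{every} reducible solution (fixed point or not) has one vanishing component, and it makes explicit where the determinant-one gauge group enters. Two cosmetic points on your write-up: the implication that a fixed point yields $(0,i\varUpsilon)\in\text{Im}\,d_{(C,\varUpsilon)}^{0}$ requires choosing the gauge transformations $u_{\theta}$ smoothly in $\theta$ (the one-parameter-group argument in Hitchin), which you cite rather than prove---the same level of rigor as the paper; and the identity $\int_{\check{\varSigma}}*F=\int_{\check{\varSigma}}F$ should be phrased as multiplying the equation of functions by the volume form before integrating, after which your Chern--Weil endgame for conditions I) and II) coincides exactly with the paper's argument via (\ref{Condition Reducibles}).
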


\begin{rem}
When we have a splitting $\check{E}=\check{L}\oplus(\check{L}^{*}\otimes\det\check{E})$,
it is assumed implicitly that $\check{L}$ has the correct isotropy
as a subbundle of $\check{E}$ as discussed before.
\end{rem}

\begin{proof}
Case i) is clear. 

ii) If $C$ is reducible, then there is a splitting $\check{E}=\check{L}\oplus(\check{L}^{*}\otimes\det\check{E})$
of the $U(2)$ bundle compatible with the reduction $C=C_{L}\oplus(C_{L}^{*}\otimes C^{\det})$,
where $C_{L}$ denotes a $U(1)$ connection on $\check{L}$. Likewise,
we can decompose $\varUpsilon$ into components $\varUpsilon=\alpha\oplus\beta$.
The curvature decomposes as 
\[
F_{C}=\left(\begin{array}{cc}
F_{C_{L}} & 0\\
0 & F_{C^{\det}}-F_{C_{L}}
\end{array}\right)
\]
so the traceless part is
\[
F_{C}^{0}=F_{C}-\frac{1}{2}\text{tr}(F_{C})1_{E}=\left(\begin{array}{cc}
F_{C_{L}}-\frac{1}{2}F_{C^{\det}} & 0\\
0 & \frac{1}{2}F_{C^{\det}}-\frac{1}{2}F_{C_{L}}
\end{array}\right)
\]
The other term of the curvature equation is 
\[
\left(\begin{array}{c}
\alpha\\
\beta
\end{array}\right)\left(\begin{array}{cc}
\bar{\alpha} & \bar{\beta}\end{array}\right)=\left(\begin{array}{cc}
|\alpha|^{2} & \alpha\bar{\beta}\\
\beta\bar{\alpha} & |\beta|^{2}
\end{array}\right)
\]
So 
\[
\varUpsilon\varUpsilon^{*}-\frac{1}{2}|\varUpsilon|^{2}I_{E}=\left(\begin{array}{cc}
\frac{1}{2}|\alpha|^{2}-\frac{1}{2}|\beta|^{2} & \alpha\bar{\beta}\\
\beta\bar{\alpha} & \frac{1}{2}|\beta|^{2}-\frac{1}{2}|\alpha|^{2}
\end{array}\right)
\]
Therefore, the $SO(3)$ vortex equations (\ref{SO(3) VORTEX MAP})
at reducible solutions can be expressed as 
\[
\begin{cases}
*F_{C_{L}}-*\frac{1}{2}F_{C^{\det}}-\frac{i}{2}|\alpha|^{2}+\frac{i}{2}|\beta|^{2}=0\\
\alpha\bar{\beta}=0\\
\bar{\partial}_{C_{L}}\alpha=0\\
\bar{\partial}_{C_{L}^{*}\otimes C^{\det}}\beta=0
\end{cases}
\]
The last two equations say that $\alpha,\beta$ are holomorphic sections,
so by the unique continuation principle, if they vanish on an open
set, they must vanish identically. Using the second equation we can
conclude that \emph{at least }one of the two sections must vanish.
Without loss of generality we can assume that $\beta$ vanishes (see
next remark). Therefore, $\alpha$ must solve the vortex equation
\begin{equation}
U(1)\text{ vortex equations }\begin{cases}
*F_{C_{L}}-\frac{i}{2}|\alpha|^{2}=*\frac{1}{2}F_{C^{\det}}\\
\bar{\partial}_{C_{L}}\alpha=0
\end{cases}\label{abelian vortex equation}
\end{equation}
Up to this point, it is not impossible for $\alpha$ to vanish identically.
If this happens, then we obtain $2F_{C_{\check{L}}}=F_{C^{\det}}$,
and so Chern-Weil theory says that 
\begin{equation}
c_{1}(\det\check{E})=2c_{1}(\check{L})\label{Condition Reducibles}
\end{equation}
Recall that in this case we are referring to the orbifold Chern classes,
which are \emph{rational }cohomology classes in general. 

a) In the smooth case, the condition (\ref{Condition Reducibles})
is not satisfied if we assume that $c_{1}(\det E)$ is of \emph{odd
degree. }

b) In the orbifold case, the classification of topological line bundles
is more complicated as explained before (see also \cite[Proposition 1.4]{MR1185787}).
However, if we assume that $a_{1},\cdots,a_{n}$ are coprime, then
we know that any line bundle has the form $\check{L}_{0}^{k}$ for
some integer $k$. Therefore, $\det\check{E}=\check{L}_{0}^{k}$ for
some $k$ and $\check{L}=\check{L}_{0}^{l}$ for some $l$. Since
$c_{1}$ continues to be a homomorphism, $c_{1}(\det\check{E})=kc_{1}(\check{L}_{0})$
and $c_{1}(\check{L})=lc_{1}(\check{L}_{0})$, so (\ref{Condition Reducibles})
is equivalent to 
\[
k=2l
\]
which again does not have any solution if we assume that $k$ is an
odd integer. 
\end{proof}
\begin{rem}
Notice that in the proof of the previous lemma we said that we could
assume $\beta$ to vanish. In reality this depends on the fact that
we are ultimately interested in studying the solutions \emph{up to
gauge.}

As mentioned before, a gauge transformation which swaps the factors
$\check{L},(\check{L}^{*}\otimes\det\check{E})$ is locally of the
form $\left(\begin{array}{cc}
0 & 1\\
1 & 0
\end{array}\right)$, which is \emph{not }of determinant one. However, the section $\left(\begin{array}{c}
\alpha\\
0
\end{array}\right)$ is gauge equivalent to the section $\left(\begin{array}{c}
0\\
-\alpha
\end{array}\right)$ if we use the matrix $\left(\begin{array}{cc}
0 & 1\\
-1 & 0
\end{array}\right)$, which is of determinant one. Since eventually we are interested
in the moduli space after the quotient by the residual $S^{1}$ action
is taken, then $\left(\begin{array}{c}
0\\
-\alpha
\end{array}\right)$ will be equivalent to $\left(\begin{array}{c}
0\\
\alpha
\end{array}\right)$, so it is possible to swap the factors after the full symmetry is
taken into account. Therefore, for our purposes no generality is lost
by assuming that $\beta$ is the section which vanishes.
\end{rem}

\textbf{}

\begin{lem}
If $(C,\varUpsilon)$ is a solution to the $SO(3)$ vortex equations
(\ref{SO(3) VORTEX MAP}) then $\mathbf{H}_{(C,\varUpsilon)}^{0}$
vanishes identically except when $C$ is a reducible connection and
$\varUpsilon$ vanishes identically.
\end{lem}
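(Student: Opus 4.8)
The plan is to unpack the two conditions contained in the requirement $\xi \in \mathbf{H}^0_{(C,\varUpsilon)}$ and treat them in turn. By definition $\mathbf{H}^0_{(C,\varUpsilon)} = \ker d^0_{(C,\varUpsilon)}$, and since $d^0_{(C,\varUpsilon)}(\xi) = (-d_C\xi, \xi\varUpsilon)$, an element $\xi \in \Omega^0(\check{\varSigma};\mathfrak{su}(\check{E}))$ lies in $\mathbf{H}^0$ precisely when $d_C\xi = 0$ and $\xi\varUpsilon = 0$. I would assume $\xi \neq 0$ and deduce that $C$ must be reducible and $\varUpsilon$ must vanish identically; the converse, that in this exceptional situation $\mathbf{H}^0$ is genuinely nonzero, is immediate.

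First I would exploit $d_C\xi = 0$. A parallel section has constant pointwise norm, since $d|\xi|^2 = 2\langle d_C\xi, \xi\rangle = 0$ by compatibility of the induced connection on $\mathfrak{su}(\check{E})$ with the real inner product $\langle\cdot,\cdot\rangle = \tfrac{1}{2}\text{tr}(\cdot^\dagger\,\cdot)$. Because $\xi$ is a traceless skew-hermitian endomorphism of the rank-two bundle $\check{E}$, its eigenvalues are $\pm i\mu$ for a real function $\mu$, and one computes $|\xi|^2 = \mu^2$; constancy of $|\xi|^2$ then forces $\mu$ to be a nonzero constant whenever $\xi \not\equiv 0$. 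Hence $\xi$ is invertible at every point. Moreover the $\pm i\mu$-eigenbundles of such a parallel $\xi$ are themselves parallel, which produces a covariantly constant splitting $\check{E} = \check{L}\oplus(\check{L}^*\otimes\det\check{E})$; this is exactly the assertion that $C$ is a reducible connection.

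Next I would invoke the second condition $\xi\varUpsilon = 0$. Since the previous step shows $\xi$ is pointwise invertible, this forces $\varUpsilon \equiv 0$. Combining the two steps, a nonzero $\xi \in \mathbf{H}^0$ can exist only when $C$ is reducible and $\varUpsilon$ vanishes identically, which is the desired statement. For the converse, when $C$ is reducible with a parallel splitting and $\varUpsilon \equiv 0$, the endomorphism $\xi = i\,\text{diag}(1,-1)$ (parallel with respect to the reduced connection) satisfies $d_C\xi = 0$ and $\xi\varUpsilon = 0$, so that $\mathbf{H}^0 \neq 0$ in that case; this confirms the exception is sharp.

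I expect the only genuinely delicate point to be the justification that a nonzero parallel traceless skew-hermitian endomorphism of a rank-two bundle is pointwise invertible, i.e. that its eigenvalues are constant and nonzero, together with the identification of ``admits a nonzero parallel element of $\mathfrak{su}(\check{E})$'' with reducibility in the sense of the determinant-one gauge group. On an orbifold these facts hold verbatim, since parallelism is a local condition checked on the local smoothing covers and the eigenbundle decomposition descends to $\check{\varSigma}$. Everything else is routine linear algebra.
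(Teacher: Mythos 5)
Your proof is correct, but it takes a genuinely different route from the paper's. The paper argues through stabilizers: it case-splits on whether $C$ is irreducible --- in which case $\ker d_{C}=\{0\}$ is taken as the defining property and $\mathbf{H}^{0}_{(C,\varUpsilon)}$ vanishes immediately --- or reducible, in which case it uses the structure of reducible solutions (the section takes the form $\varUpsilon=\alpha\oplus0$ compatible with the splitting $\check{E}=\check{L}\oplus(\check{L}^{*}\otimes\det\check{E})$, a fact that itself relies on the vortex equations) and observes that the $S^{1}$ stabilizer of $C_{L}$ acts with nonzero weight on $\alpha$, so a nonvanishing $\alpha$ leaves no infinitesimal stabilizer. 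You instead start from a putative nonzero $\xi\in\ker d^{0}_{(C,\varUpsilon)}$ and extract everything from it: parallelism gives constant norm, tracelessness and skew-hermiticity give eigenvalues $\pm i\mu$ with $\mu=|\xi|$ a nonzero constant, hence pointwise invertibility; the parallel eigenbundles then produce the reduction of $C$, and invertibility together with $\xi\varUpsilon=0$ forces $\varUpsilon\equiv0$. Your argument buys self-containedness and generality: it proves, rather than assumes, the equivalence between reducibility of $C$ and $\ker d_{C}\neq0$, it never needs the compatibility of $\varUpsilon$ with the splitting, and in fact it never invokes the vortex equations at all, so it establishes the dichotomy for arbitrary configurations $(C,\varUpsilon)$ rather than only for solutions. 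The paper's argument buys brevity and fits the gauge-theoretic narrative in which $\mathbf{H}^{0}$ is the Lie algebra of the stabilizer of the pair, whereas yours isolates the underlying linear-algebra mechanism --- a nonzero parallel element of $\mathfrak{su}(\check{E})$ on a rank-two bundle is pointwise invertible with parallel eigenbundles --- which is exactly the delicate point you flagged, and which you justify adequately.
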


\begin{proof}
Recall that $d_{(C,\varUpsilon)}^{0}(\xi)=(-d_{C}\xi,\xi\varUpsilon)$
and that $\mathbf{H}_{(C,\varUpsilon)}^{0}=\ker d_{(C,\varUpsilon)}^{0}$.
It is clear that when $C$ is a reducible connection and $\varUpsilon$
vanishes identically, then $\dim\mathbf{H}_{(C,\varUpsilon)}^{0}\geq1$
{[}it has at least an $S^{1}$ stabilizer{]}.

If $C$ is an irreducible connection, then $\ker d_{C}=\{0\}$ so
$\mathbf{H}_{(C,\varUpsilon)}^{0}$ vanishes. 

If $C$ reduces to $C=C_{L}\oplus(C_{L}^{*}\otimes C^{\det})$, then
$C_{L}$ has at least an $S^{1}$ stabilizer which acts on $\varUpsilon=\alpha\oplus0$
by multiplication. Therefore, if $\alpha$ does not vanish identically,
the action of $S^{1}$ on $(C_{L},\alpha)$ will be trivial, and the
result follows.
\end{proof}
In the next sections the following function (moment map) will be very
important.
\begin{defn}
Define the \textbf{Higgs strength }function 
\begin{align*}
\mu: & \mathcal{A}^{\det}(\check{E})\times\varGamma(\check{E})\rightarrow\mathbb{R}\\
 & (C,\varUpsilon)\rightarrow\frac{1}{2}\|\varUpsilon\|_{L^{2}(\check{\varSigma})}^{2}
\end{align*}
\end{defn}

\begin{lem}
\label{Lem degree constraint}Suppose that $(C_{L},\alpha)$ is an
abelian vortex, i.e, it solves equation (\ref{abelian vortex equation}).
Then 
\[
\mu(C_{L},\alpha)=\pi(c_{1}(\check{E})-2c_{1}(\check{L}))
\]
In particular, an abelian vortex can only appear in a reduction of
the bundle $\check{E}$ if $c_{1}(\check{L})\leq\frac{1}{2}c_{1}(\check{E})$.
\end{lem}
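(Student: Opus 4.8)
The plan is to integrate the first abelian vortex equation over $\check{\varSigma}$ and extract $\|\alpha\|_{L^2}^2$ via Chern-Weil theory. First I would record that in the reduction $\check{E}=\check{L}\oplus(\check{L}^{*}\otimes\det\check{E})$ we have $\varUpsilon=\alpha\oplus0$, so $|\varUpsilon|^{2}=|\alpha|^{2}$ and hence $\mu(C_{L},\alpha)=\frac{1}{2}\|\varUpsilon\|_{L^{2}(\check{\varSigma})}^{2}=\frac{1}{2}\|\alpha\|_{L^{2}(\check{\varSigma})}^{2}$. Thus it suffices to evaluate $\|\alpha\|_{L^{2}}^{2}$.

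Next I would rewrite the curvature equation as an identity of $2$-forms. The first equation, $*F_{C_{L}}-\frac{i}{2}|\alpha|^{2}=*\frac{1}{2}F_{C^{\det}}$, is an equality of imaginary-valued $0$-forms on $\check{\varSigma}$. Applying the Hodge star once more and using $**=\text{Id}$ on $0$- and $2$-forms on an oriented surface turns this into $F_{C_{L}}-\frac{i}{2}|\alpha|^{2}\,\text{vol}=\frac{1}{2}F_{C^{\det}}$. Integrating over the orbifold, which is well-defined by the discussion in Section \ref{sec:A-CrashCourse-on}, gives $\int_{\check{\varSigma}}F_{C_{L}}-\frac{i}{2}\|\alpha\|_{L^{2}}^{2}=\frac{1}{2}\int_{\check{\varSigma}}F_{C^{\det}}$, since $\int_{\check{\varSigma}}|\alpha|^{2}\,\text{vol}=\|\alpha\|_{L^{2}}^{2}$.

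Then I would invoke the orbifold Chern-Weil formula $c_{1}(\check{L})=[\frac{i}{2\pi}F_{C_{L}}]$, together with its analogue for $\det\check{E}$, to replace the curvature integrals. Under the usual identification of these rational classes with numbers this reads $\int_{\check{\varSigma}}F_{C_{L}}=-2\pi i\,c_{1}(\check{L})$ and $\int_{\check{\varSigma}}F_{C^{\det}}=-2\pi i\,c_{1}(\det\check{E})=-2\pi i\,c_{1}(\check{E})$. Substituting and dividing through by $-i$ yields $2\pi c_{1}(\check{L})+\frac{1}{2}\|\alpha\|_{L^{2}}^{2}=\pi c_{1}(\check{E})$, that is, $\mu(C_{L},\alpha)=\frac{1}{2}\|\alpha\|_{L^{2}}^{2}=\pi(c_{1}(\check{E})-2c_{1}(\check{L}))$, which is the claimed formula.

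Finally, since $\mu(C_{L},\alpha)$ is manifestly a nonnegative multiple of a squared $L^{2}$ norm, the right-hand side must be nonnegative, forcing $c_{1}(\check{L})\leq\frac{1}{2}c_{1}(\check{E})$; this is the asserted necessary condition for an abelian vortex to appear in a reduction of $\check{E}$. The only delicate point is the orbifold bookkeeping: the connection $C_{L}$ differs from a genuine bundle connection by the fixed holonomy-parameter forms $A_{\lambda}=i\lambda\,d\theta$ near the marked points, so $\int_{\check{\varSigma}}\frac{i}{2\pi}F_{C_{L}}$ computes the \emph{rational} orbifold Chern class $c_{1}(\check{L})$ rather than an integer degree. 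I expect this to be the main (though mild) obstacle, and it is dispatched entirely by the orbifold Chern-Weil theory already established in Section \ref{sec:A-CrashCourse-on}.
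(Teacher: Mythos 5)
Your proof is correct and is essentially the paper's own argument: both integrate the curvature equation $*F_{C_{L}}-\frac{i}{2}|\alpha|^{2}=*\frac{1}{2}F_{C^{\det}}$ over $\check{\varSigma}$, convert the curvature integrals to $c_{1}(\check{L})$ and $c_{1}(\check{E})$ via orbifold Chern--Weil theory, and deduce the inequality from nonnegativity of $\|\alpha\|_{L^{2}}^{2}$. Whether one applies the Hodge star first (as you do) or integrates the $0$-form identity directly (as the paper does) is purely presentational.
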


\begin{proof}
The curvature equation $*F_{C_{L}}-\frac{i}{2}|\alpha|^{2}=*\frac{1}{2}F_{C^{\det}}$
is equivalent to 
\[
\frac{*iF_{C_{L}}}{2\pi}+\frac{1}{4\pi}|\alpha|^{2}=\frac{*iF_{C^{\det}}}{4\pi}
\]
and after integrating of $\check{\varSigma}$ we conclude that 
\[
\frac{1}{4\pi}\int|\alpha|^{2}=\frac{1}{2}c_{1}(\check{E})-c_{1}(\check{L})
\]
from which the result follows.
\end{proof}
Now we discuss the interpretation of $\mu$ as a moment map. For this
we need to introduce other structures on the moduli space. Recall
that in equation (\ref{Inner product}) we defined the real inner
product 
\[
\mathbf{g}_{(C,\varUpsilon)}((\dot{c}_{1},\dot{\varUpsilon}_{1}),(\dot{c}_{2},\dot{\varUpsilon}_{2}))=\int_{\check{\varSigma}}\left\langle \dot{c}_{1},\dot{c}_{2}\right\rangle +\text{Re}\left\langle \dot{\varUpsilon}_{1},\dot{\varUpsilon}_{2}\right\rangle 
\]
With respect to this inner product we have:
\begin{lem}
\label{lem:almost complex structure}The endomorphism of $\mathbf{H}_{(C,\varUpsilon)}^{1}=\ker(d_{(C,\varUpsilon)}^{0,*}\oplus\mathcal{D}\mathfrak{F}_{SO(3),(C,\varUpsilon)})$
given by 
\[
\mathbf{J}_{(C,\varUpsilon)}(\dot{c},\dot{\varUpsilon})=(*\dot{c},i\dot{\varUpsilon})
\]
defines an almost complex structure on $\mathbf{H}_{(C,\varUpsilon)}^{1}$
which is $\mathbf{g}$ -orthogonal, i.e, $\mathbf{g}(\mathbf{J}\bullet,\mathbf{J}\bullet)=\mathbf{g}(\bullet,\bullet)$. 
\end{lem}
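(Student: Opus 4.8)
The plan is to check the three constituent claims separately: that $\mathbf{J}$ squares to minus the identity, that it carries $\mathbf{H}^1_{(C,\varUpsilon)}$ into itself, and that it is $\mathbf{g}$-orthogonal. The first and third are immediate, and essentially the entire content of the lemma sits in the invariance of the harmonic space.

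First I would dispose of $\mathbf{J}^2=-\text{Id}$: on a Riemann surface $*^2=-\text{Id}$ on one-forms (the remark following Lemma~\ref{Lem Linearization vortex map}) and $i^2=-1$ on sections, so $\mathbf{J}^2(\dot c,\dot\varUpsilon)=(*^2\dot c,i^2\dot\varUpsilon)=-(\dot c,\dot\varUpsilon)$. The $\mathbf{g}$-orthogonality is equally quick and pointwise: the Hodge star is a fibrewise isometry on $\varOmega^1(\check\varSigma;\mathfrak{su}(\check E))$ in real dimension two, so $\langle *\dot c_1,*\dot c_2\rangle=\langle\dot c_1,\dot c_2\rangle$, while $\langle i\dot\varUpsilon_1,i\dot\varUpsilon_2\rangle=\langle\dot\varUpsilon_1,\dot\varUpsilon_2\rangle$ for the Hermitian pairing; taking real parts and integrating over $\check\varSigma$ shows each summand in the definition~(\ref{Inner product}) of $\mathbf{g}$ is preserved, hence $\mathbf{g}(\mathbf{J}\bullet,\mathbf{J}\bullet)=\mathbf{g}(\bullet,\bullet)$.

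The substance is the invariance of $\mathbf{H}^1_{(C,\varUpsilon)}$. Writing $\mathcal{D}\mathfrak{F}_{SO(3),(C,\varUpsilon)}=(\mathcal{D}\mathfrak{F}^1,\mathcal{D}\mathfrak{F}^2)$, membership in $\mathbf{H}^1$ is the simultaneous vanishing of $d^{0,*}_{(C,\varUpsilon)}$, $\mathcal{D}\mathfrak{F}^1$ and $\mathcal{D}\mathfrak{F}^2$. I intend to reduce everything to two intertwining identities. The first, $\mathcal{D}\mathfrak{F}^1=-d^{0,*}_{(C,\varUpsilon)}\circ\mathbf{J}$, is the relation asserted (and deferred to here) in the proof of Lemma~\ref{lem:Suppose-that-smoothness criterion}; I would establish it directly by substituting $\mathbf{J}(\dot c,\dot\varUpsilon)=(*\dot c,i\dot\varUpsilon)$ into the formula~(\ref{Coulomb condition}) for $d^{0,*}$, using $(i\dot\varUpsilon)^*=-i\dot\varUpsilon^*$, $\langle i\varUpsilon,i\dot\varUpsilon\rangle=\langle\varUpsilon,\dot\varUpsilon\rangle$, and $d_C^*(*\dot c)=*d_C\dot c$ in dimension two, and comparing with the first component of the linearization in Lemma~\ref{Lem Linearization vortex map}. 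The second identity, $\mathcal{D}\mathfrak{F}^2\circ\mathbf{J}=i\,\mathcal{D}\mathfrak{F}^2$, comes from $\mathcal{D}\mathfrak{F}^2(\dot c,\dot\varUpsilon)=\bar\partial_C\dot\varUpsilon+\dot c''\otimes\varUpsilon$ together with the fact that the $(0,1)$-part of $*\dot c$ is $i\dot c''$ (from $*\dot c=-i\dot c'+i\dot c''$), which turns $\bar\partial_C(i\dot\varUpsilon)+(*\dot c)''\otimes\varUpsilon$ into $i(\bar\partial_C\dot\varUpsilon+\dot c''\otimes\varUpsilon)$.

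Granting these two identities, the invariance is then purely formal, and this is the step I would present last. For $(\dot c,\dot\varUpsilon)\in\mathbf{H}^1$ one has $d^{0,*}\mathbf{J}(\dot c,\dot\varUpsilon)=-\mathcal{D}\mathfrak{F}^1(\dot c,\dot\varUpsilon)=0$; also $\mathcal{D}\mathfrak{F}^1\mathbf{J}(\dot c,\dot\varUpsilon)=-d^{0,*}\mathbf{J}^2(\dot c,\dot\varUpsilon)=d^{0,*}(\dot c,\dot\varUpsilon)=0$ using $\mathbf{J}^2=-\text{Id}$; and $\mathcal{D}\mathfrak{F}^2\mathbf{J}(\dot c,\dot\varUpsilon)=i\,\mathcal{D}\mathfrak{F}^2(\dot c,\dot\varUpsilon)=0$. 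Thus $\mathbf{J}(\dot c,\dot\varUpsilon)$ again lies in the common kernel $\mathbf{H}^1$. The only genuine calculation in the whole argument is the verification of these two intertwining relations, and within them the one place demanding care is extracting the correct $(0,1)$-component of $*\dot c$; I expect this bookkeeping with the Hodge star and the type decomposition to be the main (if modest) obstacle, everything else following formally from $\mathbf{J}^2=-\text{Id}$ and the adjoint relation.
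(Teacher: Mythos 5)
Your proposal is correct and follows essentially the same route as the paper: the invariance of $\mathbf{H}^1_{(C,\varUpsilon)}$ rests on the same two facts, namely that $*\dot c''=i\dot c''$ preserves the $\bar\partial$-equation and that the Coulomb condition and $\mathcal{D}\mathfrak{F}^1$ are interchanged (up to sign) by $\mathbf{J}$. The only cosmetic difference is that you package the swap as the operator identity $\mathcal{D}\mathfrak{F}^1=-d^{0,*}_{(C,\varUpsilon)}\circ\mathbf{J}$ and deduce the second invariance formally from $\mathbf{J}^2=-\mathrm{Id}$, whereas the paper carries out both substitutions explicitly.
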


\begin{proof}
Checking that $\mathbf{J}^{2}=-\mathbf{Id}$ is straightforward as
well as the compatibility with $\mathbf{g}$. The only interesting
thing to verify is that $\mathbf{H}_{(C,\varUpsilon)}^{1}$ is preserved
by this almost-complex structure. Recall that $\mathbf{H}_{(C,\varUpsilon)}^{1}$
consisted of the equations 
\[
\begin{cases}
-d_{C}^{*}\dot{c}+[\dot{\varUpsilon}\varUpsilon^{*}-\varUpsilon\dot{\varUpsilon}^{*}-i\text{Re}<i\varUpsilon,\dot{\varUpsilon}>I_{E}]=0\\
*d_{C}\dot{c}-i\left[\varUpsilon\dot{\varUpsilon}^{*}+\dot{\varUpsilon}\varUpsilon^{*}-\text{Re}\left\langle \varUpsilon,\dot{\varUpsilon}\right\rangle I_{E}\right]=0\\
\bar{\partial}_{C}\dot{\varUpsilon}+\dot{c}''\otimes\varUpsilon=0
\end{cases}
\]
Since $*\dot{c}''=i\dot{c}''$ , it is straightforward to check that
the last equation is preserved under $\mathbf{J}$. For the first
equation,
\begin{align*}
 & -d_{C}^{*}(*\dot{c})+[(i\dot{\varUpsilon})\varUpsilon^{*}-\varUpsilon(i\dot{\varUpsilon})^{*}-i\text{Re}<i\varUpsilon,i\dot{\varUpsilon}>I_{E}]\\
= & *d_{C}(**)\dot{c}+i[\dot{\varUpsilon}\varUpsilon^{*}+\varUpsilon\dot{\varUpsilon}{}^{*}-\text{Re}<i\varUpsilon,i\dot{\varUpsilon}>I_{E}]\\
= & -\left(*d_{C}\dot{c}-i\left[\varUpsilon\dot{\varUpsilon}^{*}+\dot{\varUpsilon}\varUpsilon^{*}-\text{Re}\left\langle \varUpsilon,\dot{\varUpsilon}\right\rangle I_{E}\right]\right)\\
= & 0
\end{align*}
 Conversely, 
\begin{align*}
 & *d_{C}(*\dot{c})-i\left[\varUpsilon(i\dot{\varUpsilon})^{*}+(i\dot{\varUpsilon})\varUpsilon^{*}-\text{Re}\left\langle \varUpsilon,i\dot{\varUpsilon}\right\rangle I_{E}\right]\\
= & -d_{C}^{*}\dot{c}-i\left[-i\varUpsilon\dot{\varUpsilon}+i\dot{\varUpsilon}\varUpsilon^{*}+\text{Re}\left\langle i\varUpsilon,\dot{\varUpsilon}\right\rangle I_{E}\right]\\
= & -d_{C}^{*}\dot{c}+\left[-\varUpsilon\dot{\varUpsilon}+\dot{\varUpsilon}\varUpsilon^{*}-i\text{Re}\left\langle i\varUpsilon,\dot{\varUpsilon}\right\rangle I_{E}\right]\\
= & 0
\end{align*}
 So the claim is verified.
\end{proof}
Therefore, we can define the 2-form 
\[
\boldsymbol{\varOmega}(\bullet,\bullet)=\mathbf{g}(\bullet,\mathbf{J}\bullet)
\]
 Since the formulas for $\mathbf{g},\mathbf{J},\boldsymbol{\varOmega}$
are base-point independent, it is clear that $\boldsymbol{\varOmega}$
will be a closed two-form, and $\mathbf{J}$ being integrable implies
that we have a Kahler form on the moduli space. 

Now consider the vector field 
\[
\boldsymbol{\partial}_{\theta,(C,\varUpsilon)}=(0,i\varUpsilon)
\]
This is clearly the vector field associated to the $S^{1}$ action
defined before in equation (\ref{Circle action}). 

Moreover, notice that 
\begin{align*}
 & (\imath_{\boldsymbol{\partial}_{\theta}}\boldsymbol{\varOmega})(\dot{c},\dot{\varUpsilon})\\
= & \boldsymbol{\varOmega}((0,i\varUpsilon),(\dot{c},\dot{\varUpsilon}))\\
= & \int\text{Re}\left\langle i\varUpsilon,i\dot{\varUpsilon}\right\rangle \\
= & \int\text{Re}\left\langle \varUpsilon,\dot{\varUpsilon}\right\rangle \\
= & d\mu_{(C,\varUpsilon)}(\dot{c},\dot{\varUpsilon})
\end{align*}
Where $\mu$ was the Higgs strength function. This is precisely the
condition that guarantees that $\mu$ is the moment map associated
to the $S^{1}$ action \cite[Chapter 2]{MR1929136}. 

However, notice that $(0,i\varUpsilon)$ does not belong to $\mathbf{H}_{(C,\varUpsilon)}^{1}$!
Indeed, if one looks at whether $(0,i\varUpsilon)$ satisfies the
first equation for $\mathbf{H}_{(C,\varUpsilon)}^{1}$, we see that
\begin{align*}
 & (i\varUpsilon)\varUpsilon^{*}-\varUpsilon(i\varUpsilon)^{*}-i\text{Re}<i\varUpsilon,i\varUpsilon>I_{E}\\
= & 2i\varUpsilon\varUpsilon^{*}-i|\varUpsilon|^{2}I_{E}
\end{align*}
and this clearly need not vanish. However, since $(0,i\varUpsilon)$
does satisfy the other two equations defining $\mathbf{H}_{(C,\varUpsilon)}^{1}$,
this just means that the representative for $(0,i\varUpsilon)$ in
$\mathbf{H}_{(C,\varUpsilon)}^{1}$ is of the form $(0,i\varUpsilon)+d_{(C,\varUpsilon)}^{0}\xi$,
for some $\xi\in\varOmega^{0}(\check{\varSigma},\mathfrak{su}(E))$. 

Therefore, in order to check that the moment map condition holds with
respect to the vector $(0,i\varUpsilon)+d_{(C,\varUpsilon)}^{0}\xi$,
it suffices to know that for any $(\dot{c},\dot{\varUpsilon})\in\mathbf{H}_{(C,\varUpsilon)}^{1}$,
we have 
\[
\boldsymbol{\varOmega}(d_{(C,\varUpsilon)}^{0}\xi,(\dot{c},\dot{\varUpsilon}))=0
\]
See also the discussion in \cite[Section iv), page 294]{MR1066174}.
In our case, this condition is satisfied, since
\begin{align*}
 & \boldsymbol{\varOmega}(d_{(C,\varUpsilon)}^{0}\xi,(\dot{c},\dot{\varUpsilon}))\\
= & \boldsymbol{g}(d_{(C,\varUpsilon)}^{0}\xi,\mathbf{J}(\dot{c},\dot{\varUpsilon}))\\
= & \left\langle \xi,d_{(C,\varUpsilon)}^{0,*}\mathbf{J}(\dot{c},\dot{\varUpsilon})\right\rangle \\
= & 0
\end{align*}
 The last equality is true since $\mathbf{J}(\dot{c},\dot{\varUpsilon})\in\mathbf{H}_{(C,\varUpsilon)}^{1}$,
and therefore it satisfies the Coulomb condition. Summarizing our
findings we have found:
\begin{thm}
\label{moment map theorem}The two-form $\boldsymbol{\varOmega}$
is a Kahler form on the smooth points of the $SO(3)$ vortex moduli
space, and $\mu$ is a moment map for the $S^{1}$ action defined
in (\ref{Circle action}).
\end{thm}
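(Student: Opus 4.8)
The plan is to read the statement as the assembly of three facts that the preceding discussion has essentially already supplied, organized around the observation that the ambient configuration space $\mathcal{C}(\check{\varSigma},\check{E})=\mathcal{A}^{\det}(\check{E})\times\varGamma(\check{E})$ is a flat affine space. On this affine space the metric $\mathbf{g}$, the endomorphism $\mathbf{J}(\dot{c},\dot{\varUpsilon})=(*\dot{c},i\dot{\varUpsilon})$, and the pairing $\boldsymbol{\varOmega}=\mathbf{g}(\bullet,\mathbf{J}\bullet)$ are all translation invariant: their defining formulas do not depend on the base point. Hence $\boldsymbol{\varOmega}$ is a constant, and therefore automatically closed, two-form upstairs. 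The moduli space then arises as a K\"ahler quotient, and at a smooth point $[C,\varUpsilon]$ its tangent space is modeled on the harmonic slice $\mathbf{H}_{(C,\varUpsilon)}^{1}$.

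For the K\"ahler assertion, I would invoke Lemma~(\ref{lem:almost complex structure}): it already records that $\mathbf{J}^{2}=-\mathbf{Id}$, that $\mathbf{J}$ is $\mathbf{g}$-orthogonal, and, crucially, that $\mathbf{J}$ preserves $\mathbf{H}_{(C,\varUpsilon)}^{1}$. Consequently $\mathbf{g}$, $\mathbf{J}$ and $\boldsymbol{\varOmega}$ restrict to the slice and define a compatible almost Hermitian triple there. Closedness of $\boldsymbol{\varOmega}$ on the quotient descends from its constancy (hence closedness) on the affine configuration space, and integrability of $\mathbf{J}$ likewise descends from the constant — thus integrable — complex structure on $\mathcal{C}(\check{\varSigma},\check{E})$. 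Combining closedness with integrability gives the K\"ahler condition at the smooth points.

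For the moment map assertion, I would exhibit the generating vector field of the circle action (\ref{Circle action}) as $\boldsymbol{\partial}_{\theta,(C,\varUpsilon)}=(0,i\varUpsilon)$ and carry out the direct pairing $\imath_{\boldsymbol{\partial}_{\theta}}\boldsymbol{\varOmega}(\dot{c},\dot{\varUpsilon})=\int_{\check{\varSigma}}\text{Re}\langle\varUpsilon,\dot{\varUpsilon}\rangle=d\mu_{(C,\varUpsilon)}(\dot{c},\dot{\varUpsilon})$, which is exactly the defining identity $\imath_{\boldsymbol{\partial}_{\theta}}\boldsymbol{\varOmega}=d\mu$ for a moment map associated to the $S^{1}$ action.

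The main obstacle — and the step deserving the most care — is that $(0,i\varUpsilon)$ does \emph{not} lie in $\mathbf{H}_{(C,\varUpsilon)}^{1}$: it satisfies the two linearized vortex equations but fails the Coulomb gauge-fixing condition $d_{(C,\varUpsilon)}^{0,*}=0$, so it is not literally a tangent vector to the moduli space. The resolution is that its harmonic representative differs from it by a pure gauge direction $d_{(C,\varUpsilon)}^{0}\xi$, and one must check that this correction is symplectically invisible, i.e. $\boldsymbol{\varOmega}(d_{(C,\varUpsilon)}^{0}\xi,(\dot{c},\dot{\varUpsilon}))=0$ for every $(\dot{c},\dot{\varUpsilon})\in\mathbf{H}_{(C,\varUpsilon)}^{1}$. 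This follows from the identity $\boldsymbol{\varOmega}(d_{(C,\varUpsilon)}^{0}\xi,\bullet)=\mathbf{g}(d_{(C,\varUpsilon)}^{0}\xi,\mathbf{J}\bullet)=\langle\xi,d_{(C,\varUpsilon)}^{0,*}\mathbf{J}\bullet\rangle$, which vanishes because $\mathbf{J}\bullet$ again satisfies the Coulomb condition $d_{(C,\varUpsilon)}^{0,*}\mathbf{J}\bullet=0$ — precisely the content of $\mathbf{J}$ preserving $\mathbf{H}_{(C,\varUpsilon)}^{1}$. Thus the moment map identity, verified for the representative $(0,i\varUpsilon)$, passes correctly to the genuine tangent vector in $\mathbf{H}_{(C,\varUpsilon)}^{1}$, completing the argument.
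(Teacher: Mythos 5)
Your proposal is correct and follows essentially the same route as the paper: closedness of $\boldsymbol{\varOmega}$ from the base-point independence of $\mathbf{g},\mathbf{J},\boldsymbol{\varOmega}$, the K\"ahler property via Lemma~(\ref{lem:almost complex structure}) and integrability of $\mathbf{J}$, the direct computation $\imath_{\boldsymbol{\partial}_{\theta}}\boldsymbol{\varOmega}=d\mu$ for the generator $(0,i\varUpsilon)$, and the key correction step showing $\boldsymbol{\varOmega}(d_{(C,\varUpsilon)}^{0}\xi,\bullet)=0$ on $\mathbf{H}_{(C,\varUpsilon)}^{1}$ because $\mathbf{J}$ preserves the Coulomb condition. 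You even flag the same subtlety the paper emphasizes, namely that $(0,i\varUpsilon)$ satisfies the linearized equations but fails the gauge-fixing condition, so nothing is missing.
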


In the next few sections we will study some properties of the map
$\mu$ using this moment map interpretation. 

\section{\label{sec:-Vortices-and Stable Pairs}$SO(3)$ Vortices and Stable
Pairs}

In order to study the moment map $\mu$ more easily, we will now compare
in this section and the next the $SO(3)$ vortex equations to the
stable pairs equations \cite{MR1273268,MR1085139,MR1124279,MR1396775,MR1297851}.

A \textbf{stable pair} $(\bar{\partial}_{C},\varUpsilon)$ consists
for us of a holomorphic structure $\bar{\partial}_{C}$ on the orbifold
bundle $\check{E}$, together with a holomorphic section $\varUpsilon$
of $\check{E}$, that is, $\bar{\partial}_{C}\varUpsilon=0$. When
convenient, we will sometimes write the stable pair as $(\check{\mathcal{E}},\varUpsilon)$,
where $\check{\mathcal{E}}$ refers to the bundle $\check{E}$ endowed
with the holomorphic structure given by $\bar{\partial}_{C}$. 

The complex gauge group $\mathcal{G}_{\mathbb{C}}^{\det}(\check{E})$
acts on the space of stable pairs via 
\[
u\cdot(\bar{\partial}_{C},\varUpsilon)=(u\circ\bar{\partial}_{C}\circ u^{-1},u\varUpsilon)=(\bar{\partial}_{C}-(\bar{\partial}_{C}u)u^{-1},u\varUpsilon)
\]
Following \cite[Section 6.4]{MR1079726}, we can identify the tangent
space to the space of $\bar{\partial}$ operators with $\varOmega^{0,1}(\text{End}_{0}\check{E})$,
where as usual $\text{End}_{0}\check{E}$ represents the bundle of
trace free endomorphisms of $\check{E}$ (this is because we are working
with bundles having a fixed determinant). The linearization of the
derivative of the complex gauge group action is $\bar{\partial}_{C}$,
and so we have:
\begin{lem}
The deformation theorem for the stable pair equation is determined
by the cohomolology groups of the complex
\[
0\rightarrow\varOmega^{0}(\check{\varSigma},\text{End}_{0}\check{E})\rightarrow^{d_{1}}\varOmega^{0,1}(\check{\varSigma},\text{End}_{0}\check{E})\oplus\varOmega^{0}(\check{\varSigma},\check{E})\rightarrow^{d_{2}}\varOmega^{0,1}(\check{E})\rightarrow0
\]
where 
\[
\begin{cases}
d_{1}\xi=(-\bar{\partial}_{C}\xi,\xi\varUpsilon)\\
d_{2}(\dot{c}'',\dot{\varUpsilon})=\bar{\partial}_{C}\dot{\varUpsilon}+\dot{c}''\varUpsilon
\end{cases}
\]
\end{lem}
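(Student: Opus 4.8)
The plan is to exhibit $d_1$ and $d_2$ as the linearizations at a solution $(\bar{\partial}_C,\varUpsilon)$ of, respectively, the infinitesimal complex gauge action and the holomorphicity equation $\bar{\partial}_C\varUpsilon=0$, and then to check that they assemble into a genuine complex whose cohomology carries the standard deformation-theoretic meaning. Because the determinant is fixed, the Lie algebra of $\mathcal{G}_{\mathbb{C}}^{\det}(\check{E})$ is $\varOmega^0(\check{\varSigma},\text{End}_0\check{E})$, and the tangent space to the space of holomorphic structures inducing the given one on $\det\check{E}$ is $\varOmega^{0,1}(\check{\varSigma},\text{End}_0\check{E})$; this accounts for the trace-free factors in the complex, while the target $\varOmega^{0,1}(\check{E})$ is simply where the defining equation takes values.

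First I would derive $d_1$ by differentiating the action. Writing $u_t=\exp(t\xi)$ with $\xi\in\varOmega^0(\check{\varSigma},\text{End}_0\check{E})$ and using $u\cdot(\bar{\partial}_C,\varUpsilon)=(\bar{\partial}_C-(\bar{\partial}_C u)u^{-1},u\varUpsilon)$, one computes $\frac{d}{dt}\big|_{t=0}u_t\cdot(\bar{\partial}_C,\varUpsilon)=(-\bar{\partial}_C\xi,\xi\varUpsilon)$; the first component reduces to $-\bar{\partial}_C\xi$ precisely because $\bar{\partial}_C(\mathrm{Id})=0$ kills the cross term in the product rule. Next I would linearize the equation: substituting $(\bar{\partial}_C+\dot{c}'',\varUpsilon+\dot{\varUpsilon})$ into $\bar{\partial}_C\varUpsilon=0$ and discarding the quadratic term $\dot{c}''\dot{\varUpsilon}$ gives $d_2(\dot{c}'',\dot{\varUpsilon})=\bar{\partial}_C\dot{\varUpsilon}+\dot{c}''\varUpsilon$, as claimed.

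The key verification is $d_2\circ d_1=0$, which is where holomorphicity of $\varUpsilon$ enters. By the Leibniz rule for the induced operator $\bar{\partial}_C$ on $\text{End}_0\check{E}$,
\begin{align*}
d_2(d_1\xi) &=\bar{\partial}_C(\xi\varUpsilon)-(\bar{\partial}_C\xi)\varUpsilon \\
&=(\bar{\partial}_C\xi)\varUpsilon+\xi(\bar{\partial}_C\varUpsilon)-(\bar{\partial}_C\xi)\varUpsilon=\xi(\bar{\partial}_C\varUpsilon)=0,
\end{align*}
the last step using $\bar{\partial}_C\varUpsilon=0$, so the sequence is a complex exactly at solutions of the stable pair equation, in parallel with the earlier $SO(3)$ vortex complex. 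I would then record the usual interpretation: $H^0=\ker d_1$ is the space of infinitesimal automorphisms (the stabilizer of the pair under $\mathcal{G}_{\mathbb{C}}^{\det}$), $H^1=\ker d_2/\text{Im}\,d_1$ is the Zariski tangent space to the moduli of stable pairs, and $H^2=\text{coker}\,d_2$ is the obstruction space, so that vanishing of $H^2$ gives smoothness with local dimension $\dim H^1$.

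The main obstacle is bookkeeping rather than analysis: getting the sign and the surviving term in $d_1$ correct (resting on $\bar{\partial}_C(\mathrm{Id})=0$), and applying the Leibniz rule with $\bar{\partial}_C$ interpreted as the correct induced operator on endomorphisms in the middle term. On a compact orbifold Riemann surface the maps $\bar{\partial}_C$ are elliptic and Fredholm just as in the smooth case, so no new functional-analytic difficulty arises; the orbifold structure affects only the index computation, which is handled by the orbifold Riemann--Roch theorem established earlier.
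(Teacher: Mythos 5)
Your proposal is correct and follows essentially the same route as the paper, which sets up the linearizations of the complex gauge action and of the holomorphicity equation and then dismisses the verification as "a simple computation" (the orbifold version of Bradlow--Daskalopoulos, Proposition 2.1). You have simply filled in the details the paper omits — the derivation of $d_{1}$ and $d_{2}$, and the check that $d_{2}\circ d_{1}=\xi(\bar{\partial}_{C}\varUpsilon)=0$ at a solution — all of which are accurate.
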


\begin{proof}
This is a simple computation, and essentially it is the orbifold version
of \cite[Proposition 2.1]{MR1124279}
\end{proof}
Now we review the relevant version of the Hitchin-Kobayashi correspondence
between solutions to the $SO(3)$ vortex equations and stable pairs.
This correspondence has found different instantiations, and the orbifold
version is a trivial extension of the argument given in \cite[Theorem 1.4.2]{MR2254074,MR2700715},
since integration by parts and the Kahler identities continue to work
on orbifolds. 

Observe first of all that if $(C,\varUpsilon)$ is an $SO(3)$ vortex,
then $C$ naturally defines a holomorphic structure $\bar{\partial}_{C}$,
and $(\bar{\partial}_{C},\varUpsilon)$ is a stable pair. Since $\mathcal{G}^{\det}(\check{E})$
is a subgroup of $\mathcal{G}_{\mathbb{C}}^{\det}(\check{E})$, this
means that there is a well-defined map
\begin{align}
\mathcal{M}^{vor}(\check{\varSigma},\check{E})\rightarrow\mathcal{M}^{st}(\check{\varSigma},\check{E})\label{eq: KH map}\\{}
[C,\varUpsilon]_{\mathbb{R}}\rightarrow[\bar{\partial}_{C},\varUpsilon]_{\mathbb{C}}\nonumber 
\end{align}
 where $[\cdot]_{\mathbb{R}}$ denotes the gauge equivalence class
with respect to the ``real'' gauge group $\mathcal{G}^{\det}(\check{E})$
, while $[\cdot]_{\mathbb{C}}$ denotes the gauge equivalence class
with respect to the complexified gauge group $\mathcal{G}_{\mathbb{C}}^{\det}(\check{E})$.
We need to understand the surjectivity of this map.

We will focus in the case that $C$ is irreducible and $\varUpsilon$
is nowhere vanishing, since the case of $(C,0)$ can be found in \cite{MR1863850},
while the case of a $U(1)$ vortex on an orbifold can be found in
\cite[Theorem 5]{MR1611061}.

We recall first the definition of stability for a pair $(\bar{\partial}_{C},\varUpsilon)$
\cite[Section 1.3]{MR2700715}, \cite[Chapter 1,  p.3]{MR2254074}.
Let $\check{\mathcal{E}}$ denote the bundle $\check{E}$ with the
holomorphic structure induced by $\bar{\partial}_{C}$. Denote by
$\mathcal{R}(\check{\mathcal{E}})$ the set of subsheaves of $\check{\mathcal{E}}$
with torsion free quotients.  For any sheaf $\check{F}$ define the
\textbf{slope $\mu_{\check{g}}(\check{F})$ }as 
\[
\mu_{\check{g}}(\check{F})=\frac{\int_{\check{\varSigma}}c_{1}(\check{F})}{\text{rk}(\check{F})}\in\mathbb{Q}
\]

\begin{defn}
The pair $(\check{\mathcal{E}},\varUpsilon)$ is \textbf{stable }if
$\mu_{\check{g}}(\check{\mathcal{F}})<\mu_{\check{g}}(\check{\mathcal{E}})$
whenever $\check{\mathcal{F}}\in\mathcal{R}(\check{\mathcal{E}})$
and $\varUpsilon\in H^{0}(\check{\mathcal{F}})$. In other words,
\[
c_{1}(\check{L})<\frac{1}{2}c_{1}(\check{E})
\]
whenever $\varUpsilon\in H^{0}(\check{L})$.
\end{defn}

The same arguments to those in \cite[Section 2.3]{MR2700715} show
that a pair $(\bar{\partial}_{C},\varUpsilon)$ which solves the\textbf{
projective vortex equation} 
\[
*F_{C}^{0}-i\left[\varUpsilon\varUpsilon^{*}-\frac{1}{2}|\varUpsilon|^{2}I_{E}\right]=0
\]
 must be stable, provided that $C$ is irreducible and $\varUpsilon$
non-vanishing. Moreover, if the pair $(\bar{\partial}_{C},\varUpsilon)$
is stable then it will solve the projective vortex equations (up to
gauge of course). Therefore, we have 
\begin{thm}
The irreducible solutions to the $SO(3)$ vortex equations $(C,\varUpsilon)$
{[}so $C$ is irreducible and $\varUpsilon$ non-vanishing{]}, are
in bijection with the stable pairs. In other words, the Kobayashi-Hitchin
map (\ref{eq: KH map}) is a bijection when restricted to the irreducible
$SO(3)$ vortices. 
\end{thm}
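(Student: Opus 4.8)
The plan is to establish the two halves of a Hitchin--Kobayashi correspondence. Well-definedness of the map (\ref{eq: KH map}) has already been noted, since $\mathcal{G}^{\det}(\check{E})\subset\mathcal{G}_{\mathbb{C}}^{\det}(\check{E})$ means real-gauge-equivalent vortices become complex-gauge-equivalent stable pairs; moreover, by the first direction quoted above an irreducible $SO(3)$ vortex (with $C$ irreducible and $\varUpsilon$ nowhere vanishing) is genuinely stable, so the restricted map does land in the stable locus $\mathcal{M}^{st}(\check{\varSigma},\check{E})$. It then remains to prove surjectivity and injectivity.

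For surjectivity I would start from a stable pair and use the solvability direction quoted from the arguments of \cite[Section 2.3]{MR2700715}: given any representative $(\bar{\partial}_{C},\varUpsilon)$ of a stable pair, there is $g\in\mathcal{G}_{\mathbb{C}}^{\det}(\check{E})$ so that $g\cdot(\bar{\partial}_{C},\varUpsilon)=(\bar{\partial}_{C'},g\varUpsilon)$ solves the projective vortex equation. Since $g$ preserves the holomorphic structure we still have $\bar{\partial}_{C'}(g\varUpsilon)=0$, so $(C',g\varUpsilon)$ is an honest $SO(3)$ vortex; stability forces $g\varUpsilon\not\equiv 0$ and $C'$ irreducible, so it is an irreducible vortex mapping to the prescribed stable pair, which gives surjectivity.

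For injectivity, suppose $(C_{1},\varUpsilon_{1})$ and $(C_{2},\varUpsilon_{2})$ are irreducible vortices with the same image, so $(\bar{\partial}_{C_{2}},\varUpsilon_{2})=g\cdot(\bar{\partial}_{C_{1}},\varUpsilon_{1})$ for some $g\in\mathcal{G}_{\mathbb{C}}^{\det}(\check{E})$. I would invoke the uniqueness part of the correspondence: within a single complex gauge orbit the solution of the projective vortex equation is unique modulo the unitary gauge group $\mathcal{G}^{\det}(\check{E})$. Concretely, write $g=u\exp(s)$ with $u\in\mathcal{G}^{\det}(\check{E})$ and $s$ self-adjoint and trace-free, and evaluate the Donaldson-type energy functional along $t\mapsto\exp(ts)\cdot(\bar{\partial}_{C_{1}},\varUpsilon_{1})$; the two endpoints both solve the projective vortex equation (the factor $u$ being an isometry of the equation), hence are critical points, and strict geodesic convexity of the functional---which holds because $\mathbf{H}_{(C,\varUpsilon)}^{0}=0$ for an irreducible configuration---forces $s=0$. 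Thus $g=u$ is a genuine unitary gauge transformation and $[C_{1},\varUpsilon_{1}]_{\mathbb{R}}=[C_{2},\varUpsilon_{2}]_{\mathbb{R}}$.

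The step I expect to be the main obstacle is surjectivity: the actual existence of a vortex in each stable complex orbit is the analytic heart of the correspondence, and I would import it from \cite{MR2700715,MR2254074} rather than reprove it. As remarked above, the only thing to check for the orbifold case is that integration by parts and the Kahler identities continue to hold, which they do, so the orbifold version is a routine extension of the smooth argument.
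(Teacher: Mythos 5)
Your proposal is correct and takes essentially the same route as the paper: both directions are imported from the Hitchin--Kobayashi arguments of the cited references \cite{MR2700715,MR2254074}, with the orbifold case justified exactly as the paper does (integration by parts and the Kahler identities still hold), so the two proofs coincide in substance. The only difference is that you spell out the injectivity step via the Donaldson functional and its convexity, a standard part of the correspondence that the paper compresses into the phrase ``up to gauge of course.''
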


The reader may be more familiarized with a version of the stable pair
equations where a stability parameter explicitly appears \cite{MR1085139,MR1124279,MR1273268,MR1396775}.
This occurs if one uses the $U(2)$ vortex equations instead of the
$SO(3)$ vortex equations, so we now proceed to explain their relationship.
We can think of the $U(2)$ vortex equations as equations for a pair
$(C,\varUpsilon)$ which now satisfy 
\[
\begin{cases}
*F_{C}-i\varUpsilon\varUpsilon^{*}+i\tau\text{Id}_{E}=0\\
\bar{\partial}_{C}\varUpsilon=0
\end{cases}
\]
The main difference with equations (\ref{eq:SO(3) vortex equations})
is that we are no longer requiring that $C$ induce a fixed connection
$C^{\det}$ on $\det\check{E}$, which is why we now have an equation
for the entire curvature of $F_{C}$ and not just its trace-free part.
Here $\tau$ is a constant which is related to a stability parameter.
Clearly we can break the curvature equation into its traceless and
trace parts, so that the $U(2)$ equations read
\begin{equation}
\begin{cases}
2*F_{C^{\det}}-i|\varUpsilon|^{2}+2i\tau=0\\
*F_{C}^{0}-i\left[\varUpsilon\varUpsilon^{*}-\frac{1}{2}|\varUpsilon|^{2}I_{E}\right]=0\\
\bar{\partial}_{C}\varUpsilon=0
\end{cases}\label{eq:U(2) vortex equations}
\end{equation}
Notice that if we integrate the first equation, we obtain the relation
\[
\text{vol}(\check{\varSigma})\tau=\mu(\varUpsilon)+2\pi c_{1}(\det\check{E})
\]
Therefore, fixing a value of $\tau$ is equivalent to fixing a level
set for the moment map $\mu$. Clearly, any solution $(C,\varUpsilon)$
to the $U(2)$ vortex equations will yield a solution to the $SO(3)$
vortex equations, provided $C$ happened to induce our predetermined
connection $C^{\det}$ on $\det\check{E}$. On the other hand, if
we start with a solution $(C,\varUpsilon)$ to our $SO(3)$ vortex
equations, there is a priori no reason why the first equation in (\ref{eq:U(2) vortex equations})
has to be satisfied, so the moduli space of $SO(3)$ vortices is not
just the moduli space of $U(2)$ vortices which induce a particular
connection on $\det\check{E}$.

\section{\label{sec:A-Morse-Function}A Morse-Bott Function on the Moduli
Space}

As explained in Theorem (\ref{moment map theorem}), the Higgs strength
function can be interpreted as a moment map for the $S^{1}$ action
on the $SO(3)$ vortex moduli space. 

This means that whenever we choose the $U(2)$ bundle $\check{E}$
with the property that it guarantees the moduli space to be smooth
(at the reducibles), Frankel's theorem holds \cite{MR131883}, which
implies in our case the $\mu$ is a Morse-Bott function with the critical
set being equal to the fixed points of the $S^{1}$ action. In other
words, the critical sets of $\mu$ consist of the abelian vortices
or the moduli space of flat connections.

Moreover, the index of a particular critical set is the dimension
of the subspace on which the circle action acts with \emph{negative
weight.} As mentioned before, a version of Frankel's theorem for almost
hermitian manifolds was used recently by Feehan and Leness in order
to compute the Morse indices of the analogue of $\mu$ in the case
of the $SO(3)$ monopole moduli spaces (see Theorem 1 in \cite{Feehan-Leness[Virtual]}
and the discussion that follows). In the gauge theory context, probably
the first use of this idea appeared in Hitchin's paper \cite[Section 7]{MR887284}.
For the case of $U(2)$ vortices some calculations in this spirit
are done in \cite[Sections 3 and 4]{MR1396775}, and Thaddeus also
computes the dimensions of the subspaces with positive and negative
weights in \cite[Section 8]{MR1333296}.

First we understand the deformation theory at an abelian vortex. Recall
that the linearization of the $SO(3)$ vortex map was given in Lemma
\ref{Lem Linearization vortex map}
\[
\mathcal{D}\mathfrak{F}_{SO(3),(C,\varUpsilon)}(\dot{c},\dot{\varUpsilon})=\left(*d_{C}\dot{c}-i\left[\varUpsilon\dot{\varUpsilon}^{*}+\dot{\varUpsilon}\varUpsilon^{*}-\text{Re}\left\langle \varUpsilon,\dot{\varUpsilon}\right\rangle I_{E}\right],\bar{\partial}_{C}\dot{\varUpsilon}+\dot{c}''\otimes\varUpsilon\right)
\]
and likewise from equation (\ref{Coulomb condition})
\[
d_{(C,\varUpsilon)}^{0,*}(\dot{c},\dot{\varUpsilon})=-d_{C}^{*}\dot{c}+[\dot{\varUpsilon}\varUpsilon^{*}-\varUpsilon\dot{\varUpsilon}^{*}-i\text{Re}<i\varUpsilon,\dot{\varUpsilon}>I_{E}]
\]
At an abelian vortex we can write $C=C_{\check{L}}\oplus(C_{\check{L}}^{*}\otimes C^{\det})$
and $\varUpsilon=\alpha\oplus0$. Since $\check{E}=\check{L}\oplus(\check{L}^{*}\otimes\det\check{E})$,
we have $\mathfrak{g}_{\check{E}}=i\mathbb{R}\oplus(\check{L}^{2}\otimes(\det\check{E})^{-1})$.
If $\dot{\varUpsilon}=(\dot{\varUpsilon}_{t}\oplus\dot{\varUpsilon}_{n})$
and $\dot{c}=\dot{c}_{t}\oplus\dot{c}_{n}$ (here $t$ and $n$ stand
for tangential and normal respectively) then 
\[
\begin{cases}
\dot{\varUpsilon}\varUpsilon^{*}=\left(\begin{array}{c}
\dot{\varUpsilon}_{t}\\
\dot{\varUpsilon}_{n}
\end{array}\right)\left(\begin{array}{cc}
\alpha^{*} & 0\end{array}\right)=\left(\begin{array}{cc}
\dot{\varUpsilon}_{t}\alpha^{*} & 0\\
\dot{\varUpsilon}_{n}\alpha^{*} & 0
\end{array}\right)\\
\varUpsilon\dot{\varUpsilon}^{*}=\left(\begin{array}{c}
\alpha\\
0
\end{array}\right)\left(\begin{array}{cc}
\dot{\varUpsilon}_{t}^{*} & \dot{\varUpsilon}_{n}^{*}\end{array}\right)=\left(\begin{array}{cc}
\alpha\dot{\varUpsilon}_{t}^{*} & \alpha\dot{\varUpsilon}_{n}^{*}\\
0 & 0
\end{array}\right)\\
\varUpsilon\dot{\varUpsilon}^{*}+\dot{\varUpsilon}\varUpsilon^{*}-\text{Re}\left\langle \varUpsilon,\dot{\varUpsilon}\right\rangle I_{E}=\left(\begin{array}{cc}
\text{Re}\left\langle \alpha,\dot{\varUpsilon}_{t}\right\rangle  & \alpha\dot{\varUpsilon}_{n}^{*}\\
\dot{\varUpsilon}_{n}\alpha^{*} & -\text{Re}\left\langle \alpha,\dot{\varUpsilon}_{t}\right\rangle 
\end{array}\right)
\end{cases}
\]
We can interpret $\dot{c}$ as a matrix by writing $\dot{c}=\left(\begin{array}{cc}
\dot{c}_{t} & \dot{c}_{n}\\
-\dot{c}_{n}^{*} & -\dot{c}_{t}
\end{array}\right)$. Then if we write $\check{N}=\check{L}^{2}\otimes(\det\check{E})^{-1}$
so that $\dot{c}_{n}\in\varOmega^{1}(\check{\varSigma};\check{N})$,
and $C_{\check{N}}$ represents the induced connection $C_{\check{L}}^{*}\otimes C^{\det}$,
\[
*d_{C}\dot{c}-i\left[\varUpsilon\dot{\varUpsilon}^{*}+\dot{\varUpsilon}\varUpsilon^{*}-\text{Re}\left\langle \varUpsilon,\dot{\varUpsilon}\right\rangle I_{E}\right]=\left(\begin{array}{c}
*d\dot{c}_{t}-i\text{Re}\left\langle \alpha,\dot{\varUpsilon}_{t}\right\rangle \\
*d_{C_{\check{N}}}\dot{c}_{n}-i\alpha\dot{\varUpsilon}_{n}^{*}
\end{array}\right)
\]
Likewise, 
\[
\bar{\partial}_{C}\dot{\varUpsilon}+\dot{c}''\otimes\varUpsilon=\left(\begin{array}{c}
\bar{\partial}_{C_{\check{L}}}\dot{\varUpsilon}_{t}+\dot{c}_{t}''\otimes\alpha\\
\bar{\partial}_{C_{\check{L}}^{*}\otimes C^{\det}}\dot{\varUpsilon}_{n}-\dot{c}_{n}^{*\prime\prime}\alpha
\end{array}\right)
\]
It is easy to recognize the first row in each of these vectors as
corresponding to the linearization to the $U(1)$ vortex equations.
The remaining rows together with the Coulomb conditions leads us to
define the \textbf{normal operator} 
\[
\boxed{\mathscr{D}_{(C_{\check{L}},\alpha)}^{n}(\dot{c}_{n},\dot{\varUpsilon}_{n})=\left(\begin{array}{c}
-d_{C_{\check{N}}}^{*}\dot{c}_{n}-\alpha\dot{\varUpsilon}_{n}^{*}\\
*d_{C_{\check{N}}}\dot{c}_{n}-i\alpha\dot{\varUpsilon}_{n}^{*}\\
\bar{\partial}_{C_{\check{L}}^{*}\otimes C^{\det}}\dot{\varUpsilon}_{n}-\dot{c}_{n}^{*\prime\prime}\alpha
\end{array}\right)}
\]

Observe that the normal bundle to the abelian vortex moduli space
inside the $SO(3)$ vortex moduli space then corresponds to $\ker\mathscr{D}_{(C_{\check{L}},\alpha)}^{n}$.
For the case of the projectively flat connections, the story is a
bit different. Here the normal operator is easily seen to be 
\[
\boxed{\mathscr{D}_{(C,0)}^{n}(\dot{c},\dot{\varUpsilon})=\left(\begin{array}{c}
-d_{C}^{*}\dot{c}\\
\bar{\partial}_{C}\dot{\varUpsilon}
\end{array}\right)}
\]
A simple adaptation of (\ref{lem:Suppose-that-smoothness criterion})
shows:
\begin{lem}
Suppose that $c_{1}(\check{E})>2c_{1}(K_{\check{\varSigma}})$ and
that for the smooth case $\check{E}$ is of odd odd degree or when
all the $a_{1},\cdots,a_{n}$ are co-prime we are in the case where
$\det\check{E}$ is an odd power of $\check{L}_{0}$. Then the moduli
space $\mathcal{M}(\check{\varSigma},\check{E})$ is smooth at the
abelian vortices and the projectively flat connections.
\end{lem}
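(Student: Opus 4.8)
The plan is to show $\mathbf{H}^0_{(C,\varUpsilon)}=0$ and $\mathbf{H}^2_{(C,\varUpsilon)}=0$ at each reducible critical point; by the same elliptic deformation theory used in Lemma~\ref{lem:Suppose-that-smoothness criterion} this gives that $\mathcal{M}(\check{\varSigma},\check{E})$ is a smooth manifold of dimension $\dim\mathbf{H}^1_{(C,\varUpsilon)}$ near $[C,\varUpsilon]$. The proof is an adaptation of Lemma~\ref{lem:Suppose-that-smoothness criterion}: reducibility of $C$ (abelian vortices) or vanishing of $\varUpsilon$ (projectively flat connections) obstructs the surjectivity argument used there, so the two hypotheses must be inserted at the right places. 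It is worth stressing that they play complementary roles: the parity hypothesis forces $\mathbf{H}^0=0$ at both types of reducibles, while the degree bound $c_1(\check{E})>2c_1(K_{\check{\varSigma}})$ is needed only to force $\mathbf{H}^2=0$ at the projectively flat connections.

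For $\mathbf{H}^0=0$ I would invoke the lemma computing $\mathbf{H}^0_{(C,\varUpsilon)}$. At an abelian vortex $(C,\varUpsilon)=(C_{\check{L}}\oplus(C_{\check{L}}^*\otimes C^{\det}),\alpha\oplus0)$ that lemma yields $\mathbf{H}^0=0$ as soon as $\alpha\not\equiv0$, and the parity hypothesis rules out $\alpha\equiv0$: were $\alpha\equiv0$ we would get $2c_1(\check{L})=c_1(\det\check{E})$, which is impossible when $c_1(\det E)$ is odd in the smooth case, or when $\det\check{E}$ is an odd power of $\check{L}_0$ in the coprime orbifold case. The identical equation $2c_1(\check{L})=c_1(\det\check{E})$ is exactly the condition for a reducible projectively flat connection to exist, so under the parity hypothesis every projectively flat connection in the fixed-point set is irreducible, and hence $\mathbf{H}^0=\ker d_C=0$ there as well.

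The core of the argument is $\mathbf{H}^2_{(C,\varUpsilon)}=\text{coker}\,\mathcal{D}\mathfrak{F}_{SO(3),(C,\varUpsilon)}=0$, which I would again obtain by checking that the adjoint $(\mathbf{J}\circ d^0_{(C,\varUpsilon)})\oplus\mathcal{D}\mathfrak{F}^{2,*}_{SO(3),(C,\varUpsilon)}$ has trivial kernel. Such a kernel element $(\tilde{\xi},\tilde{\varUpsilon})$ has $\tilde{\xi}\in\ker d^0_{(C,\varUpsilon)}=\mathbf{H}^0=0$ and $\tilde{\varUpsilon}\in\ker\bar{\partial}_C^*$ subject to $\langle\dot{c}''\otimes\varUpsilon,\tilde{\varUpsilon}\rangle_{L^2}=0$ for all $\dot{c}''$. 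At an abelian vortex $\varUpsilon=\alpha\oplus0$ with $\alpha\not\equiv0$, and the pointwise computation of Lemma~\ref{lem:Suppose-that-smoothness criterion} applied to $(\tilde{\varUpsilon}\varUpsilon^*)_0=0$ forces both components of $\tilde{\varUpsilon}$ to vanish on the dense open set $\{\alpha\neq0\}$ — here the diagonal entry, unused in the nonsplit case of that lemma, is what kills the tangential component; since $*\tilde{\varUpsilon}$ is holomorphic, unique continuation gives $\tilde{\varUpsilon}\equiv0$. This step uses only $\alpha\not\equiv0$, not the degree bound. At a projectively flat connection the coupling term is vacuous since $\varUpsilon=0$, so the adjoint kernel collapses to $\ker\bar{\partial}_C^*\cong H^1(\check{\varSigma},\check{\mathcal{E}})$, and this is where the degree bound enters: by Serre duality $H^1(\check{\mathcal{E}})\cong H^0(K_{\check{\varSigma}}\otimes\check{\mathcal{E}}^*)^*$, and since $\check{\mathcal{E}}$ is stable so is $K_{\check{\varSigma}}\otimes\check{\mathcal{E}}^*$, of slope $c_1(K_{\check{\varSigma}})-\frac{1}{2}c_1(\check{E})<0$, whence $H^0(K_{\check{\varSigma}}\otimes\check{\mathcal{E}}^*)=0$.

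The step I expect to be the main obstacle is this last vanishing made rigorous in the orbifold category, where one must distinguish the $c_1$-slope appearing in the stability condition from the background degree $\deg_B$ that governs existence of holomorphic sections (the line-bundle vanishing lemma above phrases its hypothesis both as $2g-2<\deg_B(\check{L})$ and as $c_1(K_{\check{\varSigma}})<c_1(\check{L})$). I would reduce the rank-two statement to that line-bundle criterion by showing that a nonzero section of $K_{\check{\varSigma}}\otimes\check{\mathcal{E}}^*$, viewed as a nonzero map $\check{\mathcal{E}}\to K_{\check{\varSigma}}$, produces a sub-line-bundle contradicting stability once $c_1(\check{E})>2c_1(K_{\check{\varSigma}})$ is in force; this is the orbifold analogue of the remark following the main theorem, and is precisely where the degree bound cannot be removed. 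The remaining points — vanishing of the $\mathfrak{su}(\check{E})$-block of the cokernel by irreducibility of $C$, and the compatible splitting of the deformation at an abelian vortex into the smooth tangential $U(1)$-vortex directions and the normal directions governed by $\mathscr{D}_{(C_{\check{L}},\alpha)}^{n}$ — are routine.
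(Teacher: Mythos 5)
Your proposal is correct and takes essentially the same route as the paper: $\mathbf{H}^{0}=0$ from the parity hypothesis, $\mathbf{H}^{2}=0$ at the projectively flat connections via Serre duality, Narasimhan--Seshadri stability and the Thaddeus sub-line-bundle argument (which is exactly where the paper spends the hypothesis $c_{1}(\check{E})>2c_{1}(K_{\check{\varSigma}})$), and the pointwise coupling computation at the abelian vortices. Your added observation that at an abelian vortex the diagonal entries of $(\tilde{\varUpsilon}\varUpsilon^{*})_{0}=0$ are what kill the remaining component of $\tilde{\varUpsilon}$ --- so that the degree bound is genuinely not needed there --- is a detail the paper passes over (it calls this case ``indistinguishable'' from the irreducible one, where only the off-diagonal entries were used, and gives the degree-bound vanishing of $H^{1}(\check{L}^{*}\otimes\det\check{E})$ only as an alternative), and is a worthwhile clarification.
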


\begin{proof}
It is easier to explain why $\mathscr{D}_{(C,0)}^{n}$ is surjective.
Since $\ker d_{C}=\{0\}$ by assumption, we just need to worry about
the surjectivity of $\bar{\partial}_{C}$. Thus we need to guarantee
that $H^{1}(\check{E})$ vanishes. 

If $H^{1}(\check{E})\text{\ensuremath{\neq0}}$, then $H^{0}(K_{\check{\varSigma}}E^{*})\neq0$
by Serre duality, so following the argument in \cite[1.10]{MR1273268},
we conclude that there is an injection 
\[
0\rightarrow K_{\check{\varSigma}}^{-1}\otimes\det\check{E}\otimes L_{\check{D}}\rightarrow\check{E}
\]
 for some effective divisor $\check{D}$ (recall $L_{\check{D}}$
was the orbifold line bundle corresponding to $\check{D}$). 

By the famous theorem of Narasimhan and Seshadri \cite{MR575939,MR184252}
in the orbifold (or more generally parabolic) version, the bundle
$\check{E}$ equipped with the holomorphic structure provided by $\bar{\partial}_{C}$
is stable. Thus 
\[
c_{1}\left(K_{\check{\varSigma}}^{-1}\otimes\det\check{E}\otimes L_{\check{D}}\right)<\frac{1}{2}c_{1}(\check{E})
\]
At the same time, the left hand side is greater than or equal to $c_{1}(\check{E})-c_{1}(K_{\check{\varSigma}})$,
so we conclude that 
\[
\frac{1}{2}c_{1}(\check{E})<c_{1}(K_{\check{\varSigma}})
\]
and thus 
\[
c_{1}(\check{E})<2c_{1}(K_{\check{\varSigma}})
\]
The converse $c_{1}(\check{E})>2c_{1}(K_{\check{\varSigma}})$ implies
that $\bar{\partial}_{C}$ is surjective follows from the same argument
given by Thaddeus in \cite[1.10]{MR1273268}).

The argument given for the surjectivity of $\mathscr{D}_{(C_{L},\alpha)}^{n}$
is indistinguishable from the one we gave in Lemma (\ref{lem:Suppose-that-smoothness criterion})
. Alternatively, one can check that 
\[
H^{1}(\check{L}^{*}\otimes\det\check{E})\simeq H^{0}(K_{\check{\varSigma}}\otimes\check{L}\otimes(\det\check{E})^{*})
\]
and since $c_{1}(\check{L})<\frac{1}{2}c_{1}(\check{E})$ and we are
assuming $\frac{1}{2}c_{1}(\check{E})>c_{1}(K_{\check{\varSigma}})$
we have
\[
c_{1}(K_{\check{\varSigma}}\otimes\check{L}\otimes(\det\check{E})^{*})=c_{1}(K_{\check{\varSigma}})+c_{1}(\check{L})-c_{1}(\check{E})<c_{1}(K_{\check{\varSigma}})-\frac{1}{2}c_{1}(\check{E})<0
\]
and thus $H^{1}(\check{L}^{*}\otimes\det\check{E})$ vanishes, guaranteeing
the surjectivity of the $\bar{\partial}_{C_{\check{L}}^{*}\otimes C^{\det}}$
operator.
\end{proof}
From the stable pair perspective, the Zariski tangent space can be
identified with the hypercohomology 
\[
\mathbf{H}^{1}(\text{End}_{0}\check{E}\rightarrow^{\varUpsilon}\check{E})
\]
where $\text{End}_{0}\check{E}=\mathfrak{su}(2)\otimes\mathbb{C}$.
The reason for this is that the deformation theory at $(\bar{\partial}_{C},\varUpsilon)$
is obtained from the complex 
\[
0\rightarrow\varOmega^{0}(\check{\varSigma},\text{End}_{0}\check{E})\rightarrow^{d_{1}}\varOmega^{0,1}(\check{\varSigma},\text{End}_{0}\check{E})\oplus\varOmega^{0}(\check{\varSigma},\check{E})\rightarrow^{d_{2}}\varOmega^{0,1}(\check{\varSigma},\check{E})
\]
 where 
\[
\begin{cases}
d_{1}\xi=(-\bar{\partial}_{C}\xi,\xi\varUpsilon)\\
d_{2}(\dot{c}'',\dot{\varUpsilon})=(\bar{\partial}_{C}\dot{\varUpsilon}+\dot{c}''\varUpsilon)
\end{cases}
\]
When $\varUpsilon=\alpha\oplus0$, we know that the matrices \cite[Section 3.1]{MR1855754}
\[
G(\theta)=\left(\begin{array}{cc}
1 & 0\\
0 & e^{2i\theta}
\end{array}\right)
\]
 belong to the stabilizer of $(\check{E},\alpha)$. Thus $G(\theta)$
acts with trivial weight on the $\check{L}$ factor, and with positive
weight on the $\check{M}\equiv\check{L}^{*}\otimes\det\check{E}$
factor. Since 
\[
\mathfrak{su}(2)\simeq\mathbb{R}\oplus(\check{L}\otimes\check{M}^{*})
\]
After complexifying and writing an element $A\in\mathfrak{sl}(2,\mathbb{C})$
as 
\[
A=\left(\begin{array}{cc}
a_{11} & a_{12}\\
a_{21} & -a_{11}
\end{array}\right)
\]
with
\[
\begin{cases}
a_{11}\in\hom(\check{L},\check{L})\simeq\mathbb{C}\\
a_{12}\in\hom(\check{M},\check{L})\simeq\check{L}\otimes\check{M}^{*}\\
a_{21}\in\hom(\check{L},\check{M})\simeq\check{M}\otimes\check{L}^{*}
\end{cases}
\]
we get the decomposition 
\[
\text{End}_{0}\check{E}\simeq\underbrace{\mathbb{C}}_{\text{trivial weight}}\oplus(\underbrace{\check{L}\otimes\check{M}^{*}}_{\text{negative weight}})\oplus(\underbrace{\check{M}\otimes\check{L}^{*}}_{\text{positive weight}})
\]
Decompose the maps $d_{1},d_{2}$ as 
\[
\begin{cases}
d_{1}\xi=\left(\left(\begin{array}{cc}
-\bar{\partial}\xi_{11} & -\bar{\partial}_{\check{L}\otimes\check{M}^{*}}\xi_{12}\\
-\bar{\partial}_{\check{M}\otimes\check{L}^{*}}\xi_{21} & \bar{\partial}\xi_{11}
\end{array}\right),\left(\begin{array}{c}
\xi_{11}\alpha\\
\xi_{21}\alpha
\end{array}\right)\right)\\
d_{2}\left(\left(\begin{array}{cc}
a_{11} & a_{12}\\
a_{21} & -a_{11}
\end{array}\right),\left(\begin{array}{c}
\dot{\varUpsilon}_{\check{L}}\\
\dot{\varUpsilon}_{\check{M}}
\end{array}\right)\right)=\left(\begin{array}{c}
\bar{\partial}_{\check{L}}\dot{\varUpsilon}_{\check{L}}+a_{11}\alpha\\
\bar{\partial}_{\check{M}}\dot{\varUpsilon}_{M}+a_{21}\alpha
\end{array}\right)
\end{cases}
\]
Comparing the equations, we see that for a stable pair the \textbf{tangential
complex} is 
\begin{align*}
 & \varOmega^{0}(\check{\varSigma},\mathbb{C}) &  & \varOmega^{0,1}(\check{\varSigma},\mathbb{C})\oplus\varOmega^{0}(X,\check{L}) &  & \varOmega^{0,1}(\check{\varSigma},\check{L})\\
 & \xi_{11} & \rightarrow & (-\bar{\partial}\xi_{11},\xi_{11}\alpha)\\
 &  &  & (a_{11},\dot{\varUpsilon}_{\check{L}}) & \rightarrow & \bar{\partial}_{\check{L}}\dot{\varUpsilon}_{\check{L}}+a_{11}\alpha
\end{align*}
while the \textbf{normal complex }is the direct sum of the \textbf{negative
weight complex} 
\begin{equation}
\begin{array}{cccc}
\varOmega^{0}(\check{\varSigma},\check{L}\otimes\check{M}^{*}) &  & \varOmega^{0,1}(\check{\varSigma},\check{L}\otimes\check{M}^{*})\\
\xi_{12} & \rightarrow & -\bar{\partial}_{\check{L}\otimes\check{M}^{*}}\xi_{12}\\
 &  & a_{12}\rightarrow & 0
\end{array}\label{eq:negative weight complex}
\end{equation}
and the \textbf{positive weight complex }
\begin{align*}
 & \varOmega^{0}(\check{\varSigma},\check{M}\otimes\check{L}^{*}) &  & \varOmega^{0,1}(\check{\varSigma},\check{M}\otimes\check{L}^{*})\oplus\varOmega^{0}(X,\check{M}) &  & \varOmega^{0,1}(\check{\varSigma},\check{M})\\
 & \xi_{21} & \rightarrow & (-\bar{\partial}_{\check{M}\otimes\check{L}^{*}}\xi_{21},\xi_{21}\alpha)\\
 &  &  & (a_{21},\dot{\varUpsilon}_{\check{M}}) & \rightarrow & \bar{\partial}_{\check{M}}\dot{\varUpsilon}_{M}+a_{21}\alpha
\end{align*}
Since a slice for the gauge group action $\mathcal{G}_{\mathbb{C}}^{\det}(\check{E})$
can be taken to be $\ker\bar{\partial}_{C}^{*}$ (\cite[Section 4.1]{MR1288304},
\cite{MR1797591}), the normal bundle at $(\bar{\partial}_{C_{\check{L}}},\alpha)$
can be identified with $\ker\bar{\partial}_{C}^{*}$ together with
the kernel of the map $\dot{\varUpsilon}_{\check{M}}\rightarrow\bar{\partial}_{\check{M}}\dot{\varUpsilon}_{M}+a_{21}\alpha$. 

So to compare the normal bundles we complexify $\ker\mathscr{D}_{(C_{\check{L}},\alpha)}^{n}$.
This allows us to write $\dot{c}_{n}$ in terms of its $(1,0)$ and
$(0,1)$ types. For example, the second equation of $\mathscr{D}_{(C_{\check{L}},\alpha)}^{n}$
becomes
\begin{align*}
 & *d_{C_{\check{N}}}\dot{c}_{n}-i\alpha\dot{\varUpsilon}_{n}^{*}\\
= & -*d_{C_{\check{N}}}**\dot{c}_{n}-i\alpha\dot{\varUpsilon}_{n}^{*}\\
= & d_{C_{\check{N}}}^{*}*\dot{c}_{n}-i\alpha\dot{\varUpsilon}_{n}^{*}\\
= & d_{C_{\check{N}}}^{*}(-i\dot{c}_{n}'+i\dot{c}_{n}'')-i\alpha\dot{\varUpsilon}_{n}^{*}
\end{align*}
 while the first equation can be rewritten as 
\[
-d_{C_{\check{N}}}^{*}(\dot{c}_{n}'+\dot{c}_{n}'')-\alpha\dot{\varUpsilon}_{n}^{*}
\]
So the first two equations defining $\ker\mathscr{D}_{(C_{\check{L}},\alpha)}^{n}\otimes\mathbb{C}$
become equivalent to 
\[
\begin{cases}
d_{C_{\check{N}}}^{*}(\dot{c}_{n}'-\dot{c}_{n}'')+\alpha\dot{\varUpsilon}_{n}^{*}=0\\
d_{C_{\check{N}}}^{*}(\dot{c}_{n}'+\dot{c}_{n}'')+\alpha\dot{\varUpsilon}_{n}^{*}=0
\end{cases}
\]
which is the same as 
\[
\begin{cases}
d_{C_{\check{N}}}^{*}\dot{c}_{n}'+\alpha\dot{\varUpsilon}_{n}^{*}=0\\
d_{C_{\check{N}}}^{*}\dot{c}_{n}''=0\iff\dot{c}_{n}''\in\ker\bar{\partial}_{C_{\check{N}}}^{*}
\end{cases}
\]
Since $\ker\bar{\partial}_{C_{\check{N}}}^{*}$ was one of the equations
defining the normal bundle for the stable pair equation, this means
that the Zariski tangent space for the moduli space of $SO(3)$ vortices
can be regarded as a subspace of the Zariski tangent space to the
moduli space of stable pairs. Since both moduli spaces have the same
dimensions, there is an isomorphism between the Zariski tangent spaces,
so the moduli spaces are isomorphic as well (not just homeomorphic).

Once we know this, we can compute the dimension of the negative weight
complex (\ref{eq:negative weight complex}). A simple application
of the index formula shows that this is the same as 
\[
\dim_{\mathbb{R}}\mathbf{H}^{1}(\check{L}\otimes\check{M}^{*}\rightarrow0)=\dim_{\mathbb{R}}\mathbf{H}^{1}(\check{\varSigma},\check{L}\otimes\check{M}^{*})=-2\chi_{\mathbb{C}}(\check{\varSigma},\check{L}^{2}\otimes(\det\check{E})^{-1})
\]
where in the last step we used that $\mathbf{H}^{0}(\check{\varSigma},\check{L}\otimes\check{M}^{*})$
vanishes. This happens since
\begin{align*}
 & \deg_{B}(\check{L}\otimes\check{M}^{*})\\
\leq & c_{1}(\check{L}\otimes\check{M}^{*})\\
= & 2\deg\check{L}-\deg\check{E}\\
< & 0
\end{align*}
 where we used Lemma (\ref{Lem degree constraint}) (strictly speaking
we need to assume that $\check{E}$ was chosen so that there are no
abelian vortices of degree $\frac{1}{2}\deg\check{E}$).

To apply Riemann-Roch (\ref{Riemann Roch Line Bundles}) we need to
know what the isotropy of $\check{L}^{2}\otimes(\det\check{E})^{-1}$
is. Recall that if the $U(2)$ bundle $\check{E}$ has isotropy $\left(\begin{array}{cc}
\sigma_{i}^{b_{i}^{-}}\\
 & \sigma_{i}^{b_{i}^{+}}
\end{array}\right)$ , then the isotropy of the determinant line bundle is $\sigma_{i}^{b_{i}^{-}+b_{i}^{+}}$,
while the isotropy of $\check{L}^{\otimes2}$ will be $\sigma_{i}^{2b_{i}}$
where $b_{i}=\frac{\epsilon_{i}(b_{i}^{+}-b_{i}^{-})+(b_{i}^{+}+b_{i}^{-})}{2}$
, $\epsilon_{i}=1$ if $b_{i}=b_{i}^{+}$ and $\epsilon_{i}=-1$ if
$b_{i}=b_{i}^{-}$. Therefore the isotropy of $\check{L}^{2}\otimes(\det\check{E})^{-1}$
is 
\[
\epsilon_{i}(b_{i}^{+}-b_{i}^{-})+(b_{i}^{+}+b_{i}^{-})-(b_{i}^{-}+b_{i}^{+})=\epsilon_{i}(b_{i}^{+}-b_{i}^{-})
\]
if $\epsilon_{i}=1$ then this gives $b_{i}^{+}-b_{i}^{-}$ and this
is already between $0$ and $a_{i}$ so there is nothing to worry
about. 

If $\epsilon_{i}=-1$ then this gives $b_{i}^{-}-b_{i}^{+}$ which
is non-positive (smaller or equal to $0$ and definitely bigger than
$-a_{i}$), so we should in fact have to consider a shift by $a_{i}$,
i.e, the isotropy is 
\[
\begin{cases}
b_{i}^{+}-b_{i}^{-} & \epsilon_{i}=1\\
a_{i}+b_{i}^{-}-b_{i}^{+} & \epsilon_{i}=-1
\end{cases}
\]
Therefore, (minus) the Euler characteristic according to Rieman-Roch
is 
\[
g-1+c_{1}(\det\check{E})-2c_{1}(\check{L})+\sum_{i\mid\epsilon_{i}=1}\frac{b_{i}^{+}-b_{i}^{-}}{a_{i}}+n_{-}+\sum_{i\mid\epsilon_{i}=-1}\frac{b_{i}^{-}-b_{i}^{+}}{a_{i}}
\]
and thus the index is twice this amount, i.e, we found
\begin{thm}
\label{thm:computation indices}Suppose that a bundle $\check{E}$
is chosen on $\check{\varSigma}$ so $c_{1}(\check{E})>2c_{1}(K_{\check{\varSigma}})$
and that for the smooth case we are in the case $\check{E}$ with
odd degree or when all the $a_{1},\cdots,a_{n}$ are co-prime we are
in the case where $\det\check{E}$ is an odd power of $\check{L}_{0}$.

Consider the moduli space of abelian vortices associated to the reduction
$\check{E}=\check{L}\oplus\check{L}^{*}\otimes\det\check{E}$. Then
the Morse-Bott index of $\mu$ at this submanifold of abelian vortices
is given by the formula
\begin{equation}
\text{index}(\check{E},\check{L})=2\left[g-1+c_{1}(\det\check{E})-2c_{1}(\check{L})+\sum_{i\mid\epsilon_{i}=1}\frac{b_{i}^{+}-b_{i}^{-}}{a_{i}}+n_{-}+\sum_{i\mid\epsilon_{i}=-1}\frac{b_{i}^{-}-b_{i}^{+}}{a_{i}}\right]\label{index}
\end{equation}
where $\check{L}$ has isotropy $b_{i}$, and $\epsilon_{i}=1$ if
$b_{i}=b_{i}^{+}$, $\epsilon_{i}=-1$ if $b_{i}=b_{i}^{-}$. Here
$n_{-}=\#\{i\mid b_{i}=b_{i}^{-}<b_{i}^{+}\}$.
\end{thm}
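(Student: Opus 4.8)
The plan is to apply Frankel's theorem in the form already justified by Theorem \ref{moment map theorem}: since $\mu$ is a moment map for the $S^{1}$ action on the Kähler manifold $\mathcal{M}(\check{\varSigma},\check{E})$, the Morse-Bott index along a fixed-point component equals the real dimension of the subbundle of the normal bundle on which $S^{1}$ acts with \emph{negative} weight. The whole problem therefore reduces to identifying this negative-weight piece of the normal space at an abelian vortex $[C_{\check{L}}\oplus(C_{\check{L}}^{*}\otimes C^{\det}),\alpha\oplus0]$ and computing its dimension.

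First I would pass to the stable-pair description, using the identification of Zariski tangent spaces established just above the statement, and decompose $\text{End}_{0}\check{E}\simeq\mathbb{C}\oplus(\check{L}\otimes\check{M}^{*})\oplus(\check{M}\otimes\check{L}^{*})$ under the stabilizer $G(\theta)=\mathrm{diag}(1,e^{2i\theta})$, where $\check{M}=\check{L}^{*}\otimes\det\check{E}$. Conjugation shows that the negative-weight summand is $\check{L}\otimes\check{M}^{*}\simeq\check{L}^{2}\otimes(\det\check{E})^{-1}$, whose deformation complex is the negative weight complex (\ref{eq:negative weight complex}). Since this complex carries no coupling term to $\alpha$, its degree-one hypercohomology is simply the Dolbeault group $H^{1}(\check{\varSigma},\check{L}^{2}\otimes(\det\check{E})^{-1})$.

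Next I would kill $H^{0}$. By Lemma \ref{Lem degree constraint} the presence of an abelian vortex forces $c_{1}(\check{L})\le\frac{1}{2}c_{1}(\check{E})$, strictly once reductions of degree exactly $\frac{1}{2}\deg\check{E}$ are excluded, so that $\deg_{B}(\check{L}^{2}\otimes(\det\check{E})^{-1})\le c_{1}(\check{L}^{2}\otimes(\det\check{E})^{-1})=2\deg\check{L}-\deg\check{E}<0$ and $H^{0}$ vanishes. Consequently the negative-weight real dimension is $-2\chi_{\mathbb{C}}(\check{\varSigma},\check{L}^{2}\otimes(\det\check{E})^{-1})$, and it only remains to evaluate this Euler characteristic via the orbifold Riemann-Roch formula (\ref{Riemann Roch Line Bundles}), then multiply by two.

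The main obstacle is the orbifold bookkeeping for the isotropy of $\check{L}^{2}\otimes(\det\check{E})^{-1}$. Since $\det\check{E}$ has isotropy $b_{i}^{-}+b_{i}^{+}$ and $\check{L}^{\otimes2}$ has isotropy $2b_{i}$ with $b_{i}\in\{b_{i}^{-},b_{i}^{+}\}$, the raw isotropy is $\epsilon_{i}(b_{i}^{+}-b_{i}^{-})$. When $\epsilon_{i}=1$ this already lies in $[0,a_{i})$; when $\epsilon_{i}=-1$ it is nonpositive, so I must normalize by adding $a_{i}$ to obtain $a_{i}+b_{i}^{-}-b_{i}^{+}$. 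Feeding these normalized isotropies into (\ref{Riemann Roch Line Bundles}) produces the three isotropy sums appearing in the statement, while the added copies of $a_{i}$ contribute exactly the integer $n_{-}=\#\{i\mid b_{i}=b_{i}^{-}<b_{i}^{+}\}$ — here the strict inequality is essential, since an index with $b_{i}^{-}=b_{i}^{+}$ contributes nothing. Getting this shift, and hence the $n_{-}$ term, correct is the only delicate point; everything else is a direct application of the index theorem and Frankel's principle.
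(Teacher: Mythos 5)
Your proposal is correct and follows essentially the same route as the paper's own argument: Frankel's principle via the moment map interpretation, the weight decomposition of $\text{End}_{0}\check{E}$ under the stabilizer $G(\theta)$ in the stable-pairs picture, vanishing of $H^{0}(\check{\varSigma},\check{L}^{2}\otimes(\det\check{E})^{-1})$ from the degree constraint of Lemma \ref{Lem degree constraint}, and the orbifold Riemann-Roch evaluation with the shift by $a_{i}$ at points where $\epsilon_{i}=-1$ producing the $n_{-}$ term. Nothing essential differs from the paper's proof.
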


\begin{rem}
From the previous formula we can observe the following:

In the smooth case the formula for the index reduces to 
\[
2(g-1+\underbrace{\deg E-2\deg L}_{\geq0})
\]
 Therefore the index of the abelian vortices will always be non-negative
whenever $g\geq1$. If we choose $\deg E$ odd, then the index is
strictly positive whenever $g\geq1$ so this means that the minima
of $\mu$ is not achieved at the abelian vortices. 

Moreover, observe that when $\deg L=0$, then the index gives $2g-2+2\deg E$,
which is the dimension formula for the moduli space of stable pairs
we found in (\ref{dimension moduli space}). This vortex moduli space
thus corresponds to the critical set which is the absolute maximum
for $\mu$, and since the vortex moduli space can be identified with
$\text{Sym}^{2\deg L}\varSigma$, this means that the maximum occurs
at a single abelian vortex (it cannot be more than one since the level
sets of $\mu$ are connected \cite[Theorem IV.3.1]{MR2091310}).

In the orbifold case when the $a_{i}$ are coprime, if $\det\check{E}=\check{L}_{0}^{k}$
and $\check{L}=\check{L}_{0}^{l}$ then the previous quantity is the
same as 
\[
2\left[g-1+\underbrace{\frac{k-2l}{a_{1}\cdots a_{n}}}_{>0}+\underbrace{\sum_{i\mid\epsilon_{i}=1}\frac{b_{i}^{+}-b_{i}^{-}}{a_{i}}}_{>0}+\underbrace{n_{-}+\sum_{i\mid\epsilon_{i}=-1}\frac{b_{i}^{-}-b_{i}^{+}}{a_{i}}}_{>0}\right]
\]
So the only chance this index has of being non-positive is if we take
$g=0$. This is consistent with the fact that on $S^{2}$ with marked
points the moduli space of parabolic stable bundles can be empty depending
on the choice of weights \cite{MR184252,MR575939}.
\end{rem}

\begin{example}
\label{example T2}To illustrate the index calculation, consider on
$T^{2}$ the $U(2)$ bundle $E$ with $\deg E=1$. The expected dimension
of the $SO(3)$ monopole moduli space is given in equation (\ref{dimension moduli space})
and happens to be $2$ in this case. Since $\deg E>4g-4$ the moduli
space of stable pairs is smooth \cite{MR1273268}.

The abelian moduli spaces can only appear when $\deg L\leq\frac{1}{2}\deg E=\frac{1}{2}$,
so $\deg L=0$ and hence there is a unique solution up to gauge (because
the abelian vortex can be interpreted as a holomorphic function and
must thus be constant). The index of this abelian monopole is $2$,
so it corresponds to the absolute maximum. Since the level sets of
$\mu$ are connected \cite[Theorem IV.3.1]{MR2091310}, there is only
a minimum, which must be a flat connection. After we take the quotient
by the $S^{1}$ action, this gives an interpretation of the moduli
space of stable pairs as a cobordism between the abelian vortex and
the flat connections.
\end{example}

\section{$SO(3)$ Monopoles on 3-Manifolds}

We now review the $SO(3)$ monopole equations on 3-manifolds, which
can be regarded as the dimensional reduction of the $SO(3)$ monopole
equations on 4-manifolds introduced by Pidstrigach and Tyurin and
studied extensively by Feehan and Leness \cite{MR1664908,MR1846123,MR1855754,MR1855754II,MR3349302,MR3897982}.
In particular, we will follow mostly their conventions (see \cite[Section 2.1.2]{MR1855754}).

In the same way that the Seiberg-Witten equations require a choice
of spin-c structure $\mathfrak{s}$ in order to be defined on a 3
or 4-manifold, the $SO(3)$ monopole equations require the choice
of a \emph{spin-u }structure $\mathfrak{t}$. The most expedient way
to define them on 3-manifolds $Y$ is as follows. 

Consider a spin-c structure $\mathfrak{s}=(S,\rho)$ which is represented
by a spinor bundle $S$ with corresponding Clifford multiplication
map $\rho$. Let $E$ denote a $U(2)$ bundle on $Y$. Then $(S,\rho)$
and $E$ determine the spin-u structure 
\[
\mathfrak{t}=(V,\rho_{V})=(S\otimes E,\rho\otimes1_{E})
\]
By abuse of notation we will write $\rho$ instead of $\rho_{V}$.
The above construction shows that spin-u structures always exist on
a 3-manifold, so the next question is the topological classification
of these. Notice first of all that if $L$ is an arbitrary complex
line bundle then 
\[
S\otimes E\simeq(S\otimes L)\otimes(E\otimes L^{-1})
\]
and since $\mathfrak{s}_{L}=(S\otimes L,\rho\otimes1_{L})$ represents
another spin-c structure, the decomposition of $V$ as $S\otimes E$
is not intrinsic. Since
\[
\begin{cases}
c_{1}(\mathfrak{s})+c_{1}(E)=c_{1}(\mathfrak{s}_{L})+c_{1}(E\otimes L^{-1})\\
w_{2}(\mathfrak{su}(E))=w_{2}(\mathfrak{su}(E\otimes L^{-1}))
\end{cases}
\]
 the three-dimensional version of the discussion in \cite[Section 2.1.3]{MR1855754}
shows that the spin-u structure $\mathfrak{t}$ determines the characteristic
classes
\[
\begin{cases}
c_{1}(\mathfrak{t})\equiv & c_{1}(\mathfrak{s})+c_{1}(E)\\
w_{2}(\mathfrak{t})\equiv & w_{2}(\mathfrak{su}(E))
\end{cases}
\]
On 3-manifolds $w_{2}(\mathfrak{su}(E))$ can always be lifted to
an integral cohomology class because of the Bockstein long exact sequence
and the fact that $H^{3}(Y;\mathbb{Z})\simeq\mathbb{Z}$ has no two-torsion,
so any pair $(c,w)\in H^{2}(Y;\mathbb{Z})\times H^{2}(Y;\mathbb{Z}_{2})$
determines a spin-u structure. 

If we think of the $SO(3)$ monopole equations as a system of equations
used to relate the monopole \cite{MR2388043} and instanton Floer
homologies on a specific 3-manifold \cite{MR1883043}, we should regard
$w_{2}(\mathfrak{t})$ as being the characteristic class provided
the $SO(3)$ bundle used to define the instanton Floer homology of
that 3-manifold. 

Since every $SO(3)$ bundle can be lifted to a $U(2)$ bundle on a
3-manifold, there is no harm in choosing a reference spin-c structure
to be a torsion spin-c structure, i.e, one for which $c_{1}(\mathfrak{s})=0$,
so that we can think of the spin-u structure $\mathfrak{t}$ as being
completely specified by a choice of a $U(2)$ bundle and nothing else.

Choosing the reference spin-c structure $\mathfrak{s}$ to be torsion
also has the added advantage that there is a \emph{canonical }lift
of the Levi-Civita connection on $TY$ to a spin-c connection on $S$
\cite[Section 5.1]{MR3827053}. With this reference spin-c connection,
we can define the configuration space where the $SO(3)$ monopole
equations are defined as 
\[
\mathcal{C}(Y,\mathfrak{t})=\mathcal{A}_{\det}(E)\times\varGamma(S\otimes E)
\]
where the notation $\mathcal{A}_{\det}(E)$ indicates that we are
considering the space of unitary connections $B$ on $E$ which induce
a fixed connection $B^{\det}$ on $\det E$. The \emph{unperturbed}
$SO(3)$\emph{ monopole equations }are then equations for a pair $(B,\varPsi)$
which satisfy the equations:
\begin{equation}
\mathit{SO(3)\;monopole\;equations\;}\begin{cases}
*F_{B}^{0}+2\rho^{-1}(\varPsi\varPsi^{*})_{00}=0\\
D_{B}\varPsi=0
\end{cases}\label{eq: SO(3) monopole equations on Y}
\end{equation}
Here $(\varPhi\varPhi^{*})_{00}$ represents the component of $\varPhi\varPhi^{*}\in\varGamma(S\otimes E\otimes S^{*}\otimes E^{*})\simeq\varGamma(\text{End}S\otimes\text{End}E)$
which is traceless on each of the factors of $\text{End}(S)\otimes\text{End}(E)$,
that is, $(\varPhi\varPhi^{*})_{00}$ is a section of $\text{End}_{0}(S)\otimes\text{End}_{0}(E)$.
We are using $\rho:TY\rightarrow\hom(S,S)$ to identify $TY$ isometrically
with the sub-bundle $\mathfrak{su}(S)$ of traceless, skew-adjoint
endomorphisms equipped with the inner product $\frac{1}{2}\text{tr}(a^{*}b)$.
As usual, $*$ denotes the Hodge star operator on $Y$. Finally, $D_{B}$
represents the twisted Dirac operator acting on sections of $S\otimes E$,
which strictly speaking should be written as $D_{B_{S}\otimes B}$,
where $B_{S}$ denotes the canonical spin-c connection we are using
on $S$, coming from the fact that we are using a torsion spin-c structure
for describing $\mathfrak{t}$.

One can first consider the solutions to the $SO(3)$ monopole equations
on $Y$ modulo the gauge group $\mathcal{G}_{\det}(E)$. We will denote
such equivalence classes as pairs $[B,\varPsi]$. In the space 
\[
\mathcal{B}(Y,\mathfrak{t})=\mathcal{C}(Y,\mathfrak{t})/\mathcal{G}_{\det}(E)
\]
 there is still a residual $S^{1}$ action given by 
\[
e^{i\theta}\cdot[B,\varPsi]\rightarrow[B,e^{i\theta}\varPsi]
\]
 
\begin{lem}
The fixed points of this circle action which solve (\ref{eq: SO(3) monopole equations on Y})
are of two types:

a) The pairs $[B,0]$ with vanishing spinor, which correspond to the
projectively flat connections.

b) The pairs of the form $[B_{L}\oplus(B^{\det}\otimes B_{L}^{*}),\varPsi=\psi\oplus0]$
corresponding to a reduction of the bundle $E$ into two line bundles
$E=L\oplus(\det E\otimes L^{*})$, which solve the perturbed\emph{
}Seiberg-Witten equations 
\begin{equation}
\mathit{perturbed\;Seiberg\;Witten\;equations}\begin{cases}
*F_{B_{L}}+\rho^{-1}(\psi\psi^{*})_{0}-\frac{1}{2}*F_{B^{\det}}=0\\
D_{B}\psi=0
\end{cases}\label{perturbed monopole equations}
\end{equation}
associated to the spin-c structure $S\otimes L$.
\end{lem}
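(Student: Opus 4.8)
The plan is to mirror the fixed-point analysis already carried out for the $SO(3)$ vortices, replacing the holomorphic/unique-continuation step by the observation that a covariantly constant bundle endomorphism splits off the spinor directly. As in the two-dimensional case, the infinitesimal generator of the residual $S^1$ action at $(B,\varPsi)$ is $(0,i\varPsi)$, so $[B,\varPsi]$ is a fixed point precisely when the loop $\theta\mapsto(B,e^{i\theta}\varPsi)$ stays inside a single $\mathcal{G}_{\det}(E)$-orbit. Differentiating in $\theta$ and using connectedness of $S^1$, this holds if and only if $(0,i\varPsi)$ lies in the image of $d^0_{(B,\varPsi)}\xi=(-d_B\xi,\xi\varPsi)$; that is, there exists $\xi\in\varGamma(\mathfrak{su}(E))$ with $d_B\xi=0$ and $\xi\varPsi=i\varPsi$. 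First I would record this reduction and then split on whether such a $\xi$ can be nonzero, i.e. on the reducibility of $B$.

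If $B$ is irreducible, the only covariantly constant section of $\mathfrak{su}(E)$ is $\xi=0$ (as in the earlier observation that $\ker d_B=\{0\}$ for irreducible $B$), so $\xi\varPsi=i\varPsi$ forces $\varPsi\equiv0$; the first equation then reads $*F_B^0=0$, i.e. $B$ is projectively flat, giving case (a). If $B$ is reducible, a nonzero covariantly constant $\xi\in\varGamma(\mathfrak{su}(E))$ exists, and pointwise $\xi$ is traceless skew-Hermitian with eigenvalues $\pm i|\xi|$. The equation $\xi\varPsi=i\varPsi$ says that in the $E$-factor $\varPsi$ is an eigenvector of $\xi$ for the eigenvalue $i$; hence $|\xi|=1$ and $\varPsi$ is a section of $S\otimes L$, where $L$ is the eigenline subbundle of $\xi$. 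Since $\xi$ is $B$-parallel, its eigenbundle decomposition $E=L\oplus(\det E\otimes L^*)$ is preserved by $B$, so $B=B_L\oplus(B^{\det}\otimes B_L^*)$ and $\varPsi=\psi\oplus0$ with $\psi\in\varGamma(S\otimes L)$. As in the remark following the vortex fixed-point lemma, which summand plays the role of $L$ is immaterial after passing to the $S^1$-quotient.

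It then remains to identify the equations satisfied by the surviving data. The Dirac equation $D_B\varPsi=0$ restricts on the reduction to $D_{B_L}\psi=0$, the twisted Dirac equation for the spin-c structure $S\otimes L$. For the curvature equation I would compute the block form of $F_B^0$, which is diagonal with top-left entry $F_{B_L}-\tfrac12F_{B^{\det}}$, and of the quadratic term; the main technical point, and the one I expect to be the real obstacle, is the bookkeeping of the doubly trace-free projection $(\varPsi\varPsi^*)_{00}\in\text{End}_0(S)\otimes\text{End}_0(E)$. For $\varPsi=\psi\oplus0$ one has $\varPsi\varPsi^*=\psi\psi^*\otimes\mathrm{diag}(1,0)$, and extracting the $\text{End}_0(E)$ component together with the $S$-traceless part should give $(\varPsi\varPsi^*)_{00}=\tfrac12(\psi\psi^*)_0\otimes\mathrm{diag}(1,-1)$, so that the top-left block of $*F_B^0+2\rho^{-1}(\varPsi\varPsi^*)_{00}=0$ becomes exactly $*F_{B_L}+\rho^{-1}(\psi\psi^*)_0-\tfrac12*F_{B^{\det}}=0$. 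Getting the factor $\tfrac12$ and the sign of the $*F_{B^{\det}}$ term to match the stated perturbed Seiberg-Witten equations requires care, but involves no new ideas beyond reconciling the $SO(3)$ Clifford and trace conventions with the standard Seiberg-Witten ones; this is the three-dimensional analogue of the Chern-Weil block computation already used to extract the abelian vortex equation. Cases (a) and (b) together exhaust the fixed points and would complete the proof.
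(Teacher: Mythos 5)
Your proposal is correct, and its computational half coincides exactly with the paper's: the paper likewise writes $F_{B}^{0}$ at a reduction in block form with top-left entry $F_{B_{L}}-\tfrac{1}{2}F_{B^{\det}}$ and invokes $(\varPsi\varPsi^{*})_{00}=(\psi\psi^{*})_{0}\otimes\mathrm{diag}(\tfrac{1}{2},-\tfrac{1}{2})$, so your factor of $\tfrac{1}{2}$ and signs land precisely on the stated perturbed Seiberg--Witten equation. The difference is structural: the paper does not prove the fixed-point dichotomy at all --- it cites Lemma 3.12 of Feehan--Leness and confines its proof to reconciling constants with Kronheimer--Mrowka's conventions (via $B^{t}=B_{L}^{\otimes2}$, $F_{B^{t}}=2F_{B_{L}}$) --- whereas you derive the dichotomy yourself from the infinitesimal fixed-point condition, namely a $d_{B}$-parallel $\xi\in\varGamma(\mathfrak{su}(E))$ with $\xi\varPsi=i\varPsi$, whose constant eigenvalues $\pm i$ give the parallel eigenbundle splitting $E=L\oplus(\det E\otimes L^{*})$ and force $\varPsi=\psi\oplus0$. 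That makes the lemma self-contained, which is a genuine gain. Two points deserve care. First, the equivalence ``fixed point iff $(0,i\varPsi)\in\mathrm{Im}\,d_{(B,\varPsi)}^{0}$'' is obtained by differentiating in $\theta$, which tacitly assumes the gauge transformations $u_{\theta}$ realizing the fixed-point condition can be chosen smoothly in $\theta$; this is avoidable by working with a single $u_{\theta_{0}}$ for some $e^{i\theta_{0}}\neq\pm1$: being parallel, $u_{\theta_{0}}$ has constant eigenvalues, so if $B$ is irreducible then $u_{\theta_{0}}=\pm1$ and $u_{\theta_{0}}\varPsi=e^{i\theta_{0}}\varPsi$ forces $\varPsi\equiv0$, while if $\varPsi\not\equiv0$ the same relation places $\varPsi$ in $S$ tensored with the $e^{i\theta_{0}}$-eigenbundle of $u_{\theta_{0}}$, producing the parallel splitting with no smoothness issue. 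Second, in your reducible case the $\xi$ entering $\xi\varPsi=i\varPsi$ must be the one supplied by the fixed-point condition (automatically nonzero when $\varPsi\not\equiv0$), not an arbitrary nonzero parallel section as your sentence literally reads; the intended argument is clear, but the case split is more naturally on $\varPsi\equiv0$ versus $\varPsi\not\equiv0$ than on reducibility of $B$.
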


\begin{proof}
This is basically a restatement of \cite[Lemma 3.12]{MR1855754},
with some simplifications in the case of a 3-manifold, given the choice
of our reference spin-c structure. The only thing we really need to
check is our choice of conventions for the constants. We have chosen
them so that they agree with \cite[Eq 4.4]{MR2388043}. At a reducible
connection $B=B_{L}\oplus(B^{\det}\otimes B_{L}^{*})$, the traceless
part of the curvature $F_{B}^{0}$ can be written as 
\[
\left(\begin{array}{cc}
F_{B_{L}}-\frac{1}{2}F_{B^{\det}} & 0\\
0 & \frac{1}{2}F_{B^{\det}}-F_{B_{L}}
\end{array}\right)
\]
so the curvature equation becomes 
\[
*F_{B_{L}}-\frac{1}{2}*F_{B^{\det}}+\rho^{-1}(\psi\psi^{*})_{0}=0
\]
We used the fact that for $\varPsi=\psi\otimes0$, we have \cite[Eq. 3.13]{MR1855754}
\[
(\varPsi\varPsi^{*})_{00}=(\psi\psi^{*})_{0}\otimes\left(\begin{array}{cc}
\frac{1}{2} & 0\\
0 & -\frac{1}{2}
\end{array}\right)
\]

Notice that $\det(\mathfrak{s}_{L})=\det(S\otimes L)\simeq L^{\otimes2}$,
so in the notation of Kronheimer and Mrowka, $B^{t}\equiv B_{L}^{\otimes2}$
is the induced connection on $\det(\mathfrak{s}_{L})$. In particular,
$F_{B^{t}}=2F_{B_{L}}$ and so the previous equation can be rewritten
as 
\[
\frac{1}{2}*F_{B^{t}}+\rho^{-1}(\psi\psi^{*})_{0}=\frac{1}{2}*F_{B^{\det}}
\]
This agrees with the conventions used in \cite[Eq 4.11]{MR2388043}.
\end{proof}
It is very important to notice that when the Seiberg-Witten monopoles
appear as fixed points of the $S^{1}$ action, the equations they
solve are ``automatically'' perturbed by the term $F_{B^{\det}}$.
These perturbations can have dramatic consequences, the first one
being the following:
\begin{lem}
Suppose that $(B_{L},\psi)$ solves the perturbed Seiberg-Witten equations
(\ref{perturbed monopole equations}), where $B_{L}\in\mathcal{A}(L)$
is a $U(1)$ connection, and $\psi\in\varGamma(S\otimes L)$. Then
$\psi$ vanishes identically if and only if $c_{1}(E)\in H^{2}(Y;\mathbb{Z})$
is an even class, i.e, $w_{2}(\mathfrak{su}(E))$ vanishes. In other
words, if $c_{1}(E)$ is an odd class, then there are no ``reducible''
solutions to the equations (\ref{perturbed monopole equations}).
\end{lem}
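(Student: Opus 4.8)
The plan is to read the assertion off the curvature equation in (\ref{perturbed monopole equations}) by Chern--Weil theory, handling the two implications separately.

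First I would prove the forward implication ($\psi \equiv 0 \Rightarrow c_1(E)$ even), which is the one that yields the useful conclusion. If $\psi$ vanishes identically, the curvature equation collapses to $*F_{B_L} - \frac{1}{2}*F_{B^{\det}} = 0$. Since the Hodge star is an isomorphism on forms, this is equivalent to $F_{B_L} = \frac{1}{2}F_{B^{\det}}$ as imaginary-valued two-forms. Using the Chern--Weil representatives $c_1(L) = \left[\frac{i}{2\pi}F_{B_L}\right]$ and $c_1(E) = c_1(\det E) = \left[\frac{i}{2\pi}F_{B^{\det}}\right]$ and passing to de Rham cohomology gives $2c_1(L) = c_1(E)$ in $H^2(Y;\mathbb{R})$; since $c_1(L)$ is integral, $c_1(E)$ is twice an integral class, hence even. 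Recalling the standard identity $w_2(\mathfrak{su}(E)) = c_1(E)\bmod 2$, this is exactly $w_2(\mathfrak{su}(E)) = 0$, and the contrapositive gives the desired conclusion that odd $c_1(E)$ forbids reducible solutions.

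For the converse I would construct a reducible solution when $c_1(E)$ is even. Writing $c_1(E) = 2\gamma$ with $\gamma \in H^2(Y;\mathbb{Z})$, choose a line bundle $L$ with $c_1(L) = \gamma$. Because $U(2)$ bundles over a $3$-manifold are classified by their first Chern class (the secondary obstruction lives in $H^3(Y;\pi_2(U(2))) = 0$), the bundle $E$ admits a reduction $E \cong L \oplus (\det E \otimes L^*)$ realizing this $\gamma$. The closed two-form $\frac{1}{2}F_{B^{\det}}$ represents $\frac{1}{2}c_1(E) = \gamma = c_1(L)$ in de Rham cohomology, so there is a unitary connection $B_L$ on $L$ with $F_{B_L} = \frac{1}{2}F_{B^{\det}}$; taking the complementary connection to be $B^{\det}\otimes B_L^*$ guarantees that $B = B_L \oplus (B^{\det}\otimes B_L^*)$ induces $B^{\det}$ on $\det E$. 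With $\psi \equiv 0$ the Dirac equation holds trivially and the curvature equation reads $*F_{B_L} - \frac{1}{2}*F_{B^{\det}} = 0$ by construction, so $(B_L,0)$ is a genuine reducible solution.

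The only genuinely nontrivial input is the converse: both the existence of the reduction $E \cong L \oplus (\det E \otimes L^*)$ (for which I rely on the $3$-manifold classification of $U(2)$ bundles) and the realization of the prescribed curvature $\frac{1}{2}F_{B^{\det}}$ by an honest connection on $L$. The forward direction, which is the one actually used to rule out reducibles when $c_1(E)$ is odd, is an immediate Chern--Weil computation once one observes that the Hodge star is invertible.
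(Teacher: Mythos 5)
Your forward implication is exactly the paper's proof: the paper's entire argument is the Chern--Weil computation that $\psi\equiv 0$ forces $F_{B^{\det}}=2F_{B_{L}}$, hence $c_{1}(E)=2c_{1}(L)$, "from which the assertion in the lemma follows immediately." Where you differ is that you also prove a converse, and this is content the paper does not supply. You are right to be uneasy about the literal "if and only if": read verbatim (for a fixed solution $(B_{L},\psi)$, evenness of $c_{1}(E)$ forces $\psi\equiv 0$) the backward implication is false, since irreducible solutions can exist when $c_{1}(E)$ is even; the only sensible reading is the existence statement you prove, namely that evenness of $c_{1}(E)$ permits a reducible solution. Your construction of it is correct: line bundles on $Y$ realize any class in $H^{2}(Y;\mathbb{Z})$, $U(2)$ bundles over a $3$-manifold are classified by $c_{1}$ (the next obstruction lies in $H^{3}(Y;\pi_{3}(BU(2)))=0$), and a unitary connection on $L$ can be chosen with curvature equal to the prescribed closed form $\tfrac{1}{2}F_{B^{\det}}$ because the two differ by an exact imaginary $1$-form. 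So your proof is a strict refinement of the paper's: the paper proves only the direction actually used later (odd $c_{1}(E)$ rules out reducibles), while you additionally justify the "iff" in the one way it can be true.
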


\begin{proof}
Observe that a solution of the form $(B_{L},0)$ must solve the equation
\[
F_{B^{\det}}=2F_{B_{L}}
\]
and by Chern-Weil theory we conclude that 
\[
c_{1}(E)=2c_{1}(L)
\]
from which the assertion in the lemma follows immediately.
\end{proof}
\begin{rem}
Another consequences of these perturbations is that in general the
symmetry between the solutions to the Seiberg-Witten equations for
$\mathfrak{s}$ and its conjugate spin-c structure $\bar{\mathfrak{s}}$
is lost. This is analogous to a comment Donaldson makes in \cite[Section 6]{MR1734402}.
\end{rem}

In fact, if define 
\[
\omega\equiv\frac{1}{4}F_{B^{\det}}
\]
 then $\omega$ is a harmonic representative of $-\frac{\pi i}{2}c_{1}(E)\in H^{2}(Y;i\mathbb{R})$
and using $F_{B^{t}}=2F_{B_{L}}$, the equations \ref{perturbed monopole equations}
can be rewritten as \cite[Eq 29.2]{MR2388043} 
\begin{equation}
\begin{cases}
\frac{1}{2}\rho(F_{B^{t}}-4\omega)-(\psi\psi^{*})_{0}=0\\
D_{B}\psi=0
\end{cases}\label{non exact perturbation}
\end{equation}
In particular, if $c_{1}(E)$ is an odd class, then the term $\omega$
corresponds to a \emph{non-balanced perturbation}, in the terminology
of Kronheimer and Mrowka \cite[Definition 29.1.1]{MR2388043}. A consequence
of working with a non-balanced perturbation is that the definition
of the monopole Floer homology groups $HM(Y,\mathfrak{s},\omega)$
require the choice of a local coefficient system in order to make
sense of the differential, since in general there are monotonicity
issues when defining these groups. Some choices of local coefficient
systems are described in \cite[Section 30.2]{MR2388043}.

It remains a daunting challenge to try to define a suitable version
of $SO(3)$ \emph{monopole Floer homology, }because of the presence
of reducibles (i.e, abelian $U(1)$ monopoles and flat connections). 

As a historical analogy, before the construction of monopole Floer
homology in full generality by Kronheimer and Mrowka \cite{MR2388043},
the simplest case where it could be defined was for three manifolds
with $b_{1}(Y)>0$ and for non-torsion spin-c structures, since in
this case reducibles could be avoided by using suitable perturbations.
The difficult case was that of torsion spin-c structures, which was
solved thanks to the blow-up model introduced by Kronheimer and Mrowka.

Likewise, we can think of defining $SO(3)$ monopole Floer homology
as trying to define monopole Floer homology in the torsion case, which
begs the question of whether there is an analogue of the non-torsion
case where reducibles can be avoided.

Such an analogue may be provided by a construction of an $U(2)$ \emph{monopole
Floer homology. }We explain the basic ideas behind this construction:
assuming all the technical hurdles can be solved, it can be regarded
as giving rise to a family of Floer homologies, indexed by subintervals
of the real line. So in a sense this would be the Floer-theoretic
analogue of the typical wall-crossing phenomenon for the various moduli
spaces which appear in Algebraic Geometry. 

In this case we no longer fix a reference connection on $\det E$,
so we are working with the enlarged configuration space 
\[
\mathcal{C}_{U(2)}(Y,\mathfrak{t})=\mathcal{A}(E)\times\varGamma(S\otimes E)
\]
Suppose that $\omega$ is a harmonic representative of $c_{1}(E)$
{[}before it was chosen to be a representative of $-\frac{\pi i}{2}c_{1}(E)${]}. 

The $U(2)$ \textbf{monopole equations }$SW_{U(2)}(Y,\mathfrak{t},\tau)$
are equations for a pair $(B,\varPsi)$ which satisfy 
\begin{equation}
U(2)\text{ monopole equations }SW_{U(2)}(Y,\mathfrak{t},\tau)\;\;\;\begin{cases}
*F_{B}+2\rho^{-1}(\varPsi\varPsi^{*})_{0}+i\tau(*\omega)I_{E}=0\\
D_{B}\varPsi=0
\end{cases}\label{eq:U(2) monopole equations}
\end{equation}

The main differences with the $SO(3)$ monopole equations (\ref{eq: SO(3) monopole equations on Y})
are the following:
\begin{enumerate}
\item As mentioned before, we are no longer fixing a reference connection
on $\det E$.
\item Since we dropped the reference connection, we now need an equation
which involves the entire curvature $F_{B}$ of $B$, instead of just
its traceless part. Subsequently, $(\varPsi\varPsi^{*})_{0}$ denotes
the component of $\varPsi\varPsi^{*}$ which is traceless only on
the ``spinorial'' component, that is, $(\varPsi\varPsi^{*})_{0}\in\varGamma(\text{End}_{0}S\otimes\text{End}E)$. 
\item Notice that the traceless part of the curvature equation in (\ref{eq:U(2) monopole equations})
gives us back the $SO(3)$ monopole equations (\ref{eq: SO(3) monopole equations on Y}).
\end{enumerate}
It is not difficult to identify the ``critical values'' of $\tau$,
that is, those values where the flat connections and Seiberg-Witten
solutions can appear.
\begin{lem}
\label{reductions U(2) monopole equations}a) Suppose that $(B,\varPsi\equiv0)$
solves the $U(2)$ monopole equations (\ref{eq:U(2) monopole equations}).
Then $\tau=\pi$ and $B$ is a flat $U(2)$ connection. If we assume
$c_{1}(E)$ is odd, then $B$ is irreducible, i.e, not compatible
with a decomposition $E=L_{1}\oplus L_{2}$, $B=B_{1}\oplus B_{2}$.

b) Suppose that $(B=B_{1}\oplus B_{2},\varPsi=\psi\oplus0)$ solves
the equation (\ref{eq:U(2) monopole equations}), where $E=L_{1}\oplus L_{2}$
is compatible with the reduction of $B$. Then $\tau=2\pi n$ for
$n$ an arbitrary integer, and $(B_{1},\psi)$ solves the perturbed
Seiberg-Witten equations (\ref{perturbed monopole equations}) for
the spin-c structure $S\otimes L_{1}$ with perturbation term $F_{B_{2}}$.
Moreover, $c_{1}(L_{1})$ and $c_{1}(L_{2})$ are both integer multiples
of $c_{1}(E)$, i.e, their Chern classes belong to the ray spanned
by $c_{1}(E)$ inside the integer lattice $H^{2}(Y;\mathbb{Z})$.
\end{lem}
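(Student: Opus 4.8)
The plan is to read off both statements by decomposing the single $U(2)$ monopole equation (\ref{eq:U(2) monopole equations}) with respect to the reduction and extracting its scalar and cohomological content through Chern--Weil theory; the Dirac equation $D_B\varPsi=0$ plays an auxiliary role, since it splits diagonally along any reduction $B=B_1\oplus B_2$ as $D_B=D_{B_1}\oplus D_{B_2}$. Throughout I use that on a $3$-manifold $*^2=\mathrm{Id}$ on $2$-forms, so the curvature identity can be recovered from its Hodge dual, and that $\omega$ is a harmonic representative of $c_1(E)$, i.e. $[\omega]=c_1(E)$ in $H^2(Y;\mathbb{R})$.

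For part (a) I would set $\varPsi\equiv 0$, so that (\ref{eq:U(2) monopole equations}) collapses to $*F_B=-i\tau(*\omega)I_E$; applying $*$ gives $F_B=-i\tau\,\omega\, I_E$. Since the right-hand side is a scalar multiple of $I_E$, the traceless part $F_B^0$ vanishes, which is exactly the statement that $B$ is (projectively) flat in the $SO(3)$ sense. Taking the trace yields $F_{B^{\det}}=-2i\tau\,\omega$, and Chern--Weil gives $c_1(E)=[\tfrac{i}{2\pi}F_{B^{\det}}]=\tfrac{\tau}{\pi}\,[\omega]=\tfrac{\tau}{\pi}c_1(E)$; as $c_1(E)$ is non-torsion this forces $\tau=\pi$. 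For irreducibility, a reduction $E=L_1\oplus L_2$, $B=B_1\oplus B_2$ compatible with $F_B^0=0$ forces $F_{B_1}=F_{B_2}$ as $2$-forms, hence $c_1(L_1)=c_1(L_2)$ rationally and so $c_1(E)=2c_1(L_1)$ rationally; this contradicts the hypothesis that $c_1(E)$ is odd (equivalently $w_2(\mathfrak{su}(E))\neq 0$), so no such reduction can exist.

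For part (b) I would take $\varPsi=\psi\oplus 0$ and $B=B_1\oplus B_2$ adapted to $E=L_1\oplus L_2$, and compute $\varPsi\varPsi^*=(\psi\psi^*)\otimes\mathrm{diag}(1,0)$, so that its spin-traceless part is $(\varPsi\varPsi^*)_0=(\psi\psi^*)_0\otimes\mathrm{diag}(1,0)$. Both $F_B$ and $\omega I_E$ are diagonal, so the curvature equation has no off-diagonal component and splits into the $L_2$-slot $*F_{B_2}+i\tau(*\omega)=0$ and the $L_1$-slot $*F_{B_1}+2\rho^{-1}(\psi\psi^*)_0+i\tau(*\omega)=0$. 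The $L_2$-slot gives $F_{B_2}=-i\tau\,\omega$, whence $c_1(L_2)=\tfrac{\tau}{2\pi}c_1(E)$ and $c_1(L_1)=(1-\tfrac{\tau}{2\pi})c_1(E)$; both Chern classes therefore lie on the real ray through $c_1(E)$, and integrality of $c_1(L_2)$ (using that $c_1(E)$ is a non-torsion, indivisible class) pins $\tfrac{\tau}{2\pi}=n\in\mathbb{Z}$, i.e. $\tau=2\pi n$. Substituting $i\tau(*\omega)=-*F_{B_2}$ from the $L_2$-slot into the $L_1$-slot gives $*F_{B_1}+2\rho^{-1}(\psi\psi^*)_0-*F_{B_2}=0$, which together with the $L_1$-component $D_{B_1}\psi=0$ of $D_B\varPsi=0$ is precisely the perturbed Seiberg--Witten system (\ref{perturbed monopole equations}) for the spin-c structure $S\otimes L_1$, with the fixed curvature $F_{B^{\det}}$ there replaced by the perturbation $F_{B_2}$ (the normalization of the quadratic term matching once one accounts for $\mathrm{diag}(1,0)=\tfrac12 I_E+\mathrm{diag}(\tfrac12,-\tfrac12)$, i.e. the single- versus double-traceless convention relating (\ref{eq:U(2) monopole equations}) and (\ref{eq: SO(3) monopole equations on Y})).

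The routine content is the algebraic block-decomposition and the Chern--Weil bookkeeping; I expect the main obstacle to be the integrality/torsion step, namely passing from the real-cohomology identities $c_1(E)=2c_1(L_1)$ and $c_1(L_2)=\tfrac{\tau}{2\pi}c_1(E)$ to the integral conclusions (irreducibility in (a) and $\tau=2\pi n$ in (b)). This requires using that $c_1(E)$ is non-torsion and exploiting its precise divisibility — in particular its oddness — rather than merely its image in $H^2(Y;\mathbb{R})$, together with a careful matching of the coefficient conventions between (\ref{eq:U(2) monopole equations}) and (\ref{perturbed monopole equations}).
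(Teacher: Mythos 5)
Your proof is correct and follows essentially the same route as the paper: block-decompose the curvature equation along the reduction, apply Chern--Weil to each diagonal slot (trace for part (a)), and substitute the $L_2$-slot identity $F_{B_2}=-i\tau\omega$ into the $L_1$-slot to recover the perturbed Seiberg--Witten system. Your explicit flagging of the indivisibility/non-torsion hypothesis needed to pass from $c_1(L_2)=\tfrac{\tau}{2\pi}c_1(E)$ to $\tau\in 2\pi\mathbb{Z}$, and your phrasing ``projectively flat'' for the conclusion of part (a), are if anything slightly more careful than the paper's own treatment, which leaves both points implicit.
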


\begin{proof}
a) This is a Chern-Weil argument once again. Taking traces we find
that $\text{tr}(F_{B})=-2i\tau\omega$ and therefore $\tau=\pi$.
If $B$ were compatible with a reduction then 
\[
F_{B}=\left(\begin{array}{cc}
F_{B_{1}} & 0\\
0 & F_{B_{2}}
\end{array}\right)=-i\tau\omega I_{E}
\]
 we would conclude that $F_{B_{1}}=F_{B_{2}}$. Therefore $L_{1}\simeq L_{2}$
and $c_{1}(E)$ must represent an even class, which cannot be true
by assumption.

b) In this case the equations become 
\[
\begin{cases}
*F_{B_{1}}+2\rho^{-1}(\psi\psi^{*})_{0}+i\tau(*\omega)=0\\
F_{B_{2}}=-i\tau\omega\\
D_{B_{1}}\psi=0
\end{cases}
\]
Substituting the second equation into the first one we recognize the
perturbed Seiberg Witten equation \ref{perturbed monopole equations}
\[
*F_{B_{1}}+2\rho^{-1}(\psi\psi^{*})_{0}-*F_{B_{2}}=0
\]
By Chern-Weil theory the second equation says that $-2\pi ic_{1}(L_{2})=-i\tau c_{1}(E)$,
so $\tau$ is of the form $\tau=2\pi n$, for $n\in\mathbb{Z}$ an
integer. Since 
\[
c_{1}(E)=c_{1}(L_{1}\oplus L_{2})=c_{1}(L_{1})+c_{1}(L_{2})=c_{1}(L_{1})+nc_{1}(E)
\]
 this forces $c_{1}(L_{1})$ to be
\[
c_{1}(L_{1})=(1-n)c_{1}(E)
\]
\end{proof}
\begin{rem}
a) Notice the second part of the previous lemma implies that the only
monopole Floer groups that the $U(2)$ monopole Floer equations can
``see'' are those whose spin-c structure satisfy $c_{1}(\mathfrak{s})=c_{1}(S\otimes L_{1})=2(1-n)c_{1}(E)$,
for $n$ an arbitrary integer. In particular, notice that all spin-c
structures which are torsion can appear in this picture. Also, in
the specific case that $Y$ is an integer homology $S^{1}\times S^{2}$,
and we choose $c_{1}(E)$ as a generator of $H^{2}(Y;\mathbb{Z})\simeq\mathbb{Z}$,
then all spin-c structures can arise in this case as well. 

b) Implicitly we are also saying that solutions to equation \ref{eq:U(2) monopole equations}
of the form $(B=B_{1}\oplus B_{2},\varPhi=\varPsi_{1}\oplus\varPsi_{2})$,
$E=L_{1}\oplus L_{2}$ , do not arise unless one of the spinors $\varPsi_{1}$
or $\varPsi_{2}$ vanish identically. An easy way to see that otherwise
by taking the traceless part of the curvature equation, we would obtain
a pair $(B=B_{1}\oplus B_{2},\varPhi=\varPsi_{1}\oplus\varPsi_{2})$
which satisfies the $SO(3)$ monopole equations \ref{eq: SO(3) monopole equations on Y}
where none of the spinors is identically zero. However, \cite[Lemma 5.22]{MR1664908}
shows that this cannot be the case. A quick look at the statement
of Feehan and Leness shows that they stated this already for a \emph{perturbed
}version of the $SO(3)$ monopole equations, which had to satisfy
specific properties so that this would not occur. Therefore, this
already shows one needs to be very careful about which properties
will continue to hold after adding (holonomy) perturbations.
\end{rem}

Having introduced the $U(2)$ monopole equations, it makes sense to
state the following conjecture:
\begin{conjecture}
Suppose that $Y$ is a closed oriented 3-manifold, and $E$ is an
$U(2)$ bundle which satisfies the admissibility condition that $w=c_{1}(E)\in H^{2}(Y;\mathbb{Z})$
is a non-torsion odd class. Choose a spin-c structure $\mathfrak{s}$
and consider the bundle $V=S\otimes E$ representing the spin-u structure
$\mathfrak{t}$, where $(S,\rho)$ is a spinor bundle representing
$\mathfrak{s}$.

Then for $\tau\neq\pi,2\pi n$ for $n\in\mathbb{Z}$, it is possible
to define a\textbf{ Seiberg-Witten $U(2)$ monopole Floer homology}
$HM_{U(2)}(Y,\mathfrak{t},\tau)$. Some expected features of these
groups are the following:

a) The chain complex of $HM_{U(2)}(Y,\mathfrak{t},\tau)$ will be
built out of a suitably perturbed version of the equations \ref{eq:U(2) monopole equations}.
These solutions can be identified with the critical points of a perturbed
$U(2)$ Chern-Simons-Dirac functional $CS_{\mathfrak{t},\tau}$, morally
a Morse function on the space $\mathcal{B}(Y,\mathfrak{t})=\left(\mathcal{A}(E)\times\varGamma(V)\right)/\mathcal{G}(E)$. 

b) The differential involves the count of gradient flow lines mod
$\mathbb{R}$- translations of $CS_{\mathfrak{t},\tau}$ as in every
other Floer homology. In general, there will be monotonicity issues
{[}a bound on the dimension of flow lines will not imply a uniform
energy bound on the corresponding moduli spaces{]}, so the groups
$HM_{U(2)}(Y,\mathfrak{t},\tau)$ will require a Novikov ring (local
coefficient system) so that the differential is well defined. 

c) The groups will be $\mathbb{Z}/2\mathbb{Z}$ graded in general. 

d) Write $\mathbb{R}\backslash\{\pi,2\pi n\mid n\in\mathbb{Z}\}=\bigcup_{i}I_{i}$
as a disjoint union of open intervals. If $\tau,\tau'$ belong to
the same ``chamber'' $I_{i}$, then $HM_{U(2)}(Y,\mathfrak{t},\tau)\simeq HM_{U(2)}(Y,\mathfrak{t},\tau')$,
i.e, the groups are isomorphic to each other as long as one does not
cross a ``wall''.
\end{conjecture}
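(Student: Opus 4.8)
The plan is to follow the blueprint of Kronheimer and Mrowka \cite{MR2388043}, adapting their construction of monopole Floer homology to the enlarged configuration space $\mathcal{C}_{U(2)}(Y,\mathfrak{t})=\mathcal{A}(E)\times\varGamma(V)$ with the parameter $\tau$ fixed in a chamber $I_i$ of the complement of the critical values $\{\pi,2\pi n\}$. First I would write down the $U(2)$ Chern-Simons-Dirac functional $CS_{\mathfrak{t},\tau}$ whose formal gradient, with respect to a suitable $L^2$ metric on $\mathcal{B}(Y,\mathfrak{t})$, recovers the left-hand sides of equations (\ref{eq:U(2) monopole equations}); its critical points modulo $\mathcal{G}(E)$ are then exactly the solutions of the $U(2)$ monopole equations. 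The decisive structural input is Lemma (\ref{reductions U(2) monopole equations}): because $w=c_1(E)$ is a non-torsion odd class, the flat locus can only occur at $\tau=\pi$ and the reducible Seiberg-Witten locus only at $\tau=2\pi n$, so for $\tau\in I_i$ \emph{every} critical point is irreducible. This is the precise analogue of the non-torsion regime of ordinary monopole Floer homology, and it is what should allow one to dispense with the blow-up construction and work directly on the quotient, establishing the setting for part (a).

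Next I would complete the configuration space in the appropriate Sobolev norms and introduce a Banach space of abstract holonomy perturbations $\mathfrak{q}$ of the type used in \cite{MR2388043}. A Sard-Smale argument should then yield, for generic $\mathfrak{q}$, a finite set of nondegenerate critical points together with moduli spaces of gradient trajectories that are cut out transversely as smooth manifolds of the expected dimension. For the grading, I would define the relative index between two critical points as the spectral flow of the associated path of self-adjoint Fredholm operators. Since this index is only well defined modulo the divisibility of $c_1(\mathfrak{t})$, there is in general no absolute $\mathbb{Z}$-grading, but the parity of the spectral flow is always well defined; this gives the $\mathbb{Z}/2\mathbb{Z}$ grading of part (c).

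The hard part will be compactness together with the accompanying energy bookkeeping. One must combine Uhlenbeck compactness for the connection component with the a priori estimates for the spinor, accounting for possible instanton bubbling along trajectories, and — crucially — control the energy of a flow line. Because $\tau$ enters through the non-exact term $i\tau(*\omega)I_E$, the energy of a trajectory is \emph{not} bounded purely in terms of its relative grading: this is precisely the monotonicity failure flagged in part (b). I would resolve it by defining the differential over a Novikov completion of the group ring of the relevant period lattice, that is, a local coefficient system $\varGamma$, assigning to each trajectory a monomial recording its topological energy so that the otherwise infinite count becomes a well defined element of the Novikov field. Granting a gluing theorem identifying the ends of the one-dimensional trajectory spaces with broken trajectories, the relation $\partial^2=0$ then follows, completing parts (a) and (b).

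Finally, for the wall-crossing statement (d), I would fix a path $\tau_t$ inside a single chamber $I_i$. Since no reducible solution occurs for any $\tau\in I_i$, the parametrized moduli space over the path contains no reducibles, so a standard continuation-map argument, counting $t$-dependent trajectories, produces chain maps $CM_{U(2)}(Y,\mathfrak{t},\tau)\to CM_{U(2)}(Y,\mathfrak{t},\tau')$ that are chain-homotopy inverses and hence induce isomorphisms on homology. The main obstacle throughout is the interplay between the Novikov energy filtration and the compactness theory: one must verify that in each grading only finitely many Novikov monomials contribute, that bubbling does not produce extra boundary components beyond broken trajectories, and that these estimates persist uniformly as $\tau$ varies within $I_i$ so that the chain-homotopy equivalences survive.
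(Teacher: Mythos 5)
You should first be aware that this statement is a \emph{conjecture} in the paper, not a theorem: the author offers no proof and explicitly says that ``behind this conjecture there is a great deal of work that will require verification.'' So there is no proof in the paper to compare against, and your proposal can only be judged as a research program. As such, it does faithfully reproduce the strategy the paper envisions: the Kronheimer--Mrowka blueprint \cite{MR2388043}, the chamber structure supplied by Lemma (\ref{reductions U(2) monopole equations}), Novikov/local coefficients to absorb the monotonicity failure, parity of spectral flow for the $\mathbb{Z}/2\mathbb{Z}$ grading, and continuation maps within a chamber for part (d).

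But as a proof it has two genuine gaps, and they sit exactly where the difficulty of the conjecture lives. First, your assertion that for $\tau\in I_{i}$ every critical point is irreducible rests on the Chern--Weil arguments of Lemma (\ref{reductions U(2) monopole equations}), which apply to the \emph{unperturbed} equations (\ref{eq:U(2) monopole equations}); the chain complex, however, must be built from holonomy-perturbed equations, and the paper warns about precisely this in the remark following that lemma: the structural result \cite[Lemma 5.22]{MR1664908} of Feehan and Leness already requires perturbations with special properties, and Chern--Weil identities do not automatically survive holonomy perturbations. Your Sard--Smale step silently assumes that generic small perturbations create no reducibles inside a chamber; this needs an argument. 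Second, the compactness step is not bookkeeping: trajectories are solutions of non-abelian monopole-type equations on $\mathbb{R}\times Y$, whose compactification involves Uhlenbeck-type instanton bubbling. Unlike the abelian Seiberg--Witten case (where the maximum principle gives compactness) or the instanton case with admissible bundles (where bubbling is codimension four and misses the one-dimensional moduli spaces), here one must show that the ends of one-dimensional trajectory spaces consist only of broken trajectories, with no contribution from bubbling or from energy escaping along the Novikov directions, and no existing technology does this for the $U(2)$ monopole equations on a general $Y$. You flag both issues, but you defer them with ``granting'' clauses; since resolving them \emph{is} the content of the conjecture, what you have written is an outline of the expected proof rather than a proof.
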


Clearly behind this conjecture there is a great deal of work that
will require verification, but it could serve as a toy model before
a general construction of $SO(3)$ monopole Floer homology, which
would prove even more difficult in general. 

\section{$SO(3)$ Monopoles on $S^{1}\times\varSigma$: Framed Monopole Homology\label{sec:-Framed Monopole Homology}}

Now we want to understand the solutions to the $SO(3)$ monopole equations
on $Y=S^{1}\times\varSigma$ in terms of the solutions to the $SO(3)$
vortex equations on $\varSigma$. 

Before doing this, it is useful to understand how the spin-c structures
on $Y$ are related to the spin-c structures on $\varSigma$. More
details can be found in \cite[Section 3]{MR2141962}. Throughout our
discussion we will use a product metric on $S^{1}\times\varSigma$.

Suppose $\mathfrak{s}_{Y}$ is a spin-c structure on $Y$. Since $H^{2}(S^{1}\times\varSigma;\mathbb{Z})$
has no 2-torsion, we can specify $\mathfrak{s}_{Y}$ by its element
$c_{1}(\mathfrak{s}_{Y})\in H^{2}(S^{1}\times\varSigma;\mathbb{Z})$.
Let $S_{Y}$ denote a spinor bundle representing $\mathfrak{s}_{Y}$
and $L_{Y}=\det S_{Y}$ the corresponding line bundle.

If we think of $S^{1}\times\varSigma$ as $[0,1]\times\varSigma$
with the boundaries identified, then $L_{Y}$ is constructed as the
pullback under the projection $[0,1]\times\varSigma\rightarrow\varSigma$
of a line bundle $L_{\varSigma}$ by gluing along the boundaries with
an isomorphism $u\in\text{Map}(\varSigma,S^{1})$. Then 
\[
c_{1}(\mathfrak{s}_{Y})=c_{1}(L_{Y})=c_{1}(L_{\varSigma})+[S^{1}]\cup[u]
\]
where $[u]$ is the class of $u$ in $[\varSigma;S^{1}]\simeq H^{1}(\varSigma;\mathbb{Z})$.
The spin-c structure $\mathfrak{s}_{Y}$ induces a spin-c structure
$\mathfrak{s}_{\varSigma}$ with determinant line bundle $L_{\varSigma}$.
If $S_{\varSigma}$ is a spinor bundle for $\mathfrak{s}_{\varSigma}$
, we can assume that it is of the form 
\[
S_{\varSigma}=(\mathbb{C}\oplus K_{\varSigma}^{-1})\otimes(L_{\varSigma}\otimes K_{\varSigma})^{1/2}
\]
where $K_{\varSigma}=\varOmega^{1,0}(\varSigma)$ is the canonical
bundle of $\varSigma$ , and $(L_{\varSigma}\otimes K_{\varSigma})^{1/2}$
denotes a line bundle whose square is the line bundle $L_{\varSigma}\otimes K_{\varSigma}$.
In particular, if $L_{\varSigma}\simeq\mathbb{C}$ is the trivial
line bundle, then we obtain the canonical spin-c structure on $\varSigma$
\[
S_{can}\simeq K_{\varSigma}^{1/2}\oplus K_{\varSigma}^{-1/2}
\]
Notice that an implicit choice of \emph{spin} structure on $\varSigma$
was made so that we could find a square root of the canonical bundle,
so we will assume a fixed choice once and for all. Moreover, if $[u]=0$,
then $c_{1}(L_{Y})=0$ which means that we can regard the torsion
spin-c structure on $Y$ as being of the form 
\[
S_{tor}=K_{\varSigma}^{1/2}\oplus K_{\varSigma}^{-1/2}
\]
Now suppose that $E$ is a $U(2)$ bundle on $\varSigma$. We can
then consider the spin-u structure
\[
V=(K_{\varSigma}^{1/2}\oplus K_{\varSigma}^{-1/2})\otimes E=(\mathbb{C}\oplus K_{\varSigma}^{-1})\otimes(K_{\varSigma}^{1/2}\otimes E)
\]

Therefore, it is more elegant to reabsorb $K_{\varSigma}^{1/2}$ into
the bundle $E$ we are considering and work with the canonical spin-u
structure \footnote{Notice that $\mathbb{C}\oplus K_{\varSigma}^{-1}$ is not a torsion
spin-c structure in general, but this does not make a big difference
for our arguments, since $\text{ad}(K_{\varSigma}^{1/2}\otimes E)\simeq\text{ad}(E)$.
Moreover, the Chern connection is being implicitly used on $K_{\varSigma}^{-1}$
, and the trivial connection on the $\mathbb{C}$ factor.}
\[
V_{can}=(\mathbb{C}\oplus K_{\varSigma}^{-1})\otimes E
\]

It suffices to define the Clifford map $\rho$ on $S=\mathbb{C}\oplus K_{\varSigma}^{-1}$
. First of all, notice that
\[
\rho(d\theta)^{2}=-|d\theta|_{g}^{2}1_{S}=-1_{S}
\]
so $\rho(d\theta)$ is an endomorphism of $S$ which squares to $-1$.
We will thus identify $\rho(d\theta)$ with the matrix  
\[
\rho(d\theta)=\left(\begin{array}{cc}
i & 0\\
0 & -i
\end{array}\right)
\]

Decompose a one form $\eta\in T^{*}(S^{1}\times\varSigma)$ as 
\[
\eta=td\theta+\varpi
\]
where $\varpi$ is identified with the pullback of a $(0,1)$ form
on $\varSigma$. Notice that our conventions are such that 
\[
|\eta|_{g_{Y}}=t^{2}+2|\varpi|_{g_{\varSigma}}^{2}
\]
and that we are regarding $\varOmega^{0,1}(\varSigma)$ only as a
\emph{real }vector space, so that we have the isomorphism $\varOmega^{0,1}(\varSigma)\simeq\varOmega^{1}(\varSigma)$.
\textbf{}

Finally, we define for $(\alpha,\beta)\in\mathbb{C}\oplus\pi^{*}K_{\varSigma}^{-1}$
the Clifford map
\[
\rho(\eta)\left(\begin{array}{c}
\alpha\\
\beta
\end{array}\right)=\left(\begin{array}{c}
it\alpha-\sqrt{2}\left\langle \beta,\varpi\right\rangle \\
\sqrt{2}\alpha\varpi-it\beta
\end{array}\right)
\]
Notice that $K_{\varSigma}^{-1}$ is being interpreted as a \emph{complex
}vector space, and that $\left\langle \beta,\varpi\right\rangle $
refers to the hermitian inner product on $\varOmega^{0,1}(\varSigma)$,
complex linear in the first entry. Slightly abusing notations we can
write this as 
\[
\rho(\eta)=\left(\begin{array}{cc}
i\left\langle d\theta,\eta\right\rangle  & -\sqrt{2}\varpi^{*}\\
\sqrt{2}\varpi & -i\left\langle d\theta,\eta\right\rangle 
\end{array}\right)
\]
It is also useful to have local expressions for these formulas. Consider
a coframe $e^{1},e^{2}$ of $T^{*}\varSigma$ and define 
\[
\begin{cases}
\epsilon=\frac{1}{\sqrt{2}}(e^{1}+ie^{2})\\
\bar{\epsilon}=\frac{1}{\sqrt{2}}(e^{1}-ie^{2})
\end{cases}
\]
Under an almost complex structure $J$ of $\varSigma$, we have that
$J(e^{1})$ is mapped to $e^{2}$, thus
\[
J(\epsilon)=\frac{1}{\sqrt{2}}(e^{2}-ie^{1})=-\frac{i}{\sqrt{2}}(e^{1}+ie^{2})=-i\epsilon
\]
In other words, $\epsilon$ is of type $(1,0)$. Likewise, $\bar{\epsilon}$
is of type $(0,1)$ since $J(\bar{\epsilon})=i\bar{\epsilon}$. It
is an easy check that 
\begin{align*}
\rho(\epsilon)(\alpha,\beta)=(-\sqrt{2}\left\langle \beta,\bar{\epsilon}\right\rangle ,0) & \iff\left(\begin{array}{cc}
0 & -\sqrt{2}\epsilon\\
0 & 0
\end{array}\right)\\
\rho(\bar{\epsilon})(\alpha,\beta)=(0,\sqrt{2}\bar{\epsilon}\alpha) & \iff\left(\begin{array}{cc}
0 & 0\\
\sqrt{2}\bar{\epsilon} & 0
\end{array}\right)
\end{align*}
Following the discussion in \cite[Section 3.2]{MR3875977}, we can
write a connection $B$ on $S^{1}\times\varSigma$ in the form 
\[
B=C(\theta)+c(\theta)d\theta
\]
 where $C(\theta)$ is a one parameter periodic family of $U(2)$
connections on $\varSigma$ and $c(\theta)\in\varOmega^{0}(\varSigma,\mathfrak{su}(2))$
is also another periodic family. The curvature becomes 
\begin{equation}
F_{B}=F_{C(\theta)}+d\theta\wedge\left(\frac{\partial C(\theta)}{\partial\theta}+d_{C(\theta)}c(\theta)\right)\label{decomposition curvature}
\end{equation}
 Moreover, the Dirac operator can be written as {[}compare with \cite[Lemma 3.5]{MR3875977},
\cite[Eq 5.12]{MR1611061}, \cite[Section 3.1]{MR2814036}{]}

\[
D_{B}=\left(\begin{array}{cc}
i\left(\frac{\partial}{\partial\theta}+c(\theta)\right) & \sqrt{2}\bar{\partial}_{C(\theta)}^{*}\\
\sqrt{2}\bar{\partial}_{C(\theta)} & -i\left(\frac{\partial}{\partial\theta}+c(\theta)\right)
\end{array}\right)
\]
Therefore, the Dirac equation for $(B,(\alpha,\beta))$ reads 
\[
\begin{cases}
i\left(\frac{\partial}{\partial\theta}+c(\theta)\right)\alpha+\sqrt{2}\bar{\partial}_{C(\theta)}^{*}\beta=0\\
\sqrt{2}\bar{\partial}_{C(\theta)}\alpha-i\left(\frac{\partial}{\partial\theta}+c(\theta)\right)\beta=0
\end{cases}
\]
where $\alpha\in\varOmega^{0}(E)$ and $\beta\in\varOmega^{0,1}(E)$.

In order to understand the curvature equation, decompose the curvature
in terms of a co-frame as 
\[
F_{B}^{0}=F_{\theta,e^{1}}d\theta\wedge e^{1}+F_{\theta,e^{2}}d\theta\wedge e^{2}+F_{e^{1},e^{2}}e^{1}\wedge e^{2}
\]
\[
*F_{B}^{0}=F_{\theta,e^{1}}e^{2}-F_{\theta,e^{2}}e^{1}+F_{e^{1},e^{2}}d\theta
\]
Since $\frac{\epsilon+\bar{\epsilon}}{\sqrt{2}}=e^{1}$, $\frac{i(\bar{\epsilon}-\epsilon)}{\sqrt{2}}=e^{2}$
then $\rho(e^{1})=\left(\begin{array}{cc}
0 & -\epsilon\\
\bar{\epsilon} & 0
\end{array}\right)$, $\rho(e^{2})=\left(\begin{array}{cc}
0 & i\epsilon\\
i\bar{\epsilon} & 0
\end{array}\right)$
\[
\rho(*F_{B}^{0})=\left(\begin{array}{cc}
iF_{e^{1},e^{2}} & \left(iF_{\theta,e^{1}}+F_{\theta,e^{2}}\right)\epsilon\\
\left(iF_{\theta,e^{1}}-F_{\theta,e^{2}}\right)\bar{\epsilon} & -iF_{e^{1},e^{2}}
\end{array}\right)
\]
Thus the curvature equation reads (compare with \cite[p.10]{MR2700715})
\[
\begin{cases}
iF_{e^{1},e^{2}}+(\alpha\otimes\alpha^{*})_{0}-(\varLambda\beta\otimes\beta^{*})_{0}=0\\
\left(iF_{\theta,e^{1}}-F_{\theta,e^{2}}\right)\bar{\epsilon}+2(\beta\otimes\alpha^{*})_{0}=0
\end{cases}
\]
The expression $(\varLambda\beta\otimes\beta^{*})_{0}$ denotes that
a contraction with the symplectic form on $\varSigma$ is implicit.
\begin{thm}
\label{thm-solutions on S1xsigma} Suppose $(B,(\alpha,\beta))$ satisfies
the $SO(3)$ vortex equations on $S^{1}\times\varSigma$, for the
spin-u structure $V_{can}=(\mathbb{C}\oplus K_{\varSigma}^{-1})\otimes E$.
Here $\alpha\in\varOmega^{0}(E)$ and $\beta\in\varOmega^{0,1}(E)$.
Write $B=C(\theta)+c(\theta)d\theta$. Then
\begin{align}
i\left(\frac{\partial}{\partial\theta}+c(\theta)\right)\alpha+\sqrt{2}\bar{\partial}_{C(\theta)}^{*}\beta=0\label{equations on S1xSigma}\\
\sqrt{2}\bar{\partial}_{C(\theta)}\alpha-i\left(\frac{\partial}{\partial\theta}+c(\theta)\right)\beta=0\nonumber \\
iF_{e^{1},e^{2}}+(\alpha\otimes\alpha^{*})_{0}-(\varLambda\beta\otimes\beta^{*})_{0}=0\nonumber \\
\left(iF_{\theta,e^{1}}-F_{\theta,e^{2}}\right)\bar{\epsilon}+2(\beta\otimes\alpha^{*})_{0}=0\nonumber 
\end{align}
In fact, all solutions are of the form $(B,(\alpha,0))$ or $(B,(0,\beta))$. 

In the first case, we get a solution of the form 
\begin{align*}
\bar{\partial}_{C}\alpha=0\\
F_{C}-i(\alpha\otimes\alpha^{*})_{0}=0
\end{align*}
 which can be identified with an $SO(3)$ vortex on the bundle $E$. 

In the second case, we get a solution of the form 
\begin{align*}
\bar{\partial}_{C}^{*}\beta=0\\
F_{C}+i(\varLambda\beta\otimes\beta^{*})_{0}=0
\end{align*}
 which can be identified via Serre duality with an $SO(3)$ vortex
on the bundle $K_{\varSigma}\otimes E^{-1}$. 

In particular, if we assume that $\deg E>2\deg K_{\varSigma}$, then
this second type of moduli space of $SO(3)$ vortices is empty. 

\end{thm}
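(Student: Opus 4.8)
The plan is to prove the dichotomy in three stages---reduce to translation-invariant configurations, decouple the equations, and eliminate the mixed case---and then read off the two models and the emptiness statement. First I would show that every solution $(B,(\alpha,\beta))$ on the closed manifold $S^{1}\times\varSigma$ is gauge-equivalent to one pulled back from $\varSigma$, i.e. that after a gauge transformation $c(\theta)\equiv0$ and $C(\theta),\alpha,\beta$ are independent of $\theta$. This is the $SO(3)$ analogue of the translation invariance established for the Seiberg--Witten equations on $S^{1}\times\varSigma$ in \cite{MR1438191,MR2141962}, and I expect the rank-one argument (a Weitzenb\"ock/maximum-principle estimate forcing $\partial_{\theta}$ of the configuration to vanish for the product metric) to carry over. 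Granting this, by (\ref{decomposition curvature}) the mixed curvature components vanish, $F_{\theta,e^{1}}=F_{\theta,e^{2}}=0$, the first two lines of (\ref{equations on S1xSigma}) decouple into $\bar{\partial}_{C}\alpha=0$ and $\bar{\partial}_{C}^{*}\beta=0$, and the fourth line collapses to the pointwise condition $(\beta\otimes\alpha^{*})_{0}=0$.

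Second, I would exploit $(\beta\otimes\alpha^{*})_{0}=0$ exactly as in Lemma (\ref{lem:Suppose-that-smoothness criterion}): writing $\alpha=(\varPhi_{1},\varPhi_{2})^{\mathsf{T}}$ and $\beta=(\beta_{1},\beta_{2})^{\mathsf{T}}$ locally, vanishing of the trace-free part gives $\beta_{1}\bar{\varPhi}_{2}=0$, $\beta_{2}\bar{\varPhi}_{1}=0$, and $\beta_{1}\bar{\varPhi}_{1}=\beta_{2}\bar{\varPhi}_{2}$. If $\alpha\not\equiv0$ then $\varPhi_{1}$ or $\varPhi_{2}$ is nonzero on a dense open set (its zeros are isolated, $\alpha$ being holomorphic), and there both components of $\beta$ vanish; since $\beta$ is harmonic it then vanishes identically by unique continuation. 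Thus globally $\alpha\equiv0$ or $\beta\equiv0$, which is the dimensionally-reduced content of \cite[Lemma 5.22]{MR1664908}. In the case $\beta\equiv0$ the surviving equations are $\bar{\partial}_{C}\alpha=0$ and $F_{C}-i(\alpha\otimes\alpha^{*})_{0}=0$, precisely an $SO(3)$ vortex for $(\varSigma,E)$.

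Third, in the case $\alpha\equiv0$ the equations read $\bar{\partial}_{C}^{*}\beta=0$ and $F_{C}+i(\varLambda\beta\otimes\beta^{*})_{0}=0$. For the identification I would invoke Hodge theory and Serre duality: a $\bar{\partial}_{C}^{*}$-closed $\beta\in\varOmega^{0,1}(E)$ is harmonic and represents a class in $H^{1}(\varSigma,\mathcal{E})\cong H^{0}(\varSigma,K_{\varSigma}\otimes\mathcal{E}^{*})^{*}$, hence corresponds to a holomorphic section of $K_{\varSigma}\otimes E^{-1}$ (with $E^{-1}=E^{*}$), and the curvature equation transforms into the projective vortex equation on $K_{\varSigma}\otimes E^{-1}$. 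This gives an $SO(3)$ vortex for $(\varSigma,K_{\varSigma}\otimes E^{-1})$.

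Finally, for the emptiness claim I would run a degree count. Put $E'=K_{\varSigma}\otimes E^{*}$, so $\deg E'=2\deg K_{\varSigma}-\deg E$. A nonzero $\beta$ yields a nonzero holomorphic section of $E'$, generating a line subbundle $\check{L}'\subset\mathcal{E}'$ with $\deg\check{L}'\ge0$ (a nonzero section of a line bundle on a closed curve forces non-negative degree). By the Kobayashi--Hitchin correspondence of the theorem following (\ref{eq: KH map}), the solution of the projective vortex equation makes $\mathcal{E}'$ semistable, so $\deg\check{L}'\le\tfrac{1}{2}\deg E'$; combining gives $0\le\tfrac{1}{2}\deg E'$, i.e. $\deg E\le2\deg K_{\varSigma}$. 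Hence if $\deg E>2\deg K_{\varSigma}$ there is no such $\beta$ and the second family is empty. I expect the genuine obstacle to be the first stage, proving translation invariance for the enlarged structure group $U(2)$: a priori $c(\theta)$ and the $\theta$-dependence of $C(\theta)$ could survive, and one must check that the estimate forcing $\partial_{\theta}$ to vanish in the abelian case is not spoiled by the noncommutativity of $\mathfrak{su}(E)$.
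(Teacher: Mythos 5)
Your stages 2--4 are essentially correct and coincide with the paper's own argument: the pointwise algebra extracted from $(\beta\otimes\alpha^{*})_{0}=0$, unique continuation for the holomorphic/harmonic sections, Serre duality for the $\alpha\equiv0$ case, and the stability-plus-degree count for emptiness all go through as you wrote them. The genuine gap is stage 1, and it is not a deferrable technicality: it is the entire analytic content of the theorem, and your proposed architecture (prove translation invariance first, then deduce the dichotomy from it) is circular. Indeed, in temporal gauge the curvature decomposition (\ref{decomposition curvature}) identifies the mixed components $F_{\theta,e^{1}},F_{\theta,e^{2}}$ with the components of $\partial_{\theta}C$, and since an $\mathfrak{su}(\check{E})$-valued one-form on $\varSigma$ is determined by its $(0,1)$ part, the fourth equation of (\ref{equations on S1xSigma}) makes the statement $\partial_{\theta}C=0$ literally \emph{equivalent} to $(\beta\otimes\alpha^{*})_{0}\equiv0$, i.e., to the pointwise dichotomy you plan to derive \emph{from} invariance in stage 2. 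So no independent proof of stage 1 can exist that does not already contain the dichotomy; the two statements must be established together, which is exactly what your proposal never does.

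The paper's proof produces both at once from a single integral identity: after passing to temporal gauge, rewrite the first equation of (\ref{equations on S1xSigma}) using the Kahler identity $i\bar{\partial}_{C(\theta)}^{*}=\varLambda\partial_{C(\theta)}$, differentiate in $\theta$, substitute $\partial_{\theta}\beta$ from the second equation and the mixed curvature (i.e., $\partial_{\theta}C$) from the fourth, pair with $\alpha$, and integrate over $S^{1}\times\varSigma$. The outcome is
\[
\left\Vert \partial_{\theta}\alpha\right\Vert _{L^{2}}^{2}+\sqrt{2}\int_{S^{1}\times\varSigma}\left(|\alpha|^{2}|\beta|^{2}-\tfrac{1}{2}\left|\left\langle \beta,\alpha\right\rangle \right|^{2}\right)+2\left\Vert \bar{\partial}_{C(\theta)}\alpha\right\Vert _{L^{2}}^{2}=0,
\]
a sum of non-negative terms (Cauchy--Schwarz controls the middle one), forcing $\partial_{\theta}\alpha=0$, $\bar{\partial}_{C}\alpha=0$, and $|\alpha|\,|\beta|\equiv0$; only \emph{after} the dichotomy is in hand does the fourth equation yield $\partial_{\theta}C=0$, so that all data pulls back from $\varSigma$. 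Note also that this computation answers the worry you flag at the end: the noncommutativity of $\mathfrak{su}(E)$ never enters, because the quadratic term produced by the substitution is exactly $|\alpha|^{2}|\beta|^{2}-\tfrac{1}{2}|\langle\beta,\alpha\rangle|^{2}\geq0$. Without carrying out this estimate (or an equivalent one), your proof assumes its hardest step; with it, your stages 2--4 become the paper's conclusion of the argument.
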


\begin{rem}
The previous theorem states that on $S^{1}\times\varSigma$ the solutions
to the $SO(3)$ monopole equations decompose into the union of two
moduli spaces of $SO(3)$ vortices on the Riemann surface. This is
the analogue of what happens for the solutions to the $SO(3)$ monopole
equations on a Kahler surface, where the moduli space also decomposes
into the union of two moduli spaces of stable pairs on the Kahler
surface (see \cite[Theorem 6.3.10]{MR2254074} and \cite[Section 1.2]{MR2700715}).
\end{rem}

\begin{proof}
We follow the proof in \cite[Proposition 3.1]{MR2141962}, \cite[Proposition 3.6]{MR3875977},
which applies with cosmetic changes to our situation.

Suppose that $(B,(\alpha,\beta))$ is irreducible in that $B$ is
irreducible and $(\alpha,\beta)$ non-vanishing. This forces the the
stabilizer of this $SO(3)$ monopole to be $\text{Id}$ under the
determinant one gauge group. Moreover, one can use a gauge transformation
to assume the connection is in ``temporal'' gauge, which means that
$B=C(\theta)$ instead of the more general case $B=C(\theta)+c(\theta)d\theta$.
So we can assume that $c(\theta)=0$ in the equations (\ref{equations on S1xSigma})

Since $i\bar{\partial}_{C(\theta)}^{*}=\varLambda\partial_{C(\theta)}$
on $(0,1)$ forms, the first equation (\ref{equations on S1xSigma})
can be written as 
\[
-\frac{\partial}{\partial\theta}\alpha+\sqrt{2}\varLambda\partial_{C(\theta)}\beta=0
\]
Differentiate this with respect to $\theta$ to find
\[
0=-\frac{\partial^{2}}{\partial\theta^{2}}\alpha+\sqrt{2}(\frac{\partial}{\partial\theta}\varLambda\partial_{C(\theta)})\beta+\sqrt{2}\varLambda\partial_{C(\theta)}\frac{\partial\beta}{\partial\theta}
\]
Because of the second equation in (\ref{equations on S1xSigma}),
$\frac{\partial}{\partial\theta}\beta=-\sqrt{2}i\bar{\partial}_{C(\theta)}\alpha$,
so we obtain
\[
0=-\frac{\partial^{2}}{\partial\theta^{2}}\alpha+\sqrt{2}(\frac{\partial}{\partial\theta}\varLambda\partial_{C(\theta)})\beta-2i\varLambda\partial_{C(\theta)}\underbrace{\bar{\partial}_{C(\theta)}\alpha}_{(0,1)\text{ form}}
\]
Using the Kahler identity $i\bar{\partial}_{C(\theta)}^{*}=\varLambda\partial_{C(\theta)}$
one more time we end up with 
\[
0=-\frac{\partial^{2}}{\partial\theta^{2}}\alpha+\sqrt{2}(\frac{\partial}{\partial\theta}\varLambda\partial_{C(\theta)})\beta+2\bar{\partial}_{C(\theta)}^{*}\bar{\partial}_{C(\theta)}\alpha
\]
Take inner product with $\alpha$, and use integration by parts and
the third equation in (\ref{equations on S1xSigma}) to conclude 
\[
0=-\int_{\check{\varSigma}}\left\langle \frac{\partial^{2}\alpha}{\partial\theta^{2}},\alpha\right\rangle +\sqrt{2}\int_{\check{\varSigma}}\left\langle (\alpha\otimes\beta^{*})_{0}\beta,\alpha\right\rangle +2\int_{\check{\varSigma}}|\bar{\partial}_{C(\theta)}\alpha|^{2}
\]
The integrand of the term in the middle is
\[
\left\langle |\beta|^{2}\alpha-\frac{1}{2}\left\langle \beta,\alpha\right\rangle _{\tilde{E}}\beta,\alpha\right\rangle =|\beta|^{2}|\alpha|^{2}-\frac{1}{2}|<\beta,\alpha>_{\tilde{E}}|^{2}
\]
Integrate from $0$ to $1$ to obtain (recall that we are thinking
of $S^{1}\times\varSigma$ as $[0,1]\times\varSigma$ with the ends
identified)
\[
|\frac{\partial}{\partial\theta}\alpha|^{2}+\sqrt{2}\left(|\beta|^{2}|\alpha|^{2}-\frac{1}{2}|<\beta,\alpha>_{\tilde{E}}|^{2}\right)+2|\bar{\partial}_{C(\theta)}\alpha|^{2}=0
\]
So $\alpha$ is $\theta$ independent and by Cauchy Schwarz since
$|<\beta,\alpha>_{\tilde{E}}|^{2}\leq|\beta|^{2}|\alpha|^{2}$ we
conclude that either $\alpha$ or $\beta$ vanishes identically. Because
of the curvature decomposition (\ref{decomposition curvature}) we
also find that $\frac{\partial C}{\partial\theta}=0$ thus all the
data pulls back from the surface $\varSigma$, and the claim of the
theorem is now straightforward.

For the last claim, suppose that $\deg E>2K_{\varSigma}$. Then $\deg(K_{\varSigma}\otimes E^{-1})<0$
so if $(C,\beta)$ denotes an $SO(3)$ vortex on the bundle $K_{\varSigma}\otimes E^{-1}$
, we have $c_{1}(L)<\frac{1}{2}\deg(K_{\varSigma}\otimes E^{-1})<0$
for any holomorphic sub-line bundle of $E$ for which $\beta\in H^{0}(L)$
and this is clearly impossible. Thus the moduli space of $SO(3)$
vortices must be empty.
\end{proof}
Now we specialize to the case that $\varSigma=T^{2}$. The canonical
bundle of an elliptic curve is trivial, and choose the $U(2)$ bundle
over $T^{2}$ with $\deg E=1$. Now we discuss the solutions to the
$SO(3)$ monopole equations $(B,\varPsi)$ on $S^{1}\times T^{2}$.

$\bullet$ For the irreducible $SO(3)$ monopoles, where $B$ is an
irreducible $SO(3)$ connection, and $\varPsi$ non-vanishing, by
the previous theorem (\ref{thm-solutions on S1xsigma}), $\varPsi=\alpha\oplus0$
or $\varPsi=0\oplus\beta$, and $B$ pulls back from a connection
$C$ on $T^{2}$. Moreover, $(C,\alpha)$ is an $SO(3)$ vortex on
$E$ and there are no $SO(3)$ vortices of the second kind since $\deg E=1>0=2\deg K_{T^{2}}$.
As described in Example (\ref{example T2}), the dimension of the
moduli space of $SO(3)$ vortices on $E$ is two, which becomes one
dimensional after taking the quotient by the $S^{1}$ action.

$\bullet$ The moduli space of flat connections on $T^{3}$. As described
in \cite[Section 4.1]{MR2860345}, there are two flat connections
on $T^{3}$. One corresponds to the flat connection (representation)
on $T^{2}$, while the other is obtained from this one by applying
a twist to the representation on $T^{2}$.

$\bullet$ The Seiberg-Witten solutions which appear associated to
our choice of spin-u structure must satisfy the perturbed Seiberg
Witten equations (\ref{perturbed monopole equations}):
\[
\begin{cases}
*F_{B_{L}}+\rho^{-1}(\psi\psi^{*})_{0}-\frac{1}{2}*F_{B^{\det}}=0\\
D_{B}\psi=0
\end{cases}
\]
Here $\psi$ corresponds to a section of $\underbrace{K_{\varSigma}^{1/2}\otimes L}_{\simeq L}\subset\underbrace{K_{\varSigma}^{1/2}\otimes E}_{\simeq E}$
for a splitting of $E=L\oplus(L^{*}\otimes\det E)$. Thus $\psi$
satisfies the abelian vortex equations 
\[
\begin{cases}
*_{T^{2}}F_{C}-\frac{i}{2}|\psi|^{2}=\frac{1}{2}*_{T^{2}}F_{C^{\det E}}\\
\bar{\partial}_{C}\psi=0
\end{cases}
\]
 The usual Chern-Weil argument says that 
\[
\deg L+\frac{1}{4\pi}\int_{T^{2}}|\psi|^{2}=\frac{1}{2}\deg E=\frac{1}{2}
\]
Since $\deg L\geq0$, this forces $\deg L=0$ and thus $\psi$ is
a holomorphic \emph{function }on $T^{2}$. Thus it is constant and
up to gauge there is a unique solution. In this way we find that there
is a unique solution to these perturbed Seiberg-Witten equations on
$T^{3}$.

In particular, for this choice of $U(2)$ bundle on $T^{3}$, the
moduli space of $SO(3)$ monopoles on $T^{3}$ consists of a one-dimensional
moduli space (an interval) with one endpoint corresponding to the
Seiberg-Witten monopole, the other endpoint corresponding to the flat
connection which pulls back from the flat connection on $T^{2}$,
and an additional flat connection not connected to this one dimensional
moduli space. Based on this picture, we define:
\begin{defn}
Suppose that $Y$ is a closed oriented 3-manifold. The\textbf{ }\textbf{\emph{framed
monopole Floer}}\emph{ }groups of $Y$, denoted $HM^{\#}(Y)$, is
defined as the monopole Floer homology $HM(Y\#T^{3},\omega,\varGamma)$.
Here $\omega$ is a non-balanced perturbation which is a harmonic
representative of $-\frac{\pi i}{2}c_{1}(E)\in H^{2}(T^{2};i\mathbb{R})\subset H^{2}(T^{3};i\mathbb{R})$
for the $U(2)$ bundle $E$ over $T^{2}$ of degree one. Since $\omega$
is a non-balanced perturbation, the definition of the monopole Floer
homology requires the use of a local coefficient system $\varGamma$. 
\end{defn}

\begin{rem}
Some choices of local systems are described in \cite[Chapter VIII]{MR2388043}
as well as in the author's paper \cite{Echeverria[Furuta]}.
\end{rem}

As explained in the previous section, the fact that we are using a
non-balanced perturbation means that there are no reducible solutions
to the Seiberg-Witten equations on the 3 manifold $Y\#T^{3}$. In
fact, there are only finitely many irreducible Seiberg-Witten monopoles
which make up the chain complex $CM(Y\#T^{3},\omega)$ which computes
$HM(Y\#T^{3},\omega,\varGamma)$. Notice that 
\[
HM(Y\#T^{3},\omega,\varGamma)=\bigoplus_{\mathfrak{s}\#\mathfrak{s}'}HM(Y\#T^{3},\omega,\varGamma,\mathfrak{s}\#\mathfrak{s}')
\]
where $\mathfrak{s}\#\mathfrak{s}'$ denotes a spin-c structure on
$Y\#T^{3}$. Observe that our notation for the framed monopole Floer
groups $HM^{\#}(Y)$ suggest that they depend on $Y$, not on $Y\#T^{3}$.
In order to make this more precise we need to use the Künneth formula
from \cite[Section 6]{MR4194309}.

In the notation of Kutluhan, Lee, Taubes, $M_{1},M_{2}$ are two three
manifolds and $M_{\#}$ denotes its connected sum. The Floer complex
for $M_{1}$ is assumed to come from a \emph{nonbalanced perturbation.
}In particular, the complex for $M_{\#}$ uses a non-balanced perturbation
as well. The connected sum theorem relates $CM_{*}(M_{\#})$ with
$S_{U_{\sqcup}}(\hat{C}_{*}(M_{\sqcup}))$. The latter refers to $\hat{C}_{\sqcup}\otimes\mathbb{Z}[y]$,
where $y$ acts as a degree 1 chain map \cite[Section 4.1]{MR4194309}.
If we write this as $\hat{C}_{\sqcup}\oplus y\hat{C}_{\sqcup}$ ,
then the differential takes the block form 
\[
D_{\sqcup}=\left(\begin{array}{cc}
\hat{\partial}_{\sqcup} & 0\\
\hat{U}_{\sqcup} & -\hat{\partial}_{\sqcup}
\end{array}\right)
\]
Proposition 6.7 in \cite[Section 6]{MR4194309} then states that there
is a chain homotopy equivalence between the complex $C(M_{\#},\mathfrak{s}_{\#},\omega_{\#},\varGamma_{\#})$
and the complex $S_{U_{\sqcup}}(\hat{C}_{*}(M_{\sqcup},\mathfrak{s}_{\sqcup},\omega_{\sqcup};\varGamma_{\sqcup})$. 

In our case, $M_{1}=T^{3}$, $M_{2}=Y$. The differential $\hat{\partial}_{\sqcup}=1\otimes\partial_{\natural}^{\#}(M_{2})+\partial_{o}^{o}(M_{1})\otimes1)$
reduces to $1\otimes\partial_{\natural}^{\#}(Y)$. Likewise, $\hat{U}_{\sqcup}=1\otimes\hat{U}_{p_{2}}-\hat{U}_{p_{1}}\otimes1$
reduces to $1\otimes\hat{U}_{p_{2}}$. Thus, it is not difficult to
see that in our case the complex $S_{U_{\sqcup}}(\hat{C}_{*}(M_{\sqcup},\mathfrak{s}_{\sqcup},\omega_{\sqcup};\varGamma_{\sqcup})$
is isomorphic to $S_{U_{Y}}(\hat{C}_{*}(Y,\mathfrak{s};\varGamma))$,
but this is precisely the complex which computes the monopole Floer
group $\widetilde{HM}(Y,\varGamma)$ \cite[Section 8]{MR2764887},
the only difference is that we now end up with a twisted version of
this group. In other words, we have
\begin{thm}
\label{theo iso framed group}The framed monopole Floer group $HM^{\#}(Y)$
is isomorphic to the group $\widetilde{HM}(Y,\varGamma)$. Moreover,
the Euler characteristic of $HM^{\#}(Y)$ (hence $\widetilde{HM}(Y,\varGamma)$)
is the same as the one of $HI^{\#}(Y)$.
\end{thm}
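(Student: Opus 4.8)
The plan is to deduce the theorem by assembling the connected-sum (Künneth) machinery of Kutluhan, Lee and Taubes with the chain-level Euler-characteristic identities already recorded above. For the isomorphism I would start from the definition $HM^{\#}(Y)=HM(Y\#T^{3},\omega,\varGamma)$ and feed the pair $M_{1}=T^{3}$, $M_{2}=Y$ into Proposition 6.7 of \cite{MR4194309}, which supplies a chain homotopy equivalence between $C(Y\#T^{3},\mathfrak{s}_{\#},\omega_{\#},\varGamma_{\#})$ and the mapping cone $S_{U_{\sqcup}}(\hat{C}_{*}(T^{3}\sqcup Y,\mathfrak{s}_{\sqcup},\omega_{\sqcup};\varGamma_{\sqcup}))$. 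The decisive input is that, for the non-balanced perturbation $\omega$ arising from the degree-one $U(2)$ bundle over $T^{2}\subset T^{3}$, the complex $\hat{C}_{*}(T^{3},\omega)$ has a single generator and no internal differential, as follows from the analysis of Seiberg-Witten monopoles on $S^{1}\times T^{2}$ and their identification with the unique abelian vortex on $T^{2}$. Consequently the $T^{3}$ tensor factor collapses: $\partial_{o}^{o}(T^{3})$ vanishes and the $\hat{U}_{p_{1}}\otimes 1$ contribution drops out, so that $\hat{\partial}_{\sqcup}$ reduces to $1\otimes\partial_{\natural}^{\#}(Y)$ and $\hat{U}_{\sqcup}$ reduces to $1\otimes\hat{U}_{p_{2}}$.

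With these reductions in hand, the mapping cone $S_{U_{\sqcup}}(\hat{C}_{*}(T^{3}\sqcup Y;\varGamma))$ is identified with $S_{U_{Y}}(\hat{C}_{*}(Y,\mathfrak{s};\varGamma))$, which is precisely the mapping-cone complex whose homology is the (twisted) tilde group $\widetilde{HM}(Y,\varGamma)$ in the sense of \cite[Section 8]{MR2764887}. Passing to homology then yields $HM^{\#}(Y)\simeq\widetilde{HM}(Y,\varGamma)$; the only deviation from the untwisted definition is that the local coefficient system $\varGamma$, forced on us by the non-balanced perturbation, must be carried through the whole argument, producing the twisted group rather than the integral one.

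For the statement about Euler characteristics I would argue purely at the level of counts. Since the chain complex $CM^{\#}(Y)=CM(Y\#T^{3},\omega)$ is finitely generated, the Euler characteristic is a homotopy invariant, so $\chi(HM^{\#}(Y))=\chi(CM(Y\#T^{3},\omega))$ as in (\ref{eq: Euler characteristic monopoles}), while the isomorphism above furnishes (\ref{eq: euler characteristic}). On the instanton side, (\ref{Euler characteristics instantons}) records $\chi(HI^{\#}(Y))=\tfrac{1}{2}\chi(CI(Y\#T^{3},\gamma))$. The final link is the identity $\chi(CM(Y\#T^{3},\omega))=\tfrac{1}{2}\chi(CI(Y\#T^{3},\gamma))$ coming from the work of Doan and Gerig \cite{Doan-Gerig[KM]}, after matching orientation conventions; chaining these equalities produces $\chi(HM^{\#}(Y))=\chi(HI^{\#}(Y))$, and hence the same for $\widetilde{HM}(Y,\varGamma)$.

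The main obstacle is the chain-level reduction of the first paragraph: one must verify that the $T^{3}$ factor genuinely contributes nothing beyond a single free generator to the Kutluhan-Lee-Taubes mapping cone, so that the pair $(\hat{\partial}_{\sqcup},\hat{U}_{\sqcup})$ collapses exactly to $(1\otimes\partial_{\natural}^{\#}(Y),\,1\otimes\hat{U}_{p_{2}})$ and reproduces the defining complex of $\widetilde{HM}(Y,\varGamma)$ on the nose. Keeping careful track of the local system $\varGamma$ and of the grading conventions through this reduction is where the real work lies. By contrast, the Euler-characteristic half is formal once the external input of \cite{Doan-Gerig[KM]} is granted, and that half is not self-contained within this paper, since it leans on work in progress.
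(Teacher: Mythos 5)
Your proposal follows essentially the same route as the paper: it invokes Proposition 6.7 of Kutluhan--Lee--Taubes with $M_{1}=T^{3}$, $M_{2}=Y$, collapses the $T^{3}$ factor (single generator from the unique abelian vortex, hence vanishing differential and $\hat{U}_{p_{1}}$-contribution) to identify the mapping cone with $S_{U_{Y}}(\hat{C}_{*}(Y,\mathfrak{s};\varGamma))$, i.e.\ the complex computing $\widetilde{HM}(Y,\varGamma)$, and then chains the identities (\ref{Euler characteristics instantons}), (\ref{eq: Euler characteristic monopoles}), (\ref{eq: euler characteristic}) together with the Doan--Gerig input exactly as the paper does in its introduction. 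The only difference is cosmetic: you spell out the justification for the chain-level collapse slightly more explicitly than the paper, which simply asserts the reduction.
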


\begin{rem}
The equality of the Euler characteristics was explained near the end
of the introduction to this work.
\end{rem}

\section{$SO(3)$ Monopoles on\label{sec:-Seifert} Seifert Manifolds}

Now we proceed to analyze the $SO(3)$ monopole equations on a Seifert
manifold $Y$. We will follow \cite{MR1651420} and \cite{MR1611061}.
Unfortunately, our notation and some of our conventions are not quite
isomorphic to those used in either reference, so we have to redo some
of their computations. 

For the identies we require we need to have very good control on all
the constants appearing in our formulas, so at the cost of redudance
we will reprove some of the results in each paper.

First of all, for an $SO(3)$ connection $^{\circ}\nabla$ on the
cotangent bundle $T^{*}Y$ which is not necessarily the Levi-Civita
connection, we say that a connection 
\[
\nabla^{S}:\varGamma(S)\rightarrow\varGamma(T^{*}Y\otimes S)
\]
 is a \textbf{spinorial connection} with respect to $^{\circ}\nabla$
if all vector fields $v$ on $Y$, sections $\varPhi$ of $S$ and
one forms $\theta$ we have 
\begin{equation}
\nabla_{v}^{S}(\rho(\theta)\varPhi)=\rho(^{\circ}\nabla_{v}\theta)\varPhi+\rho(\theta)(\nabla_{v}^{S}\varPhi)\label{eq:Clifford map}
\end{equation}
If $(e^{i})$ is a coframe and the connection matrix of $^{\circ}\nabla$
is given by \footnote{Our convention is chosen in such a way that if $^{\circ}\nabla^{TY}$
is the induced connection on the tangent bundle then $^{\circ}\nabla^{TY}e_{i}=\sum_{j=1}^{3}\omega_{ji}e_{j}$
where $e_{i}$ an orthonormal frame}

\[
^{\circ}\nabla e^{i}=\omega e^{i}=\sum_{j=1}^{3}\omega_{ji}\otimes e^{j}
\]
where $\omega\in\varOmega^{1}(Y)\otimes\mathfrak{so}(3)$ then then
the connection matrix of $\nabla^{S}$ is 
\begin{equation}
-\frac{1}{4}\sum_{i,j=1}^{3}\omega_{ij}\otimes\rho(e^{i}\wedge e^{j})+bId_{S}\label{coefficients spin connection}
\end{equation}
where $b$ is any imaginary valued one form.

With this baggage out of the way we can start defining some special
geometric structures on $Y=S(L)$. Recall that $S(L)$ is a principal
$S^{1}$ bundle. As such, we can find a connection $i\eta$ for $S(L)$
and we assume that it is of constant curvature. We give $Y$ the metric
\[
g_{Y}=\eta\otimes\eta+\pi^{*}(g_{\check{\varSigma}})
\]
This is a locally homogeneous Riemann metric on $Y=S(L)$. The global
1 form induces a reduction in the structure group of $TY$ to $SO(2)$;
the kernel of $\eta$ is naturally identified with the pull back of
the orbifold tangent bundle of $\varSigma$, so that we have orthogonal
splittings 
\[
\begin{cases}
TY\simeq\mathbb{R}\frac{\partial}{\partial\varphi}\oplus\pi^{*}(T\check{\varSigma})\\
T^{*}Y\simeq\mathbb{R}\eta\oplus\pi^{*}(T^{*}\check{\varSigma})
\end{cases}
\]
Here $\frac{\partial}{\partial\varphi}$ is the vector field dual
to $\eta$ with respect to $g_{Y}$. Notice that $\frac{\partial}{\partial\varphi}$
has unit length with respect to this metric and in fact it is a Killing
vector field for the metric $g_{Y}$, that is, the Lie derivative
of the metric $g_{Y}$ with respect to $\frac{\partial}{\partial\varphi}$
vanishes.

Moreover, for any vector field $v_{\check{\varSigma}}$ dual to the
pull back of a 1 form from $\check{\varSigma}$, we have that $\left[v_{\check{\varSigma}},\frac{\partial}{\partial\varphi}\right]=0$.
If $\nabla_{\check{\varSigma}}^{LC}$ is the Levi-Civita connection
on $T^{*}\check{\varSigma}$, we give $T^{*}Y$ the so called \textbf{adiabatic
connection
\begin{equation}
\nabla^{\infty}=d\oplus\pi^{*}(\nabla_{\check{\varSigma}}^{LC})\label{eq:adiabatic connection}
\end{equation}
}In particular, 
\[
\begin{cases}
\nabla^{\infty}\eta=0\\
\nabla^{\infty}\pi^{*}(\check{\theta})=\pi^{*}(\nabla_{\check{\varSigma}}^{LC}\check{\theta})
\end{cases}
\]
where $\check{\theta}$ is any 1 form on $\check{\varSigma}$. Since
$d\eta$ is constant, let $c_{\eta}$ be the constant determined by
\begin{equation}
d\eta=2c_{\eta}\pi^{*}(\mu_{\check{\varSigma}})\label{eq:constant Seifert}
\end{equation}
Using Chern-Weil theory one deduces that 
\begin{equation}
c_{\eta}=-\frac{\pi\int_{\check{\varSigma}}c_{1}(L)}{\text{vol}(\check{\varSigma})}=-\frac{\pi\deg(\check{L})}{\text{vol}(\check{\varSigma})}\label{eq:constant Seifert 2}
\end{equation}
The following estimate will allows us to compare two Dirac operators
associated to the adiabatic and Levi-Civita connections.
\begin{lem}
(Comparing Dirac Operators\label{lem:(Comparing-Dirac-Operators)},
\cite[Lemma 5.5]{MR1611061}) Let $(S,\rho)$ be a spin-c structure
over $Y=S(L)$ and $\nabla^{S},\nabla_{LC}^{S}$ a pair of connections
which are spinorial with respect to $\nabla^{\infty}$ and $\nabla^{LC}$
respectively, which induce the same connection on the determinant
line bundle of $S$. Then, for any $\theta\in\varOmega^{1}(Y,\mathbb{R})$
, we have 
\[
\nabla_{LC,\theta^{\#}}^{S}=\nabla_{\theta^{\#}}^{S}+\frac{c_{\eta}}{2}\left(\rho(\theta)-2\left\langle \theta,\eta\right\rangle \rho(\eta)\right)
\]
where $\theta^{\#}$ denotes the vector field which is $g_{Y}$ dual
to $\theta$. If $D_{LC}$ and $D^{\infty}$ are the corresponding
Dirac operators then 
\[
D_{LC}=D^{\infty}-\frac{1}{2}c_{\eta}
\]
In particular, this implies that the Dirac operator $D^{\infty}$
is self adjoint, since $D_{LC}$ and multiplication by a real scalar
valued function are self adjoint. 
\end{lem}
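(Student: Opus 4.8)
The plan is to reduce the entire statement to a comparison of the two orthogonal $\mathfrak{so}(3)$-connections $\nabla^{LC}$ and $\nabla^{\infty}$ on $T^{*}Y$, translate that comparison into a statement about the associated spin connections via formula (\ref{coefficients spin connection}), and then contract with Clifford multiplication to read off the two Dirac operators.

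First I would fix an adapted orthonormal coframe $(e^{0},e^{1},e^{2})$ on $Y$ with $e^{0}=\eta$ and with $e^{1},e^{2}$ the pullbacks of a local orthonormal coframe on $\check{\varSigma}$; then $\pi^{*}(\mu_{\check{\varSigma}})=e^{1}\wedge e^{2}$ and $d\eta=2c_{\eta}\,e^{1}\wedge e^{2}$ by (\ref{eq:constant Seifert}). Writing $\check{\omega}_{12}$ for the Levi-Civita connection form of $\check{\varSigma}$, the definition (\ref{eq:adiabatic connection}) gives $\omega^{\infty}_{01}=\omega^{\infty}_{02}=0$ and $\omega^{\infty}_{12}=\pi^{*}\check{\omega}_{12}$. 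For the Levi-Civita connection on $Y$ I would solve Cartan's first structure equations $de^{i}=-\sum_{j}\omega^{LC}_{ij}\wedge e^{j}$ (with $\omega^{LC}_{ij}=-\omega^{LC}_{ji}$, matching the sign convention of the paper's footnote) using the homogeneous ansatz $\omega^{LC}_{01}=a\,e^{2}$, $\omega^{LC}_{02}=b\,e^{1}$, $\omega^{LC}_{12}=\pi^{*}\check{\omega}_{12}+c\,e^{0}$. A short computation forces $a=c_{\eta}$, $b=-c_{\eta}$, $c=-c_{\eta}$, so the whole discrepancy is carried by the three forms
\[
\omega^{LC}_{01}-\omega^{\infty}_{01}=c_{\eta}e^{2},\quad \omega^{LC}_{02}-\omega^{\infty}_{02}=-c_{\eta}e^{1},\quad \omega^{LC}_{12}-\omega^{\infty}_{12}=-c_{\eta}e^{0}.
\]
This O'Neill-tensor computation for a constant-curvature circle bundle is the geometric heart of the argument, and is the step I expect to be the main obstacle, since the precise signs and the factor $c_{\eta}/2$ in the final formula are all produced here.

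With the discrepancy of the base connections in hand I would invoke (\ref{coefficients spin connection}): because $\nabla^{S}$ and $\nabla^{S}_{LC}$ induce the same connection on $\det S$, the undetermined $b\,\mathrm{Id}_{S}$ terms coincide and cancel, leaving
\[
\nabla^{S}_{LC}-\nabla^{S}=-\tfrac{1}{4}\sum_{i,j}\bigl(\omega^{LC}_{ij}-\omega^{\infty}_{ij}\bigr)\rho(e^{i}\wedge e^{j})=-\tfrac{1}{2}\sum_{i<j}\bigl(\omega^{LC}_{ij}-\omega^{\infty}_{ij}\bigr)\rho(e^{i})\rho(e^{j}).
\]
Using the three-dimensional Clifford relation $\rho(e^{0})\rho(e^{1})\rho(e^{2})=\mathrm{Id}_{S}$ (so $\rho(e^{0})\rho(e^{1})=-\rho(e^{2})$, $\rho(e^{0})\rho(e^{2})=\rho(e^{1})$, $\rho(e^{1})\rho(e^{2})=-\rho(e^{0})$), substituting the three discrepancy forms above, and contracting with $\theta^{\#}$ (writing $\theta=\theta_{0}e^{0}+\theta_{1}e^{1}+\theta_{2}e^{2}$, so that $\langle\theta,\eta\rangle=\theta_{0}$ and $\rho(\eta)=\rho(e^{0})$) produces exactly $\tfrac{c_{\eta}}{2}\bigl(\rho(\theta)-2\langle\theta,\eta\rangle\rho(\eta)\bigr)$, which is the first assertion.

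For the Dirac operators I would write $D=\sum_{i}\rho(e^{i})\nabla^{S}_{e_{i}}$ and apply the pointwise identity just proved with $\theta=e^{i}$, obtaining
\[
D_{LC}-D^{\infty}=\frac{c_{\eta}}{2}\sum_{i}\rho(e^{i})\bigl(\rho(e^{i})-2\langle e^{i},\eta\rangle\rho(\eta)\bigr)=\frac{c_{\eta}}{2}\Bigl(\textstyle\sum_{i}\rho(e^{i})^{2}-2\rho(\eta)^{2}\Bigr),
\]
where I used $\sum_{i}\langle e^{i},\eta\rangle\rho(e^{i})=\rho(\eta)$ to collapse the second sum. Since $\sum_{i}\rho(e^{i})^{2}=-3$ and $\rho(\eta)^{2}=-1$, the bracket equals $-3-2(-1)=-1$, giving $D_{LC}-D^{\infty}=-\tfrac{1}{2}c_{\eta}$. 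Finally, $D_{LC}$ is self-adjoint because it is the Dirac operator of a metric (Levi-Civita) spin connection, and multiplication by the real constant $\tfrac{1}{2}c_{\eta}$ is self-adjoint, so $D^{\infty}=D_{LC}+\tfrac{1}{2}c_{\eta}$ is self-adjoint as well.
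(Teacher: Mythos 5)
Your proposal is correct and follows essentially the same route as the paper's proof: an adapted coframe with $e^{0}=\eta$, Cartan's structure equations to extract the discrepancy $\omega^{LC}-\omega^{\infty}$ (your ansatz yields exactly the paper's matrix $\omega_{01}^{LC}=c_{\eta}e^{2}$, $\omega_{02}^{LC}=-c_{\eta}e^{1}$, $\omega_{12}^{LC}=\pi^{*}\check{\omega}_{12}-c_{\eta}e^{0}$), the spinorial-connection formula (\ref{coefficients spin connection}) together with the three-dimensional Clifford relations $\rho(e^{0})\rho(e^{1})\rho(e^{2})=\mathrm{Id}_{S}$, and a final contraction giving $D_{LC}-D^{\infty}=-\tfrac{1}{2}c_{\eta}$. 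Your explicit observation that the same-determinant-connection hypothesis is what cancels the $b\,\mathrm{Id}_{S}$ ambiguity is a small clarification the paper leaves implicit, but otherwise the two arguments coincide.
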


\begin{proof}
Recall the Cartan's structure equations \cite[Section 4.3]{MR1735502}.
Let $\{e^{0},e^{1},e^{2}\}$ be a (local) orthonormal frame of $T^{*}Y$.
Define the connection matrix for the Levi Civita connection $\nabla^{LC}$
by 
\[
\nabla^{LC}e^{i}=\sum_{j=0}^{2}\omega_{ji}^{LC}\otimes e_{j}
\]
then
\[
de^{i}=\sum_{j=0}^{2}e^{j}\wedge\omega_{ij}^{LC}=-\sum_{j=0}^{2}\omega_{ij}^{LC}\wedge e^{j}
\]
These can be conveniently rewritten as
\[
d\left[\begin{array}{c}
e^{0}\\
e^{1}\\
e^{2}
\end{array}\right]=-\left[\begin{array}{ccc}
0 & \omega_{01}^{LC} & \omega_{02}^{LC}\\
\omega_{10}^{LC} & 0 & \omega_{12}^{LC}\\
\omega_{20}^{LC} & \omega_{21}^{LC} & 0
\end{array}\right]\wedge\left[\begin{array}{c}
e^{0}\\
e^{1}\\
e^{2}
\end{array}\right]
\]
Choose $e^{0}=\eta$, and $e^{1},e^{2}$ the pull back from an orthonormal
frame on $\check{\varSigma}$. We obtain the system of equations 
\begin{align}
\begin{array}{ccccc}
d\eta & = &  & -\omega_{01}^{LC}\wedge e^{1} & -\omega_{02}^{LC}\wedge e^{2}\\
de^{1} & = & -\omega_{10}^{LC}\wedge e^{0} &  & -\omega_{12}^{LC}\wedge e^{2}\\
de^{2} & = & -\omega_{20}^{LC}\wedge e^{0} & -\omega_{21}^{LC}\wedge e^{1}
\end{array}\label{eq:structure 1}
\end{align}
Given that
\[
d\eta=2c_{\eta}\mu_{\check{\varSigma}}=2c_{\eta}e^{1}\wedge e^{2}
\]
our first equation in \ref{eq:structure 1}becomes 
\[
2c_{\eta}e^{1}\wedge e^{2}=e^{1}\wedge\omega_{01}^{LC}-\omega_{02}^{LC}\wedge e^{2}
\]
Therefore we have
\begin{align*}
\omega_{01}^{LC}=c_{\eta}e^{2}\\
\omega_{02}^{LC}=-c_{\eta}e^{1}
\end{align*}
Also, if we let $\omega_{\check{\varSigma}}^{LC}$ be the connection
form for the Levi-Civita connection, since pull back commutes with
exterior differentiation then the structure equations for $\omega_{\check{\varSigma}}^{LC}$
are
\begin{equation}
d\left[\begin{array}{c}
e^{1}\\
e^{2}
\end{array}\right]=-\left[\begin{array}{cc}
0 & \omega_{\check{\varSigma},12}^{LC}\\
\omega_{\check{\varSigma},21}^{LC} & 0
\end{array}\right]\wedge\left[\begin{array}{c}
e^{1}\\
e^{2}
\end{array}\right]\label{structure 2}
\end{equation}
Comparing \ref{structure 2} with \ref{eq:structure 1} and using
the antisymmetry of the $\omega_{ij}$ we obtain the equations
\begin{align*}
c_{\eta}e^{2}\wedge e^{0}-\omega_{12}^{LC}\wedge e^{2}=-\omega_{\check{\varSigma},12}^{LC}\wedge e^{2}\implies(-c_{\eta}e^{0}-\omega_{12}^{LC}+\omega_{\check{\varSigma},12}^{LC})\wedge e^{2}=0\\
-c_{\eta}e^{1}\wedge e^{0}-\omega_{21}^{LC}\wedge e^{1}=-\omega_{\check{\varSigma},21}^{LC}\wedge e^{1}\implies e^{1}\wedge(-c_{\eta}e^{0}+\omega_{21}^{LC}-\omega_{\check{\varSigma},21}^{LC})=0
\end{align*}
Clearly we obtain
\[
\omega_{21}^{LC}=c_{\eta}e^{0}+\omega_{\check{\varSigma},21}^{LC}
\]
Therefore the connection matrix for the Levi-Civita connection $\omega^{LC}$
becomes \footnote{As mentioned in \cite{MR1611061}, we are writing the connection matrix
for $TY$ which differs by a sign from the connection matrix for $T^{*}Y$
, hence explaining the difference sign with their paper}

\begin{align}
\left(\begin{array}{ccc}
0 & \omega_{01}^{LC} & \omega_{02}^{LC}\\
\omega_{10}^{LC} & 0 & \omega_{12}^{LC}\\
\omega_{20}^{LC} & \omega_{21}^{LC} & 0
\end{array}\right)=\left(\begin{array}{ccc}
0 & c_{\eta}e^{2} & -c_{\eta}e^{1}\\
-c_{\eta}e^{2} & 0 & -c_{\eta}e^{0}-\omega_{\check{\varSigma},21}^{LC}\\
c_{\eta}e^{1} & c_{\eta}e^{0}+\omega_{\check{\varSigma},21}^{LC} & 0
\end{array}\right)\label{eq:connection matrix LC}
\end{align}
The connection matrix for $\nabla^{\infty}$ is easier to find since
it is a reducible connection and trivial in the first summand; it
is simply 
\begin{equation}
\left(\begin{array}{ccc}
0 & 0 & 0\\
0 & 0 & -\omega_{\check{\varSigma},21}^{LC}\\
0 & \omega_{\check{\varSigma},21}^{LC} & 0
\end{array}\right)\label{connection matrix reducible}
\end{equation}
Therefore the difference between connection matrices is
\[
\nabla^{LC}-\nabla^{\infty}\iff\left(\begin{array}{ccc}
0 & c_{\eta}e^{2} & -c_{\eta}e^{1}\\
-c_{\eta}e^{2} & 0 & -c_{\eta}e^{0}\\
c_{\eta}e^{1} & c_{\eta}e^{0} & 0
\end{array}\right)=(\tilde{\omega}_{ij})
\]
Using formula \ref{coefficients spin connection} the corresponding
spin connections differ by
\begin{align*}
\nabla_{LC}^{S}-\nabla^{S} & = & -\frac{1}{4}\sum_{i,j=0}^{2}\tilde{\omega}_{ij}\otimes\rho(e^{i})\rho(e^{j})\\
 & = & -\frac{1}{2}\sum_{i<j}\tilde{\omega}_{ij}\otimes\rho(e^{i})\rho(e^{j})\\
 & = & -\frac{1}{2}\left(c_{\eta}e^{2}\otimes\rho(e^{0})\rho(e^{1})-c_{\eta}e^{1}\otimes\rho(e^{0})\rho(e^{2})-c_{\eta}e^{0}\otimes\rho(e^{1})\rho(e^{2})\right)\\
 & = & \frac{c_{\eta}}{2}\left(e^{0}\otimes\rho(e^{1})\rho(e^{2})+e^{1}\otimes\rho(e^{0})\rho(e^{2})-e^{2}\otimes\rho(e^{0})\rho(e^{1})\right)\\
 & = & \frac{c_{\eta}}{2}\left(-e^{0}\otimes\rho(e^{0})+e^{1}\otimes\rho(e^{1})+e^{2}\otimes\rho(e^{2})\right)\rho(\mu_{Y})\\
 & = & \frac{c_{\eta}}{2}\left(-e^{0}\otimes\rho(e^{0})+e^{1}\otimes\rho(e^{1})+e^{2}\otimes\rho(e^{2})\right)
\end{align*}
where we used $1_{S}=\rho(\mu_{Y})=\rho(e_{0})\rho(e_{1})\rho(e_{2})$.
To check the first formula in the lemma $\nabla_{LC,\theta^{\#}}^{S}=\nabla_{\theta^{\#}}^{S}+\frac{c_{\eta}}{2}\left(\rho(\theta)-2\left\langle \theta,\eta\right\rangle \rho(\eta)\right)$
notice that it is linear in $\theta^{\#}$ and so we just need to
verify it on the coframe for each covector $e^{0},e^{1},e^{2}$.

We will just check it for $\theta=e^{0}$ and $\theta=e^{1}$ since
$\theta=e^{2}$ is entirely analogous to the second case. In the case
$\theta^{\#}=e_{0}$ because of the previous calculation
\begin{align*}
\nabla_{LC,e_{0}}^{S}-\nabla_{e_{0}}^{S} & =\frac{c_{\eta}}{2}\left(-e^{0}\otimes\rho(e^{0})+e^{1}\otimes\rho(e^{1})+e^{2}\otimes\rho(e^{2})\right)e_{0}\\
 & =-\frac{c_{\eta}}{2}\rho(e^{0})
\end{align*}
On the other hand
\begin{align*}
\frac{c_{\eta}}{2}\left(\rho(e^{0})-2\left\langle e^{0},\eta\right\rangle \rho(\eta)\right) & =\frac{c_{\eta}}{2}\left(\rho(e^{0})-2\rho(\eta)\right)\\
 & =-\frac{c_{\eta}}{2}\rho(e^{0})
\end{align*}
and so both formulas agree. When $\theta=e^{1}$ our long calculation
implies that 
\begin{align*}
\nabla_{LC,e_{1}}^{S}-\nabla_{e_{1}}^{S} & =\frac{c_{\eta}}{2}\left(-e^{0}\otimes\rho(e^{0})+e^{1}\otimes\rho(e^{1})+e^{2}\otimes\rho(e^{2})\right)e_{1}\\
 & =\frac{c_{\eta}}{2}\rho(e^{1})
\end{align*}
On the other hand
\[
\begin{array}{ccc}
\frac{c_{\eta}}{2}\left(\rho(e^{1})-2\left\langle e^{1},\eta\right\rangle \rho(\eta)\right) & = & \frac{c_{\eta}}{2}\rho(e^{1})\end{array}
\]
Which verifies the formula. The statement about the Dirac operators
$D_{LC}=D^{\infty}-\frac{1}{2}c_{\eta}$ also follows because
\begin{align*}
D_{LC}-D^{\infty} & =\frac{c_{\eta}}{2}\left(-\rho(e^{0})\rho(e^{0})+\rho(e^{1})\rho(e^{1})+\rho(e^{2})\rho(e^{2})\right)\\
 & =\frac{c_{\eta}}{2}\left(1-1-1\right)\\
 & =-\frac{c_{\eta}}{2}
\end{align*}
\end{proof}
As a corollary, we also have
\begin{cor}
(\cite[Corollary 5.7]{MR1611061}) The anticommutator of the Dirac
operator with Clifford multiplication by a one form $\theta$ is given
by the formula
\begin{equation}
\{D^{\infty},\rho(\theta)\}=-\rho((*d+d*)\theta)-2\nabla_{\theta^{\#}}^{S}+2c_{\eta}<\theta,\eta>\rho(\eta)\label{eq:eq:anticommutator dirac one form}
\end{equation}
\end{cor}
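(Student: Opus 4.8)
The plan is to first establish the classical anticommutator identity for the Dirac operator $D_{LC}$ built from the \emph{Levi-Civita} spinorial connection, and then transfer it to $D^{\infty}$ using the two outputs of Lemma \ref{lem:(Comparing-Dirac-Operators)}. For the Levi-Civita version I would write $D_{LC}=\sum_{i}\rho(e^{i})\nabla_{LC,e_{i}}^{S}$ in an orthonormal coframe $\{e^{0},e^{1},e^{2}\}$ and compute $\{D_{LC},\rho(\theta)\}\varPhi=D_{LC}(\rho(\theta)\varPhi)+\rho(\theta)(D_{LC}\varPhi)$ directly. Expanding $\nabla_{LC,e_{i}}^{S}(\rho(\theta)\varPhi)$ with the spinorial Leibniz rule (\ref{eq:Clifford map}) and commuting $\rho(e^{i})$ past $\rho(\theta)$ through the Clifford relation $\rho(e^{i})\rho(\theta)+\rho(\theta)\rho(e^{i})=-2\langle e^{i},\theta\rangle$, the first-order terms in $\varPhi$ combine so that the $\rho(e^{i})\rho(\theta)\nabla_{LC,e_{i}}^{S}\varPhi$ contribution produces exactly $-\rho(\theta)D_{LC}\varPhi-2\nabla_{LC,\theta^{\#}}^{S}\varPhi$, cancelling the $\rho(\theta)(D_{LC}\varPhi)$ term and leaving the zeroth-order piece $\sum_{i}\rho(e^{i})\rho(\nabla_{e_{i}}^{LC}\theta)$.

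The heart of the argument is to identify this zeroth-order piece with $-\rho((*d+d*)\theta)$. Using $\rho(e^{i})\rho(\nabla_{e_{i}}^{LC}\theta)=\rho(e^{i}\wedge\nabla_{e_{i}}^{LC}\theta)-\langle e^{i},\nabla_{e_{i}}^{LC}\theta\rangle$ and the fact that $\nabla^{LC}$ is torsion-free, summing over $i$ yields $\sum_{i}\rho(e^{i})\rho(\nabla_{e_{i}}^{LC}\theta)=\rho(d\theta)+d^{*}\theta$. Since we are on a three-manifold with $\rho(\mu_{Y})=1_{S}$, I would invoke $\rho(*\sigma)=-\rho(\sigma)$ for a two-form $\sigma$ (so that $\rho(*d\theta)=-\rho(d\theta)$), together with $\rho(d*\theta)=(*d*\theta)\,1_{S}=-d^{*}\theta\,1_{S}$, to rewrite $\rho((*d+d*)\theta)=-\rho(d\theta)-d^{*}\theta$. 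This is precisely the negative of the expression just obtained, giving the Levi-Civita identity $\{D_{LC},\rho(\theta)\}=-\rho((*d+d*)\theta)-2\nabla_{LC,\theta^{\#}}^{S}$.

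Finally I would substitute the two results of Lemma \ref{lem:(Comparing-Dirac-Operators)}. Because $\tfrac{1}{2}c_{\eta}$ is a real scalar, $D_{LC}=D^{\infty}-\tfrac{1}{2}c_{\eta}$ gives $\{D_{LC},\rho(\theta)\}=\{D^{\infty},\rho(\theta)\}-c_{\eta}\rho(\theta)$, hence $\{D^{\infty},\rho(\theta)\}=\{D_{LC},\rho(\theta)\}+c_{\eta}\rho(\theta)$. Replacing $\nabla_{LC,\theta^{\#}}^{S}$ by $\nabla_{\theta^{\#}}^{S}+\tfrac{c_{\eta}}{2}\bigl(\rho(\theta)-2\langle\theta,\eta\rangle\rho(\eta)\bigr)$ turns $-2\nabla_{LC,\theta^{\#}}^{S}$ into $-2\nabla_{\theta^{\#}}^{S}-c_{\eta}\rho(\theta)+2c_{\eta}\langle\theta,\eta\rangle\rho(\eta)$, and the two $c_{\eta}\rho(\theta)$ terms cancel, leaving exactly $-\rho((*d+d*)\theta)-2\nabla_{\theta^{\#}}^{S}+2c_{\eta}\langle\theta,\eta\rangle\rho(\eta)$. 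The only delicate point is the bookkeeping of Clifford sign conventions in the middle step — getting $\rho(*\sigma)=-\rho(\sigma)$ and $*d*\theta=-d^{*}\theta$ right for the chosen orientation and for $\rho(\mu_{Y})=1_{S}$, since a slip there would flip the sign of the first term; once the Levi-Civita identity is pinned down, the transfer through Lemma \ref{lem:(Comparing-Dirac-Operators)} is purely mechanical.
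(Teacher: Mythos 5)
Your proof is correct, and it follows the same two-step skeleton as the paper: first pin down the anticommutator identity for the Levi-Civita Dirac operator $D_{LC}$, then transfer it to $D^{\infty}$ by substituting $D_{LC}=D^{\infty}-\tfrac{1}{2}c_{\eta}$ and $\nabla_{LC,\theta^{\#}}^{S}=\nabla_{\theta^{\#}}^{S}+\tfrac{c_{\eta}}{2}\left(\rho(\theta)-2\langle\theta,\eta\rangle\rho(\eta)\right)$ from Lemma \ref{lem:(Comparing-Dirac-Operators)}; the cancellation of the two $c_{\eta}\rho(\theta)$ terms in your final step is exactly the paper's computation. The genuine difference is in how the Levi-Civita identity is obtained. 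The paper cites it from \cite[Proposition 3.45]{MR2273508} in the form $\{D_{LC},\rho(\theta)\}=\rho((*d+d*)\theta)-2\nabla_{LC,\theta^{\#}}^{S}$, with a \emph{plus} sign; under the paper's own convention $\rho(\mu_{Y})=\rho(\eta)\rho(e^{1})\rho(e^{2})=1_{S}$ that sign belongs to the opposite convention $\rho(\mu_{Y})=-1_{S}$, and if one follows the paper's proof literally it delivers $+\rho((*d+d*)\theta)$ in the conclusion, contradicting the minus sign in the statement of the corollary. Your derivation from scratch avoids this: expanding $\{D_{LC},\rho(\theta)\}$ with the spinorial Leibniz rule, identifying the zeroth-order term as $\rho(d\theta)+d^{*}\theta$ via torsion-freeness of $\nabla^{LC}$ (this is also why one must start from $D_{LC}$ rather than $D^{\infty}$, since the adiabatic connection has torsion), and then converting with $\rho(*\sigma)=-\rho(\sigma)$ on two-forms and $\rho(d*\theta)=(*d*\theta)1_{S}=-d^{*}\theta\,1_{S}$ — both of which are the correct identities when $\rho(\mu_{Y})=1_{S}$ — gives the minus-sign version $\{D_{LC},\rho(\theta)\}=-\rho((*d+d*)\theta)-2\nabla_{LC,\theta^{\#}}^{S}$, which is the one consistent with the corollary as stated and with the Clifford matrices used later in Section \ref{sec:-Seifert}. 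So your argument is not only correct but internally repairs a sign/convention mismatch present in the paper's own proof; the price is a page of Clifford-algebra bookkeeping that the paper sidesteps by citation.
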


\begin{proof}
This follows from the analogous formula for the Levi-Civita operator
\cite[Proposition 3.45]{MR2273508}
\[
\{D_{LC},\rho(\theta)\}=\rho((*d+d*)\theta)-2\nabla_{LC,\theta^{\#}}^{S}
\]

Since $D_{LC}=D^{\infty}-\frac{c_{\eta}}{2}$ and $\nabla_{LC,\theta^{\#}}^{S}=\nabla_{\theta^{\#}}^{S}+\frac{c_{\eta}}{2}\left(\rho(\theta)-2\left\langle \theta,\eta\right\rangle \rho(\eta)\right)$
we find that 
\[
\{D^{\infty},\rho(\theta)\}-c_{\eta}\rho(\theta)=\rho((*d+d*)\theta)-2\nabla_{\theta^{\#}}^{S}-c_{\eta}(\rho(\theta)-2\left\langle \theta,\eta\right\rangle \rho(\eta))
\]
from which the result is now obvious.
\end{proof}
For our purposes we are interested in the case where we twist our
spinor bundle with a rank two hermitian bundle $E$, endowed with
a $U(2)$ connection $B$. Then we have a twisted connection 
\[
\nabla^{S,B}:\varGamma(S\otimes E)\rightarrow\varGamma(T^{*}Y\otimes S\otimes E)
\]
 defined in the usual way 
\[
\nabla^{S,B}(s\otimes s_{E})=(\nabla^{S}s)\otimes s_{E}+s\otimes(\nabla^{B}s_{E})
\]
Here $s$ is a section of $S$, while $s_{E}$ a section of $E$.
The twisted Dirac operator is then 
\[
D_{S,B}=\sum_{i=0}^{2}\rho(e^{i})\nabla_{e_{i}}^{S,B}=\rho(\eta)\nabla_{\frac{\partial}{\partial\varphi}}^{S,B}+\rho(e^{1})\nabla_{e_{1}}^{S,B}+\rho(e^{2})\nabla_{e_{2}}^{S,B}
\]
Observe first of all that since $D_{S}$ {[}called $D^{\infty}$ before{]}
is self-adjoint thanks to Lemma (\ref{lem:(Comparing-Dirac-Operators)}),
we know that $D_{S,B}$ is a self-adjoint operator as well. Hence
\[
D_{S,B}^{*}D_{S,B}=D_{S,B}^{2}
\]
and our objective is to find an useful decomposition for the square
of this operator, in other words, a Weitzenbock-type formula. Tautologically
we define 
\[
D_{2,S,B}\equiv D_{S,B}-\rho(\eta)\nabla_{\frac{\partial}{\partial\varphi}}^{S,B}
\]
 Then
\[
D_{S,B}^{2}=\left(\rho(\eta)\cdot\nabla_{\frac{\partial}{\partial\varphi}}^{S,B}\right)^{2}+D_{2,S,B}^{2}+\left\{ \rho(\eta)\nabla_{\frac{\partial}{\partial\varphi}}^{S,B},D_{2,S,B}\right\} 
\]
decomposes $D_{S,B}^{2}$ as a sum of three operators, the first two
which are evidently non-negative. The third term is an anti-commutator,
which we need to analyze explicitly. 
\begin{lem}
The term $\left\{ \rho(\eta)\nabla_{\frac{\partial}{\partial\varphi}}^{S,B},D_{2,S,B}\right\} $
is equal to 
\[
\left\{ \rho(\eta)\nabla_{\frac{\partial}{\partial\varphi}}^{S,B},D_{2,S,B}\right\} =\rho(\eta)\rho(e^{1})F_{S,B}(\partial_{\varphi},e_{1})+\rho(\eta)\rho(e^{2})F_{S,B}(\partial_{\varphi},e_{2})
\]
\end{lem}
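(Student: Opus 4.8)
The plan is to expand the anticommutator bilinearly and reduce it, term by term, to a commutator of covariant derivatives that is then recognized as a curvature. Since $D_{2,S,B}=\rho(e^{1})\nabla_{e_{1}}^{S,B}+\rho(e^{2})\nabla_{e_{2}}^{S,B}$, the anticommutator $\{\rho(\eta)\nabla_{\frac{\partial}{\partial\varphi}}^{S,B},D_{2,S,B}\}$ splits as a sum over $j=1,2$ of the pieces $\{\rho(\eta)\nabla_{\frac{\partial}{\partial\varphi}}^{S,B},\rho(e^{j})\nabla_{e_{j}}^{S,B}\}$, so it suffices to show that each piece equals $\rho(\eta)\rho(e^{j})F_{S,B}(\frac{\partial}{\partial\varphi},e_{j})$.

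The key observations are three commutation facts, all consequences of the structure of the adiabatic connection $\nabla^{\infty}$. First, since $\nabla^{\infty}\eta=0$, the spinorial property (\ref{eq:Clifford map}) (which persists after twisting by $B$, because $\rho$ acts only on the $S$ factor) gives $\nabla_{v}^{S,B}(\rho(\eta)\varPhi)=\rho(\eta)\nabla_{v}^{S,B}\varPhi$ for every $v$; that is, $\rho(\eta)$ is parallel and hence commutes with $\nabla_{e_{j}}^{S,B}$. Second, because $e^{j}=\pi^{*}(\check{\theta})$ is pulled back from $\check{\varSigma}$ while $\frac{\partial}{\partial\varphi}$ is vertical, the covariant derivative $\nabla_{\frac{\partial}{\partial\varphi}}^{\infty}e^{j}=\pi^{*}(\nabla_{\check{\varSigma}}^{LC}\check{\theta})(\tfrac{\partial}{\partial\varphi})$ vanishes, since the contraction of a pulled-back horizontal tensor with the vertical field $\frac{\partial}{\partial\varphi}$ is zero; thus (\ref{eq:Clifford map}) again yields that $\rho(e^{j})$ commutes with $\nabla_{\frac{\partial}{\partial\varphi}}^{S,B}$. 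Third, $\rho(\eta)$ and $\rho(e^{j})$ anticommute by the Clifford relations, since $\{\eta=e^{0},e^{1},e^{2}\}$ is orthonormal.

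Using the first two facts to move each Clifford factor past the foreign covariant derivative, the $j$-th piece becomes
\[
\rho(\eta)\rho(e^{j})\nabla_{\frac{\partial}{\partial\varphi}}^{S,B}\nabla_{e_{j}}^{S,B}+\rho(e^{j})\rho(\eta)\nabla_{e_{j}}^{S,B}\nabla_{\frac{\partial}{\partial\varphi}}^{S,B},
\]
and the anticommutation of the Clifford factors collapses this to $\rho(\eta)\rho(e^{j})[\nabla_{\frac{\partial}{\partial\varphi}}^{S,B},\nabla_{e_{j}}^{S,B}]$. Finally I would identify this commutator with the curvature: by definition $[\nabla_{\frac{\partial}{\partial\varphi}}^{S,B},\nabla_{e_{j}}^{S,B}]=F_{S,B}(\frac{\partial}{\partial\varphi},e_{j})+\nabla_{[\frac{\partial}{\partial\varphi},e_{j}]}^{S,B}$, and the bracket term drops out because $\frac{\partial}{\partial\varphi}$ commutes with the horizontal lift $e_{j}$ of a vector field on $\check{\varSigma}$, as recorded earlier in the form $[v_{\check{\varSigma}},\frac{\partial}{\partial\varphi}]=0$. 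Summing over $j=1,2$ then gives the claimed identity.

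The main obstacle here is bookkeeping rather than anything conceptual: one must verify that twisting by the $U(2)$ connection $B$ does not disturb the two parallel-transport identities, and this is exactly the point where care is needed. It works because Clifford multiplication acts only on the spinor factor $S$, so the vanishing derivatives $\nabla^{\infty}\eta=0$ and $\nabla_{\frac{\partial}{\partial\varphi}}^{\infty}e^{j}=0$ continue to control $\rho(\eta)$ and $\rho(e^{j})$ under $\nabla^{S,B}$. The two geometric inputs that genuinely must be checked are the vertical vanishing $\nabla_{\frac{\partial}{\partial\varphi}}^{\infty}e^{j}=0$ and the commutator $[\frac{\partial}{\partial\varphi},e_{j}]=0$, both of which rest on the orthogonal splitting $T^{*}Y\simeq\mathbb{R}\eta\oplus\pi^{*}(T^{*}\check{\varSigma})$ and on $\frac{\partial}{\partial\varphi}$ being the vertical Killing field.
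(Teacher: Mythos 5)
Your proof is correct and follows essentially the same route as the paper's: expand the anticommutator using the spinorial (Leibniz) property, observe that the derivative-of-symbol terms vanish because $\nabla^{\infty}\eta=0$ and $\nabla_{\partial_{\varphi}}^{\infty}e^{j}=0$, collapse the remaining terms to $\rho(\eta)\rho(e^{j})[\nabla_{\partial_{\varphi}}^{S,B},\nabla_{e_{j}}^{S,B}]$ via Clifford anticommutation, and identify the commutator with $F_{S,B}(\partial_{\varphi},e_{j})$ using $[\partial_{\varphi},e_{j}]=0$. The only difference is presentational: you state explicitly two points the paper leaves implicit, namely that twisting by $B$ does not disturb the spinorial identity and that the contraction of a pulled-back horizontal tensor with the vertical field vanishes.
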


\begin{rem*}
This computation should be compared with the formula given in \cite[Lemma 5.8]{MR1611061}.
There $E$ would be a rank complex line bundle, and the formula would
read in terms of an orthonormal frame
\[
\left\{ \rho(\eta)\nabla_{\frac{\partial}{\partial\varphi}}^{S,B},D_{2,S,B}\right\} =\rho(\eta)\left(\rho(e^{1})(db)_{\eta,e_{1}}+\rho(e^{2})(db)_{\eta,e_{2}}\right)1_{S}
\]
where $b$ is the imaginary valued one form which appears in \ref{coefficients spin connection}
and we decomposed it as 
\[
db=(db)_{\eta,e_{1}}\eta\wedge e^{1}+(db)_{\eta,e_{2}}\eta\wedge e^{2}+(db)_{e_{1},e_{2}}e^{1}\wedge e^{2}
\]
\end{rem*}
\begin{proof}
Clearly, 
\[
\left\{ \rho(\eta)\nabla_{\partial_{\varphi}}^{S},D_{2,SB}\right\} =\left\{ \rho(\eta)\nabla_{\partial_{\varphi}}^{S},\rho(e^{1})\nabla_{e_{1}}^{S,B}\right\} +\left\{ \rho(\eta)\nabla_{\partial_{\varphi}}^{S},\rho(e^{2})\nabla_{e_{2}}^{S,B}\right\} 
\]
where $\partial_{\varphi}=\frac{\partial}{\partial\varphi}$ , and
it suffices to find the first term since the other one is analogous.

For a spinor $\varPsi$ of $S\otimes E$ we have
\[
\left\{ \rho(\eta)\nabla_{\partial_{\varphi}}^{S,B},\rho(e^{1})\nabla_{e_{1}}^{S,B}\right\} \varPsi=\rho(\eta)\nabla_{\partial_{\varphi}}^{S,B}\left(\rho(e^{1})\nabla_{e_{1}}^{S,B}\varPsi\right)+\rho(e^{1})\nabla_{e_{1}}^{S,B}\left(\rho(\eta)\nabla_{\partial_{\varphi}}^{S,B}\varPsi\right)
\]
Now we use the fact that the connection is spinorial, i.e, $\nabla_{v}^{S,B}(\rho(\theta)\varPsi)=\rho(\nabla_{v}^{\infty}\theta)\varPsi+\rho(\theta)(\nabla_{v}^{S,B}\varPsi)$.
In our case the anticommutator becomes
\begin{align*}
 & \rho(\eta)\rho(\nabla_{\partial_{\varphi}}^{\infty}e^{1})\left(\nabla_{e_{1}}^{S,B}\varPsi\right)+\rho(\eta)\rho(e^{1})\nabla_{\partial_{\varphi}}^{S,B}\left(\nabla_{e_{1}}^{S,B}\varPsi\right)\\
+ & \rho(e^{1})\rho\left(\nabla_{e_{1}}^{\infty}\rho(\eta)\right)\left(\nabla_{\partial_{\varphi}}^{S,B}\varPsi\right)+\rho(e^{1})\rho(\eta)\nabla_{e_{1}}^{S,B}\left(\nabla_{\partial_{\varphi}}^{S,B}\varPsi\right)
\end{align*}
Now, for our adiabatic connection we have that $\nabla^{\infty}\eta=0$
and $\nabla^{\infty}\pi^{*}(\check{\theta})=\pi^{*}(\nabla_{\varSigma}^{LC}\check{\theta})$
so the first term in each row of the previous expression disappears.
Also, using the Clifford relations $\rho(e^{1})\rho(\eta)=-\rho(\eta)\rho(e^{1})$
the computation for the anticommutator becomes 
\[
\left\{ \rho(\eta)\nabla_{\partial_{\varphi}}^{S,B},\rho(e^{1})\nabla_{e_{1}}^{S,B}\right\} \varPsi=\rho(\eta)\rho(e^{1})[\nabla_{\partial_{\varphi}}^{S,B},\nabla_{e_{1}}^{S_{c}}]\varPsi
\]
To compute the commutator recall that because $\nabla^{S,B}$ is a
connection on a vector bundle it has a curvature $F_{S,B}$ given
by \cite[Section 3.3.3]{MR2363924}
\[
[\nabla_{v}^{S,B},\nabla_{w}^{S,B}]-\nabla_{[v,w]}^{S,B}=F_{S,B}(v,w)
\]
Since $\partial_{\varphi}$ commutes with any vector field which is
dual to a form that pulls back from $\varSigma$ we have that $[\partial_{\varphi},e_{1}]=0$
and so 
\[
\begin{array}{c}
[\nabla_{\partial_{\varphi}}^{S,B},\nabla_{e_{1}}^{S,B}]=F_{S,B}(\partial_{\varphi},e_{1})\end{array}
\]
 Since a similar result holds for $\partial_{\varphi}$ and $e_{2}$
we have just found that
\[
\left\{ \rho(\eta)\nabla_{\partial_{\varphi}}^{S,B},D_{2,SB}\right\} =\rho(\eta)\rho(e^{1})F_{S,B}(\partial_{\varphi},e_{1})+\rho(\eta)\rho(e^{2})F_{S,B}(\partial_{\varphi},e_{2})
\]

\end{proof}
\begin{rem}
Notice that as \cite{MR1611061} point out before Lemma 5.8, the term
$\left\{ \rho(\eta)\nabla_{\partial_{\varphi}}^{S,B},D_{2,SB}\right\} $
ends up being a zeroth-order operator, not a first order operator. 

Now we write the $SO(3)$ monopole equations on a Seifert manifold.
Locally, we can decompose the curvature in the same way as in the
previous section, namely
\begin{align*}
F_{B}^{0}=F_{\eta,e^{1}}\eta\wedge e^{1}+F_{\eta,e^{2}}\eta\wedge e^{2}+F_{e^{1},e^{2}}e^{1}\wedge e^{2}\\
*F_{B}^{0}=F_{\eta,e^{1}}e^{2}-F_{\eta,e^{2}}e^{1}+F_{e^{1},e^{2}}d\eta
\end{align*}
and we obtain:
\end{rem}

\begin{thm}
\label{thm solutions on Seifert manifolds}Suppose $(B,\varPsi)$
satisfies the $SO(3)$ vortex equations on a Seifert manifold $Y$,
for the spin-u structure $V_{can}=(\mathbb{C}\oplus\pi^{*}(K_{\check{\varSigma}}^{-1}))\otimes\pi^{*}(\check{E})$.
Write $\varPsi=\alpha\oplus\beta$ with $\alpha\in\varOmega^{0}(\pi^{*}(\check{E}))$
and $\beta\in\varOmega^{0,1}(\pi^{*}(K_{\check{\varSigma}}^{-1}\otimes\check{E}))$. 

Then with respect to the $SO(3)$ monopole equations
\begin{align}
iF_{e^{1},e^{2}}+(\alpha\otimes\alpha^{*})_{0}-(\varLambda\beta\otimes\beta^{*})_{0}=0\nonumber \\
\left(iF_{\eta,e^{1}}-F_{\eta,e^{2}}\right)\bar{\epsilon}+2(\beta\otimes\alpha^{*})_{0}=0\nonumber \\
\rho(\eta)\nabla_{\frac{\partial}{\partial\varphi}}^{S,B}\varPsi+D_{2,S}\varPsi=0\label{eq:Seifert}
\end{align}
all solutions are of the form $(B,(\alpha,0))$ or $(B,(0,\beta))$. 

In the first case, we get a solution of the form 
\begin{align*}
\bar{\partial}_{C}\alpha=0\\
F_{C}-i(\alpha\otimes\alpha^{*})_{0}=0
\end{align*}
 which can be identified with a $SO(3)$ vortex on a bundle $\check{E}'$
which satisfies that $\det\check{E}'\simeq\det\check{E}$. 

In the second case, we get a solution of the form 
\begin{align*}
\bar{\partial}_{C}^{*}\beta=0\\
F_{C}+i(\varLambda\beta\otimes\beta^{*})_{0}=0
\end{align*}
 which can be identified via Serre duality with an $SO(3)$ vortex
on the bundle $K_{\check{\varSigma}}\otimes\check{E}'^{-1}$, where
again $\tilde{E}$ is a $U(2)$ bundle satisfying $\det\check{E}'\simeq\det E$. 

In particular, if we assume that $c_{1}(\check{E})>2c_{1}(K_{\check{\varSigma}})$,
then this second type of moduli space of stable pairs is empty.
\end{thm}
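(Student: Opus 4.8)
The plan is to reproduce, in the Seifert setting, the vanishing argument that proved Theorem \ref{thm-solutions on S1xsigma}, but with the naive $\theta$-differentiation replaced by the Weitzenb\"ock-type decomposition of $D_{S,B}^{2}$ built from the adiabatic connection. First I would reduce to the irreducible case, so that the stabilizer of $(B,\varPsi)$ under the determinant-one gauge group is trivial. Because $\partial/\partial\varphi$ is a Killing field and the adiabatic connection satisfies $\nabla^{\infty}\eta=0$, I can then choose a gauge in which $B$ and $\varPsi$ are invariant along the fibre direction; this is the exact analogue of the temporal gauge $c(\theta)=0$ used on $S^{1}\times\varSigma$.

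With $D_{S,B}\varPsi=0$ I would apply $D_{S,B}$ a second time and use the splitting
\[
D_{S,B}^{2}=\left(\rho(\eta)\nabla_{\frac{\partial}{\partial\varphi}}^{S,B}\right)^{2}+D_{2,S,B}^{2}+\left\{\rho(\eta)\nabla_{\frac{\partial}{\partial\varphi}}^{S,B},D_{2,S,B}\right\}.
\]
Pairing with $\varPsi$ and integrating over $Y$, the first two terms contribute $\|\nabla_{\partial_{\varphi}}^{S,B}\varPsi\|_{L^{2}}^{2}$ and $\|D_{2,S,B}\varPsi\|_{L^{2}}^{2}$, both manifestly non-negative: since $\nabla^{\infty}\eta=0$ the operator $\rho(\eta)\nabla_{\partial_{\varphi}}^{S,B}$ is self-adjoint and $\rho(\eta)$ is a pointwise isometry, while $D_{2,S,B}$ is the self-adjoint Dirac operator in the base directions. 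The decisive input is the zeroth-order anticommutator, which by the preceding lemma equals $\rho(\eta)\rho(e^{1})F_{S,B}(\partial_{\varphi},e_{1})+\rho(\eta)\rho(e^{2})F_{S,B}(\partial_{\varphi},e_{2})$. I would split $F_{S,B}$ into its spinorial part and the bundle curvature $F_{B}$; the mixed components $F_{\eta,e^{1}},F_{\eta,e^{2}}$ of $F_{B}$ are precisely those governed by the second $SO(3)$ monopole equation $\left(iF_{\eta,e^{1}}-F_{\eta,e^{2}}\right)\bar{\epsilon}+2(\beta\otimes\alpha^{*})_{0}=0$, while the mixed components of the spinorial curvature are controlled through the explicit connection matrices of Lemma \ref{lem:(Comparing-Dirac-Operators)}.

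Substituting the second monopole equation, the anticommutator pairing collapses to a pointwise expression proportional to $|\beta|^{2}|\alpha|^{2}-\tfrac{1}{2}|\langle\beta,\alpha\rangle|^{2}$, exactly as on the surface. This is non-negative by Cauchy--Schwarz, so the identity $\langle D_{S,B}^{2}\varPsi,\varPsi\rangle_{L^{2}}=0$ forces every summand to vanish: $\nabla_{\partial_{\varphi}}^{S,B}\varPsi=0$, whence all data descends from $\check{\varSigma}$; $D_{2,S,B}\varPsi=0$, giving the holomorphicity of $\alpha$ and of $\beta$ in the base directions; and $(\beta\otimes\alpha^{*})_{0}=0$. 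Unique continuation for these holomorphic sections then yields $\alpha\equiv0$ or $\beta\equiv0$. In the first case the residual equations are the $SO(3)$ vortex equations on a bundle $\check{E}'$ with $\det\check{E}'\simeq\det\check{E}$, and in the second the Serre-dual reformulation gives an $SO(3)$ vortex on $K_{\check{\varSigma}}\otimes\check{E}'^{-1}$, establishing parts a) and b). For the emptiness statement I would argue by slopes as in Theorem \ref{thm-solutions on S1xsigma}: if $\alpha\equiv0$, stability applied to any holomorphic sub-line bundle $\check{L}$ carrying the non-zero section forces $c_{1}(\check{L})<\tfrac{1}{2}c_{1}(K_{\check{\varSigma}}\otimes\check{E}'^{-1})=c_{1}(K_{\check{\varSigma}})-\tfrac{1}{2}c_{1}(\check{E})$, which is strictly negative under $c_{1}(\check{E})>2c_{1}(K_{\check{\varSigma}})$, contradicting the existence of a non-zero holomorphic section via Lemma \ref{Lem degree constraint}. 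I expect the main obstacle to be the second paragraph: extracting the non-negative quadratic form from the anticommutator requires carefully separating the spinorial curvature, with its $c_{\eta}$ Seifert corrections, from $F_{B}$ and tracking every constant so that the spin contributions either vanish in the $\eta\wedge e^{i}$ components or reassemble into the manifestly non-negative expression --- this bookkeeping is where the Seifert geometry genuinely differs from the product case.
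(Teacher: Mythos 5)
Your proposal is essentially the paper's own proof: the same Weitzenb\"ock splitting of $D_{S,B}^{2}$ into $\bigl(\rho(\eta)\nabla^{S,B}_{\partial_{\varphi}}\bigr)^{2}+D_{2,S,B}^{2}$ plus the zeroth-order anticommutator, the same substitution of the second monopole equation to convert that term into $2\bigl(|\alpha|^{2}|\beta|^{2}-\tfrac{1}{2}|\langle\alpha,\beta\rangle|^{2}\bigr)$-type quantities, Cauchy--Schwarz plus unique continuation to force $\alpha\equiv0$ or $\beta\equiv0$, and the same slope/stability argument for emptiness when $c_{1}(\check{E})>2c_{1}(K_{\check{\varSigma}})$. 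The bookkeeping you flag as the main obstacle is in fact benign, for exactly the reason you hint at: the adiabatic connection is the product $d\oplus\pi^{*}(\nabla^{LC}_{\check{\varSigma}})$, so the spinorial part of $F_{S,B}$ pulls back from $\check{\varSigma}$ and contributes no $\eta\wedge e^{i}$ components, leaving only the bundle curvature in the anticommutator. One sentence should be deleted, however: you cannot ``choose a gauge in which $B$ and $\varPsi$ are invariant along the fibre direction'' --- such invariance is precisely the conclusion of the theorem, not a gauge choice (temporal gauge on $S^{1}\times\varSigma$ only removes $c(\theta)$; it does not make $C(\theta)$ or the spinor $\theta$-independent), and if it could be imposed the statement would be nearly trivial. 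Fortunately your argument never uses this claim: the Weitzenb\"ock identity is gauge-invariant, and fibre-invariance of the data is derived, as in the paper, from $\nabla^{S,B}_{\partial_{\varphi}}\varPsi=0$ together with the vanishing of the mixed curvature components once one of the spinor components is identically zero.
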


\begin{rem}
a) Notice that implicitly we chose a connection $C^{\det}$ on $\det\check{E}$,
and we are considering all $U(2)$ connections on $\pi^{*}(\check{E})$
which induce the predetermined connection $\pi^{*}(C^{\det})$ on
$\pi^{*}(\det\check{E})$. Using \cite[Propositon 5.3]{MR1611061},
there is a bijection between bundles such connections on orbifold
line bundles over $\check{\varSigma}$, and the usual connections
on line bundles over $Y$, whose curvature pulls up from $\check{\varSigma}$
and whose fiberwise holonomy is trivial. 

Thus, this is why the bundle $\pi^{*}(\check{E})$ ``remembers''
the choice of connection we made downstairs (i.e, on the orbifold
$\check{\varSigma}$). Since there are several $U(2)$ orbifold bundles
with the same $\det\check{E}$ (up to isomorphism), we need to consider
all the distinct choices, which differ in their isotropy data. But
at least there are only finitely many isomorphism classes to consider.

b) One could also be interested in understanding how the abelian vortices
that appear in these moduli spaces are related to those appearing
in MOY \cite{MR1611061}. As Theorem 1 of MOY states, the Seiberg-Witten
solutions that appear there can be identified with two copies of the
space of effective orbifold divisors over $\varSigma$ with orbifold
degree less than $-\frac{\chi(\varSigma)}{2}=\frac{c_{1}(K_{\varSigma})}{2}$.
For simplicity suppose that the $a_{i}$ are coprime and that $\check{\varSigma}$
admits orbi-spin bundles \cite[Definition 5.13]{MR1611061}. 

This means that there is a square-root $K_{\check{\varSigma}}^{1/2}$
of $K_{\check{\varSigma}}$ in the sense that $2c_{1}(K_{\check{\varSigma}}^{1/2})=c_{1}(K_{\check{\varSigma}})$.
In our setup this would require all the $a_{i}$ to be odd integers
and moreover $K_{\check{\varSigma}}^{1/2}$ is unique (see the proof
of Corollary 5.17 in MOY). Thus, if we take for example $\det\check{E}\simeq K_{\check{\varSigma}}\otimes\check{L}_{0}$
this is a valid choice in the sense that $K_{\check{\varSigma}}$
is an even power of $\check{L}_{0}$ thus $\det\check{E}$ is an odd
power of $\check{L}_{0}$. However, this choice for $\det\check{E}$
is not within the vanishing condition stated in our theorem thus we
obtain:

1) Moduli spaces of $SO(3)$ vortices on $\check{E}'$: now abelian
vortices can arise provided $c_{1}(\check{L})<\frac{1}{2}c_{1}(K_{\check{\varSigma}}\otimes\check{L}_{0})=\frac{1}{2}c_{1}(K_{\check{\varSigma}})+\frac{1}{2a_{1}\cdots a_{n}}$
.

2) Moduli spaces of $SO(3)$ vortices on $K_{\check{\varSigma}}\otimes\check{E}'^{-1}$:
now abelian vortices can arise provided $c_{1}(\check{L})<\frac{1}{2}c_{1}(K_{\check{\varSigma}})-\frac{1}{2a_{1}\cdots a_{n}}$
. 

This seems the closest one can get to the constraint that appears
on MOY. However, because of this choice for $\det\check{E}$ it is
not clear that the moduli space of $SO(3)$ vortices is smooth at
the reducibles so it seems better to assume $c_{1}(\check{E})>2c_{1}(K_{\check{\varSigma}})$
instead.
\end{rem}

\begin{proof}
We follow the strategy of \cite[Theorem 4]{MR1611061}. If we start
with the equation $D_{S,B}\varPsi=0$, and apply $D_{S,B}$ to it,
we obtain
\[
0=\left(\rho(\eta)\cdot\nabla_{\frac{\partial}{\partial\varphi}}^{S,B}\right)^{2}\varPsi+D_{2,S,B}^{2}\varPsi+\rho(\eta)\rho(e^{1})F_{S,B}(\partial_{\varphi},e_{1})\varPsi+\rho(\eta)\rho(e^{2})F_{S,B}(\partial_{\varphi},e_{2})\varPsi
\]
Now take inner product with $\varPsi$ and integrate over $Y$ in
order to obtain 
\begin{equation}
0=\left|\nabla_{\partial_{\varphi}}^{S}\varPhi\right|^{2}+|D_{2}\varPhi|^{2}+\frac{1}{2}\left\langle \rho(\eta)\rho(e^{1})F_{\eta,e^{1}}\varPsi+\rho(\eta)\rho(e^{2})F_{\eta,e^{2}}\varPsi,\varPsi\right\rangle \label{eq:vanishing Seifert}
\end{equation}
where $F_{\eta,e^{1}}=F_{S,B}(\partial_{\varphi},e_{1})$ and $F_{\eta,e^{2}}=F_{S,B}(\partial_{\varphi},e_{2})$.

Under a slight abuse of notation, recall from last section that 
\begin{align*}
\rho(\eta)=\left(\begin{array}{cc}
i & 0\\
0 & -i
\end{array}\right) &  & \rho(e^{1})= & \left(\begin{array}{cc}
0 & -\epsilon\\
\bar{\epsilon} & 0
\end{array}\right) & \rho(e^{2})=\left(\begin{array}{cc}
0 & i\epsilon\\
i\bar{\epsilon} & 0
\end{array}\right)
\end{align*}
Thus 
\begin{align*}
\rho(\eta)\rho(e^{1})=\left(\begin{array}{cc}
0 & -i\epsilon\\
-i\bar{\epsilon} & 0
\end{array}\right) &  & \rho(\eta)\rho(e^{2})=\left(\begin{array}{cc}
0 & -\epsilon\\
\bar{\epsilon} & 0
\end{array}\right)
\end{align*}
Recall that if we were to write $\varPsi$ locally as $\varPsi=\sum_{i,j=1}^{2}c_{ij}\varPsi_{i}^{S}\otimes\varPsi_{j}^{E}$,
where $\varPsi_{1}^{S},\varPsi_{2}^{S}$ is an orthonormal basis for
$S$ and $\varPsi_{1}^{E},\varPsi_{2}^{E}$ an orthonormal basis for
$E$, then for any one form $\rho(\theta)\varPsi=\sum_{i,j=1}^{2}c_{ij}\left(\rho(\theta)\varPsi_{i}^{S}\right)\otimes\varPsi_{j}^{E}$
, while for any section $\xi$ of $\mathfrak{su}(E)$, $\xi\varPsi=\sum_{i,j=1}^{2}c_{ij}\varPsi_{i}^{S}\otimes(\xi\varPsi_{j}^{E})$.
In other words, the Clifford multiplication action on spinors $\varPsi$
is uncoupled from the action of $\mathfrak{su}(E)$ on spinors. 

Therefore, we can write 
\begin{align*}
 & \rho(\eta)\rho(e^{1})F_{\eta,e^{1}}\varPsi &  & \rho(\eta)\rho(e^{2})F_{\eta,e^{2}}\varPsi\\
= & \left(\begin{array}{cc}
0 & -i\epsilon\\
-i\bar{\epsilon} & 0
\end{array}\right)F_{\eta,e^{1}}\left(\begin{array}{c}
\alpha\\
\beta
\end{array}\right) & = & \left(\begin{array}{cc}
0 & -\epsilon\\
\bar{\epsilon} & 0
\end{array}\right)F_{\eta,e^{2}}\left(\begin{array}{c}
\alpha\\
\beta
\end{array}\right)\\
= & \left(\begin{array}{cc}
0 & -i\epsilon\\
-i\bar{\epsilon} & 0
\end{array}\right)\left(\begin{array}{c}
F_{\eta,e^{1}}\alpha\\
F_{\eta,e^{1}}\beta
\end{array}\right) & = & \left(\begin{array}{cc}
0 & -\epsilon\\
\bar{\epsilon} & 0
\end{array}\right)\left(\begin{array}{c}
F_{\eta,e^{2}}\alpha\\
F_{\eta,e^{2}}\beta
\end{array}\right)\\
= & \left(\begin{array}{c}
-i\epsilon F_{\eta,e^{1}}\beta\\
-i\bar{\epsilon}F_{\eta,e^{1}}\alpha
\end{array}\right) & = & \left(\begin{array}{c}
-\epsilon F_{\eta,e^{2}}\beta\\
\bar{\epsilon}F_{\eta,e^{2}}\alpha
\end{array}\right)
\end{align*}
And so we must understand 
\begin{align*}
 & \left\langle \left(\begin{array}{c}
-\epsilon(iF_{\eta,.e^{1}}+F_{\eta,e^{2}})\beta\\
-\bar{\epsilon}(iF_{\eta,e^{1}}-F_{\eta,e^{2}})\alpha
\end{array}\right),\left(\begin{array}{c}
\alpha\\
\beta
\end{array}\right)\right\rangle \\
= & 2\left\langle \left(\begin{array}{c}
(\alpha\otimes\beta^{*})_{0}\beta\\
(\beta\otimes\alpha^{*})_{0}\alpha
\end{array}\right),\left(\begin{array}{c}
\alpha\\
\beta
\end{array}\right)\right\rangle \\
= & 2\left\langle \left(\begin{array}{c}
|\beta|^{2}\alpha-\frac{1}{2}<\beta,\alpha>_{E}\beta\\
|\alpha|^{2}\beta-\frac{1}{2}<\alpha,\beta>_{E}\alpha
\end{array}\right),\left(\begin{array}{c}
\alpha\\
\beta
\end{array}\right)\right\rangle \\
= & 2\left(|\beta|^{2}|\alpha|^{2}-\frac{1}{2}|<\beta,\alpha>_{E}|^{2}+|\beta|^{2}|\alpha|^{2}-\frac{1}{2}|<\alpha,\beta>_{E}|^{2}\right)
\end{align*}
 Just as in the case of $S^{1}\times\varSigma$, by Cauchy-Schwarz
this is non-negative so going back to the equality (\ref{eq:vanishing Seifert})
we conclude that 
\[
\begin{cases}
\nabla_{\partial_{\varphi}}^{S}\alpha\equiv0\\
\nabla_{\partial_{\varphi}}^{S}\beta\equiv0\\
D_{2,S,B}\varPsi\equiv0\\
\alpha\equiv0\text{ or }\beta\equiv0
\end{cases}
\]
Once we know this, the argument is identical to the one given by \cite[Theorem 4]{MR1611061}. 

Namely, the first two equations say that $\alpha,\beta$ actually
pullback from the orbifold Riemann surface. Likewise, the connection
$B$ will pullback from an orbifold bundle $\check{E}'$ over $\check{\varSigma}$
which satisfies $\det\check{E}'\simeq\det\check{E}$ . Finally, the
operator $D_{2}$ can then be identified with a twisted $\bar{\partial}\oplus\bar{\partial}^{*}$
operator \cite[p. 709]{MR1611061}, so this is why $D_{2,S,B}\varPsi\equiv0$
together with the equation for $F_{e^{1},e^{2}}$ gives a solution
means that we have a solution to the $SO(3)$ vortex equations. 

The vanishing result uses and the identification with the stable pairs
on $\check{\varSigma}$ is exactly the same argument as the one we
used for $S^{1}\times\varSigma$.
\end{proof}
\begin{example}
The Poincaré Homology Sphere

As is well know the Poincaré homology sphere can be identified with
the Brieskorn homology sphere $\varSigma(2,3,5)$, and as a Seifert
manifold, the corresponding orbifold is $S^{2}(2,3,5)$. 

In this case the generator for the topological Picard group is an
orbifold line bundle with $c_{1}(\check{L}_{0})=\frac{1}{30}$. The
isotropy invariants consists of three integers $b_{1},b_{2},b_{3}$
satisfying 
\begin{align*}
1\leq b_{1}<2 &  & 1\leq b_{2}<3 &  & 1\leq b_{3}<5
\end{align*}
These numbers are found by the requirement that 
\[
\frac{1}{\prod\alpha_{i}}-\sum_{i=1}^{n}\frac{\beta_{i}}{\alpha_{i}}=\frac{1}{30}-\frac{15b_{1}+10b_{2}+6b_{3}}{30}
\]
be an integer (the background degree of $\check{L}_{0}$) . It is
easy to check that the only choice which works is 
\[
b_{1}=b_{2}=b_{3}=1
\]
Thus we will write $\check{L}_{0}(1,1,1)$ to emphasize the isotropy
data. 

Recall that the Seifert invariants for the canonical bundle were given
in equation (\ref{eq:Seifert Canonical}). Thus, our non-vanishing
condition reads in this case 
\[
c_{1}(\check{E})>2c_{1}(K_{\check{\varSigma}})=2\left(0-2+3-\frac{1}{2}-\frac{1}{3}-\frac{1}{5}\right)=2\left(\frac{30-15-10-6}{30}\right)=-\frac{1}{15}
\]
In particular, it suffices to take $\det\check{E}\simeq\check{L}_{0}$.
Now suppose that $\check{E}'$ is a $U(2)$ bundle with $\det\check{E}'\simeq\check{L}_{0}$.
Write the isotropy invariants of $\check{E}'$ as $((b_{1}^{-},b_{1}^{+}),(b_{2}^{-},b_{2}^{+}),(b_{3}^{-},b_{3}^{+}))$.
The conditions these integers must satisfy are 
\[
0\leq b_{i}^{-}\leq b_{i}^{+}<a_{i}
\]
and also 
\[
b_{i}^{-}+b_{i}^{+}\equiv1\mod a_{i}
\]
since they must give the isotropy data of $\check{L}_{0}$.

We find by trial and error that the only options which work are

\begin{align*}
((0,1),(0,1),(0,1)) &  & ((0,1),(0,1),(2,4)) &  & ((0,1),(0,1),(3,3))\\
((0,1),(2,2),(0,1)) &  & ((0,1),(2,2),(2,4)) &  & ((0,1),(2,4),(3,3))
\end{align*}
In order to analyze these bundles in a systematic way we recall the
Facts (\ref{FACTS}), as well as some other important things to keep
in mind when analyzing the different moduli spaces:
\end{example}

\begin{enumerate}
\item Because $g=0$, $n=3$, if $n_{0}=\#\{i\mid b_{i}^{-}=b_{i}^{+}\}\geq1$,
then $\check{E}'$ admits no projectively flat connections. Moreover,
the space of projectively flat connections is connected and for three
marked points of expected dimension zero (this can be seen from the
proof of our Lemma (\ref{lem:Suppose-that-smoothness criterion})). 
\item If the moduli space $\mathcal{M}(\check{\varSigma},\check{E}')$ admits
no moduli space of abelian vortices, then there are no irreducible
vortices inside this moduli space.
\item The abelian vortices which can arise must satisfy $c_{1}(\check{L})<\frac{1}{2}c_{1}(\check{L}_{0})=\frac{1}{2}\det(\check{E})$.
Since $\check{L}=\check{L}_{0}^{l}$ for some integer $l$ and $l\geq0$
because otherwise $H^{0}(\check{L})$ vanishes, we must have $\check{L}$
is the trivial line bundle (isotropy $(0,0,0)$ and $c_{1}(\check{L})=0$).
Thus in this case any isotropy data which does not contain a $0$
in each of the three pairs $(b_{i}^{-},b_{i}^{+})$ will have an empty
moduli space of irreducible $SO(3)$ vortices.
\item Moreover, the dimension of the moduli space of irreducible $SO(3)$
vortices can be read from (\ref{dimension moduli space})
\[
2\left((g-1)+c_{1}(\det\check{E})+(n-n_{0})-\sum_{i=1}^{n}\frac{b_{i}^{-}+b_{i}^{+}}{a_{i}}\right)=2\left(-1+\frac{1}{30}+3-n_{0}-\sum_{i=1}^{n}\frac{b_{i}^{-}+b_{i}^{+}}{a_{i}}\right)
\]
\item The index for an abelian monopole will be according to equation (\ref{index})
\begin{align*}
 & 2\left(g-1+c_{1}(\det\check{E})-2c_{1}(\check{L})+\sum_{i\mid\epsilon_{i}=1}\frac{b_{i}^{+}-b_{i}^{-}}{a_{i}}+n_{-}+\sum_{i\mid\epsilon_{i}=-1}\frac{b_{i}^{-}-b_{i}^{+}}{a_{i}}\right)\\
= & 2\left[-1+\frac{1}{30}+\sum_{i\mid\epsilon_{i}=1}\frac{b_{i}^{+}-b_{i}^{-}}{a_{i}}+n_{-}+\sum_{i\mid\epsilon_{i}=-1}\frac{b_{i}^{-}-b_{i}^{+}}{a_{i}}\right]
\end{align*}
\end{enumerate}
With this information, we can make the following table {[}the dimensions
refer to those before we take the quotient by the circle action{]}:

\ 

\begin{tabular}{|c|c|c|c|}
\hline 
Isotropy & $\mathcal{M}^{*}(\check{\varSigma},\check{E}')$  & \# Proj. Flat & \# Abelian Vortices\tabularnewline
\hline 
\hline 
$((0,1),(0,1),(0,1))$ & non empty, of dim $2$ & one  & one vortex of index $2$\tabularnewline
\hline 
$((0,1),(0,1),(2,4))$ & empty, exp. dim$=$$0$ & one (isolated) & no vortices (wrong isotropy)\tabularnewline
\hline 
$((0,1),(0,1),(3,3))$ & empty, exp. dim$=$$-2$ & empty since $n_{0}\geq1$ & no vortices (wrong isotropy)\tabularnewline
\hline 
$((0,1),(2,2),(0,1))$ & empty, exp. dim$=$$-2$ & empty since $n_{0}\geq1$ & no vortices (wrong isotropy)\tabularnewline
\hline 
$((0,1),(2,2),(2,4))$ & empty, exp. dim$=$$-4$ & empty since $n_{0}\geq1$ & no vortices (wrong isotropy)\tabularnewline
\hline 
$((0,1),(2,4),(3,3))$ & empty, exp. dim$=$$-4$ & empty since $n_{0}\geq1$ & no vortices (wrong isotropy)\tabularnewline
\hline 
\end{tabular}

\ 

Therefore, we conclude that the solutions to the $SO(3)$ monopole
equations on the Poincaré homology sphere consist of: a) the trivial
$SU(2)$ connection, b) two irreducible flat $SU(2)$ connections,
c) one Seiberg-Witten monopole, d) a one dimensional moduli space
of $SO(3)$ monopoles which serves as a ``cobordism'' between the
Seiberg-Witten monopole and one of the irreducible flat connections. 

A more realistic example to consider is the following:
\begin{example}
The Brieskorn Homology Sphere $\varSigma(2,3,7)$

In this case we are looking at $S^{2}(2,3,7)$. The generator $\check{L}_{0}$
now satisfies $c_{1}(\check{L}_{0})=\frac{1}{42}$ and its isotropy
must satisfy that 
\[
\frac{1}{42}-\frac{21b_{1}+14b_{2}+6b_{3}}{42}
\]
is an integer. By trial and error we find that the only choice is
\[
b_{1}=1,b_{2}=2,b_{3}=6
\]
Write $\check{L}_{0}(1,2,6)$. Now the vanishing condition reads 
\[
c_{1}(\check{E})>2c_{1}(K_{\check{\varSigma}})=2\left(0-2+3-\frac{1}{2}-\frac{1}{3}-\frac{1}{7}\right)=\frac{1}{21}
\]
So we need to look at the moduli spaces of $SO(3)$ vortices with
$\det\check{E}\simeq\check{L}_{0}^{3}$. Notice that the isotropy
is now $\check{L}_{0}^{3}(1,0,4)$. In this case the expected dimension
would need to be 
\[
2\left(2+\frac{1}{14}-\frac{b_{1}^{-}+b_{1}^{+}}{2}-\frac{b_{2}^{-}+b_{2}^{+}}{3}-\frac{b_{3}^{-}+b_{3}^{+}}{7}\right)
\]
Notice that in this case $b_{1}^{-}+b_{1}^{+}=1$ and $b_{2}^{-}+b_{2}^{+}=3$
so this forces the expected dimension to be 
\[
2\left(\frac{1}{2}+\frac{1}{14}-\frac{b_{3}^{-}+b_{3}^{+}}{7}\right)=1+\frac{1-2b_{3}^{-}-2b_{3}^{+}}{7}
\]
and since one of the $b_{3}^{\pm}$ must be non-trivial, this forces
the dimension after taking the quotient by the $S^{1}$ action to
be negative, thus we end up with empty moduli space of $SO(3)$ vortices,
which is not interesting.

Thus, we need to increase $c_{1}(\det\check{E})$ in order to make
the moduli spaces of $SO(3)$ vortices to have positive expected dimension.
The next choice is thus $\det\check{E}\simeq\check{L}_{0}^{5}$. Notice
that the isotropy is now $\check{L}_{0}^{5}(1,1,2)$. In this case
the formula for the expected dimension reads
\[
2\left(2+\frac{5}{42}-\frac{b_{1}^{-}+b_{1}^{+}}{2}-\frac{b_{2}^{-}+b_{2}^{+}}{3}-\frac{b_{3}^{-}+b_{3}^{+}}{7}\right)
\]
Again, $b_{1}^{-}+b_{1}^{+}=1$, but now we can have $b_{2}^{-}+b_{2}^{+}=1$
and $b_{3}^{-}+b_{3}^{+}=2$ in which case the dimension would be
\[
2\left(2+\frac{5}{42}-\frac{1}{2}-\frac{1}{3}-\frac{2}{7}\right)=2
\]
so that means we will find positive dimensional moduli spaces of $SO(3)$
vortices!

The isotropy $((b_{1}^{-},b_{1}^{+}),(b_{2}^{-},b_{2}^{+}),(b_{3}^{-},b_{3}^{+}))$
of our $U(2)$ bundles must satisfy
\[
\begin{cases}
0\leq b_{1}^{-}\leq b_{1}^{+}\leq1 & b_{1}^{-}+b_{1}^{+}-1\equiv0\mod2\\
0\leq b_{2}^{-}\leq b_{2}^{+}\leq1 & b_{2}^{-}+b_{2}^{+}-1\equiv0\mod3\\
0\leq b_{3}^{-}\leq b_{3}^{+}\leq2 & b_{3}^{-}+b_{3}^{+}-2\equiv0\mod7
\end{cases}
\]
Again after trial and error one finds the isotropies 
\begin{align*}
((0,1),(0,1),(0,2)) &  & ((0,1),(0,1),(1,1)) &  & ((0,1),(0,1),(3,6)) &  &  & ((0,1),(0,1),(4,5))\\
((0,1),(2,2),(0,2)) &  & ((0,1),(2,2),(1,1)) &  & ((0,1),(2,2),(3,6)) &  &  & ((0,1),(2,2),(4,5))
\end{align*}

In addition to the facts used for the case of the Poincaré homology
sphere, notice that:
\end{example}

\begin{enumerate}
\item The moduli spaces of abelian vortices which can arise are associated
to $\check{L}_{0}(1,2,6)$, $\check{L}_{0}^{2}(0,1,5)$ and the trivial
one $\check{L}_{triv}$. The background degrees are 
\[
\begin{cases}
\deg_{B}\check{L}_{triv}=0\\
\deg_{B}\check{L}_{0}(1,2,6)=\frac{1}{42}-\frac{1}{2}-\frac{2}{3}-\frac{6}{7}=-2\\
\deg_{B}\check{L}_{0}^{2}(0,1,5)=\frac{2}{42}-\frac{0}{2}-\frac{1}{3}-\frac{5}{7}=-1
\end{cases}
\]
so the space of abelian vortices to consider is really just $\check{L}_{triv}$,
since recall from MOY that these moduli spaces of abelian vortices
are isomorphic to $\text{Sym}^{\deg_{B}\check{L}}(\varSigma)$, thus
they are empty if the background degree is negative. 
\item From Facts (\ref{FACTS}), the space of irreducible projectively flat
connections is empty if and only if there exists a vector $(\epsilon_{i})$,
with $\epsilon_{i}=\pm1$, such that $n_{+}+\deg_{B}(\det\check{E})\equiv1\mod2$
and $n_{+}-\sum_{i=1}^{n}\frac{\epsilon_{i}(b_{i}^{+}-b_{i}^{-})}{a_{i}}<1-g$.
Here $n_{\pm}=\#\{i\mid\epsilon_{i}=\pm1\}$.
\item Moreover, the dimension of the moduli space of irreducible $SO(3)$
vortices can be read from (\ref{dimension moduli space})
\[
2\left((g-1)+c_{1}(\det\check{E})+(n-n_{0})-\sum_{i=1}^{n}\frac{b_{i}^{-}+b_{i}^{+}}{a_{i}}\right)=2\left(-1+\frac{5}{42}+3-n_{0}-\sum_{i=1}^{n}\frac{b_{i}^{-}+b_{i}^{+}}{a_{i}}\right)
\]
\item The index for an abelian monopole will be according to equation (\ref{index})
\begin{align*}
 & 2\left(g-1+c_{1}(\det\check{E})-2c_{1}(\check{L})+\sum_{i\mid\epsilon_{i}=1}\frac{b_{i}^{+}-b_{i}^{-}}{a_{i}}+n_{-}+\sum_{i\mid\epsilon_{i}=-1}\frac{b_{i}^{-}-b_{i}^{+}}{a_{i}}\right)\\
= & 2\left[-1+\frac{1}{42}+\sum_{i\mid\epsilon_{i}=1}\frac{b_{i}^{+}-b_{i}^{-}}{a_{i}}+n_{-}+\sum_{i\mid\epsilon_{i}=-1}\frac{b_{i}^{-}-b_{i}^{+}}{a_{i}}\right]
\end{align*}
With this information, we can make the following table {[}the dimensions
refer to those before we take the quotient by the circle action{]}:
\end{enumerate}
\ 

\begin{tabular}{|c|c|c|c|}
\hline 
Isotropy & $\mathcal{M}^{*}(\check{\varSigma},\check{E}')$ & \# Proj. Flat & \# Abelian Vortices\tabularnewline
\hline 
\hline 
$((0,1),(0,1),(0,2))$ & non empty, of dim $2$ & one & one vortex of index $2$\tabularnewline
\hline 
$((0,1),(0,1),(1,1))$ & empty, exp. dim$=0$ & empty since $n_{0}\geq1$ & no vortices (wrong isotropy)\tabularnewline
\hline 
$((0,1),(0,1),(3,6))$ & empty, exp. dim$=0$ & one & no vortices (wrong isotropy)\tabularnewline
\hline 
$((0,1),(0,1),(4,5))$ & empty, exp. dim$=$$0$ & empty (see below) & no vortices (wrong isotropy)\tabularnewline
\hline 
$((0,1),(2,2),(0,2))$ & empty, exp. dim$=$$-2$ & empty since $n_{0}\geq1$ & no vortices (wrong isotropy)\tabularnewline
\hline 
$((0,1),(2,2),(1,1))$ & empty, exp. dim$=$$-4$ & empty since $n_{0}\geq1$ & no vortices (wrong isotropy)\tabularnewline
\hline 
$((0,1),(2,2),(3,6))$ & empty, exp. dim$=$$-4$ & empty since $n_{0}\geq1$ & no vortices (wrong isotropy)\tabularnewline
\hline 
$((0,1),(2,2),(4,5))$ & empty, exp. dim$=$$-4$ & empty since $n_{0}\geq1$ & no vortices (wrong isotropy)\tabularnewline
\hline 
\end{tabular}

\ 

The reason why the bundle with isotropy $((0,1),(0,1),(4,5))$ is
empty is that we can take the isotropy vector $\epsilon=(1,-1,-1)$,
so $n_{+}=1$ and $\frac{1}{2}-\frac{1}{3}-\frac{1}{7}>0$ which verifies
the criterion mentioned before for the emptiness of the space of irreducible
projectively flat connections.

\ 

Thus one more time our picture is similar to the case of the Poincaré
homology sphere. On $\varSigma(2,3,7)$ we have the trivial connection,
two irreducible flat connections, one Seiberg-Witten monopole, and
a ``cobordism'' of $SO(3)$ monopoles between this Seiberg-Witten
monopole and one of the irreducible flat connections.

\textbf{}

\textbf{}

\bibliographystyle{plain}
\bibliography{amsj,../ARXIV/references}

\begin{quote}
Department of Mathematics, Rutgers University.

\textit{\small{}E-mail address}{\small{}:} \textsf{\footnotesize{}mariano.echeverria@rutgers.edu}{\footnotesize\par}
\end{quote}

\end{document}